\documentclass[11pt, twoside, leqno]{article}
\usepackage{enumerate}
\usepackage{graphics}
\usepackage{graphicx}
\usepackage{amssymb}
\usepackage{amsmath}
\usepackage{amsthm}
\usepackage{color}
\usepackage{mathrsfs}


\usepackage{indentfirst}

\usepackage{txfonts}

\allowdisplaybreaks

\pagestyle{myheadings}\markboth{\footnotesize\rm\sc
Xiaosheng Lin, Dachun Yang, Sibei Yang and Wen Yuan}
{\footnotesize\rm\sc Hardy Spaces Associated with Operators and BQBF Spaces}

\textwidth=15cm
\textheight=21.3cm
\oddsidemargin 0.46cm
\evensidemargin 0.46cm

\parindent=13pt

\newtheorem{theorem}{Theorem}[section]
\newtheorem{lemma}[theorem]{Lemma}

\newtheorem{assumption}[theorem]{Assumption}
\newtheorem{proposition}[theorem]{Proposition}

\theoremstyle{definition}
\newtheorem{remark}[theorem]{Remark}
\newtheorem{definition}[theorem]{Definition}
\renewcommand{\appendix}{\par
	\setcounter{section}{0}%
	\setcounter{subsection}{0}%
	\setcounter{subsubsection}{0}%
	\gdef\thesection{\@Alph\c@section}%
	\gdef\thesubsection{\@Alph\c@section.\@arabic\c@subsection}%
	\gdef\theHsection{\@Alph\c@section.}%
	\gdef\theHsubsection{\@Alph\c@section.\@arabic\c@subsection}%
	\csname appendixmore\endcsname
}

\numberwithin{equation}{section}

\begin{document}
\title{\bf\Large
Hardy Spaces Associated with Non-Negative Self-Adjoint Operators
and Ball Quasi-Banach Function Spaces on Doubling Metric Measure Spaces and Their Applications
\footnotetext{\hspace{-0.35cm} 2020 {\it
Mathematics Subject Classification}.
Primary 42B30; Secondary 42B25,
35K08, 42B35,  35J30.
\endgraf {\it Key words and phrases.}
Hardy space, ball quasi-Banach function space, non-negative self-adjoint operator, atom, molecule,
Schr\"older group, spectral multiplier, Littlewood--Paley function.
\endgraf This project is partially supported by the National Key Research and Development Program of China (Grant No. 2020YFA0712900), the National Natural Science Foundation of China (Grant Nos.
11971058, 12071197, 12122102 and 12071431), the Fundamental Research Funds for the
Central Universities (Grant No. lzujbky-2021-ey18) and the Innovative Groups of
Basic Research in Gansu Province (Grant No. 22JR5RA391).}}
\author{Xiaosheng Lin, Dachun Yang\footnote{Corresponding
author, E-mail: \texttt{dcyang@bnu.edu.cn}/{\color{red} April 26, 2023}/Final version.},
\ Sibei Yang and Wen Yuan}
\date{}
\maketitle

\vspace{-0.7cm}
	
\begin{center}
\begin{minipage}{13cm}
{\small {\bf Abstract.}\quad
Let $(\mathcal{X},d,\mu)$ be a doubling metric measure space 
in the sense of R. R. Coifman and G. Weiss,
$L$ a non-negative self-adjoint operator on $L^2(\mathcal{X})$
satisfying the Davies--Gaffney estimate,
and $X(\mathcal{X})$ a ball quasi-Banach function space
on $\mathcal{X}$ satisfying some mild assumptions.
In this article, the authors introduce the Hardy type space
$H_{X,\,L}(\mathcal{X})$ by the Lusin area function
associated with $L$ and establish the atomic and the molecular
characterizations of $H_{X,\,L}(\mathcal{X}).$
As an application of these characterizations of $H_{X,\,L}(\mathcal{X})$,
the authors obtain the boundedness of
spectral multiplies on $H_{X,\,L}(\mathcal{X})$. Moreover, when $L$
satisfies the Gaussian upper bound estimate,
the authors further characterize $H_{X,\,L}(\mathcal{X})$ in 
terms of the Littlewood--Paley functions $g_L$ and
$g_{\lambda,\,L}^\ast$ and establish the boundedness estimate of Schr\"odinger groups on $H_{X,\,L}(\mathcal{X})$.
Specific spaces $X(\mathcal{X})$ to which these results can be 
applied include Lebesgue spaces, Orlicz spaces,
weighted Lebesgue spaces, and variable Lebesgue spaces. 
This shows that the results obtained in the article
have extensive generality.}
\end{minipage}
\end{center}

\vspace{0.2cm}



\section{Introduction}
As a major landmark in the development of harmonic analysis and partial differential equations,
the real-variable theory of the classical Hardy space on the Euclidean space $\mathbb{R}^n$ with $p\in(0,1]$
was creatively initiated by Stein and Weiss \cite{sw60} and then further seminally developed by Fefferman
and Stein  \cite{fs72}. Moreover, Calder\'on et al. \cite{c77,ccfjr78} used weighted Hardy
spaces to investigate the Cauchy integral on Lipschitz curves and Kenig \cite{k80} studied the (weighted)
Hardy spaces on Lipschitz domains. Observe that the classical Hardy space is built on the Lebesgue space.
It is also well known that, due to the need from various applications, there appear a lot
of more exquisite function spaces than the Lebesgue space, such as Orlicz spaces, Morrey spaces, weighted Lebesgue spaces,
mixed-norm Lebesgue spaces, and variable Lebesgue spaces.
Among with these function spaces,
the (quasi-)Banach function space is an important concept, which includes many function spaces such as
Lebesgue spaces, variable Lebesgue spaces, and Orlicz spaces as special cases (see, for instance, \cite{bs88}). However,
there  also exist many function spaces which are not necessarily (quasi-)Banach function spaces; for examples,
 weighted Lebesgue space, Morrey spaces, and Herz spaces (see, for instance, \cite{shyy17}).
Thus, the concept of (quasi-)Banach function spaces is restricted. In order to include all  these function spaces in
a unified framework, Sawano et al. \cite{shyy17} introduced the ball quasi-Banach function space $X$ on
$\mathbb{R}^n$ and studied the real-variable theory of the Hardy type space $H_{X}(\mathbb{R}^n)$ associated
with $X$. Compared with (quasi-)Banach function spaces, ball (quasi-)Banach function spaces contain more
function spaces and hence are more general. For more studies on the ball (quasi-)Banach function space $X$
and the Hardy type space $H_{X}(\mathbb{R}^n)$, we refer the reader to \cite{Syy21,syy21,wyy20,wyyz21,
yhyy22b,zwyy20}.

Although the theory of classical Hardy spaces has been very successful and
fruitful in the past decades,
there exist some important situations in which the classical real-variable theory
of Hardy spaces is not applicable.
For example, let $L:=-{\rm div}(A\nabla)$ be a second-order divergence elliptic operator with complex
bounded measurable coefficients on $\mathbb{R}^n$.
Then the Riesz transform $\nabla L^{-1/2}$ associated
with $L$ may not be bounded from the classical Hardy space
$H^1(\mathbb{R}^n)$ to $L^1(\mathbb{R}^n)$, but
is bounded from $H^1_{L}(\mathbb{R}^n)$ to $L^1(\mathbb{R}^n)$ (see, for instance, \cite{hmm11}), where
$H^1_{L}(\mathbb{R}^n)$ denotes the Hardy space associated with the operator $L$. Thus, it is necessary
to consider   Hardy spaces and also other function spaces
associated with different differential operators.
In particular, when the operator $L$ satisfies a Poisson upper bound estimate, Auscher et al. \cite{adm05}
initially introduced the Hardy space $H^1_{L}(\mathbb{R}^n)$ associated with $L$ and established its
molecular characterization. Furthermore, Duong and Yan \cite{dy051,dy05} showed that the dual space of
the Hardy space $H^1_L(\mathbb{R}^n)$ is just the BMO-type space $\mathrm{BMO}_{L^\ast}(\mathbb{R}^n)$,
where $L^\ast$ denotes the adjoint operator of $L$. Later on, when $p\in(0,1]$ is closed to $1$, these
results were generalized to $H^p_{L}(\mathbb{R}^n)$ by Yan \cite{y08}.

The real-variable theory of Hardy type spaces has also been developed on the more general spaces of homogeneous type.
In what follows, let
$$(\mathcal{X},d,\mu) \text{ or simply } \mathcal{X}$$
be a space of homogeneous type in the sense of Coifman and Weiss \cite{cw77}. Hofmann et al. \cite{hlmmy11} established several characterizations
of the Hardy space $H^1_{L}(\mathcal{X})$,
respectively, in terms of atoms, molecules, and Littlewood--Paley square
functions, when the operator $L$ satisfies the so-called Davies--Gaffney estimate which is weaker than
the Gaussian upper bound estimate. Later, in \cite{jy11,yy14}, the results of Hofmann et al. \cite{hlmmy11}
was further extended to the case of (Musielak--)Orlicz--Hardy spaces
associated with $L$ which contain Hardy spaces $H^p_L(\mathcal{X})$,
where $p\in(0,1]$, as special cases. Meanwhile, the boundedness of spectral multipliers on
$H^p_{L}(\mathcal{X})$ with $p\in(0,1]$ was also considered by Duong and Yan \cite{dy11}. Recently,
Chen et al. \cite{cdly20,cdly21arxiv} gave the boundedness estimates of Schr\"odinger groups generated by $L$ on
the Lebesgue space $L^p(\mathcal{X})$ with $p\in(1,\infty)$ and the Hardy space $H^1_{L}(\mathcal{X})$, which
are generalized to the case of $H^p_{L}(\mathcal{X})$ with $p\in(0,1]$ by Bui and Ly \cite{bl22} when $L$ further
satisfies the Gaussian upper bound estimate; see also \cite{bddm19,bdn20} for more boundedness results
on Schr\"odinger groups generated by $L$. The real-variable theory of Hardy spaces associated with operators
and various function spaces were developed rapidly and vigorously; see, for instance, \cite{yz16,yzz18} for variable Hardy spaces associated with operators, \cite{bckyy13,bl11,jy10,jy11,yy14}
for (Musielak--)Orlicz--Hardy spaces associated with operators, \cite{b14,bckyy131,ls13,sy10} for
weighted Hardy spaces associated with operators, and, particularly, Georgiadis et al. \cite{gkkp19,gkkp17,gk20,gk23,gn18,gn17}
for other function spaces associated with non-negative
self-adjoint operators and their applications.

Let $\mathcal{X}$ be a space of homogeneous type, $L$ a non-negative self-adjoint operator on $L^2(\mathcal{X})$,
and $X(\mathcal{X})$ a ball quasi-Banach function space on $\mathcal{X}$. For the purpose of having a deeper
understanding of Hardy spaces associated with the operator $L$ and putting various Hardy spaces into a general
framework, in this article, we introduce and study the Hardy type space $H_{X,\,L}(\mathcal{X})$ associated with
both $L$ and $X(\mathcal{X})$. Precisely, we introduce the Hardy type space $H_{X,\,L}(\mathcal{X})$ via the Lusin
area function associated with $L$ and establish the atomic and the molecular characterizations of
$H_{X,\,L}(\mathcal{X})$ with the help of the atomic decomposition of the tent space $T_{X}(\mathcal{X}^+)$
on $\mathcal{X}^+:=\mathcal{X}\times(0,\infty)$. As an application of these characterizations of $H_{X,\,L}(\mathcal{X})$,
we obtain the boundedness of the  H\"ormander type spectral multiplier on $H_{X,\,L}(\mathcal{X})$.
Moreover, assume further that $L$ satisfies the Gaussian upper bound estimate. Applying a pointwise estimate
for the Peetre type maximal function associated with $L$ (see Lemma \ref{point} below) and the atomic
characterization of $H_{X,\,L}(\mathcal{X})$, we further characterize $H_{X,\,L}(\mathcal{X})$
by means of the Littlewood--Paley $g$-function $g_L$ and the Littlewood--Paley $g_\lambda^\ast$-function
$g^\ast_{\lambda,\,L}$. Furthermore, we also show that the operator $(I+L)^{-s}e^{itL}$ is bounded
on $H_{X,\,L}(\mathcal{X})$.

It is worth mentioning that the main results given in this article can be applied to many specific
Hardy type spaces associated with operators, including
Hardy spaces $H^p_L(\mathcal{X})$, Orlicz--Hardy spaces $H^\Phi_L(\mathcal{X})$, weighted Hardy spaces
$H^p_{\omega,\,L}(\mathcal{X})$, and variable Hardy spaces $H^{p(\cdot)}_L(\mathcal{X})$. Moreover, to the best
of our knowledge, even in the special cases of Orlicz--Hardy spaces, weighted Hardy spaces, and variable
Hardy spaces, the boundedness of the operator $(I+L)^{-s}e^{itL}$ on $H_{X,\,L}(\mathcal{X})$ obtained in
this article is completely new.

Comparing with the classical Hardy space, one main difficulty to deal with Hardy type spaces built on
$X(\mathcal{X})$ is that the quasi-norm $\|\cdot\|_{X(\mathcal{X})}$ has no explicit
expression. The main novelty of this article is that we make full use of the
vector-valued maximal inequality in $X(\mathcal{X})$,
the boundedness the Hardy--Littlewood maximal
operator $\mathcal{M}$ on the associate space of the convexification of $X(\mathcal{X})$ (see Assumptions \ref{vector1} and \ref{vector2} below),
and some ideas from  the extrapolation theorem and its proof
to overcome the difficulties caused by the deficiency of the explicit
expression of the quasi-norm $\|\cdot\|_{X(\mathcal{X})}$. Indeed, the boundedness of the Hardy--Littlewood
maximal operator $\mathcal{M}$ on the associate space of the convexification of $X(\mathcal{X})$
allows us to reconstruct the atomic or the molecular Hardy space (quasi-)norm and the vector-valued
maximal inequality of $X(\mathcal{X})$ plays an essential role in the $\|\cdot\|_{X(\mathcal{X})}$ estimate of functions under consideration .
Moreover, to obtain the Littlewood--Paley characterization of $H_{X,\,L}(\mathcal{X})$,
we borrow some ideas from the extrapolation theorem and its proof and
translate the problem into the weighted case.

The remainder of this article is organized as follows.

In Section \ref{section2}, we recall some concepts and well-known results on spaces of homogeneous type
$(\mathcal{X},d,\mu)$, the ball quasi-Banach function $X(\mathcal{X})$, and the Hardy type space
$H_{X,\,L}(\mathcal{X})$.

In Section \ref{section3}, we establish the atomic and the molecular characterizations of
$H_{X,\,L}(\mathcal{X})$ (see Theorem \ref{thm-mc} below). By making full use of the geometrical properties
of $\mathcal{X}$, we first gave the atomic decomposition of the tent space $T_X(\mathcal{X}^+)$ in Subsection
\ref{sec3.1} (see Theorem \ref{thm-ad-tent} below). Then, applying the atomic decomposition of $T_X(\mathcal{X}^+)$
and the functional calculus associated with the operator $L$, we obtain the atomic and the molecular
characterizations of $H_{X,\,L}(\mathcal{X})$.

Section \ref{section4} is  divided into three subsections to give some applications of the atomic and the
molecular characterizations of $H_{X,\,L}(\mathcal{X})$. First, we establish the boundedness of the  H\"ormander
type spectral multiplier on $H_{X,\,L}(\mathcal{X})$ in Subsection \ref{sec4.3} (see Theorem \ref{thm-spec} below).
Assume further that the operator $L$ satisfies the Gaussian upper bound estimate. In Subsection \ref{sec4.1},
we obtain the Littlewood--Paley characterization of $H_{X,\,L}(\mathcal{X})$ (see Theorem \ref{thm-g} below)
via using the atomic characterization of $H_{X,\,L}(\mathcal{X})$ and a pointwise estimate for the Peetre type
maximal function associated with $L$ given in \cite{h17}. In Subsection \ref{sec4.2}, we give the boundedness
of the operator $(I+L)^{-s}e^{itL}$ on $H_{X,\,L}(\mathcal{X})$ (see Theorem \ref{sch} below).

In Section \ref{section5}, we apply the results obtained in Sections \ref{section3} and \ref{section4} to
some specific spaces including  Orlicz--Hardy spaces, weighted Hardy spaces, and
variable Hardy spaces associated with operators.

Finally, we make some convention on symbols. Let $\mathbb{N}:=\{1,2,3,\ldots\}$, $\mathbb{Z}_+:=\mathbb{N}\cup\{0\}$,
and $\mathbb{Z}$ be the set of all integers. For any $p\in[1,\infty]$, the \emph{symbol} $p'$ denotes its
\emph{conjugate index}, that is, $1/p'+1/p=1$. The \emph{symbol} $C$ always denotes a positive constant independent
of main parameters involved, but may vary from line to line. We also use the \emph{symbol} $C_{(\alpha,\beta,\ldots)}$ or
$c_{(\alpha,\beta,\ldots)}$ to denote a positive constant depending on the indicated parameters $\alpha,\beta,\ldots.$
If $f\le Cg$, we then write $f\lesssim g$ or $g\gtrsim f$; if $f\lesssim g\lesssim f$, we then write $f\sim g$.
Moreover, if $f\lesssim g$ and $g=h$, or $f\lesssim g$ and $g\leq h$, we then write $f\lesssim g= h$ or
$f\lesssim g\leq h$. For any subset $E$ of
$\mathcal{X}$, the \emph{symbol} $\mathbf{1}_E$ denotes its \emph{characteristic function}
and $E^\complement$ its \emph{complement}. For any $x\in\mathcal{X}$ 
and $E_1,E_2\subset \mathcal{X}$, let
$$
d(x,E_1):=\inf_{y\in E_1}d(x,y)\ \mathrm{and}\ d(E_1,E_2):=\inf_{y\in E_1,z\in E_2}d(y,z).
$$
For any $j\in\mathbb{N}$ and any ball $B\subset\mathcal{X}$, let $U_j(B):=(2^{j}B)\setminus(2^{j-1}B)$
and $U_0(B):=B.$
The \emph{symbol} $\mathscr{M}(\mathcal{X})$ denotes the set of all $\mu$-measurable
functions on $\mathcal{X}$. For any $\mu$-measurable set $E\subset \mathcal{X}$ and any $p\in(0,\infty)$,
the \emph{Lebesgue space} $L^p(E)$ is defined by setting
$$
L^p(E):=\left\{f\in\mathscr{M}(\mathcal{X}):\ \|f\|_{L^p(E)}:
=\left[\int_E|f(z)|^p\,d\mu(z)\right]^{\frac{1}{p}}<\infty\right\},
$$
and the \emph{Lebesgue space} $L^{\infty}(E)$ is defined by setting
$L^{\infty}(E):=\{f\in\mathscr{M}( \mathcal{X}):\ \|f\|_{L^\infty(E)}<\infty\}$,
where $\|f\|_{L^\infty(E)}$ denotes the \emph{essential supremum} of $f$ on $E$.
When we prove a theorem or the like, we always use the same symbols
in the wanted proved theorem or the like.
\section{Preliminary}\label{section2}

In this section, we recall some concepts and well-known results on any space of homogeneous type,
$(\mathcal{X},d,\mu)$, in the sense of  Coifman and  Weiss \cite{cw77}, as well as the ball quasi-Banach function
$X(\mathcal{X})$  and the Hardy type space $H_{X,\,L}(\mathcal{X})$.

\subsection{Spaces of Homogeneous Type}

In this subsection, we recall the concept of spaces of homogeneous type initially introduced by
 Coifman and  Weiss \cite{cw77}.
\begin{definition}
A \emph{quasi-metric space} $(\mathcal{X},d)$ is a non-empty set $\mathcal{X}$ equipped with a
\emph{quasi-metric} $d$, namely a non-negative function defined on $\mathcal{X}\times\mathcal{X}$
satisfying that, for any $x,y,z\in\mathcal{X}$,
\begin{enumerate}
\item[{\rm(i)}] $d(x,y)=0$ if and only if $x=y$;
\item[{\rm(ii)}] $d(x,y)=d(y,x)$;
\item[{\rm(iii)}] there exists a constant $A_0\in[1,\infty)$, independent of $x,y$, and $z$, such that
\begin{align*}
d(x,z)\leq A_0[d(x,y)+d(y,z)].
\end{align*}
\end{enumerate}
\end{definition}

\begin{definition}
Let $(\mathcal{X},d)$ be a quasi-metric space and $\mu$ a non-negative measure on $\mathcal{X}$.
The triple $(\mathcal{X},d,\mu)$ is called a \emph{space of homogeneous type} if $\mu$ satisfies the
following \emph{doubling condition}: there exists a constant $C_{(\mu)}\in[1,\infty)$ such that, for any
ball $B\subset\mathcal{X}$,
\begin{equation*}
\mu(2B)\leq C_{(\mu)}\mu(B).
\end{equation*}
If $A_0:=1$, then $(\mathcal{X},d,\mu)$ is called a \emph{metric measure space of homogeneous type} or,
simply, a \emph{doubling metric measure space}.
\end{definition}
In what follows, we \emph{always} assume that
$(\mathcal{X},d,\mu)$ is a  doubling metric measure space. Then
the above doubling condition further implies that, for any ball $B\subset\mathcal{X}$ and any
$\lambda\in[1,\infty)$,
\begin{equation}\label{eqoz}
\mu(\lambda B)\leq C_{(\mu)}\lambda^{n}\mu(B),
\end{equation}
where $n:= \log_2 C_{(\mu)}$ is called the \emph{upper dimension} of $\mathcal{X}$. Moreover,
for any $x,y\in\mathcal{X}$ and $r\in(0,\infty)$,
\begin{align}\label{eqoz3}
V(y,r)\leq C_{(\mu)}\left[1+\frac{d(x,y)}{r}\right]^{n}V(x,r).
\end{align}

Throughout this article, according to \cite[pp.\,587-588]{cw77}, we \emph{always} make the following assumptions
on $(\mathcal{X},d,\mu)$:
\begin{enumerate}
\item[\rm(i)] for any point $x\in\mathcal{X}$, the balls $\{B(x,r)\}_{r\in(0,\infty)}$ form a basis of
open neighborhoods of $x$;
\item[{\rm(ii)}] $\mu$ is \emph{Borel regular} which means that all open sets are $\mu$-measurable
and every set $A\subset\mathcal{X}$ is contained in a Borel set $E$ such that $\mu(A)=\mu(E)$;
\item[{\rm(iii)}] for any $x\in\mathcal{X}$ and $r\in(0,\infty)$, $\mu(B(x,r))\in(0,\infty)$;
\item[{\rm(iv)}] $\mathrm{diam}\, \mathcal{X}=\infty$ and $(\mathcal{X},d,\mu)$ is \emph{non-atomic}
which means $\mu(\{x\})=0$ for any $x\in\mathcal{X}$. Here and thereafter, $\mathrm{diam}\, \mathcal{X}
:=\sup\{d(x,y):\ x,y\in\mathcal{X}\}$.
\end{enumerate}
Note that $\mathrm{diam}\, \mathcal{X}=\infty$ implies that $\mu(\mathcal{X})=\infty$ (see \cite[p.\,284]{ah13}
or \cite[Lemma 5.1]{ny97}). From this, it follows that, under the above assumptions,
$\mu(\mathcal{X})=\infty$ if and only if $\mathrm{diam}\, \mathcal{X}=\infty$.

\subsection{Ball Quasi-Banach Function Spaces\label{sbqbs}}

In this subsection, we recall the concept of ball (quasi-)Banach function spaces on $\mathcal{X}$.
The ball (quasi-)Banach function space on $\mathbb{R}^n$ was originally introduced in
\cite[Definition 2.2 and (2.3)]{shyy17}.

\begin{definition}\label{debb}
A quasi-normed linear space $X(\mathcal{X})\subset\mathscr{M}(\mathcal{X})$, equipped with a \emph{quasi-norm}
$\|\cdot\|_{X(\mathcal{X})}$ which makes sense for all functions in $\mathscr{M}(\mathcal{X})$, is called
a \emph{ball quasi-Banach function space} (for short, BQBF \emph{space}) on $\mathcal{X}$ if it satisfies
the following conditions:
\begin{enumerate}
\item [(i)] for any $f\in\mathscr{M}(\mathcal{X})$, $\|f\|_{X( \mathcal{X})}=0$ if and only if
$f=0$ $\mu$-almost everywhere;
\item [(ii)] for any $f,g\in\mathscr{M}(\mathcal{X})$, $|g|\le|f|$ $\mu$-almost everywhere
implies that $\|g\|_{X( \mathcal{X})}\le \|f\|_{X( \mathcal{X})}$;
\item [(iii)] for any $\{f_k\}_{n\in\mathbb{N}}\subset\mathscr{M}(\mathcal{X})$ and
$f\in\mathscr{M}(\mathcal{X})$, $0\le f_k\uparrow f$ $\mu$-almost everywhere as $k\to\infty$
implies that $\|f_k\|_{X( \mathcal{X})}\uparrow \|f\|_{X( \mathcal{X})}$ as $k\to\infty$;
\item [(iv)] for any ball $B\subset\mathcal{X}$, $\mathbf{1}_B\in X(\mathcal{X})$.
\end{enumerate}
A normed linear space $X(\mathcal{X})\subset\mathscr{M}(\mathcal{X})$ is called a
\emph{ball Banach function space} (for short, BBF \emph{space}) on $\mathcal{X}$ if
it satisfies (i)-(iv) and
\begin{enumerate}
\item [(v)] for any ball $B$ of $\mathcal{X}$, there exists a positive constant $C$, depending
only on $B$, such that, for any $f\in X( \mathcal{X})$,
$$\int_{B}|f(x)|\,d\mu(x)\leq C\|f\|_{X( \mathcal{X})}.$$
\end{enumerate}
\end{definition}

\begin{remark}
\begin{itemize}
\item[\textup{(i)}] Let $X(\mathcal{X})$ be a BQBF space. By \cite[Theorem 2]{dfmn21}, we find that both
(ii) and (iii) of Definition \ref{debb} imply that $X(\mathcal{X})$ is complete.

\item[\textup{(ii)}] In Definition \ref{debb}, if any ball $B$ is replaced by any bounded $\mu$-measurable
set $E\subset \mathcal{X}$, then we obtain an equivalent definition of the ball (quasi-)Banach function space
on $\mathcal{X}$. In Definition \ref{debb}, if any ball $B$ is replaced by any $\mu$-measurable set
$E\subset\mathcal{X}$ with $\mu(E)<\infty$, then we obtain the definition of the (quasi-)Banach function
space on $\mathcal{X}$; see, for instance, \cite[p.\,3, Definition 1.3]{bs88} and \cite[Definition 2.4]{yhyy21a}.
It is easy to show that a (quasi-)Banach function space on $\mathcal{X}$ is a ball (quasi-)Banach function space
on $\mathcal{X}$.
\end{itemize}
\end{remark}
The associate space $X'(\mathcal{X})$ of a $\mathrm{BBF}$ space $X(\mathcal{X})$ is defined by setting
\begin{equation*}
X'(\mathcal{X}):=\left\{f\in\mathscr{M}(\mathcal{X}):\ \|f\|_{X'(\mathcal{X})}<\infty\right\},
\end{equation*}
where, for any $f\in \mathscr{M}(\mathcal{X})$,
$$\|f\|_{X'(\mathcal{X})}:=\sup\left\{\|fg\|_{L^1(\mathcal{X})}: \ g\in X(\mathcal{X}),\
\|g\|_{X(\mathcal{X})}=1\right\}$$
(see, for instance, \cite[p.\,8, Definition 2.1]{bs88}).
The following conclusions are just \cite[Lemmas 2.18-2.20]{yhyy21a}.
\begin{lemma}\label{2023221}
Let $X(\mathcal{X})$ be a $\mathrm{BBF}$ space and $X'(\mathcal{X})$ the associate space of $X(\mathcal{X})$. Then the following conclusions hold true:
\begin{itemize}
\item [$\mathrm{(i)}$]
 $X'(\mathcal{X})$ is a $\mathrm{BBF}$ space;
\item [$\mathrm{(ii)}$]
for any $f\in X(\mathcal{X})$ and $g\in X'(\mathcal{X})$,
\begin{align*}
\|fg\|_{L^1(\mathcal{X})}\leq \|f\|_{X(\mathcal{X})}\|g\|_{X'(\mathcal{X})};
\end{align*}
\item [$\mathrm{(iii)}$] a function $f\in X(\mathcal{X})$
if and only if $f\in X''(\mathcal{X});$ moreover, for any $f\in X(\mathcal{X})$ or $f\in X''(\mathcal{X})$,
\begin{align*}
\|f\|_{X(\mathcal{X})}=\|f\|_{X''(\mathcal{X})}.
\end{align*}
\end{itemize}
\end{lemma}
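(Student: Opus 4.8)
The three parts build on one another, so the natural order is to prove (ii) first, then (i), then (iii). Part (ii) is the Hölder-type inequality and is immediate from the definition of $\|\cdot\|_{X'(\mathcal{X})}$. If $\|f\|_{X(\mathcal{X})}=0$, then $f=0$ $\mu$-almost everywhere and both sides vanish; otherwise I would apply the defining supremum to the normalized function $f/\|f\|_{X(\mathcal{X})}$, whose $X(\mathcal{X})$-norm equals $1$, to obtain $\|fg\|_{L^1(\mathcal{X})}/\|f\|_{X(\mathcal{X})}\le\|g\|_{X'(\mathcal{X})}$.

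For part (i) I would verify the five conditions of Definition \ref{debb} for $X'(\mathcal{X})$. That $\|\cdot\|_{X'(\mathcal{X})}$ is a norm (subadditivity and homogeneity) follows directly from its expression as a supremum of the quantities $\|fg\|_{L^1(\mathcal{X})}$, which are sublinear and homogeneous in $f$; and the lattice property of Definition \ref{debb}(ii) is immediate, since $|f_1|\le|f_2|$ forces $\|f_1 g\|_{L^1(\mathcal{X})}\le\|f_2 g\|_{L^1(\mathcal{X})}$ for every $g$. The positivity condition uses that, under the standing assumptions, $\mathcal{X}$ is a countable union of balls, so $\|f\|_{X'(\mathcal{X})}=0$ together with $\mathbf{1}_B\in X(\mathcal{X})$ (and $\|\mathbf{1}_B\|_{X(\mathcal{X})}\neq0$) forces $\int_B|f|\,d\mu=0$ on every ball, whence $f=0$ $\mu$-almost everywhere. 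The Fatou property of Definition \ref{debb}(iii) I would obtain from the monotone convergence theorem: for $0\le f_k\uparrow f$ and any nonnegative $g\in X(\mathcal{X})$ with $\|g\|_{X(\mathcal{X})}=1$, one has $\int|f_k g|\,d\mu\uparrow\int|fg|\,d\mu$, and taking the supremum over such $g$ after passing to the limit yields $\|f_k\|_{X'(\mathcal{X})}\uparrow\|f\|_{X'(\mathcal{X})}$. Finally, $\mathbf{1}_B\in X'(\mathcal{X})$ follows from condition (v) of the BBF space $X(\mathcal{X})$, which bounds $\int_B|g|\,d\mu$ by $C_B\|g\|_{X(\mathcal{X})}$, so that $\|\mathbf{1}_B\|_{X'(\mathcal{X})}\le C_B<\infty$; and condition (v) for $X'(\mathcal{X})$ follows from part (ii) applied with $\mathbf{1}_B\in X(\mathcal{X})$.

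Part (iii), the Lorentz--Luxemburg identity $X(\mathcal{X})=X''(\mathcal{X})$, is the crux. One inequality, $\|f\|_{X''(\mathcal{X})}\le\|f\|_{X(\mathcal{X})}$, is again Hölder: by part (ii), $\int|fg|\,d\mu\le\|f\|_{X(\mathcal{X})}\|g\|_{X'(\mathcal{X})}$, so taking the supremum over $g$ with $\|g\|_{X'(\mathcal{X})}=1$ gives the bound and, in particular, $X(\mathcal{X})\subset X''(\mathcal{X})$. The reverse inequality $\|f\|_{X(\mathcal{X})}\le\|f\|_{X''(\mathcal{X})}$ is the main obstacle. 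I would first reduce to $f\ge0$ by the lattice property, and then, using the Fatou property of $X(\mathcal{X})$ together with the fact that $\mathcal{X}$ is a countable union of balls, reduce to the case of a bounded $f$ supported in a single ball: truncating $f$ by $\min\{f,j\}\mathbf{1}_{B(x_0,j)}$ and invoking Definition \ref{debb}(iii) recovers the general case in the limit. In this reduced case $\|f\|_{X(\mathcal{X})}<\infty$ automatically, and the task becomes a reverse Hölder statement, namely to produce $g\in X'(\mathcal{X})$ with $\|g\|_{X'(\mathcal{X})}\le1$ and $\int fg\,d\mu$ arbitrarily close to $\|f\|_{X(\mathcal{X})}$. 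This is where the real work lies: a norming functional for $f$ exists by Hahn--Banach, but it need not a priori be represented by integration against an $X'(\mathcal{X})$-function, and showing that it is—so that the supremum defining $\|f\|_{X''(\mathcal{X})}$ actually reaches $\|f\|_{X(\mathcal{X})}$—is precisely the point at which the Fatou property of $X(\mathcal{X})$ must be used in an essential way. Combining the two inequalities yields the norm equality, and the characterization $f\in X(\mathcal{X})\iff f\in X''(\mathcal{X})$ then follows from the finiteness of the (equal) norms.
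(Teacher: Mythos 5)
The paper does not prove this lemma at all: it is quoted verbatim from \cite[Lemmas 2.18--2.20]{yhyy21a}, which in turn adapts the classical Lorentz--Luxemburg argument of \cite[Chapter 1, Theorem 2.7]{bs88} to the ball Banach function space setting. So the relevant comparison is with that classical proof. Your treatment of (ii) and of (i) is complete and correct: the H\"older inequality by normalization, the norm and lattice axioms for $X'(\mathcal{X})$ from the defining supremum, positivity via testing against $\mathbf{1}_B/\|\mathbf{1}_B\|_{X(\mathcal{X})}$ over a countable cover of $\mathcal{X}$ by balls, the Fatou property from monotone convergence, and the two membership conditions $\mathbf{1}_B\in X'(\mathcal{X})$ and Definition \ref{debb}(v) for $X'(\mathcal{X})$ from, respectively, axiom (v) for $X(\mathcal{X})$ and part (ii). The easy half of (iii) is likewise fine.

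The genuine gap is the reverse inequality $\|f\|_{X(\mathcal{X})}\le\|f\|_{X''(\mathcal{X})}$ in (iii). Your reduction to a bounded $f$ supported in a ball is the right first move, but at the decisive step you only \emph{describe} the difficulty (``a norming functional exists by Hahn--Banach, but it need not a priori be represented by integration against an $X'(\mathcal{X})$-function \dots this is where the real work lies'') without resolving it; as written, nothing forces the supremum defining $\|f\|_{X''(\mathcal{X})}$ to reach $\|f\|_{X(\mathcal{X})}$. Moreover, the route you gesture at (represent a norming functional on $X(\mathcal{X})$) is not how the standard argument goes, and representing general elements of $X(\mathcal{X})^{*}$ by functions is false in general. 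The correct mechanism is a separation, not a representation: assuming $\|f\|_{X''(\mathcal{X})}\le 1$ but $\|f\|_{X(\mathcal{X})}>1$, truncate to $f_j:=\min\{f,j\}\mathbf{1}_{B(x_0,j)}$ so that, by Definition \ref{debb}(iii), some $f_j$ still has $\|f_j\|_{X(\mathcal{X})}>1$; set $E:=B(x_0,j)$ and consider the convex set $S:=\{g\in L^1(E):\ g\ge 0,\ \|g\|_{X(\mathcal{X})}\le 1\}$, which is norm-closed in $L^1(E)$ because an $L^1$-convergent sequence has an a.e.\ convergent subsequence and the Fatou property of $X(\mathcal{X})$ gives $\|g\|_{X(\mathcal{X})}\le\liminf_k\|g_k\|_{X(\mathcal{X})}$ (this, not the representation of a functional, is where the Fatou property enters essentially). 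Since $f_j\notin S$ and $f_j\in L^1(E)$ by axiom (v), the Hahn--Banach separation theorem in $L^1(E)$ produces $h\in L^\infty(E)$ (which one may take nonnegative) with $\int_E f_j h\,d\mu>1\ge\sup_{g\in S}\int_E gh\,d\mu$; the second inequality says exactly $\|h\|_{X'(\mathcal{X})}\le 1$, and the first then contradicts $\|f\|_{X''(\mathcal{X})}\le 1$. Without this (or an equivalent) argument, part (iii) is not proved.
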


Let us recall the concept of the convexification
of a BQBF space as follows (see, for instance, \cite[p.\,53]{lt79} and \cite[Definition 2.6]{shyy17}).
\begin{definition}\label{decon}
Let $p\in(0,\infty)$ and $X(\mathcal{X})$ be a BQBF space. The \emph{p-convexification} $X^p(\mathcal{X})$
of $X( \mathcal{X})$ is defined by setting
\begin{equation*}
X^p( \mathcal{X}):=\left\{f\in\mathscr{M}( \mathcal{X}):\ \|f\|_{X^p(\mathcal{X})}:=
\left\||f|^p\right\|_{X(\mathcal{X})}^{\frac{1}{p}}<\infty\right\}.
\end{equation*}
\end{definition}

Recall that the \emph{Hardy--Littlewood maximal operator} $\mathcal{M}$ on $\mathcal{X}$ is defined by setting,
for any $f\in\mathscr{M}(\mathcal{X})$ and $x\in\mathcal{X}$,
\begin{align*}
\mathcal{M}(f)(x):=\sup_{B\ni x}\frac{1}{\mu(B)}\int_B|f(z)|\,d\mu(z),
\end{align*}
where the supremum is taken over all the balls $B$ containing $x$.

Moreover, we also need the following two key assumptions on the BQBF space.

\begin{assumption}\label{vector1}
Let $X(\mathcal{X})$ be a $\mathrm{BQBF}$ space. Assume that there exists a positive constant $p$ such that,
for any given $t\in (0,p)$ and $u\in(1,\infty)$, there exists a positive constant $C$ such that, for
any $\{f_j\}_{j\in\mathbb{N}}\subset \mathscr{M}(\mathcal{X})$,
\begin{equation*}
\left\|\left\{\sum_{j\in\mathbb{N}}\left[\mathcal{M}\left(f_j\right)\right]^u\right\}
^{\frac{1}{u}}\right\|_{X^{\frac{1}{t}}(\mathcal{X})}
\leq C\left\|\left(\sum_{j\in\mathbb{N}}\left|f_j\right|^u\right)
^{\frac{1}{u}}\right\|_{X^{\frac{1}{t}}(\mathcal{X})}.
\end{equation*}
\end{assumption}
\begin{assumption}\label{vector2}
Let $X(\mathcal{X})$ be a $\mathrm{BQBF}$ space. Assume that there exist constants $s_0\in(0,\infty)$
and $q_0\in(s_0,\infty)$ such that $X^{\frac{1}{s_0}}(\mathcal{X})$ is a $\mathrm{BBF}$ space and the
Hardy--Littlewood maximal operator $\mathcal{M}$ is bounded on the $\frac{1}{(q_0/s_0)'}$-convexification
of the associate space $(X^{1/s_0})'(\mathcal{X})$, where $\frac{1}{(q_0/s_0)'}+\frac{1}{q_0/s_0}=1$.
\end{assumption}

The following two propositions are just, respectively,  \cite[Proposition 4.8(i)]{syy21} and
\cite[Proposition 2.14]{Syy21}.

\begin{proposition}\label{prfs}
Let $X(\mathcal{X})$ be a $\mathrm{BQBF}$ space satisfying Assumption \ref{vector1} for some $p\in(0,\infty)$.
Then, for any given $t\in(0,\infty)$ and $s\in(\max\{1,t/p\},\infty)$,
there exists a positive constant $C$ such that, for any $\tau\in[1,\infty)$, any $\{\lambda_j\}_{j\in\mathbb{N}}\in
[0,\infty)$, and any sequence $\{B_j\}_{j\in\mathbb{N}}$ of balls,
\begin{align*}
\left\|\sum_{j\in\mathbb{N}}\lambda_j\mathbf{1}_{\tau B_j}
\right\|_{X^{\frac{1}{t}}(\mathcal{X})}\le C\tau^{sn }\left\|\sum_{j\in\mathbb{N}}
\lambda_j\mathbf{1}_{B_j}\right\|_{X^{\frac{1}{t}}(\mathcal{X})},
\end{align*}
where $n$ is the same as in \eqref{eqoz}.
\end{proposition}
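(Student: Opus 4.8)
The plan is to reduce the estimate to a single application of the vector-valued maximal inequality in Assumption~\ref{vector1}, after dominating each dilated indicator $\mathbf{1}_{\tau B_j}$ by a power of the Hardy--Littlewood maximal function of $\mathbf{1}_{B_j}$. First I would establish the pointwise bound: for any ball $B=B(x_B,r_B)$, any $\tau\in[1,\infty)$, and any $x\in\tau B$, the quasi-triangle inequality gives $B\subset B(x,2A_0\tau r_B)$, while $B(x,2A_0\tau r_B)\subset A_0(2A_0+1)\tau B$; hence, by the doubling estimate \eqref{eqoz},
\[
\mathcal{M}(\mathbf{1}_B)(x)\ge\frac{\mu(B)}{\mu(B(x,2A_0\tau r_B))}\ge\frac{\mu(B)}{C_{(\mu)}[A_0(2A_0+1)\tau]^{n}\mu(B)}\gtrsim\tau^{-n}.
\]
Since $s>0$, raising to the power $s$ and rearranging yields $\mathbf{1}_{\tau B}(x)\le C\tau^{ns}[\mathcal{M}(\mathbf{1}_B)(x)]^{s}$ for all $x\in\mathcal{X}$, with $C$ independent of $B$ and $\tau$.

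Then, using the monotonicity and homogeneity of $\|\cdot\|_{X^{1/t}(\mathcal{X})}$ (inherited from Definition~\ref{debb}(ii) through convexification), together with the homogeneity $\mathcal{M}(\lambda_j^{1/s}\mathbf{1}_{B_j})=\lambda_j^{1/s}\mathcal{M}(\mathbf{1}_{B_j})$ of the maximal operator, I would obtain
\[
\left\|\sum_{j\in\mathbb{N}}\lambda_j\mathbf{1}_{\tau B_j}\right\|_{X^{1/t}(\mathcal{X})}\le C\tau^{ns}\left\|\sum_{j\in\mathbb{N}}\left[\mathcal{M}\left(\lambda_j^{1/s}\mathbf{1}_{B_j}\right)\right]^{s}\right\|_{X^{1/t}(\mathcal{X})}.
\]
The crux is then to convert the right-hand side into the $\ell^{s}$-valued maximal expression of Assumption~\ref{vector1}. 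Writing $g_j:=\lambda_j^{1/s}\mathbf{1}_{B_j}$ and setting $H:=(\sum_{j}[\mathcal{M}(g_j)]^{s})^{1/s}$, I would use the convexification identity $\|H^{s}\|_{X^{1/t}(\mathcal{X})}=\|H\|_{X^{s/t}(\mathcal{X})}^{s}$, which recasts the last norm as the $s$-th power of $\|H\|_{X^{s/t}(\mathcal{X})}$.

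At this point I would invoke Assumption~\ref{vector1} with $u:=s$ and convexification exponent $1/\tilde t:=s/t$, that is, $\tilde t=t/s$. The two admissibility conditions hold precisely because $s\in(\max\{1,t/p\},\infty)$: indeed $u=s>1$, and $\tilde t=t/s\in(0,p)$ since $s>t/p$. The vector-valued inequality then bounds $\|H\|_{X^{s/t}(\mathcal{X})}$ by $C\|(\sum_{j}|g_j|^{s})^{1/s}\|_{X^{s/t}(\mathcal{X})}$; using $\mathbf{1}_{B_j}^{s}=\mathbf{1}_{B_j}$ gives $\sum_{j}|g_j|^{s}=\sum_{j}\lambda_j\mathbf{1}_{B_j}$, and applying the convexification identity in reverse, $\|h^{1/s}\|_{X^{s/t}(\mathcal{X})}^{s}=\|h\|_{X^{1/t}(\mathcal{X})}$ with $h:=\sum_{j}\lambda_j\mathbf{1}_{B_j}$, returns exactly $C\tau^{ns}\|\sum_{j}\lambda_j\mathbf{1}_{B_j}\|_{X^{1/t}(\mathcal{X})}$, completing the proof. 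I expect the main obstacle to be purely bookkeeping: tracking the convexification exponents correctly through the power manipulations and checking that $s>\max\{1,t/p\}$ is exactly what makes both hypotheses of Assumption~\ref{vector1} simultaneously valid; the geometric pointwise estimate and the homogeneity identities are routine.
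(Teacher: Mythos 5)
Your proof is correct, and it is essentially the standard argument: the paper does not prove Proposition \ref{prfs} itself but cites it from \cite[Proposition 4.8(i)]{syy21}, where exactly this route is taken — the pointwise bound $\mathbf{1}_{\tau B}\le C\tau^{ns}[\mathcal{M}(\mathbf{1}_{B})]^{s}$ followed by Assumption \ref{vector1} applied with exponent $u=s$ and convexification parameter $t/s$, which is admissible precisely because $s>\max\{1,t/p\}$. Your bookkeeping of the convexification identities and the verification of the two admissibility conditions are both accurate, so there is nothing to correct.
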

\begin{proposition}\label{pras}
Let $X(\mathcal{X})$ be a $\mathrm{BQBF}$ space satisfying Assumption \ref{vector2} for some $s_0\in(0,\infty)$
and $q_0\in(s_0,\infty)$. Assume that $q\in[q_0,\infty)$. Then there exists a positive constant $C$
such that, for any $\{\lambda_j\}_{j\in\mathbb{N}}\subset (0,\infty)$ and
$\{a_j\}_{j\in\mathbb{N}}\subset L^q(\mathcal{X})$ satisfying both  $\|a_j\|_{L^q(\mathcal{X})}\le
\lambda_j[\mu(B_j)]^{\frac{1}{q}}$ and $\mathrm{supp}\,(a_j)\subset \theta B_j$ for any $j\in\mathbb{N}$,
\begin{equation*}
\left\|\sum_{j\in\mathbb{N}}\left|a_j\right|^{s_0}\right\|_{X^{\frac{1}{s_0}}(\mathcal{X})}
\le C\theta^{(1-\frac{s_0}{q})n}\left\|\sum_{j\in\mathbb{N}}\left|\lambda_j\right|^{s_0}
\mathbf{1}_{B_j}\right\|_{X^{\frac{1}{s_0}}(\mathcal{X})}.
\end{equation*}
\end{proposition}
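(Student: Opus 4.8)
The plan is to establish the estimate by a duality argument through the associate space of $X^{1/s_0}(\mathcal{X})$, combined with the boundedness of the Hardy--Littlewood maximal operator $\mathcal{M}$ supplied by Assumption \ref{vector2}, after first reducing the general exponent $q$ to the borderline exponent $q_0$. For the reduction, I would use that $\mathrm{supp}\,(a_j)\subset\theta B_j$ has finite measure together with $q\ge q_0$: Hölder's inequality gives
$$
\|a_j\|_{L^{q_0}(\mathcal{X})}\le\|a_j\|_{L^q(\mathcal{X})}[\mu(\theta B_j)]^{\frac1{q_0}-\frac1q}
\le\lambda_j[\mu(B_j)]^{\frac1q}[\mu(\theta B_j)]^{\frac1{q_0}-\frac1q},
$$
and, invoking \eqref{eqoz} to control $\mu(\theta B_j)\lesssim\theta^{n}\mu(B_j)$ (here I assume $\theta\in[1,\infty)$, so that $B_j\subset\theta B_j$), I obtain $\|a_j\|_{L^{q_0}(\mathcal{X})}\lesssim\widetilde\lambda_j[\mu(B_j)]^{1/q_0}$ with $\widetilde\lambda_j:=\theta^{n(\frac1{q_0}-\frac1q)}\lambda_j$. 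If the assertion holds for $q=q_0$, then applying it with $\widetilde\lambda_j$ in place of $\lambda_j$ and adding the exponents of $\theta$, namely $n(1-\frac{s_0}{q_0})+s_0n(\frac1{q_0}-\frac1q)=n(1-\frac{s_0}q)$, recovers the claimed bound for general $q$.

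It then remains to treat $q=q_0$. Writing $Z:=X^{1/s_0}(\mathcal{X})$, which is a BBF space by Assumption \ref{vector2}, and $\beta:=(q_0/s_0)'$, so that $\frac1\beta=1-\frac{s_0}{q_0}$, I would appeal to Lemma \ref{2023221}(iii) and the definition of the associate space to represent the norm as
$$
\left\|\sum_{j\in\mathbb{N}}|a_j|^{s_0}\right\|_{Z}=\sup\left\{\int_{\mathcal{X}}\sum_{j\in\mathbb{N}}|a_j(x)|^{s_0}g(x)\,d\mu(x):\ g\in Z'(\mathcal{X}),\ g\ge0,\ \|g\|_{Z'(\mathcal{X})}\le1\right\}.
$$
Fixing such a $g$, for each $j$ I would apply Hölder's inequality with exponents $q_0/s_0$ and $\beta$ and the normalization of $a_j$, and then bound the average of $g^{\beta}$ over the ball $\theta B_j\ni x$ by $\mathcal{M}(g^{\beta})(x)$, to get, for every $x\in B_j$,
$$
\int_{\mathcal{X}}|a_j|^{s_0}g\,d\mu\le\|a_j\|_{L^{q_0}(\mathcal{X})}^{s_0}\left(\int_{\theta B_j}g^{\beta}\,d\mu\right)^{1/\beta}\lesssim\lambda_j^{s_0}[\mu(B_j)]^{\frac{s_0}{q_0}}[\mu(\theta B_j)]^{\frac1\beta}\left[\mathcal{M}(g^{\beta})(x)\right]^{1/\beta}.
$$
Since $\frac{s_0}{q_0}+\frac1\beta=1$, the doubling estimate \eqref{eqoz} collapses the measure factors into $\theta^{n/\beta}\mu(B_j)$; taking the infimum over $x\in B_j$ and using $\mu(B_j)\inf_{B_j}h\le\int_{B_j}h\,d\mu$ yields
$$
\int_{\mathcal{X}}|a_j|^{s_0}g\,d\mu\lesssim\theta^{n(1-\frac{s_0}{q_0})}\lambda_j^{s_0}\int_{B_j}\left[\mathcal{M}(g^{\beta})(x)\right]^{1/\beta}\,d\mu(x).
$$

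Summing over $j$ and applying the Hölder inequality of Lemma \ref{2023221}(ii) for the pair $Z,\,Z'$, I would bound the whole sum by
$$
\theta^{n(1-\frac{s_0}{q_0})}\left\|\sum_{j\in\mathbb{N}}\lambda_j^{s_0}\mathbf{1}_{B_j}\right\|_{Z}\left\|\left[\mathcal{M}(g^{\beta})\right]^{1/\beta}\right\|_{Z'(\mathcal{X})}.
$$
The crux is the last factor. By Definition \ref{decon} one has $\|[\mathcal{M}(g^{\beta})]^{1/\beta}\|_{Z'(\mathcal{X})}=\|\mathcal{M}(g^{\beta})\|_{(Z')^{1/\beta}(\mathcal{X})}^{1/\beta}$, and $(Z')^{1/\beta}(\mathcal{X})$ is precisely the $\frac1{(q_0/s_0)'}$-convexification of $(X^{1/s_0})'(\mathcal{X})$ on which Assumption \ref{vector2} asserts $\mathcal{M}$ to be bounded; hence $\|\mathcal{M}(g^{\beta})\|_{(Z')^{1/\beta}}\lesssim\|g^{\beta}\|_{(Z')^{1/\beta}}=\|g\|_{Z'}^{\beta}\le1$, so this factor is $\lesssim1$. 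Taking the supremum over $g$ completes the case $q=q_0$, and the reduction above then finishes the proof. The step I expect to be the main obstacle is exactly this final identification: matching the convexification power $\frac1{(q_0/s_0)'}$ built into Assumption \ref{vector2} with the exponent $\frac1\beta$ produced by the Hölder splitting, and justifying the convexification identity $\|h^{1/\beta}\|_{Z'}=\|h\|_{(Z')^{1/\beta}}^{1/\beta}$ so that the maximal-function hypothesis can legitimately be invoked; the remaining estimates are routine bookkeeping with the doubling property \eqref{eqoz} and the elementary averaging bound by $\mathcal{M}$.
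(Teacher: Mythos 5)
Your proposal is correct: the reduction from $q$ to $q_0$ via H\"older and doubling, and then the duality argument through $(X^{1/s_0})''$ combined with the pointwise bound by $\mathcal{M}(g^{(q_0/s_0)'})^{1/(q_0/s_0)'}$ and the boundedness of $\mathcal{M}$ on the $\frac{1}{(q_0/s_0)'}$-convexification of $(X^{1/s_0})'$, is exactly the mechanism for which Assumption \ref{vector2} is designed, and the exponents of $\theta$ combine as you claim. The paper itself does not prove this proposition but quotes it from \cite[Proposition 2.14]{Syy21}, where the argument is essentially the one you give, so there is nothing to correct.
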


\subsection{Hardy Spaces $H_{X,\,L}(\mathcal{X})$}

In this subsection, we give the assumption on the operator $L$ considered in this article
and the definition of the Hardy space $H_{X,\,L}(\mathcal{X})$.

Let $L$ be a non-negative self-adjoint operator on $L^2(\mathcal{X})$.
In this article, we \emph{always} make the following assumption on the operator $L$.

\begin{assumption}\label{ls2}
The semigroup $\{e^{-tL}\}_{t\in(0,\infty)}$, generated by $-L$, satisfies the \emph{Davies--Gaffney estimate},
that is, there exist positive constants $C_{(L)}$ and $c_{(L)}$ such that, for any closed sets
$E,F\subset\mathcal{X}$ and any $f\in L^2(\mathcal{X})$ with $\mathrm{supp}\,(f)\subset E$,
\begin{align}\label{dg}
\left\|e^{-tL}(f)\right\|_{L^2(F)}\leq C_{(L)}\exp\left\{-c_{(L)}\frac{[\mathrm{dist}\,(E,F)]^2}{t}\right\}
\|f\|_{L^2(E)}.
\end{align}
\end{assumption}

Examples of operators satisfying Assumption \ref{ls2} include second-order elliptic self-adjoint operators of
divergence form on $\mathbb{R}^n$, Schr\"odinger operators with real potential and magnetic field on
$\mathbb{R}^n$, and Laplace--Beltrami operators on all complete Riemannian manifolds (see, for instance,
\cite{da92,hlmmy11}).

\begin{remark}\label{1515}
Let $L$ be a non-negative self-adjoint operator on $L^2(\mathcal{X})$ whose related semigroup  satisfies the
Davies--Gaffney estimate \eqref{dg}. By \cite[Proposition 3.1]{hlmmy11}, we find that, for any given
$k\in\mathbb{Z}_+$, the family $\{(tL)^ke^{-tL}\}_{t\in(0,\infty)}$ of operators also satisfies the
Davies--Gaffney estimate.
\end{remark}

For any given  $\alpha\in(0,\infty)$, the \emph{square function}
$S_{L}^{(\alpha)}$, associated with $L$, is defined by setting, for any $f\in L^2(\mathcal{X})$ and
$x\in\mathcal{X},$
\begin{align*}
S_{L}^{(\alpha)}(f)(x):=\left[\int_0^\infty\int_{B(x,\alpha t)}\left|t^2L e^{-t^2L}
(f)(y)\right|^2\,\frac{d\mu(y)\,dt}{V(x,t)t}\right]^{\frac{1}{2}}.
\end{align*}
In particular, when $\alpha:=1$,  let $S_L(f):=S_L^{(1)}(f)$.
Let $R(L)$ be the range of $L$ and $\overline{R(L)}$ its \emph{closure} in $L^2(\mathcal{X})$. Then
$L^2(\mathcal{X})$ is the orthogonal sum of $\overline{R(L)}$ and the null space $N(L)$, that is,
$L^2(\mathcal{X}):=\overline{R(L)}\bigoplus N(L)$.
\begin{definition}\label{defi-area}
Let $X(\mathcal{X})$ be a $\mathrm{BQBF}$ space and $L$ a non-negative self-adjoint operator on $L^2(\mathcal{X})$
satisfying the Davies--Gaffney estimate \eqref{dg}. Then the \emph{Hardy space} $H_{X,\,L}(\mathcal{X})$,
associated with $X$ and $L$, is defined as the completion of the set
\begin{align*}
\widetilde{H}_{X,\,L}(\mathcal{X}):=\left\{f\in\overline{R(L)}:\ \|S_{L}(f)\|_{X(\mathcal{X})}<\infty\right\}
\end{align*}
with respect to the \emph{quasi-norm}
$\|f\|_{H_{X,\,L}(\mathcal{X})}:=\|S_{L}(f)\|_{X(\mathcal{X})}.$
\end{definition}
\begin{remark}
From \cite[Section 2.6]{hlmmy11}, it follows that, for any $f\in L^2(\mathcal{X})$ with $f\not\equiv0$,
$\|S_{L}(f)\|_{X(\mathcal{X})}=0$  if and only if $f\in N(L)$. Thus, in Definition \ref{defi-area},
it is necessary to use $\overline{R(L)}$ rather than $L^2(\mathcal{X})$ to guarantee $\|\cdot\|_{X(\mathcal{X})}$
to be a quasi-norm.
\end{remark}

\section{Atomic and Molecular Characterizations of $H_{X,\,L}(\mathcal{X})$}\label{section3}

In this section, we establish the atomic and the molecular characterizations of $H_{X,\,L}(\mathcal{X})$.
We first introduce the molecular Hardy space $H^{M,\,\epsilon}_{X,\,L,\,\rm{mol}}(\mathcal{X})$ and
the atomic Hardy space $H^{M}_{X,\,L,\,\rm{at}}(\mathcal{X})$ as follows.

\begin{definition}\label{defi-mol}
Let $M\in\mathbb{N}$, $\epsilon\in(0,\infty)$, and $X(\mathcal{X})$ be a $\mathrm{BQBF}$ space satisfying Assumption
\ref{vector2} for some $s_0\in(0,\infty)$ and $q_0\in(s_0,\infty)$. Assume that $L$ is a non-negative
self-adjoint operator on $L^2(\mathcal{X})$ satisfying the Davies--Gaffney estimate \eqref{dg}.
Denote by $\mathcal{D}(L^M)$ the domain of $L^M$.
\begin{itemize}
\item[\rm(i)] A function $\alpha\in L^2(\mathcal{X})$ is called an \emph{$(X,M,\epsilon)$-molecule}
associated with the ball $B:=B(x_B,r_B)$ of $\mathcal{X}$ for some $x_B\in\mathcal{X}$ and $r_B\in(0,\infty)$
if there exists a function $b\in\mathcal{D}(L^M)$ such that $\alpha=L^M(b)$ and
\begin{align}\label{chicun}
\left\|\left(r_B^2L\right)^k(b)\right\|_{L^2(U_j(B))}
\leq2^{-j\epsilon}r_B^{2M}\left[\mu(2^jB)\right]^{\frac{1}{2}}\|\mathbf{1}_{B}\|_{X(\mathcal{X})}^{-1}
\end{align}
for any $k\in\{0,\ldots,M\}$ and $j\in\mathbb{Z}_+$.

\item[\rm(ii)] For any $f\in L^2(\mathcal{X})$, $f=\sum_{j=1}^\infty \lambda_j\alpha_j$ is called a
\emph{molecular $(X,M,\epsilon)$-representation} of $f$ if, for any $j\in\mathbb{N}$, $\alpha_j$ is
an $(X,M,\epsilon)$-molecule associated with the ball $B_j\subset\mathcal{X}$, the summation
converges in $L^2(\mathcal{X})$, and $\{\lambda_j\}_{j\in\mathbb{N}}\subset[0,\infty)$ satisfies
\begin{align*}
\Lambda\left(\left\{\lambda_j\alpha_j\right\}_{j\in\mathbb{N}}\right)
:=\left\|\left\{\sum_{j=1}^\infty
\left[\frac{\lambda_j}{\|\mathbf{1}_{B_j}\|_{X(\mathcal{X})}}\right]^{s_0}
\mathbf{1}_{B_j}\right\}^{\frac{1}{s_0}}\right\|_{X(\mathcal{X})}<\infty.
\end{align*}
Let
\begin{align*}
\widetilde{H}^{M,\,\epsilon}_{X,\,L,\,\rm{mol}}(\mathcal{X}):=\left\{f\in L^2(\mathcal{X}):\ f \
\text{has a molecular}\ (X,M,\epsilon)\text{-representation}\right\}
\end{align*}
equipped with the \emph{quasi-norm} $\|\cdot\|_{H^{M,\,\epsilon}_{X,\,L,\,\rm{mol}}(\mathcal{X})}$
given by setting, for any $f\in\widetilde{H}^{M,\,\epsilon}_{X,\,L,\,\rm{mol}}(\mathcal{X})$,
\begin{align*}
\|f\|_{H^{M,\,\epsilon}_{X,\,L,\,\rm{mol}}(\mathcal{X})}
&:=\inf\Bigg\{\Lambda\left(\left\{\lambda_j\alpha_j\right\}_{j\in\mathbb{N}}\right):\
f=\sum_{j=1}^\infty \lambda_j\alpha_j \\
&\quad\quad\quad\quad \ \text{is a molecular}\ (X,M,\epsilon)\text{-representation}\Bigg\},
\end{align*}
where the infimum is taken over all the molecular $(X,M,\epsilon)$-representations of $f$ as above.

The \emph{molecular Hardy  space} $H^{M,\,\epsilon}_{X,\,L,\,\rm{mol}}(\mathcal{X})$ is
defined as the completion of $\widetilde{H}^{M,\,\epsilon}_{X,\,L,\,\rm{mol}}(\mathcal{X})$ with
respect to the quasi-norm $\|\cdot\|_{H^{M,\,\epsilon}_{X,\,L,\,\rm{mol}}(\mathcal{X})}$.
\end{itemize}
\end{definition}

\begin{definition}
Let $M\in\mathbb{N}$ and $X(\mathcal{X})$ be a $\mathrm{BQBF}$ space satisfying Assumption \ref{vector2} for some
$s_0\in(0,\infty)$ and $q_0\in(s_0,\infty)$. Assume that $L$ is a non-negative self-adjoint operator on
$L^2(\mathcal{X})$ satisfying the Davies--Gaffney estimate \eqref{dg}. A function $\alpha\in L^2(\mathcal{X})$
is called an \emph{$(X,M)$-atom} associated with the ball $B:=B(x_B,r_B)\subset \mathcal{X}$ with some
$x_B\in\mathcal{X}$ and $r_B\in(0,\infty)$ if there exists a function $b\in\mathcal{D}(L^M)$ such that
$\alpha=L^M(b)$, $\mathrm{supp}\,(L^k(b))\subset B$, and
\begin{align}\label{chichun-at}
\left\|\left(r_B^{2}L\right)^k(b)\right\|_{L^2(\mathcal{X})}
\leq r_B^{2M}[\mu(B)]^{\frac{1}{2}}\|\mathbf{1}_{B}\|_{X(\mathcal{X})}^{-1}
\end{align}
for any $k\in\{0,\ldots,M\}.$ In a similar way, the set $\widetilde{H}^{M}_{X,\,L,\,\rm{at}}(\mathcal{X})$
and the atomic Hardy space $H^{M}_{X,\,L,\,\rm{at}}(\mathcal{X})$ are defined
in the same way, respectively, as $\widetilde{H}^{M,\,\epsilon}_{X,\,L,\,\rm{mol}}(\mathcal{X})$ and
$H^{M,\,\epsilon}_{X,\,L,\,\rm{mol}}(\mathcal{X})$ with $(X,M,\epsilon)$-molecules replaced by $(X,M)$-atoms.
\end{definition}

\begin{remark}\label{buceng}
Let $M\in\mathbb{N}$ and $\epsilon\in(0,\infty)$. Obviously, any  $(X,M)$-atom is also
an $(X,M,\epsilon)$-molecule. This implies that $\widetilde{H}^{M}_{X,\,L,\,\rm{at}}
(\mathcal{X})\subset \widetilde{H}^{M,\,\epsilon}_{X,\,L,\,\rm{mol}}(\mathcal{X})$ and hence
$H^{M}_{X,\,L,\,\rm{at}}(\mathcal{X})\subset H^{M,\,\epsilon}_{X,\,L,\,\rm{mol}}(\mathcal{X})$.
\end{remark}

The following conclusion is the main result of this section.

\begin{theorem}\label{thm-mc}
Let $X(\mathcal{X})$ be a $\mathrm{BQBF}$ space satisfying both Assumptions \ref{vector1} and \ref{vector2}
for some $p\in(0,\infty)$, $s_0\in(0,\min\{p,1\}]$, and $q_0\in(s_0,2]$. Assume that $L$ is a non-negative
self-adjoint operator on $L^2(\mathcal{X})$ satisfying the Davies--Gaffney estimate \eqref{dg}.
Let $M\in(\frac{n}{2}[\frac{1}{s_0}-\frac{1}{2}],\infty)\cap\mathbb{N}$ and $\epsilon\in(\frac{n}{s_0},\infty)$.
Then the spaces $H_{X,\,L}(\mathcal{X})$, $H^{M,\,\epsilon}_{X,\,L,\,\rm{mol}}(\mathcal{X})$,
and $H^{M}_{X,\,L,\,\rm{at}}(\mathcal{X})$ coincide with equivalent quasi-norms.
\end{theorem}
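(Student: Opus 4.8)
The plan is to establish the cyclic chain of continuous inclusions
$$
H^{M}_{X,\,L,\,\rm{at}}(\mathcal{X})\subset H^{M,\,\epsilon}_{X,\,L,\,\rm{mol}}(\mathcal{X})
\subset H_{X,\,L}(\mathcal{X})\subset H^{M}_{X,\,L,\,\rm{at}}(\mathcal{X}),
$$
each accompanied by the corresponding quasi-norm domination. Since all three spaces are defined as completions of $L^2(\mathcal{X})$-dense subspaces, it suffices to prove these inclusions together with the quasi-norm estimates on the dense subsets $\widetilde H^{M}_{X,\,L,\,\rm{at}}(\mathcal{X})$, $\widetilde H^{M,\,\epsilon}_{X,\,L,\,\rm{mol}}(\mathcal{X})$, and $\widetilde{H}_{X,\,L}(\mathcal{X})\cap L^2(\mathcal{X})$, and then to pass to the completions by a routine density argument. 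The first inclusion is immediate from Remark \ref{buceng}: every $(X,M)$-atom is an $(X,M,\epsilon)$-molecule and the functional $\Lambda$ is the same in both definitions, so the atomic quasi-norm dominates the molecular one.

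For the second inclusion, I would first prove a uniform annular $L^2$-estimate for a single $(X,M,\epsilon)$-molecule $\alpha=L^M(b)$ associated with a ball $B=B(x_B,r_B)$, namely
$$
\left\|S_L(\alpha)\right\|_{L^2(U_i(B))}\lesssim 2^{-i\delta}\left[\mu(2^iB)\right]^{\frac12}
\|\mathbf{1}_{B}\|_{X(\mathcal{X})}^{-1},\qquad i\in\mathbb{Z}_+,
$$
for some $\delta>0$ controlled by $M$, $\epsilon$, and $n$. This follows by splitting the $t$-integral defining $S_L$ into the local part $t\le r_B$ and the global part $t>r_B$, and then combining the $L^2$-boundedness of $S_L$, the Davies--Gaffney estimates for $\{(t^2L)^ke^{-t^2L}\}_{t>0}$ supplied by Remark \ref{1515}, the molecular size condition \eqref{chicun}, and the rescaling $\alpha=r_B^{-2M}(r_B^2L)^M(b)$. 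Given a molecular representation $f=\sum_j\lambda_j\alpha_j$, I would use the subadditivity of $S_L$ to obtain $S_L(f)\le\sum_j\lambda_jS_L(\alpha_j)$, decompose each $S_L(\alpha_j)$ over the annuli $\{U_i(B_j)\}_{i\in\mathbb{Z}_+}$, view $S_L(\alpha_j)\mathbf{1}_{U_i(B_j)}$ as a multiple of an $L^2$-normalized atom supported in $2^iB_j$, and apply Proposition \ref{pras} (with $q=2\ge q_0$) for each fixed $i$. Using $s_0\le 1$ to pass the $s_0$-power inside the sums and the fact that $\|\cdot\|_{X^{1/s_0}(\mathcal{X})}$ is subadditive (as $X^{1/s_0}(\mathcal{X})$ is a BBF space by Assumption \ref{vector2}), the factor $2^{i(1-s_0/2)n}$ from Proposition \ref{pras} combines with $2^{-is_0\delta}$ to a geometric series convergent precisely when $\delta>n/s_0$; the hypotheses $M>\frac{n}{2}[\frac{1}{s_0}-\frac12]$ and $\epsilon>\frac{n}{s_0}$ guarantee this. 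The outcome is $\|S_L(f)\|_{X(\mathcal{X})}\lesssim\Lambda(\{\lambda_j\alpha_j\}_{j})$.

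The third inclusion is the substantive one and proceeds through tent spaces. By the very definition of $H_{X,\,L}(\mathcal{X})$, the map $f\mapsto F:=\{t^2Le^{-t^2L}(f)\}_{t>0}$ sends $\widetilde{H}_{X,\,L}(\mathcal{X})\cap L^2(\mathcal{X})$ isometrically into $T_X(\mathcal{X}^+)$, since $\|F\|_{T_X(\mathcal{X}^+)}=\|S_L(f)\|_{X(\mathcal{X})}=\|f\|_{H_{X,\,L}(\mathcal{X})}$. I would then invoke the atomic decomposition of $T_X(\mathcal{X}^+)$ from Theorem \ref{thm-ad-tent} to write $F=\sum_j\lambda_jA_j$, where each $A_j$ is a tent-space atom supported in a tent over a ball $B_j$ and the associated $\Lambda$-functional is dominated by $\|F\|_{T_X(\mathcal{X}^+)}$. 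Choosing $\Psi$ with $\Psi(\lambda)=\lambda^M\widetilde\Psi(\lambda)$ and the correct normalization yields, on $\overline{R(L)}$, a Calder\'on reproducing formula
$$
f=c_\Psi\int_0^\infty \Psi(t^2L)\left(t^2Le^{-t^2L}(f)\right)\frac{dt}{t}
=c_\Psi\,\pi_{\Psi,\,L}(F),\qquad
\pi_{\Psi,\,L}(G)(x):=\int_0^\infty \Psi(t^2L)\left(G(\cdot,t)\right)(x)\,\frac{dt}{t},
$$
so that $f=c_\Psi\sum_j\lambda_j\pi_{\Psi,\,L}(A_j)$ with $L^2(\mathcal{X})$-convergence. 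The key structural lemma is that $\pi_{\Psi,\,L}(A_j)$ is, up to a harmless constant, an $(X,M)$-atom: writing $\pi_{\Psi,\,L}(A_j)=L^M(b_j)$ with $b_j=\int_0^\infty t^{2M}\widetilde\Psi(t^2L)(A_j(\cdot,t))\,\frac{dt}{t}$, the size bound \eqref{chichun-at} on $(r_{B_j}^2L)^k(b_j)$ follows from the tent-support and the $L^2$-normalization of $A_j$ together with the off-diagonal bounds of $\widetilde\Psi(t^2L)$, while the exact support requirement $\mathrm{supp}\,(L^k(b_j))\subset CB_j$ is secured by the finite propagation speed of the wave operator $\cos(t\sqrt{L})$, itself a consequence of the Davies--Gaffney estimate \eqref{dg}, upon representing $\Psi(t^2L)$ through even functions with compactly supported Fourier transform at scale $t$. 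A final application of Proposition \ref{pras} to the sum gives $\|f\|_{H^{M}_{X,\,L,\,\rm{at}}(\mathcal{X})}\lesssim\|F\|_{T_X(\mathcal{X}^+)}=\|f\|_{H_{X,\,L}(\mathcal{X})}$.

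The main obstacle lies in this third inclusion, on two fronts. First, producing \emph{exactly} supported atoms rather than mere molecules forces the finite-propagation-speed machinery, and the atomic size estimate only closes because $M>\frac{n}{2}[\frac{1}{s_0}-\frac12]$. Second, and more characteristically, every summation in $\|\cdot\|_{X(\mathcal{X})}$ must be carried out without an explicit expression for the quasi-norm; this is exactly where Proposition \ref{pras}, hence Assumption \ref{vector2}, replaces the classical $\ell^{s_0}$-summation available for a fixed Lebesgue exponent, and where the conditions $s_0\le\min\{p,1\}$ and $q_0\le 2$ are used to match the $L^2$-normalization of tent-space and molecular data with the convexification $X^{1/s_0}(\mathcal{X})$.
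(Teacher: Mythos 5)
Your proposal is correct and follows essentially the same route as the paper: the trivial inclusion of atoms into molecules (Remark \ref{buceng}), the annular $L^2$-estimate for $S_L$ of a single molecule combined with Proposition \ref{pras} and a geometric series in $i$ (Proposition \ref{thm-mc-re}), and the tent-space atomic decomposition of $t^2Le^{-t^2L}f$ fed through the Calder\'on reproducing operator $\pi_{\Phi,\,L}$ with finite propagation speed supplying the exact support of the resulting atoms (Proposition \ref{thm-at-dec}), finished by a density argument. The only cosmetic difference is your normalization $[\mu(2^iB)]^{1/2}$ versus the paper's $[\mu(B)]^{1/2}$ in the annular estimate, which merely shifts the decay exponent by $n/2$ and leads to the same convergence condition.
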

\begin{remark}
Since we only assume that the operator $L$ satisfies the Davies--Gaffney estimate \eqref{dg},
the condition $q_0\leq 2$ is nature, which further deduce the condition $r\in(0,2)$
in Theorem \ref{ls10} below.
\end{remark}
The proof of  Theorem \ref{thm-mc} is given in Subsection \ref{sec3.2}. In Subsection \ref{sec3.1}, we
first establish the atomic decomposition of the tent space $T_X(\mathcal{X}^+)$ on $\mathcal{X}^+:=
\mathcal{X}\times(0,\infty)$, which is an essential tool for the proof of Theorem \ref{thm-mc}.

Let $r\in(0,\infty)$. Then $L^r(\mathcal{X})$ is a $\mathrm{BQBF}$ space. In the case when $X(\mathcal{X}):=L^{r}(\mathcal{X})$,
we denote $H_{X,\,L}(\mathcal{X})$, $H_{X,\,L,\,\rm{mol}}^{M,\,\epsilon}(\mathcal{X})$, and $H_{X,\,L,\,\rm{at}}^{M}(\mathcal{X})$, respectively, by
$H_{L}^{r}(\mathcal{X})$, $H_{L,\,\rm{mol}}^{r,\,M,\,\epsilon}(\mathcal{X})$, and $H_{L,\,\rm{at}}^{r,\,M}(\mathcal{X})$. Applying Theorem \ref{thm-mc}
with $X(\mathcal{X}):=L^{r}(\mathcal{X})$, we obtain the following conclusion.

\begin{theorem}\label{ls10}
Let $L$ be a non-negative self-adjoint operator on $L^2(\mathcal{X})$ satisfying
the Davies--Gaffney estimate \eqref{dg}. Assume that $r\in(0,2)$, $M\in(\frac{n}{2}
[\frac{1}{\min\{1,r\}}-\frac{1}{2}],\infty)\cap\mathbb{N}$, and $\epsilon\in(\frac{n}{\min\{1,r\}},\infty)$.
Then the spaces $H_{L}^{r}(\mathcal{X})$, $H_{L,\,\rm{mol}}^{r,\,M,\,\epsilon}(\mathcal{X})$, and $H_{L,\,\rm{at}}^{r,\,M}
(\mathcal{X})$ coincide with equivalent quasi-norms.
\end{theorem}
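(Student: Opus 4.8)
The plan is to derive this result as a direct specialization of Theorem \ref{thm-mc} by taking $X(\mathcal{X}):=L^r(\mathcal{X})$ and verifying that this choice satisfies all the structural hypotheses of the general theorem. The entire content of the proof is therefore bookkeeping: I must check that $L^r(\mathcal{X})$ is a $\mathrm{BQBF}$ space, that it verifies Assumptions \ref{vector1} and \ref{vector2} with appropriate parameters $p$, $s_0$, and $q_0$, and that the numerical ranges for $M$ and $\epsilon$ stated in Theorem \ref{ls10} are consistent with those required in Theorem \ref{thm-mc}. Once these verifications are in place, the three spaces $H_{L^r,\,L}(\mathcal{X})$, $H_{L^r,\,L,\,\mathrm{mol}}^{M,\,\epsilon}(\mathcal{X})$, and $H_{L^r,\,L,\,\mathrm{at}}^{M}(\mathcal{X})$ coincide with equivalent quasi-norms by Theorem \ref{thm-mc}, and these are exactly $H_L^r(\mathcal{X})$, $H_{L,\,\mathrm{mol}}^{r,\,M,\,\epsilon}(\mathcal{X})$, and $H_{L,\,\mathrm{at}}^{r,\,M}(\mathcal{X})$ by the notational convention introduced just before the statement.

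First I would record the parameter choices. For $X(\mathcal{X})=L^r(\mathcal{X})$ with $r\in(0,2)$, the natural choices are $p:=r$ (so that $X^{1/t}=L^{r/t}$ behaves well for $t<r$) and $s_0:=\min\{1,r\}$, together with some $q_0\in(s_0,2)$ to be chosen. Note that $X^{1/s_0}(\mathcal{X})=L^{r/s_0}(\mathcal{X})$, and since $s_0=\min\{1,r\}\le r$, the exponent $r/s_0\ge 1$, so $X^{1/s_0}(\mathcal{X})$ is a genuine Banach function space, confirming the first requirement of Assumption \ref{vector2}. With these choices the constraints in Theorem \ref{thm-mc}, namely $s_0\in(0,\min\{p,1\}]$ and $q_0\in(s_0,2]$, reduce to $\min\{1,r\}\le\min\{r,1\}$ (an equality) and $q_0\le 2$, both satisfiable; and the ranges $M\in(\frac{n}{2}[\frac{1}{s_0}-\frac{1}{2}],\infty)\cap\mathbb{N}$ and $\epsilon\in(\frac{n}{s_0},\infty)$ become precisely $M\in(\frac{n}{2}[\frac{1}{\min\{1,r\}}-\frac12],\infty)\cap\mathbb{N}$ and $\epsilon\in(\frac{n}{\min\{1,r\}},\infty)$, matching the statement.

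Next I would verify the two assumptions. Assumption \ref{vector1} for $L^r(\mathcal{X})$ with $p=r$ amounts to the Fefferman--Stein vector-valued maximal inequality in $L^{r/t}(\mathcal{X})$ for $t\in(0,r)$ and $u\in(1,\infty)$; since $r/t>1$ in this range, this is the classical Fefferman--Stein inequality on spaces of homogeneous type, which is standard. Assumption \ref{vector2} requires, after choosing $q_0\in(s_0,2)$, the boundedness of $\mathcal{M}$ on the $\frac{1}{(q_0/s_0)'}$-convexification of the associate space $(L^{r/s_0})'(\mathcal{X})=L^{(r/s_0)'}(\mathcal{X})$. This convexified space is again a Lebesgue space $L^{\rho}(\mathcal{X})$ with exponent $\rho=(r/s_0)'\cdot(q_0/s_0)'$, and the only thing to check is that this exponent exceeds $1$, which holds for a suitable choice of $q_0\in(s_0,2)$; the Hardy--Littlewood maximal operator is then bounded on $L^\rho(\mathcal{X})$ by its classical $L^\rho$-boundedness on doubling spaces.

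The main obstacle, such as it is, lies not in any deep estimate but in correctly matching the index arithmetic: one must pin down an admissible $q_0\in(s_0,2)$ for which both the convexification exponent $\rho>1$ and the inequality $q_0\le 2$ (inherited from $r<2$) hold simultaneously, and confirm that the two possible regimes $r\in(0,1]$ (where $s_0=r$) and $r\in(1,2)$ (where $s_0=1$) are both covered without boundary issues. I would split the verification into these two cases and check the endpoint behavior of the index relations carefully. Apart from this arithmetic, no new analytic input beyond the classical maximal inequalities is needed, so the proof is short once the parameters are fixed.
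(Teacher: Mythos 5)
Your proposal takes exactly the paper's route: specialize Theorem \ref{thm-mc} to $X(\mathcal{X}):=L^r(\mathcal{X})$ with $p:=r$ and verify Assumptions \ref{vector1} (via the classical Fefferman--Stein inequality on doubling spaces) and \ref{vector2} (via the $L^\rho$-boundedness of $\mathcal{M}$). The only substantive slip is in the convexification arithmetic: since the $a$-convexification of $L^q(\mathcal{X})$ is $L^{qa}(\mathcal{X})$, the $\frac{1}{(q_0/s_0)'}$-convexification of $\bigl(L^{r/s_0}\bigr)'(\mathcal{X})=L^{(r/s_0)'}(\mathcal{X})$ is $L^{\rho}(\mathcal{X})$ with $\rho=(r/s_0)'/(q_0/s_0)'$, a \emph{quotient}, not the product $(r/s_0)'\cdot(q_0/s_0)'$ you wrote. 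With the correct exponent, $\rho>1$ is not automatic: in the regime $r\in(1,2)$, $s_0=1$, it is equivalent to $q_0>r$, and this is precisely where the hypothesis $r<2$ enters (the paper simply takes $q_0:=2$, which Theorem \ref{thm-mc} permits since the interval for $q_0$ is closed at $2$); in the regime $r\in(0,1]$ one gets $L^\infty$ and there is nothing to check. Your endpoint choice $s_0=\min\{1,r\}$ is admissible because Theorem \ref{thm-mc} allows $s_0\in(0,\min\{p,1\}]$, whereas the paper takes $s_0$ strictly below $\min\{1,r\}$ (harmless, since the conditions on $M$ and $\epsilon$ are open); either works, but the constraint $q_0>r$ must be stated explicitly for the verification of Assumption \ref{vector2} to be correct.
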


\begin{proof}
Let $p:=r,$ $q_0:=2,$ and $s_0\in (0,\min\{1,r\})$ satisfying both $M>\frac{n}{2}(\frac{1}{s_0}-\frac{1}{2})$
and $\epsilon>\frac{n}{s_0}.$ By \cite[Theorem 1.2]{gly09}, we find that $X(\mathcal{X}):=L^r(\mathcal{X})$
satisfies Assumption \ref{vector1} for the aforementioned $p.$ Using \cite[p.\,10, Theorem 2.5]{bs88} and
the boundedness of $\mathcal{M}$ on $L^q(\mathcal{X})$ for any given $q\in(1,\infty)$ (see, for instance,
\cite[Theorem 2.2]{h01}), we conclude that $X(\mathcal{X}):=L^r(\mathcal{X})$ satisfies Assumption
\ref{vector2} for the aforementioned $s_0$ and $q_0.$ Thus, all the assumption of Theorem \ref{thm-mc}
are satisfied with $X(\mathcal{X}):=L^r(\mathcal{X})$, which further implies the desired conclusions of
the present theorem. This finishes the proof of Theorem \ref{ls10}.
\end{proof}

\begin{remark}
We point out that, when $r\in(0,1]$, the conclusion of Theorem \ref{ls10} is well known (see, for instance,
\cite[Theorem 2.5]{hlmmy11} and \cite[Theorem 5.5]{jy11}).
\end{remark}

\subsection{Atomic Decomposition of $T_X(\mathcal{X}^+)$}\label{sec3.1}
Let us first recall the concept of the tent space. In what follows, we \emph{always} let $\mathcal{X}^+:
=\mathcal{X}\times(0,\infty).$ For any $\alpha\in(0,\infty)$ and $x\in\mathcal{X}$, let
\begin{align*}
\Gamma_\alpha(x):=\left\{(y,t)\in\mathcal{X}^+:\ d(x,y)<\alpha t\right\}.
\end{align*}
Moreover, for any closed set $E\subset\mathcal{X}$, let
\begin{align*}
R_\alpha(E):=\bigcup_{x\in E}\Gamma_\alpha(x).
\end{align*}
For any open set $O\subset\mathcal{X}$ and any $\alpha\in(0,\infty)$, define the \emph{tent $T_\alpha(O)$}
over $O$ with aperture $\alpha$ by setting
\begin{align*}
T_\alpha(O):=\left\{(x,t) \in \mathcal{X}^+:\ d(x,O^\complement)\geq \alpha t\right\}.
\end{align*}
In what follows, when $\alpha:=1$, we remove the superscript $\alpha$ for simplicity.
It is easy to show that
\begin{align*}
T_\alpha(O)=\left[R_\alpha\left(O^\complement\right)\right]^\complement.
\end{align*}

Coifman et al. \cite{cms85} first introduced the tent space $T^p(\mathbb{R}^{n+1}_+)$ for any given
$p\in(0,\infty)$, where $\mathbb{R}^{n+1}_+:=\mathbb{R}^{n}\times(0,\infty)$.  Moreover, Russ \cite{r07}
studied the tent space $T^p(\mathcal{X}^+)$ on $\mathcal{X}^+$. Recall that a measurable function $f$ is
said to belong to the \emph{tent space} $T^p(\mathcal{X}^+)$ with $p\in(0,\infty)$ if $\|f\|_{T^p(\mathcal{X}^+)}:=\|\mathcal{A}(f)\|_{L^p(\mathcal{X})}<\infty$, where, for any $x\in\mathcal{X},$
\begin{align*}
\mathcal{A}(f)(x):=\left[\int_0^\infty\int_{B(x,t)}|f(y,t)|^2
\,\frac{d\mu(y)\,dt}{V(x,t)t}\right]^{\frac{1}{2}}.
\end{align*}
For any given $\mathrm{BQBF}$ space $X(\mathcal{X})$, the \emph{$X$-tent space} $T_X(\mathcal{X}^+)$ is defined to be the
set of all the  measurable functions $f:\ \mathcal{X}^+ \to {\mathbb C}$ with the finite \emph{quasi-norm}
$$\|f\|_{T_X(\mathcal{X}^+)}:=\left\|{\mathcal A}(f)\right\|_{X(\mathcal{X})}.$$

\begin{definition}
Let $X(\mathcal{X})$ be a $\mathrm{BQBF}$ space and $p\in(1,\infty)$. A measurable function $a:\ \mathcal{X}^+ \to
{\mathbb C}$ is said to be a \emph{$(T_X,p)$-atom} if there exists a ball $B\subset\mathcal{X}$ such that
\begin{enumerate}
\item[(i)] ${\rm supp}\,(a):=\{(x,t)\in\mathcal{X}^+:\ a(x,t)\neq0\} \subset T(B)$,
\item[(ii)] $\|a\|_{T^p(\mathcal{X}^+)}\le[\mu(B)]^{\frac{1}{p}}\|\mathbf{1}_B\|_{X(\mathcal{X})}^{-1}$.
\end{enumerate}
Furthermore, if $a$ is a $(T_X,p)$-atom for any $p\in(1,\infty)$, then $a$ is called a \emph{$(T_X,\infty)$-atom}.
\end{definition}

Then we have the following atomic decomposition of the tent space $T_X(\mathcal{X}^+)$.

\begin{theorem}\label{thm-ad-tent}
Assume that $X(\mathcal{X})$ is a  $\mathrm{BQBF}$ space satisfying Assumption \ref{vector1} for some
$p\in(0,\infty)$. Let $f\in T_X(\mathcal{X}^+)$ and $s_0\in(0,p]$. Then there exists a sequence
$\{\lambda_j\}_{j=1}^\infty\subset[0,\infty)$ and a sequence $\{a_j\}_{j=1}^\infty$ of
$(T_X,\infty)$-atoms associated, respectively,  with the balls $\{B_j\}_{j=1}^\infty$ such that,
for almost every $(x,t)\in\mathcal{X}^+$,
\begin{equation}\label{ad-tent-ae}
f(x,t)=\sum_{j\in\mathbb{N}} \lambda_j a_j(x,t)
\end{equation}
and
\begin{align*}
\left\|\left\{\sum_{j=1}^\infty
\left[\frac{\lambda_j}{\|\mathbf{1}_{B_j}\|_{X(\mathcal{X})}}\right]^{s_0}\mathbf{1}_{B_j}
\right\}^{\frac{1}{s_0}}\right\|_X\lesssim\|f\|_{T_X(\mathcal{X}^+)},
\end{align*}
where the implicit positive constant is independent of $f$. Moreover,
if $f\in T^2(\mathcal{X}^+)\cap T_X(\mathcal{X}^+),$ then \eqref{ad-tent-ae} holds true in $T^2(\mathcal{X}^+).$
\end{theorem}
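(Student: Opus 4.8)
The plan is to adapt the Calder\'on--Zygmund/Coifman--Meyer--Stein tent-space construction to the BQBF setting, replacing the explicit $L^p$ bookkeeping of the classical argument by Assumption \ref{vector1} and Proposition \ref{prfs}. Since $f\in T_X(\mathcal{X}^+)$, the area function $\mathcal{A}(f)$ is finite $\mu$-almost everywhere and lower semicontinuous, so for each $k\in\mathbb{Z}$ the level set $O_k:=\{x\in\mathcal{X}:\ \mathcal{A}(f)(x)>2^k\}$ is open. I would fix a small $\gamma\in(0,1)$, depending only on $A_0$ and $C_{(\mu)}$, and enlarge $O_k$ to $O_k^*:=\{x\in\mathcal{X}:\ \mathcal{M}(\mathbf{1}_{O_k})(x)>\gamma\}$, so that $O_k\subset O_k^*$, $O_k^*$ is open, and, by a Vitali/doubling covering argument, each point of $(O_k^*)^\complement$ is a point of positive density for $(O_k^*)^\complement$. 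A Whitney-type covering then decomposes each $O_k^*$ into balls $\{B_{k,i}\}_{i}$ of bounded overlap whose fixed dilates still lie in $O_k^*$. Using the identity $[T(O_{k+1}^*)]^\complement=R_1((O_{k+1}^*)^\complement)$ together with the Whitney cover, I would cut the slab $T(O_k^*)\setminus T(O_{k+1}^*)$ into pairwise disjoint pieces $A_{k,i}$, one per ball, with $A_{k,i}\subset T(\kappa B_{k,i})$ for a fixed dilation constant $\kappa$. Up to a null set, the $\{A_{k,i}\}$ exhaust $\{(y,t)\in\mathcal{X}^+:\ f(y,t)\neq0\}$; hence, setting $\lambda_{k,i}:=C\,2^{k}\|\mathbf{1}_{B_{k,i}}\|_{X(\mathcal{X})}$ and $a_{k,i}:=\lambda_{k,i}^{-1}f\mathbf{1}_{A_{k,i}}$, the representation \eqref{ad-tent-ae} holds pointwise almost everywhere after relabelling $(k,i)$ as $j$.

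The heart of the proof, and the step I expect to be the \textbf{main obstacle}, is verifying that each $a_{k,i}$ is a $(T_X,\infty)$-atom associated with $\kappa B_{k,i}$. The key is the energy estimate
\[
\int_0^\infty\int_{\mathcal{X}}|f(y,t)|^2\mathbf{1}_{A_{k,i}}(y,t)\,\frac{d\mu(y)\,dt}{t}\lesssim 2^{2k}\mu(B_{k,i}),
\]
which I would prove by Fubini's theorem: since $A_{k,i}\subset R_1((O_{k+1}^*)^\complement)$ and $(O_{k+1}^*)^\complement$ satisfies the density property, $\int_{(O_{k+1}^*)^\complement\cap B(y,t)}[V(x,t)]^{-1}\,d\mu(x)\gtrsim1$ whenever $(y,t)\in A_{k,i}$, so exchanging the order of integration bounds the left-hand side by $\int_{(O_{k+1}^*)^\complement\cap\kappa B_{k,i}}[\mathcal{A}(f)(x)]^2\,d\mu(x)$; on $(O_{k+1})^\complement\supset(O_{k+1}^*)^\complement$ one has $\mathcal{A}(f)\le2^{k+1}$, which yields the claim. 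Because $\|f\mathbf{1}_{A_{k,i}}\|_{T^2(\mathcal{X}^+)}^2$ is comparable to this energy and $\mathcal{A}(f\mathbf{1}_{A_{k,i}})$ is supported in a fixed dilate of $B_{k,i}$ (cone-tent geometry), H\"older's inequality handles $p\in(1,2]$; the exponents $p\in(2,\infty)$ additionally require a uniform Carleson-type bound $\mathcal{A}(f\mathbf{1}_{A_{k,i}})\lesssim2^{k}$ followed by interpolation, and this all-$p$ verification is the most delicate bookkeeping.

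Finally I would control the coefficients. Since $\lambda_{k,i}/\|\mathbf{1}_{B_{k,i}}\|_{X(\mathcal{X})}=C\,2^{k}$, Proposition \ref{prfs} absorbs the dilation $\kappa$, and the bounded overlap of the Whitney balls gives $\sum_i\mathbf{1}_{B_{k,i}}\lesssim\mathbf{1}_{O_k^*}$; thus it suffices to bound $\|\sum_{k}2^{ks_0}\mathbf{1}_{O_k^*}\|_{X^{1/s_0}(\mathcal{X})}$. Using $\mathbf{1}_{O_k^*}\le\gamma^{-u}[\mathcal{M}(\mathbf{1}_{O_k})]^{u}$ for any fixed $u\in(1,\infty)$ together with the vector-valued maximal inequality of Assumption \ref{vector1} applied in $X^{u/s_0}(\mathcal{X})$ --- legitimate since $t:=s_0/u<p$ --- this quantity is dominated by $\|\sum_{k}2^{ks_0}\mathbf{1}_{O_k}\|_{X^{1/s_0}(\mathcal{X})}$. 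Summing the geometric series gives $\sum_{k}2^{ks_0}\mathbf{1}_{O_k}\sim[\mathcal{A}(f)]^{s_0}$, whence the target norm is $\lesssim\|\mathcal{A}(f)\|_{X(\mathcal{X})}=\|f\|_{T_X(\mathcal{X}^+)}$, as required. For the last assertion, when $f\in T^2(\mathcal{X}^+)\cap T_X(\mathcal{X}^+)$ the partial sums of \eqref{ad-tent-ae} equal $f$ on regions increasing to $\{f\neq0\}$ up to null sets, and the remainders tend to $0$ in $T^2(\mathcal{X}^+)$ by the dominated convergence theorem since $f\in T^2(\mathcal{X}^+)$.
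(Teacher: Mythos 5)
Your decomposition is, in all essentials, the one the paper uses: level sets $O_k$ of $\mathcal{A}(f)$, the maximal-function enlargement $O_k^\ast$, a Whitney cover of $O_k^\ast$ with bounded overlap, a cutting of $T(O_k^\ast)\setminus T(O_{k+1}^\ast)$ into pieces supported in tents over dilated Whitney balls (the paper uses a smooth partition of unity $\{\phi_{k,j}\}$ from Lemma \ref{tent-ge} where you use disjoint indicator pieces, which is cosmetic), the Fubini-plus-density energy estimate, and the coefficient bound via $(1-\gamma)\mathbf{1}_{O_k^\ast}\le\mathcal{M}(\mathbf{1}_{O_k})$ and Assumption \ref{vector1}. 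The coefficient estimate and the $T^2$-convergence argument match the paper's almost verbatim. The one place you genuinely diverge is the verification that each piece is a $(T_X,\infty)$-atom: the paper does this in one stroke for every $r\in(1,\infty)$ by pairing $a_{k,j}$ against $h\in T^{r'}(\mathcal{X}^+)$, applying Lemma \ref{tent-1} to move to cone integrals over $[O_{k+1}]^\complement$, and invoking the $T^r$--$T^{r'}$ duality of \cite[Proposition 3.10]{a14}; this avoids any case distinction in $r$ and any interpolation.

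Two inaccuracies in your atom verification should be repaired. First, the density property you invoke is stated for the wrong set: what the maximal-function condition $\mathcal{M}(\mathbf{1}_{O_{k+1}})(x_0)\le\gamma$ gives is that $(O_{k+1})^\complement$ (not $(O_{k+1}^\ast)^\complement$) occupies a fixed fraction of every ball containing $x_0$; the set $(O_{k+1}^\ast)^\complement$ need not have uniformly positive density in $B(y,t)$. This is harmless --- replacing the inner set by $(O_{k+1})^\complement$ is exactly the content of Lemma \ref{tent-1}, and you only ever use $\mathcal{A}(f)\le 2^{k+1}$ on $(O_{k+1})^\complement$ afterwards --- but as written the claim is false. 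Second, and more seriously for your treatment of $p\in(2,\infty)$, the pointwise bound $\mathcal{A}(f\mathbf{1}_{A_{k,i}})\lesssim 2^k$ does not hold in general: points of $O_{k+1}$ can see, through a single cone, mass that is only controlled on average from cones based in $(O_{k+1})^\complement$. The correct endpoint statement is the Carleson-measure bound $\sup_{B}\mu(B)^{-1}\iint_{T(B)}|f|^2\mathbf{1}_{A_{k,i}}\,\frac{d\mu\,dt}{t}\lesssim 2^{2k}$ (i.e., a bound on the Carleson functional, not on $\mathcal{A}$), after which the Coifman--Meyer--Stein $T^2$--$T^\infty$ interpolation yields the $T^p$ bounds for $p>2$. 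With those two corrections your argument closes; alternatively, the paper's duality argument renders the whole $p$-splitting unnecessary.
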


\begin{remark}
Let $X(\mathcal{X}):=L^r(\mathcal{X})$ and $p=s_0:=r\in(0,1]$. In this case,
Theorem \ref{thm-ad-tent} coincides with \cite[Theorem 1.1]{r07}. However, in the case that
$r\in(1,\infty)$, Theorem \ref{thm-ad-tent} is new.
\end{remark}

To prove Theorem \ref{thm-ad-tent}, we need several lemmas. The following lemma is just \cite[Lemma 2.1]{r07}.

\begin{lemma}\label{tent-1}
Let $\eta\in(0,1)$. Then there exists a  $\gamma\in(0,1)$ and a $C_{(\gamma,\eta)}\in(0,\infty)$ such
that, for any closed subset $F\subset\mathcal{X}$ and any non-negative measurable function $H$ on
$\mathcal{X}^+$,
\begin{align*}
\iint_{R_{1-\eta}(F^\ast_\gamma)}H(y,t)V(y,t)\,d\mu(y)\,dt\leq
C_{(\gamma,\eta)}\int_F\iint_{\Gamma(x)}H(y,t)\,d\mu(y)\,dt\,d\mu(x),
\end{align*}
where
\begin{align*}
F^\ast_\gamma:=\left\{x\in\mathcal{X}:\ \mathcal{M}\left(\mathbf{1}_{F^\complement}\right)(x)
\leq 1-\gamma\right\}.
\end{align*}
\end{lemma}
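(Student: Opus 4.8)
The plan is to reduce the whole inequality to a single pointwise geometric comparison between $V(y,t)=\mu(B(y,t))$ and $\mu(F\cap B(y,t))$, valid on the region $R_{1-\eta}(F^\ast_\gamma)$, after rewriting the right-hand side by Tonelli's theorem. I would fix any $\gamma\in(0,1)$ (any choice works, so the existence is immediate; the constant $C_{(\gamma,\eta)}$ will then depend on $\gamma$, $\eta$, and the doubling constant $C_{(\mu)}$), and recall that $\Gamma(x)=\{(y,t)\in\mathcal{X}^+:\ d(x,y)<t\}$ and that $\mathcal{X}$ is a doubling metric measure space, so $A_0=1$.

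First I would apply Tonelli's theorem (legitimate since $H\ge 0$) to the right-hand side to interchange the $\mu$-integration in $x$ with the integration in $(y,t)$. Writing the cone constraint as an indicator, this gives
\begin{align*}
\int_F\iint_{\Gamma(x)}H(y,t)\,d\mu(y)\,dt\,d\mu(x)
=\iint_{\mathcal{X}^+}H(y,t)\,\mu\left(F\cap B(y,t)\right)\,d\mu(y)\,dt,
\end{align*}
since $\{x\in F:\ d(x,y)<t\}=F\cap B(y,t)$. Because $H\ge 0$ and $R_{1-\eta}(F^\ast_\gamma)\subset\mathcal{X}^+$, it then suffices to prove the pointwise bound $V(y,t)\le C_{(\gamma,\eta)}\,\mu(F\cap B(y,t))$ for every $(y,t)\in R_{1-\eta}(F^\ast_\gamma)$; integrating this against $H$ over $R_{1-\eta}(F^\ast_\gamma)$ and then enlarging the domain of integration to $\mathcal{X}^+$ recovers the claimed inequality.

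The core of the argument is this pointwise estimate. If $(y,t)\in R_{1-\eta}(F^\ast_\gamma)$, then by definition there is some $x\in F^\ast_\gamma$ with $d(x,y)<(1-\eta)t$, and the triangle inequality (with $A_0=1$) yields the two inclusions $B(x,\eta t)\subset B(y,t)\subset B(x,(2-\eta)t)$. Since $x\in F^\ast_\gamma$ means $\mathcal{M}(\mathbf{1}_{F^\complement})(x)\le 1-\gamma$, testing the maximal operator against the particular ball $B(x,\eta t)\ni x$ shows that $\mu(F^\complement\cap B(x,\eta t))\le(1-\gamma)\mu(B(x,\eta t))$, hence (balls having finite measure) $\mu(F\cap B(x,\eta t))\ge\gamma\,\mu(B(x,\eta t))$. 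Using the second inclusion together with the doubling estimate \eqref{eqoz} at dilation factor $(2-\eta)/\eta$, and then the lower bound just obtained together with the first inclusion, I would estimate
\begin{align*}
V(y,t)=\mu(B(y,t))\le\mu\left(B(x,(2-\eta)t)\right)\le C_{(\mu)}\left(\frac{2-\eta}{\eta}\right)^n\mu\left(B(x,\eta t)\right)
\le\frac{C_{(\mu)}}{\gamma}\left(\frac{2-\eta}{\eta}\right)^n\mu\left(F\cap B(y,t)\right),
\end{align*}
which is exactly the required pointwise bound with $C_{(\gamma,\eta)}:=C_{(\mu)}\gamma^{-1}[(2-\eta)/\eta]^n$.

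I expect the only genuinely delicate point to be the bookkeeping in the pointwise step: one must choose the two comparison balls $B(x,\eta t)$ and $B(x,(2-\eta)t)$ so that, simultaneously, the inner ball sits inside $B(y,t)$ (to transfer the maximal-function lower bound on $F$ from $x$ to the point $(y,t)$) and the outer ball contains $B(y,t)$ (to apply doubling), while verifying that the dilation factor $(2-\eta)/\eta$ remains finite for $\eta\in(0,1)$. Everything else—the Tonelli interchange and the identification $\{x\in F:\ d(x,y)<t\}=F\cap B(y,t)$—is routine. Measurability and well-definedness cause no trouble: $\mathcal{M}(\mathbf{1}_{F^\complement})$ is lower semicontinuous, so $F^\ast_\gamma$ is closed, and the integrals above are all over non-negative integrands.
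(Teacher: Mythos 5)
Your proof is correct, and it is essentially the standard Coifman--Meyer--Stein/Russ argument that the paper invokes purely by citation (the paper gives no proof, referring to \cite[Lemma 2.1]{r07}): the Tonelli reduction of the right-hand side to $\iint_{\mathcal{X}^+}H(y,t)\,\mu(F\cap B(y,t))\,d\mu(y)\,dt$, followed by the pointwise ample-density estimate $V(y,t)\leq C_{(\gamma,\eta)}\,\mu(F\cap B(y,t))$ on $R_{1-\eta}(F^\ast_\gamma)$ via the two ball inclusions $B(x,\eta t)\subset B(y,t)\subset B(x,(2-\eta)t)$ and doubling, is exactly the proof strategy of that reference, and all your auxiliary checks (uncentered maximal operator tested on $B(x,\eta t)\ni x$, finiteness of ball measures, openness of $R_{1-\eta}(F^\ast_\gamma)$) are sound. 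As a bonus, your self-contained argument never uses $\mu(F^\complement)<\infty$, which confirms the paper's remark that this extra hypothesis in \cite[Lemma 2.7]{r07} is superfluous.
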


\begin{remark}
We should point out that Lemma \ref{tent-1} was obtained in \cite[Lemma 2.7]{r07} under the additional
assumption $\mu(F^\complement)<\infty$. However, by checking the proof of \cite[Lemma 2.7]{r07} very
carefully, we find that the additional assumption $\mu(F^\complement)<\infty$
in \cite[Lemma 2.7]{r07}
is superfluous.
\end{remark}

The following lemma plays a key role in the proof of Theorem \ref{thm-ad-tent} (see, for instance,
\cite[Lemmas 5.6 and 5.6]{syy21}).

\begin{lemma}\label{tent-ge}
Let $\Omega$ be a proper open subset of $\mathcal{X}$. Then there exists a sequence
$\{x_j\}_{j\in\mathbb{N}}\subset \Omega$ such that
\begin{enumerate}
\item  [$\mathrm{(i)}$]
$$\Omega=\bigcup_{j\in\mathbb{N}}B(x_j,r_j),$$
where $r_j:=2^{-5}\mathrm{dist}\,(x_j,\Omega^\complement)$ for any $j\in\mathbb{N};$
\item [$\mathrm{(ii)}$] $\{B(x_j,5^{-1}r_j)\}_{j\in\mathbb{N}}$ is pairwise disjoint;
\item  [$\mathrm{(iii)}$] for any $j\in\mathbb{N}$, $\sharp\{i\in\mathbb{N}:\ B(x_i,2^4r_i)\cap
B(x_j,2^4r_j)\not=\emptyset\}\leq M$, where $M$ is a positive integer independent of $j$ and $\Omega$ and
$\sharp E$ denotes the cardinality of the set $E.$
\end{enumerate}
Moreover, there exists a family $\{\phi_j\}_{j\in\mathbb{N}}$ of non-negative functions on $\mathcal{X}$
such that
\begin{enumerate}
\item [$\mathrm{(iv)}$] for any $j\in\mathbb{N}$, $\mathrm{supp}\,(\phi_j)\subset B(x_j,2r_j)$;
\item [$\mathrm{(v)}$] for any $j\in\mathbb{N}$ and $x\in B(x_j,r_j),$ $\phi_j(x)\geq M^{-1}$;
\item [$\mathrm{(vi)}$] $\sum_{j\in\mathbb{N}}\phi_j=\mathbf{1}_\Omega.$
\end{enumerate}
\end{lemma}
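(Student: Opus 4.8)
The plan is to realize $\{x_j\}_{j\in\mathbb{N}}$ through a Whitney-type selection and then to build $\{\phi_j\}_{j\in\mathbb{N}}$ as a Lipschitz partition of unity subordinate to the covering. Throughout I write $r(x):=2^{-5}\mathrm{dist}\,(x,\Omega^\complement)$ for $x\in\Omega$; since $A_0=1$ here, the function $x\mapsto\mathrm{dist}\,(x,\Omega^\complement)$ is $1$-Lipschitz, so that
\[
|r(x)-r(y)|\le 2^{-5}d(x,y)\quad\text{for all }x,y\in\Omega,
\]
which is the single geometric fact driving every constant comparison below. First I would consider the family $\{B(x,5^{-1}r(x))\}_{x\in\Omega}$ and extract, via Zorn's lemma, a maximal pairwise disjoint subfamily $\{B(x_j,5^{-1}r_j)\}_{j}$ with $r_j:=r(x_j)$; this is exactly $\mathrm{(ii)}$. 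Because the standing assumptions force $\mathcal{X}$ to be separable, any disjoint family of balls is at most countable, so the index set may be taken to be $\mathbb{N}$.

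Next I would deduce $\mathrm{(i)}$ from maximality. Fix $x\in\Omega$. By maximality $B(x,5^{-1}r(x))$ meets some chosen $B(x_j,5^{-1}r_j)$, whence $d(x,x_j)\le 5^{-1}[r(x)+r_j]$. Feeding this into the Lipschitz bound gives $r(x)\le\frac{161}{159}r_j$, and reinserting yields $d(x,x_j)<r_j$; thus $x\in B(x_j,r_j)$ and $\Omega\subset\bigcup_j B(x_j,r_j)$. The reverse inclusion is immediate since $r_j<\mathrm{dist}\,(x_j,\Omega^\complement)$ forces $B(x_j,r_j)\subset\Omega$. Here the exact values $2^{-5}$ and $5^{-1}$ are chosen precisely so that the factor-$5$ dilation from $5^{-1}r_j$ to $r_j$ recaptures the point $x$.

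For the bounded overlap $\mathrm{(iii)}$ I would first show that intersecting dilates have comparable radii: if $B(x_i,2^4r_i)\cap B(x_j,2^4r_j)\ne\emptyset$ then $d(x_i,x_j)\le 2^4[r_i+r_j]$, and the Lipschitz bound gives $|r_i-r_j|\le\frac{1}{2}[r_i+r_j]$, hence $3^{-1}r_j\le r_i\le 3r_j$. Consequently every such $B(x_i,5^{-1}r_i)$ sits inside the fixed ball $B(x_j,(2^6+1)r_j)$, while doubling \eqref{eqoz} gives $\mu(B(x_j,r_j))\lesssim\mu(B(x_i,5^{-1}r_i))$ with an implicit constant depending only on $C_{(\mu)}$. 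Since the balls $\{B(x_i,5^{-1}r_i)\}_i$ are pairwise disjoint, summing their measures inside $B(x_j,(2^6+1)r_j)$ and again invoking \eqref{eqoz} bounds the cardinality of the overlap set by a constant $M$ depending only on $C_{(\mu)}$ (and hence on $n$); this standard volume-counting estimate is the only place doubling is essential.

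Finally, for $\mathrm{(iv)}$--$\mathrm{(vi)}$ I would set $\psi_j(x):=\max\{0,\min\{1,2-d(x,x_j)/r_j\}\}$, so that $\psi_j\equiv1$ on $B(x_j,r_j)$, $0\le\psi_j\le1$, and $\mathrm{supp}\,(\psi_j)\subset B(x_j,2r_j)$. By $\mathrm{(iii)}$ at most $M$ of the balls $B(x_j,2r_j)$ can contain any given point (overlapping $2r$-balls have overlapping $2^4r$-balls), so on $\Omega$ the sum $\Psi:=\sum_j\psi_j$ satisfies $1\le\Psi\le M$, the lower bound coming from the covering $\mathrm{(i)}$. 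Putting $\phi_j:=\psi_j/\Psi$ on $\Omega$ and $\phi_j:=0$ elsewhere yields $\mathrm{(iv)}$ and $\mathrm{(vi)}$ at once, and for $x\in B(x_j,r_j)$ we get $\phi_j(x)=1/\Psi(x)\ge M^{-1}$, which is $\mathrm{(v)}$. I expect the main obstacle to be bookkeeping rather than ideas: one must track the numerical constants $2^{-5},5^{-1},2^4,2$ through each radius comparison so that the dilation factors in $\mathrm{(i)}$ and $\mathrm{(iii)}$ close exactly, and the volume-counting step in $\mathrm{(iii)}$ must be phrased to produce a single $M$ serving simultaneously in $\mathrm{(iii)}$, $\mathrm{(v)}$, and the upper bound on $\Psi$.
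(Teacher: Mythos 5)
Your proposal is correct, and it is essentially the canonical argument: the paper itself gives no proof of this lemma, quoting it from \cite{syy21}, and your Whitney-type maximal selection driven by the Lipschitz bound $|r(x)-r(y)|\le 2^{-5}d(x,y)$, the volume-counting overlap bound via disjointness of the $5^{-1}r_j$-balls inside $B(x_j,(2^6+1)r_j)$, and the normalized Lipschitz bumps $\phi_j=\psi_j/\Psi$ is exactly the standard construction behind that citation, with all your constant-chasing ($r(x)\le\tfrac{161}{159}r_j$, hence $d(x,x_j)<\tfrac{64}{159}r_j<r_j$; $3^{-1}r_j\le r_i\le 3r_j$) checking out. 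The only point worth flagging is that your $\mathrm{supp}\,(\psi_j)\subset B(x_j,2r_j)$ tacitly uses the non-closed support convention $\mathrm{supp}\,(f):=\{f\neq0\}$, which is precisely the convention this paper adopts, so no adjustment is needed.
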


Now, we show Theorem \ref{thm-ad-tent} by using Lemmas \ref{tent-1} and \ref{tent-ge}.

\begin{proof}[Proof of Theorem \ref{thm-ad-tent}]
Let $\eta\in(0,1)$ be fixed and $\gamma\in(0,1)$ be the same as in Lemma \ref{tent-1}.
Let $f\in T_X(\mathcal{X}^+)$. For any $k\in\mathbb{Z}$, let
\begin{align*}
O_k:=\left\{x\in\mathcal{X}:\ \mathcal{A}(f)(x)>2^k\right\}\ \
\text{and}\ \ O_{k,\gamma}^\ast:=\left\{x\in\mathcal{X}:\
\mathcal{M}\left(\mathbf{1}_{O_k}\right)(x)>1-\gamma\right\}.
\end{align*}
In the remainder of this proof, for any $k\in\mathbb{Z}$, we denote $O^\ast_{k,\gamma}$ simply
by $O^\ast_{k}$. If $O^\ast_k=\mathcal{X}$ for some $k\in\mathbb{Z}$, then, by Assumption \ref{vector1}
and $f\in T_X(\mathcal{X}^+)$, we find that
\begin{align*}
\|1-\gamma\|_{X^{\frac{1}{t}}(\mathcal{X})}
\leq\|\mathcal{M}(\mathbf{1}_{O_k})\|_{X^{\frac{1}{t}}(\mathcal{X})}
\lesssim\|\mathbf{1}_{O_k}\|_{X(\mathcal{X})}^t
\lesssim\left\|\frac{\mathcal{A}(f)}{2^k}\right\|_{X(\mathcal{X})}^t<\infty,
\end{align*}
where $t\in(0,p)$ is a fixed constant. This contradicts  $\|1\|_{X(\mathcal{X})}=\infty$
(see, for instance, \cite[Proposition 4.8(ii)]{syy21}). Thus, for any $k\in\mathbb{Z}$,
$O^\ast_k$ is a proper subset of $\mathcal{X}$. Using this and applying Lemma \ref{tent-ge} with $\Omega$
replaced by $O^\ast_k$, we obtain a family $\{B_{k,j}\}_{j\in\mathbb{N}}:=\{B(x_{k,j},r_{k,j})\}_{j\in\mathbb{N}}$
of balls and a family $\{\phi_{k,j}\}_{j\in\mathbb{N}}$ of non-negative functions having the properties stated
in Lemma \ref{tent-ge}. By Lemma \ref{tent-ge}(vi), we conclude that, for any $k\in\mathbb{Z}$ and
$(x,t)\in\mathcal{X}^+,$
\begin{align}\label{qingniao}
\mathbf{1}_{T_{1-\eta}(O^\ast_k)\setminus T_{1-\eta}(O^\ast_{k+1})}(x,t)
&=\mathbf{1}_{O^\ast_k}(x)\mathbf{1}_{T_{1-\eta}(O^\ast_k)\setminus T_{1-\eta}(O^\ast_{k+1})}(x,t)\\ \notag
&=\sum_{j\in\mathbb{N}}\phi_{k,j}(x)\mathbf{1}_{T_{1-\eta}(O^\ast_k)\setminus T_{1-\eta}(O^\ast_{k+1})}(x,t).
\end{align}
On the other hand, from \cite[p.\,513]{yy14}, we deduce that
\begin{align*}
\mathrm{supp}\,(f)\subset \bigcup_{k\in\mathbb{Z}}
T_{1-\eta}(O^\ast_{k})\cup E,
\end{align*}
where the set $E\subset \mathcal{X}^+$ satisfies that $\int_{E}\,d\mu(y)\,dt=0.$ This implies that, for almost every $(x,t)
\in\mathcal{X}^+,$
\begin{align*}
f(x,t)=f(x,t)\mathbf{1}_{\cup_{k\in\mathbb{Z}}T_{1-\eta}(O^\ast_k)}(x,t)
=\sum_{k\in\mathbb{Z}}f(x,t)\mathbf{1}_{T_{1-\eta}(O^\ast_k)\setminus T_{1-\eta}(O^\ast_{k+1})}(x,t),
\end{align*}
which, combined with \eqref{qingniao}, further implies that
\begin{align*}
f(x,t)=\sum_{k\in\mathbb{Z}}\sum_{j\in\mathbb{N}}f(x,t)\phi_{k,j}(x)
\mathbf{1}_{T_{1-\eta}(O^\ast_k)\setminus T_{1-\eta}(O^\ast_{k+1})}(x,t).
\end{align*}
For any $k\in\mathbb{Z}$, $j\in\mathbb{N}$, and $(x,t)\in\mathcal{X}^+$, let
\begin{align*}
a_{k,j}(x,t):=2^{-k}\left\|\mathbf{1}_{\widetilde{B}_{k,j}}\right\|_{X(\mathcal{X})}^{-1}f(x,t)
\phi_{k,j}(x)\mathbf{1}_{T_{1-\eta}(O^\ast_k)\setminus T_{1-\eta}(O^\ast_{k+1})}(x,t)
\end{align*}
and
\begin{align*}
\lambda_{k,j}:=2^{k}\left\|\mathbf{1}_{\widetilde{B}_{k,j}}\right\|_{X(\mathcal{X})},
\end{align*}
where $\widetilde{B}_{k,j}:=[2+34(1-\eta)^{-1}]B_{k,j}$.

Next, we prove that, for any given $k\in\mathbb{Z}$ and $j\in\mathbb{N}$, $a_{k,j}$ is a
$(T_X,\infty)$-atom, associated with the ball $\widetilde{B}_{k,j}$, up to a harmless
positive constant multiple. We first show that
\begin{align}\label{zhiji3}
\mathrm{supp}\,(a_{k,j})\subset T\left(\widetilde{B}_{k,j}\right).
\end{align}
By the definition of $a_{k,j}$ and Lemma \ref{tent-ge}(iv), we find that, for any $(y,t)\in
\mathrm{supp}\,(a_{k,j})$,
\begin{align}\label{xixixi}
(y,t)\in T_{1-\eta}(O^\ast_k)\ \ \text{and}\ \ y\in2B_{k,j}.
\end{align}
From this and the definition of $\widetilde{B}_{k,j}$, it follows that
\begin{align}\label{2015}
d\left(y,(\widetilde{B}_{k,j})^\complement\right)\geq
d\left(x_{k,j},(\widetilde{B}_{k,j})^\complement\right)-d(y,x_{k,j})
\geq34(1-\eta)^{-1}r_{k,j}.
\end{align}
By the definition of $r_{k,j}$, we conclude that $d(x_{k,j},(O^\ast_k)^\complement)=32 r_{k,j}$.
Thus, for any $\epsilon\in(0,\infty)$, there exists a $u_{k,j}\in (O^\ast_k)^\complement$ such that
$d(x_{k,j},u_{k,j})<32r_{k,j}+\epsilon.$ Using this and \eqref{xixixi}, we find that, for any $(y,t)\in
\mathrm{supp}\,(a_{k,j})$,
\begin{align*}
(1-\eta)t\leq d\left(y,(O^\ast_k)^\complement\right)\leq d(y,u_{k,j})\leq d(y,x_{k,j})+d(x_{k,j},u_{k,j})
<34r_{k,j}+\epsilon.
\end{align*}
Letting $\epsilon\to0$ and using \eqref{2015}, we obtain,  for any $(y,t)\in \mathrm{supp}\,(a_{k,j})$,
\begin{align*}
d\left(y,(\widetilde{B}_{k,j})^\complement\right)\geq\frac{2^5+2}{1-\eta}r_{k,j}\geq t
\end{align*}
and hence $(y,t)\in T(\widetilde{B}_{k,j})$.
This proves \eqref{zhiji3}.

Furthermore, let $r\in(1,\infty)$ and $r'\in(1,\infty)$ be such that $\frac{1}{r'}+\frac{1}{r}=1.$
From  the fact that
$$\mathrm{supp}\,(a_{k,j})\subset\left[T_{1-\eta}(O^\ast_{k+1})\right]^\complement
=R_{1-\eta}\left([O_{k+1}^\ast]^\complement\right),$$
the H\"older inequality, and \eqref{zhiji3},
we deduce that, for any $h\in T^{r'}(\mathcal{X}^+)$,
\begin{align}\label{huimian}
&\left|\iint_{\mathcal{X}^+}a_{k,j}(y,t)\overline{h(y,t)}\,\frac{d\mu(y)\,dt}{t}\right|\\ \notag
&\quad\leq
\iint_{R_{1-\eta}([O_{k_+1}^\ast]^\complement)}\left|a_{k,j}(y,t)h(y,t)\right|\,\frac{d\mu(y)\,dt}{t}\\ \notag
&\quad\lesssim\int_{[O_{k+1}]^\complement}\iint_{\Gamma(x)}
\left|a_{k,j}(y,t)h(y,t)\right|\,\frac{d\mu(y)\,dt}{V(y,t)t}\,d\mu(x)\\ \notag
&\quad\lesssim\int_{[O_{k+1}]^\complement}\left[\iint_{\Gamma(x)\cap T(\widetilde{B}_{k,j})}
\left|a_{k,j}(y,t)\right|^2\,\frac{d\mu(y)\,dt}{V(y,t)t}\right]^{\frac{1}{2}}\mathcal{A}(h)(x)\,d\mu(x),
\end{align}
where we applied Lemma
\ref{tent-1} with $H(y,t):=|a_{k,j}(y,t)h(y,t)|[V(y,t)]^{-1}t^{-1}$ and $F:=[O_{k+1}]^\complement$ in the second step.
Notice that, for any $x\in\mathcal{X},$ if $(y,t)\in\Gamma(x)\cap T(\widetilde{B}_{k,j})$, then
$x\in \widetilde{B}_{k,j}$. By this, \eqref{huimian}, the H\"older inequality,
and the definitions of both $a_{k,j}$ and $O_{k+1}$, we conclude that, for any $h\in T^{r'}(\mathcal{X}^+)$,
\begin{align*}
&\left|\iint_{\mathcal{X}^+}a_{k,j}(y,t)\overline{h(y,t)}\,\frac{d\mu(y)\,dt}{t}\right|\\\notag
&\quad\lesssim
2^{-k}\left\|\mathbf{1}_{\widetilde{B}_{k,j}}\right\|_{X(\mathcal{X})}^{-1}
\int_{[O_{k+1}]^\complement\cap
\widetilde{B}_{k,j}}\left[\iint_{\Gamma(x)\cap T(\widetilde{B}_{k,j})}
|f(y,t)|^2\,\frac{d\mu(y)\,dt}{V(y,t)t}\right]^{\frac{1}{2}}\mathcal{A}(h)(x)\,d\mu(x)\\\notag
&\quad\lesssim2^{-k}\left\|\mathbf{1}_{\widetilde{B}_{k,j}}\right\|_{X(\mathcal{X})}^{-1}
\left\|\mathcal{A}(f)\right\|_{L^r([O_{k+1}]^\complement\cap
\widetilde{B}_{k,j})}\|h\|_{T^{r'}(\mathcal{X}^+)}\\\notag
&\quad\lesssim\left[\mu\left(\widetilde{B}_{k,j}\right)\right]^{\frac{1}{r}}
\left\|\mathbf{1}_{\widetilde{B}_{k,j}}\right\|_{X(\mathcal{X})}^{-1}
\|h\|_{T^{r'}(\mathcal{X}^+)}.
\end{align*}
This, together with the dual result on $T^r(\mathcal{X}^+)$ (see, for instance, \cite[Proposition 3.10]{a14}), further
implies that
$$\left\|a_{k,j}\right\|_{T^r(\mathcal{X}^+)}\lesssim\left[\mu\left(\widetilde{B}_{k,j}\right)\right]^{\frac{1}{r}}
\left\|\mathbf{1}_{\widetilde{B}_{k,j}}\right\|_{X(\mathcal{X})}^{-1}.$$
Using this and \eqref{zhiji3}, we find that $a_{k,j}$ is a $(T_X,\infty)$-atom, associated with the ball
$\widetilde{B}_{k,j}$, up to a harmless positive constant multiple.

Furthermore, from Proposition \ref{prfs} with $t:=s_0$, both (i) and (ii) of Lemma \ref{tent-ge},
the estimate that $(1-\gamma)\mathbf{1}_{O^\ast_k}\leq\mathcal{M}(\mathbf{1}_{O_k}),$ and Assumption
\ref{vector1} with $t:=\frac{s_0}{2}$ and $u:=2$, it follows that
\begin{align*}
&\left\|\left\{\sum_{k\in\mathbb{Z}_+}\sum_{j\in\mathbb{N}}
\left(\frac{\lambda_{k,j}}{\|\mathbf{1}_{\widetilde{B}_{k,j}}\|_{X}}\right)^{s_0}\mathbf{1}_{\widetilde{B}_{k,j}}
\right\}^{\frac{1}{s_0}}\right\|_{X(\mathcal{X})}\\
&\quad=\left\|\left\{\sum_{k\in\mathbb{Z}_+}\sum_{j\in\mathbb{N}}
2^{ks_0}\mathbf{1}_{\widetilde{B}_{k,j}}\right\}^{\frac{1}{s_0}}\right\|_{X(\mathcal{X})}
\lesssim\left\|\left\{\sum_{k\in\mathbb{Z}_+}\sum_{j\in\mathbb{N}}
2^{ks_0}\mathbf{1}_{5^{-1}B_{k,j}}\right\}^{\frac{1}{s_0}}\right\|_{X(\mathcal{X})}\\
&\quad\leq\left\|\left\{\sum_{k\in\mathbb{Z}_+}
\left[2^{\frac{ks_0}{2}}\mathbf{1}_{O^\ast_k}\right]^{2}\right\}^{\frac{1}{s_0}}\right\|_{X(\mathcal{X})}
\lesssim\left\|\left\{\sum_{k\in\mathbb{Z}_+}
\left[\mathcal{M}\left(2^{\frac{ks_0}{2}}\mathbf{1}_{O_k}\right)\right]^{2}\right\}^{\frac{1}
{s_0}}\right\|_{X(\mathcal{X})}\\\notag
&\quad\lesssim\left\|\left\{\sum_{k\in\mathbb{Z}_+}
2^{ks_0}\mathbf{1}_{O_k}\right\}^{\frac{1}{s_0}}\right\|_{X(\mathcal{X})}
\sim\left\|\left\{\sum_{k\in\mathbb{Z}_+}
2^{ks_0}\mathbf{1}_{O_k\setminus O_{k+1}}\right\}^{\frac{1}{s_0}}\right\|_{X(\mathcal{X})}\\\notag
&\quad\sim\|\mathcal{A}(f)\|_{X(\mathcal{X})}= \|f\|_{T_X(\mathcal{X}^+)}.
\end{align*}

Finally, assuming further that $f\in T^2(\mathcal{X}^+)\cap T_{X}(\mathcal{X}^+)$, we show that
\begin{align}\label{227}
f=\sum_{k\in\mathbb{Z}_+}\sum_{j\in\mathbb{N}}\lambda_{k,j}a_{k,j}
\end{align}
in $T^2(\mathcal{X}^+).$ By \eqref{eqoz3}, we conclude that, for any $(y,t)\in\mathcal{X}^+$,
\begin{align}\label{jiewei}
\int_{B(y,t)}\,\frac{d\mu(x)}{V(x,t)}
\sim\int_{B(y,t)}\,\frac{d\mu(x)}{V(y,t)}=1.
\end{align}
From this, the Fubini theorem, and the definitions of both $a_{k,j}$ and $\lambda_{k,j}$,
we deduce that
\begin{align}\label{nawei}
&\left\|\mathcal{A}\left(f-\sum_{|k|+|j|\leq N}\lambda_{k,j}a_{k,j}\right)\right\|_{L^2(\mathcal{X})}\\\notag
&\quad=\left[\int_0^\infty\int_\mathcal{X} \int_{B(y,t)}
\left|\sum_{|k|+|j|> N}\lambda_{k,j}a_{k,j}(y,t)\right|^2\,\frac{d\mu(x)}{V(x,t)}\,
\frac{d\mu(y)\,dt}{t}\right]^{\frac{1}{2}}\\\notag
&\quad\lesssim\left[\int_0^\infty\int_\mathcal{X}
\sum_{|k|+|j|> N}|\lambda_{k,j}a_{k,j}(y,t)|^2\,\frac{d\mu(y)\,dt}{t}\right]^{\frac{1}{2}}.
\end{align}
On the other hand, by the definitions of both $a_{k,j}$ and $\lambda_{k,j}$, we have, for almost every
$(x,t)\in\mathcal{X}^+,$
\begin{align*}
\sum_{k\in\mathbb{Z}_+}\sum_{j\in\mathbb{N}}\left|\lambda_{k,j}a_{k,j}\right|^2\leq|f(x,t)|^2.
\end{align*}
From this, the Lebesgue dominated convergence theorem, and \eqref{nawei}, it follows that \eqref{227} holds true
in $T^2(\mathcal{X}^+).$ This finishes the proof of Theorem \ref{thm-ad-tent}.
\end{proof}

\subsection{Proof of Theorem \ref{thm-mc}}\label{sec3.2}

In this subsection, we show Theorem \ref{thm-mc}, which can be easily deduced from Propositions \ref{thm-mc-re}
and  \ref{thm-at-dec} below.

For any given $\delta\in(0,\infty)$, let $\varphi$ be a measurable function from $\mathbb{C}$ to $\mathbb{C}$
such that there exists a positive constant $C_{(\delta)}$ satisfying that, for any $z\in\mathbb{C}$,
$|\varphi(z)|\leq C_{(\delta)}\frac{|z|^\delta}{1+|z|^{2\delta}}$. Then $\int_0^\infty|\varphi(t)|^2\,\frac{dt}{t}<\infty$.
By the functional calculi associated with $L$, we conclude that, for any $f\in L^2(\mathcal{X})$,
\begin{align}\label{bound3}
\int_0^\infty\int_{\mathcal{X}}\left|\varphi\left(t\sqrt{L}\right)(f)(x)\right|^2\,\frac{d\mu(x)\,dt}{t}
\leq\|f\|_{L^2(\mathcal{X})}^2\int_0^\infty|\varphi(t)|^2\,\frac{dt}{t}
\end{align}
(see, for instance, \cite[(3.14)]{hlmmy11}).
Taking $\varphi(z):=|z|^2e^{-|z|^2}$ for any $z\in \mathbb{C}$.
Then, using \eqref{bound3} and the Fubini theorem, we conclude that, for any $f\in L^2(\mathcal{X})$,
\begin{align}\label{bound}
\|S_L(f)\|_{L^2(\mathcal{X})}\lesssim \|f\|_{L^2(\mathcal{X})}.
\end{align}
We first prove the following conclusion.
\begin{proposition}\label{thm-mc-re}
Let $L$ be a non-negative self-adjoint operator on $L^2(\mathcal{X})$ satisfying the Davies--Gaffney estimate \eqref{dg}.
Assume that $X(\mathcal{X})$ is a $\mathrm{BQBF}$ space satisfying  Assumption \ref{vector2} for some $s_0\in(0,1]$ and
$q_0\in(s_0,2]$. Let $M\in(\frac{n}{2}[\frac{1}{s_0}-\frac{1}{2}],\infty)\cap\mathbb{N}$ and $\epsilon\in
(\frac{n}{s_0},\infty)$. Then there exists a positive constant $C$ such that, for any $f\in
\widetilde{H}^{M,\,\epsilon}_{X,\,L,\,\rm{mol}}(\mathcal{X})$,
\begin{align}\label{tabuhui1}
\|f\|_{H_{X,\,L}(\mathcal{X})}\leq C\|f\|_{H^{M,\,\epsilon}_{X,\,L,\,\rm{mol}}(\mathcal{X})},
\end{align}	
where $\widetilde{H}^{M,\,\epsilon}_{X,\,L,\,\rm{mol}}(\mathcal{X})$ is the same as in Definition \ref{defi-mol}.
\end{proposition}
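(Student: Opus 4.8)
The plan is to prove that every $(X,M,\epsilon)$-molecule lies in $H_{X,\,L}(\mathcal{X})$ with a quantitatively controlled area-function norm, and then to pass to arbitrary molecular representations by subadditivity. Fix $f\in\widetilde{H}^{M,\,\epsilon}_{X,\,L,\,\mathrm{mol}}(\mathcal{X})$ together with a molecular representation $f=\sum_{j}\lambda_j\alpha_j$ converging in $L^2(\mathcal{X})$. Since $S_L$ is subadditive and, by \eqref{bound}, bounded on $L^2(\mathcal{X})$, passing to the $L^2$-limit of the partial sums along an $\mu$-a.e.\ convergent subsequence yields $S_L(f)\le\sum_{j}\lambda_j S_L(\alpha_j)$ $\mu$-almost everywhere. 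As $s_0\in(0,1]$, this gives $[S_L(f)]^{s_0}\le\sum_{j}\lambda_j^{s_0}[S_L(\alpha_j)]^{s_0}$ pointwise, so by the convexification identity $\|S_L(f)\|_{X(\mathcal{X})}^{s_0}=\|[S_L(f)]^{s_0}\|_{X^{1/s_0}(\mathcal{X})}$ it suffices to bound $\|\sum_{j}\lambda_j^{s_0}[S_L(\alpha_j)]^{s_0}\|_{X^{1/s_0}(\mathcal{X})}$ by $\Lambda(\{\lambda_j\alpha_j\})^{s_0}$ and then take the infimum over all representations. Note that, by Assumption \ref{vector2}, $X^{1/s_0}(\mathcal{X})$ is a $\mathrm{BBF}$ space, so its norm obeys the triangle inequality; this will be used for the sum over dyadic annuli below.

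The heart of the argument is the uniform annular decay estimate: for any $(X,M,\epsilon)$-molecule $\alpha=L^M(b)$ associated with $B=B(x_B,r_B)$ and any $i\in\mathbb{Z}_+$,
\[
\|S_L(\alpha)\|_{L^2(U_i(B))}\lesssim 2^{-i\sigma}[\mu(B)]^{\frac12}\|\mathbf{1}_B\|_{X(\mathcal{X})}^{-1},\qquad \sigma:=\min\Big\{2M,\ \epsilon-\tfrac{n}{2}\Big\}.
\]
I would prove this by splitting, for $x\in U_i(B)$, the defining $t$-integral of $S_L(\alpha)(x)$ at $t=2^{i-2}r_B$. For $i\le 4$ nothing beyond the $L^2$-boundedness of $S_L$ is needed, since summing the $k=M$ molecular bound \eqref{chicun} over annuli (using $\epsilon>n/2$) gives $\|\alpha\|_{L^2(\mathcal{X})}\lesssim[\mu(B)]^{1/2}\|\mathbf{1}_B\|_{X(\mathcal{X})}^{-1}$. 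For $i\ge 5$ there are two regimes. On $t>2^{i-2}r_B$ I would write $t^2Le^{-t^2L}\alpha=t^{-2M}(t^2L)^{M+1}e^{-t^2L}b$, use the uniform $L^2$-bound of $(t^2L)^{M+1}e^{-t^2L}$ together with $\|b\|_{L^2(\mathcal{X})}\lesssim r_B^{2M}[\mu(B)]^{1/2}\|\mathbf{1}_B\|_{X(\mathcal{X})}^{-1}$ (the $k=0$ case of \eqref{chicun}) and, via Fubini and \eqref{jiewei}, reduce to $\int_{2^{i-2}r_B}^{\infty}(t^{-2M}r_B^{2M})^2\,\frac{dt}{t}\sim 2^{-4Mi}$, which yields the decay $2^{-2Mi}$. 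On $t\le 2^{i-2}r_B$ I would decompose $b=\sum_{\ell}b\mathbf{1}_{U_\ell(B)}$ and apply the Davies--Gaffney estimate for $t^2Le^{-t^2L}$ (Remark \ref{1515}) between $U_\ell(B)$ and the cone over $U_i(B)$; the exponential gain in $[d(U_\ell(B),U_i(B))]^2/t^2$ dominates, and combining it with the molecular bounds and the volume growth \eqref{eqoz} produces the decay $2^{-i(\epsilon-n/2)}$. Taking the minimum gives $\sigma$. This annular estimate, in particular the bookkeeping of distances and the balancing of Davies--Gaffney decay against the polynomial molecular decay, is the main technical obstacle.

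With the annular estimate in hand, I would set $a_{j,i}:=\lambda_j S_L(\alpha_j)\mathbf{1}_{U_i(B_j)}$, which is supported in $2^iB_j$ and satisfies $\|a_{j,i}\|_{L^2(\mathcal{X})}\le\Lambda_{j,i}[\mu(B_j)]^{1/2}$ with $\Lambda_{j,i}:=C\lambda_j 2^{-i\sigma}\|\mathbf{1}_{B_j}\|_{X(\mathcal{X})}^{-1}$. Since $q_0\le 2$, one may take $q=2$ in Proposition \ref{pras}; applying it for each fixed $i$ with $\theta=2^i$ gives
\[
\Big\|\sum_{j}|a_{j,i}|^{s_0}\Big\|_{X^{1/s_0}(\mathcal{X})}\lesssim 2^{i[(1-\frac{s_0}{2})n-s_0\sigma]}\Big\|\sum_{j}\Big(\tfrac{\lambda_j}{\|\mathbf{1}_{B_j}\|_{X(\mathcal{X})}}\Big)^{s_0}\mathbf{1}_{B_j}\Big\|_{X^{1/s_0}(\mathcal{X})}.
\]
Because the $\{U_i(B_j)\}_{i\in\mathbb{Z}_+}$ partition $\mathcal{X}$, one has $\sum_{j,i}|a_{j,i}|^{s_0}=\sum_{j}\lambda_j^{s_0}[S_L(\alpha_j)]^{s_0}\ge[S_L(f)]^{s_0}$ $\mu$-almost everywhere, so summing the displayed bound over $i\in\mathbb{Z}_+$ through the triangle inequality in the $\mathrm{BBF}$ space $X^{1/s_0}(\mathcal{X})$ finishes the estimate, provided $\sum_{i}2^{i[(1-\frac{s_0}{2})n-s_0\sigma]}<\infty$. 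The exponent is negative exactly when $\sigma>(\frac{1}{s_0}-\frac12)n$, and the hypotheses $M>\frac{n}{2}(\frac{1}{s_0}-\frac12)$ and $\epsilon>\frac{n}{s_0}$ guarantee both $2M>(\frac{1}{s_0}-\frac12)n$ and $\epsilon-\frac{n}{2}>(\frac{1}{s_0}-\frac12)n$, hence $\sigma>(\frac{1}{s_0}-\frac12)n$. This yields $\|S_L(f)\|_{X(\mathcal{X})}^{s_0}\lesssim\Lambda(\{\lambda_j\alpha_j\})^{s_0}$, and taking the infimum over all molecular $(X,M,\epsilon)$-representations of $f$ gives \eqref{tabuhui1}.
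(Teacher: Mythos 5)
Your overall architecture --- reduce everything to the annular decay estimate $\|S_L(\alpha)\|_{L^2(U_i(B))}\lesssim 2^{-i\sigma}[\mu(B)]^{1/2}\|\mathbf{1}_B\|_{X(\mathcal{X})}^{-1}$, feed it into Proposition \ref{pras} with $q=2$ and $\theta=2^i$, and sum the resulting geometric series in $i$ using $\sigma>n(\frac{1}{s_0}-\frac{1}{2})$ --- is exactly the paper's, and the reduction steps (pointwise subadditivity of $S_L$, the $s_0$-power trick, the triangle inequality in the $\mathrm{BBF}$ space $X^{1/s_0}(\mathcal{X})$, the exponent bookkeeping, the treatment of $i\le 4$ and of the large-$t$ piece via $\alpha=L^M(b)$) are all sound. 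The gap is in the annular estimate itself, specifically in your choice of the splitting height $t=2^{i-2}r_B$. In the small-$t$ regime you propose to control the far pieces of the molecule purely by the Davies--Gaffney estimate, asserting that the exponential gain in $[d(U_\ell(B),U_i(B))]^2/t^2$ dominates. But for $t$ near the top of your range, say $t\in(2^{i-3}r_B,2^{i-2}r_B]$, the relevant distance is only $\sim 2^ir_B$, so the Gaussian factor $\exp\{-c(2^ir_B/t)^2\}$ is merely a constant; combined with the molecular size bound for the core piece $\ell=0$, which carries no decay in $i$, this single $t$-slab contributes $\sim\mu(B)\|\mathbf{1}_B\|_{X(\mathcal{X})}^{-2}$ to your bound for $\|S_L(\alpha)\|_{L^2(U_i(B))}^2$, with no decay in $i$ whatsoever. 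The series $\sum_{i}2^{i[(1-\frac{s_0}{2})n-s_0\sigma]}$ you need then becomes $\sum_i 2^{i(1-\frac{s_0}{2})n}=\infty$. (The estimate for that slab is in fact true, but it has to be extracted from the cancellation $t^2Le^{-t^2L}\alpha=t^{-2M}(t^2L)^{M+1}e^{-t^2L}(b)$, i.e.\ from the factor $t^{-2M}r_B^{2M}\sim 2^{-2Mi}$, which your sketch reserves exclusively for $t>2^{i-2}r_B$; your writeup also conflates decomposing $b$ with decomposing $\alpha$ here, and never tracks the $t^{-2M}$ factor in the small-$t$ regime.)

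The paper avoids this by splitting at $t=2^{i\delta}r_B$ with $\delta\in(0,1)$: then throughout the small-$t$ range one has $2^ir_B/t\ge 2^{i(1-\delta)}$, so the Gaussian factor is at most $\exp\{-c2^{2i(1-\delta)}\}$ and genuinely dominates the volume factors, yielding a rate $\frac{1}{2}(N-2n)(1-\delta)$ that can be made as large as desired; the price is that the large-$t$ (cancellation) part now only yields $2^{-2M\delta i}$ instead of your claimed $2^{-2Mi}$. Since $M>\frac{n}{2}(\frac{1}{s_0}-\frac{1}{2})$ is a strict inequality, one may take $\delta$ close enough to $1$ that $2M\delta$ still exceeds $n(\frac{1}{s_0}-\frac{1}{2})$, and the argument closes with $\eta=\min\{\frac{1}{2}(N-2n)(1-\delta),\epsilon-\frac{n}{2},2M\delta\}$. (The paper also localizes the source via an enlarged annulus $\widetilde S_i(B)$ rather than annulus by annulus, but that difference is cosmetic.) So your proof is repaired by moving the split point; as written, the claimed rate $\sigma=\min\{2M,\epsilon-\frac{n}{2}\}$ is not established.
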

\begin{proof}
Let $f\in \widetilde{H}^{M,\,\epsilon}_{X,\,L,\,\rm{mol}}(\mathcal{X})$. Then there exists a sequence
$\{\lambda_{j}\}_{j\in\mathbb{N}}\subset[0,\infty)$ and a sequence $\{\alpha_j\}_{j\in\mathbb{N}}$ of
$(X,M,\epsilon)$-molecules associated, respectively, with the balls $\{B_j\}_{j\in\mathbb{N}}$ satisfying
\begin{equation}\label{converge}
f=\sum_{j\in\mathbb{N}} \lambda_{j}\alpha_j
\end{equation}
in $L^2(\mathcal{X})$ and
\begin{align}\label{guji3}
\left\|\left\{\sum_{j\in\mathbb{N}}\left[\frac{\lambda_j}
{\|\mathbf{1}_{B_j}\|_{X(\mathcal{X})}}\right
]^{s_0}\mathbf{1}_{B_j}\right\}^{\frac{1}{s_0}}\right\|_X
\lesssim\|f\|_{H^{M,\,\epsilon}_{X,\,L,\,\rm{mol}}(\mathcal{X})}.
\end{align}
Recall that, for any $s\in(0,1]$ and $\{a_j\}_{j\in\mathbb{Z}_+}\subset [0,\infty)$,
\begin{align}\label{jiben}
\left(\sum_{j\in\mathbb{Z}_+}a_j\right)^s\leq\sum_{j\in\mathbb{Z}_+}a_j^{s}.
\end{align}
From this,  \eqref{converge}, and \eqref{bound}, we deduce that, for almost every $x\in\mathcal{X}$,
\begin{align*}
S_L(f)(x)&\leq\sum_{j\in\mathbb{N}} \lambda_jS_L(\alpha_j)(x)=\sum_{j\in\mathbb{N}}\sum_{i\in\mathbb{Z}_+}
\lambda_j S_L(\alpha_j)(x)\mathbf{1}_{U_i(B_j)}(x)\\ \notag
&\leq\left\{\sum_{i\in\mathbb{Z}_+}\sum_{j\in\mathbb{N}}\left[\lambda_j S_L(\alpha_j)(x)\mathbf{1}_{
U_i(B_j)}(x)\right]^{s_0}\right\}^{\frac{1}{s_0}}.
\end{align*}
By this and  the assumption that $X^{\frac{1}{s_0}}(\mathcal{X})$ is a $\mathrm{BBF}$ space,  we conclude that
\begin{align}\label{silu}
\|f\|_{H_{X,\,L}(\mathcal{X})}^{s_0}&=\left\|[S_L(f)]^{s_0}\right\|_{X^{\frac{1}{s_0}}(\mathcal{X})}
\leq\left\|\sum_{i\in\mathbb{Z}_+}\sum_{j\in\mathbb{N}}\left[\lambda_j S_L(\alpha_j)\mathbf{1}_{U_i(B_j)}\right
]^{s_0}\right\|_{X^{\frac{1}{s_0}}(\mathcal{X})}\\ \notag
&\leq\sum_{i\in\mathbb{Z}_+}\left\|\sum_{j\in\mathbb{N}}
\left[\lambda_jS_L(\alpha_j)\mathbf{1}_{U_i(B_j)}\right
]^{s_0}\right\|_{X^{\frac{1}{s_0}}(\mathcal{X})}.
\end{align}

We claim that,
for any $(X,M,\epsilon)$-molecule $\alpha$, associated with the ball $B:=B(x_B,r_B)$ for some $x_B\in\mathcal{X}$ and
$r_B\in(0,\infty)$, and any $i\in\mathbb{Z}_+$,
\begin{align}\label{guji2}
\|S_L(\alpha)\|_{L^2(U_i(B))}\lesssim 2^{-i\eta}[\mu(B)]^{\frac{1}{2}}\left\|\mathbf{1}_B\right\|_{X(\mathcal{X})}^{-1}
\end{align}
for some constant $\eta\in(n[\frac{1}{s_0}-\frac{1}{2}],\infty)$.
For the moment, we take this claim for granted and proceed with the proof of the present proposition. From \eqref{guji2},
it follows that, for any $i\in\mathbb{Z}_+$ and $j\in\mathbb{N}$,
\begin{align*}
\left\|S_L(\alpha_j)\right\|_{L^2(U_i(B_j))}\lesssim2^{-i\eta}[\mu(B_j)]^{\frac{1}{2}}
\left\|\mathbf{1}_{B_j}\right\|_{X(\mathcal{X})}^{-1}.
\end{align*}
Using this and applying Proposition \ref{pras} with $q:=2$ and $\theta:=2^i$, we find that, for any $i\in\mathbb{Z}_+,$
\begin{align*}
&\left\|\sum_{j\in\mathbb{N}}\left[\lambda_jS_L(\alpha_j)\mathbf{1}_{U_i(B_j)}\right]^{s_0}
\right\|_{X^{\frac{1}{s_0}}(\mathcal{X})}\\ \notag
&\quad\lesssim2^{-is_0[\eta-n(\frac{1}{s_0}-\frac{1}{2})]}\left\|\sum_{j\in\mathbb{N}}
\left[\frac{\lambda_j}{\|\mathbf{1}_{B_j}\|_{X(\mathcal{X})}}\right]^{s_0}\mathbf{1}_{B_j}
\right\|_{X^{\frac{1}{s_0}}(\mathcal{X})},
\end{align*}
which, together with \eqref{silu} and \eqref{guji3},  further implies that
\begin{align*}
\|f\|_{H_{X,\,L}(\mathcal{X})}^{s_0}&\leq\sum_{i\in\mathbb{Z}_+}\left\|\sum_{j\in\mathbb{N}}
\left[\lambda_jS_L(\alpha_j)\mathbf{1}_{U_i(B_j)}\right]^{s_0}\right\|_{X^{\frac{1}{s_0}}(\mathcal{X})}\\
&\lesssim\sum_{i\in\mathbb{Z}_+}2^{-is_0[\eta-n(\frac{1}{s_0}-\frac{1}{2})]}\left\|
\sum_{j\in\mathbb{N}}\left[\frac{\lambda_j}{\|\mathbf{1}_{B_j}\|_{X(\mathcal{X})}}\right]^{s_0}
\mathbf{1}_{B_j}\right\|_{X^{\frac{1}{s_0}}(\mathcal{X})}\\
&\lesssim\|f\|_{H^{M,\,\epsilon}_{X,\,L,\,\rm{mol}}(\mathcal{X})}^{s_0}.
\end{align*}
This proves \eqref{tabuhui1}.

Finally, we show \eqref{guji2}. If $i\in\{0,1,2,3\}$, then, from \eqref{bound}, \eqref{chicun}, \eqref{eqoz}, and
the assumption that $\epsilon>\frac{n}{s_0}>\frac{n}{2}$, we deduce that
\begin{align*}
\left\|S_L(\alpha)\right\|_{L^2(U_i(B))}^2&\lesssim\|\alpha\|_{L^2(\mathcal{X})}^2=
\sum_{k\in\mathbb{Z}_+}\|\alpha\|_{L^2(U_k(B))}^2\\ \notag
&\leq\sum_{k\in\mathbb{Z}_+} 2^{-2k\epsilon}\mu(2^kB)\|\mathbf{1}_B\|_{X(\mathcal{X})}^{-2}\\ \notag
&\lesssim\sum_{k\in\mathbb{Z}_+}2^{-2k\epsilon+kn}
\mu(B)\|\mathbf{1}_B\|_{X(\mathcal{X})}^{-2}\sim\mu(B)\|\mathbf{1}_B\|_{X(\mathcal{X})}^{-2},
\end{align*}
which is the desired estimate of this case.
If $i\in\mathbb{N}\cap[4,\infty)$, let $\delta\in(0,1)$ be determined later. Then we find that
\begin{align}\label{tabuhui5}
&\|S_L(\alpha)\|_{L^2(U_i(B))}^2\\ \notag
&\quad=\int_{U_i(B)}\int_0^{2^{i\delta}r_B}\int_{B(x,t)}
\left|t^2Le^{-t^2L}(\alpha)(y)\right|^2\,\frac{d\mu(y)\,dt}{V(x,t)t}\,d\mu(x)\\ \notag
&\quad\quad+\int_{U_i(B)}\int_{2^{i\delta}r_B}^\infty\int_{B(x,t)}\cdots\\\notag
&\quad=:\mathrm{I}+\mathrm{II}.
\end{align}
Since $\alpha$ is an $(X,M,\epsilon)$-molecule, it follows that there exists a $b\in \mathcal{D}(L^M)$ such that
$\alpha=L^M(b)$ and \eqref{chicun} holds true. By this, \eqref{bound}, \eqref{eqoz}, and the assumption that
$\epsilon>\frac{n}{s_0}>\frac{n}{2}$, we conclude that
\begin{align}\label{tabuhui7}
\mathrm{II}&=\int_{U_i(B)}\int_{2^{i\delta}r_B}^{\infty}\int_{B(x,t)}
t^{-4M}\left|\left(t^2L\right)^{M+1}e^{-t^2L}(b)\right|^2\,\frac{d\mu(y)\,dt}{V(x,t)t}\,d\mu(x)\\ \notag
&\leq\left(2^{i\delta}r_B\right)^{-4M}\left\|S_{L,\,M+1}(b)\right\|_{L^2(U_i(B))}^2
\lesssim\left(2^{i\delta}r_B\right)^{-4M}\|b\|_{L^2(\mathcal{X})}^2\\ \notag
&\sim\left(2^{i\delta}r_B\right)^{-4M}\sum_{k\in\mathbb{Z}_+}\|b\|_{L^2(U_k(B))}^2
\lesssim\sum_{k\in\mathbb{Z}_+} 2^{-2k\epsilon-4i\delta M}\mu\left(2^kB\right)
\|\mathbf{1}_B\|_{X(\mathcal{X})}^{-2}\\ \notag
&\lesssim\sum_{k\in\mathbb{Z}_+}2^{-2k\epsilon+kn-4i\delta M}\mu(B)\|\mathbf{1}_B\|_{X(\mathcal{X})}^{-2}
\lesssim2^{-4i\delta M}\mu(B)\|\mathbf{1}_B\|_{X(\mathcal{X})}^{-2}.
\end{align}
To estimate $\rm{I},$ for any $i\in\mathbb{N}\cap [4,\infty),$ let
\begin{align*}
S_i(B):=\left(2^{i+1}B\right)\setminus\left(2^{i-2}B\right)\ \text{and}\ \widetilde{S}_i(B):
=\left(2^{i+2}B\right)\setminus\left(2^{i-3}B\right).
\end{align*}
Observe that, if $t\in(0,2^{i\delta}r_B)$ and $x\in U_i(B)$, then $B(x,t)\subset S_i(B).$ From this, we deduce that
\begin{align}\label{zuozu1}
\mathrm{I}&\lesssim\int_{U_i(B)}\int_0^{2^{i\delta}r_B}\int_{S_i(B)}
\left|t^2Le^{-t^2L}\left(\alpha\mathbf{1}_{[\widetilde{S}_i(B)]^\complement}\right)(y)
\right|^2\,\frac{d\mu(y)\,dt}{V(x,t)t}\,d\mu(x)\\ \notag
&\quad+\int_{U_i(B)}\int_0^{2^{i\delta}r_B}\int_{B(x,t)}\left|t^2Le^{-t^2L}\left(\alpha\mathbf{1}_{
\widetilde{S}_i(B)}\right)(y)\right|^2\,\frac{d\mu(y)\,dt}{V(x,t)t}\,d\mu(x)\\ \notag
&=:\rm{I}_1+\rm{I}_2.
\end{align}
Using \eqref{bound}, \eqref{chicun}, and \eqref{eqoz},  we find that
\begin{align}\label{zuozu3}
\mathrm{I}_2&\leq\left\|S_L\left(\alpha\mathbf{1}
_{\widetilde{S}_i(B)}\right)\right\|_{L^2(\mathcal{X})}^2
\lesssim\|\alpha\|_{L^2(\widetilde{S}_i(B))}^2
=\sum_{k=i-2}^{i+2}\|\alpha\|_{L^2(U_k(B))}^2\\\notag
&\leq\sum_{k=i-2}^{i+2} 2^{-2k\epsilon}\mu\left(2^kB\right)\|\mathbf{1}_B\|_{X(\mathcal{X})}^{-2}
\lesssim\sum_{k=i-2}^{i+2}2^{-2k\epsilon+kn}\mu(B)\|\mathbf{1}_B\|_{X(\mathcal{X})}^{-2}\\ \notag
&\sim2^{-2i\epsilon+in}\mu(B)\|\mathbf{1}_B\|_{X(\mathcal{X})}^{-2}.
\end{align}
Moreover, by Remark \ref{1515}, we conclude that $\{tLe^{-tL}\}_{t\in(0,\infty)}$ satisfies the Davies--Gaffney estimate.
This, together with the estimate $\mathrm{dist}\,(S_i(B),[\widetilde{S}_i(B)]^\complement)\sim 2^ir_B$, further
implies that
\begin{align}\label{zuozu5}
\mathrm{I}_1&\lesssim\int_{U_i(B)}\int_0^{2^{i\delta}r_B}
e^{-c(2^ir_B/t)^2}\|\alpha\|_{L^2(\mathcal{X})}^2\,\frac{dt\,d\mu(x)}{V(x,t)t}\\ \notag
&\lesssim\|\alpha\|_{L^2(\mathcal{X})}^2\int_{U_i(B)}\int_0^{2^{i\delta}r_B}
\left(\frac{t}{2^ir_B}\right)^N\,\frac{dt\,d\mu(x)}{V(x,t)t},
\end{align}
where $c$ is a positive constant depending only on $L$ and $N\in\mathbb{N}$ is determined later. From \eqref{eqoz}
and \eqref{eqoz3}, it follows that, for any $x\in U_i(B)$,
\begin{align*}
&\int_0^{2^{i\delta}r_B}\left(\frac{t}{2^ir_B}\right)^N\,\frac{dt}{V(x,t)t}\\ \notag
&\quad=(2^ir_B)^{-N}\sum_{k=-\infty}^i\int_{2^{(k-1)\delta}r_B}^{2^{k\delta} r_B}\,\frac{dt}{V(x,t)t^{1-N}}\\ \notag
&\quad\sim(2^ir_B)^{-N}\sum_{k=-\infty}^i (2^{k\delta}r_B)^N\left[V\left(x,2^{k\delta}r_B\right)\right]^{-1}\\ \notag
&\quad\lesssim(2^ir_B)^{-N}\sum_{k=-\infty}^i (2^{k\delta}r_B)^N\left[1+\frac{d(x,x_B)}{2^{k\delta}r_B}\right]^{n}
\left[V\left(x_B,2^{k\delta}r_B\right)\right]^{-1}\\ \notag
&\quad\sim2^{-iN}\sum_{k=-\infty}^i2^{k\delta N+
(i-k\delta)n}\left[V\left(x_B,2^{k\delta}r_B\right)\right]^{-1}\\ \notag
&\quad\lesssim2^{-iN}\left[V\left(x_B,2^{i}r_B\right)\right]^{-1}\sum_{k=-\infty}^i2^{k\delta N+
2(i-k\delta)n}\\ \notag
&\quad\sim2^{-i(N-2n)(1-\delta)}\left[V\left(x_B,2^{i}r_B\right)\right]^{-1}.
\end{align*}
By this, \eqref{zuozu5},  \eqref{chicun}, \eqref{eqoz},
and $\epsilon\in(\frac{n}{s_0},\infty)$, we have
\begin{align*}
\mathrm{I}_1 &\lesssim2^{-i(N-2n)(1-\delta)}\|\alpha\|_{L^2(\mathcal{X})}^2
\sim2^{-i(N-2n)(1-\delta)}\sum_{k\in\mathbb{Z}_+}\|\alpha\|_{L^2(U_k(B))}^2\\ \notag
&\lesssim2^{-i(N-2n)(1-\delta)}\sum_{k\in\mathbb{Z}_+}
2^{-2k\epsilon+kn}\mu(B)\|\mathbf{1}_B\|_{X(\mathcal{X})}^{-2}\\
&\sim2^{-i(N-2n)(1-\delta)}\mu(B)\|\mathbf{1}_B\|_{X(\mathcal{X})}^{-2}.
\end{align*}
From this, \eqref{zuozu3}, \eqref{zuozu1}, \eqref{tabuhui5}, and \eqref{tabuhui7}, we deduce that
\begin{align*}
\|S_L(\alpha)\|_{L^2(U_i(B))}\lesssim2^{-i\eta}[\mu(B)]^{\frac{1}{2}}\|\mathbf{1}_B\|_{X(\mathcal{X})}^{-1},
\end{align*}
where
\begin{align*}
\eta:=\min\left\{\frac{1}{2}(N-2n)(1-\delta),\epsilon-\frac{n}{2},2M\delta\right\}.
\end{align*}
By the assumptions that $\epsilon\in(\frac{n}{s_0},\infty)$ and $M\in\mathbb{N}\cap(\frac{n}{2}[\frac{1}{s_0}
-\frac{1}{2}],\infty)$, we can choose a $\delta\in(0,1)$ and  an $N\in\mathbb{N}$ large enough such that
$\eta\in (n[\frac{1}{s_0}-\frac{1}{2}],\infty)$. This proves \eqref{guji2}, which completes the proof of
Proposition \ref{thm-mc-re}.
\end{proof}

In what follows, for any operator $T$, we denote its integral kernel by $K_T$. Let $L$ be a non-negative
self-adjoint operator on $L^2(\mathcal{X})$ satisfying the Davies--Gaffney estimate \eqref{dg}.
By \cite[Theorem 3.14]{CS08}, we find that there exists a positive constant $C$ such that, for any $t\in(0,\infty)$,
the kernel $K_T$ of the operator $T:=\cos(t\sqrt{L})$ satisfies
\begin{align}\label{2039}
\mathrm{supp}\,(K_T)\subset D_t:=\left\{(x,y)\in\mathcal{X}\times\mathcal{X}:\ d(x,y)\leq C t\right\}.
\end{align}
The following lemma is just \cite[Lemma 3.5]{hlmmy11}.

\begin{lemma}\label{guanjian}
Let $\psi\in C^\infty_{\rm{c}}(\mathbb{R})$ be even, $\psi\not\equiv0$, and $\mathrm{supp}\,(\psi)\subset(-C,C)$,
where $C$ is the same as in \eqref{2039}. Denote by $\Psi$ the Fourier transform of $\psi.$ Assume that $L$ is
a non-negative self-adjoint operator on $L^2(\mathcal{X})$ satisfying the Davies--Gaffney estimate \eqref{dg}.
Then, for any $k\in\mathbb{N}$ and $t\in(0,\infty)$, the kernel $K_{(t^2L)^k\Psi(t\sqrt{L})}$ of $(t^2L)^k\Psi(t\sqrt{L})$
satisfies
\begin{align*}
\mathrm{supp}\,\left(K_{(t^2L)^k\Psi(t\sqrt{L})}\right)\subset\left\{(x,y)\in\mathcal{X}\times\mathcal{X}:\
d(x,y)\leq t\right\}.
\end{align*}
\end{lemma}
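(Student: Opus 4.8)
The plan is to exploit the evenness of $\psi$ together with the finite propagation speed property \eqref{2039} of the wave operators $\cos(t\sqrt{L})$. The starting point is the spectral representation of $\Psi(t\sqrt{L})$ as a superposition of cosines. Since $\psi$ is even and $\Psi$ is its Fourier transform, in the relevant normalization one has $\Psi(\xi)=c\int_{\mathbb R}\psi(s)\cos(s\xi)\,ds$ for some normalizing constant $c$ (the odd part drops out because $\psi$ is even); hence, by the functional calculus associated with the non-negative self-adjoint operator $L$,
\begin{align*}
\Psi\left(t\sqrt{L}\right)=c\int_{\mathbb R}\psi(s)\cos\left(st\sqrt{L}\right)\,ds,
\end{align*}
the integral being understood in the strong operator topology. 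Its convergence is guaranteed by the spectral theorem, since $\psi\in C^\infty_{\rm c}(\mathbb R)$ and $\|\cos(st\sqrt{L})\|_{L^2(\mathcal X)\to L^2(\mathcal X)}\le1$ for every $s$.

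Next I would absorb the polynomial factor $(t^2L)^k$ into derivatives of the cosine. From $\partial_s^2\cos(st\sqrt{L})=-t^2L\cos(st\sqrt{L})$ and iteration one gets $(t^2L)^k\cos(st\sqrt{L})=(-1)^k\partial_s^{2k}\cos(st\sqrt{L})$, so that
\begin{align*}
\left(t^2L\right)^k\Psi\left(t\sqrt{L}\right)=(-1)^kc\int_{\mathbb R}\psi(s)\,\partial_s^{2k}\cos\left(st\sqrt{L}\right)\,ds.
\end{align*}
Because $\psi\in C^\infty_{\rm c}(\mathbb R)$, integrating by parts $2k$ times in $s$ annihilates all boundary terms and transfers the derivatives onto $\psi$, which yields
\begin{align*}
\left(t^2L\right)^k\Psi\left(t\sqrt{L}\right)=(-1)^kc\int_{\mathbb R}\psi^{(2k)}(s)\cos\left(st\sqrt{L}\right)\,ds.
\end{align*}
The crucial feature is that $\mathrm{supp}\,(\psi^{(2k)})\subset\mathrm{supp}\,(\psi)\subset(-C,C)$, so the $s$-integration effectively runs only over $(-C,C)$.

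Finally I would read off the support of the kernel. Interpreting the last identity at the level of integral kernels, one has $K_{(t^2L)^k\Psi(t\sqrt{L})}(x,y)=(-1)^kc\int_{-C}^{C}\psi^{(2k)}(s)\,K_{\cos(st\sqrt{L})}(x,y)\,ds$. By the finite propagation speed \eqref{2039}, applied with $t$ replaced by $|s|t$, the kernel $K_{\cos(st\sqrt{L})}$ vanishes whenever $d(x,y)>C|s|t$; as $s$ is confined to the support of $\psi$, the matching of the constant $C$ in \eqref{2039} with the support radius of $\psi$ is precisely what forces the whole integral to vanish once $d(x,y)>t$. Hence $K_{(t^2L)^k\Psi(t\sqrt{L})}(x,y)=0$ for $d(x,y)>t$, which is exactly the asserted inclusion $\mathrm{supp}\,(K_{(t^2L)^k\Psi(t\sqrt{L})})\subset\{(x,y):\ d(x,y)\le t\}$.

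The main obstacle is rigor rather than ideas: one must justify both the operator-valued integral representation and the $2k$-fold integration by parts at the operator level, and one must make sense of ``the kernel'' of operators that a priori act only on $L^2(\mathcal X)$. The clean route around the latter is to phrase finite propagation speed as the vanishing of the bilinear form $\langle\cos(st\sqrt{L})f_1,f_2\rangle$ whenever $d(\mathrm{supp}\,f_1,\mathrm{supp}\,f_2)>C|s|t$, to perform all the above manipulations inside these bilinear forms (where interchanging the integral with $\langle\cdot f_1,f_2\rangle$ and differentiating under the integral sign are legitimized by the spectral theorem and dominated convergence, using the uniform bound $\|\cos(st\sqrt{L})\|_{L^2(\mathcal X)\to L^2(\mathcal X)}\le1$ and the smoothness and compact support of $\psi$), and only at the end to translate the resulting vanishing of the bilinear form back into the desired support statement for $K_{(t^2L)^k\Psi(t\sqrt{L})}$.
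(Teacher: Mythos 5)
Your argument is the standard proof of this lemma and is essentially the proof of the source the paper cites; the paper itself offers no proof, stating that the lemma ``is just [Lemma 3.5]{hlmmy11}'', and that reference proceeds exactly as you do (even Fourier inversion into cosines, $(t^2L)^k\cos(st\sqrt{L})=(-1)^k\partial_s^{2k}\cos(st\sqrt{L})$, integration by parts onto $\psi^{(2k)}$, then finite propagation speed). One quantitative point in your last step deserves care: if $\mathrm{supp}\,\psi\subset(-C,C)$ and \eqref{2039} gives $K_{\cos(st\sqrt{L})}(x,y)=0$ for $d(x,y)>C|s|t$, then the integral is only seen to vanish for $d(x,y)>C^2t$, not $d(x,y)>t$; the stated conclusion requires $\mathrm{supp}\,\psi\subset(-1/C,1/C)$, which is the hypothesis actually imposed in \cite{hlmmy11}. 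Your closing claim that the constants ``match'' glosses over this; the mismatch originates in the paper's transcription of the hypothesis rather than in your method, but it should be stated explicitly rather than asserted.
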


\begin{proposition}\label{thm-at-dec}
Let $L$ be a non-negative self-adjoint operator on $L^2(\mathcal{X})$ satisfying the Davies--Gaffney estimate \eqref{dg}.
Assume that $X(\mathcal{X})$ is a $\mathrm{BQBF}$ space satisfying  Assumption \ref{vector1} for some $p\in(0,\infty)$.
Let $M\in\mathbb{N}$ and  $s_0\in(0,p].$ Then, for any $f\in H_{X,\,L}(\mathcal{X})\cap \overline{R(L)}$,
there exists a sequence $\{\lambda_j\}_{j\in\mathbb{N}}\subset[0,\infty)$ and a sequence $\{\alpha_j\}_{j\in\mathbb{N}}$
of $(X,M)$-atoms associated, respectively, with the balls $\{B_j\}_{j\in\mathbb{N}}\subset \mathcal{X}$ such that
\begin{align*}
f=\sum_{j\in\mathbb{N}}\lambda_j\alpha_j
\end{align*}
in $L^2(\mathcal{X})$ and
\begin{align*}
\left\|\left\{\sum_{j=1}^\infty
\left(\frac{\lambda_j}{\|\mathbf{1}_{B_j}\|_{X(\mathcal{X})}}\right)^{s_0}\mathbf{1}_{B_j}
\right\}^{\frac{1}{s_0}}\right\|_X\lesssim \|f\|_{H_{X,\,L}(\mathcal{X})},
\end{align*}
where the implicit positive constant is independent of $f$.
\end{proposition}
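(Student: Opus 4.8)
The plan is to realize $f$ through a Calder\'on-type reproducing formula attached to the area function, to transport the atomic decomposition of the tent space (Theorem \ref{thm-ad-tent}) through this formula, and then to verify that the resulting pieces are genuine $(X,M)$-atoms. Concretely, since $f\in\overline{R(L)}\subset L^2(\mathcal{X})$, I first set $F(x,t):=t^2Le^{-t^2L}(f)(x)$ for $(x,t)\in\mathcal{X}^+$. By the very definitions of the area function and of the tent-space quasi-norm, $\|F\|_{T_X(\mathcal{X}^+)}=\|S_L(f)\|_{X(\mathcal{X})}=\|f\|_{H_{X,\,L}(\mathcal{X})}$, while \eqref{bound} gives $\|F\|_{T^2(\mathcal{X}^+)}=\|S_L(f)\|_{L^2(\mathcal{X})}\lesssim\|f\|_{L^2(\mathcal{X})}<\infty$; hence $F\in T^2(\mathcal{X}^+)\cap T_X(\mathcal{X}^+)$. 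Applying Theorem \ref{thm-ad-tent} with the given $s_0\in(0,p]$, I obtain $\{\lambda_j\}_{j\in\mathbb{N}}\subset[0,\infty)$ and $(T_X,\infty)$-atoms $\{a_j\}_{j\in\mathbb{N}}$ on balls $\{B_j\}_{j\in\mathbb{N}}$ with $F=\sum_j\lambda_j a_j$ both almost everywhere and in $T^2(\mathcal{X}^+)$, together with the coefficient control
\[
\left\|\left\{\sum_{j\in\mathbb{N}}\left[\frac{\lambda_j}{\|\mathbf{1}_{B_j}\|_{X(\mathcal{X})}}\right]^{s_0}\mathbf{1}_{B_j}\right\}^{\frac{1}{s_0}}\right\|_X\lesssim\|F\|_{T_X(\mathcal{X}^+)}=\|f\|_{H_{X,\,L}(\mathcal{X})}.
\]

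Next I would fix the reproducing operator. Choosing $\psi\in C^\infty_{\rm c}(\mathbb{R})$ even with $\mathrm{supp}\,(\psi)\subset(-C,C)$ ($C$ as in \eqref{2039}) and letting $\Psi$ be the Fourier transform of $\psi$, I normalize $\psi$ so that $\int_0^\infty s^{2M+2}\Psi(s)e^{-s^2}\,\frac{ds}{s}=1$, which is possible because $\Psi$ is a real even Schwartz function freely at our disposal. Define
\[
\pi_{\Psi,\,L}^{(M)}(G):=\int_0^\infty(t^2L)^M\Psi(t\sqrt{L})(G(\cdot,t))\,\frac{dt}{t}.
\]
The spectral theorem, applied on $\overline{R(L)}$ where the spectral measure lives on $(0,\infty)$, yields the reproducing formula $f=\pi_{\Psi,\,L}^{(M)}(F)$, since $(t^2L)^M\Psi(t\sqrt{L})\,t^2Le^{-t^2L}=\Theta(t\sqrt{L})$ with $\Theta(s):=s^{2M+2}\Psi(s)e^{-s^2}$ and $\int_0^\infty\Theta(t\sqrt{L})\,\frac{dt}{t}$ equals multiplication by $\int_0^\infty\Theta(s)\,\frac{ds}{s}=1$. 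Moreover, arguing by duality and using the square-function bound \eqref{bound3} for the symbols $s\mapsto s^{2k}\Psi(s)$ (which have the required decay for every $k\in\{0,\dots,M\}$) together with the identity \eqref{jiewei} identifying $\|\cdot\|_{T^2(\mathcal{X}^+)}$ with the $L^2(\mathcal{X}^+,\,d\mu\,dt/t)$-norm, I get that $\pi_{\Psi,\,L}^{(M)}$ is bounded from $T^2(\mathcal{X}^+)$ to $L^2(\mathcal{X})$. Continuity of this operator lets me pass it through the $T^2$-convergent sum, so that $f=\sum_j\lambda_j\alpha_j$ in $L^2(\mathcal{X})$, where $\alpha_j:=\pi_{\Psi,\,L}^{(M)}(a_j)=L^M(b_j)$ with $b_j:=\int_0^\infty t^{2M}\Psi(t\sqrt{L})(a_j(\cdot,t))\,\frac{dt}{t}$.

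It then remains to check that each $\alpha_j$ is, up to a harmless uniform constant, an $(X,M)$-atom on a fixed dilate $\widetilde{B}_j:=cB_j$. For the support condition I invoke Lemma \ref{guanjian}: the kernel of $(t^2L)^k\Psi(t\sqrt{L})$ is supported in $\{(x,y):\ d(x,y)\le t\}$, and since $a_j$ lives on $T(B_j)$ (so that $t\lesssim r_{B_j}$ and $y\in B_j$ there), each $L^k(b_j)$ is supported in $\widetilde{B}_j$. For the size condition \eqref{chichun-at}, I would estimate $\|(r_{\widetilde{B}_j}^2L)^kb_j\|_{L^2(\mathcal{X})}$ by duality: writing the pairing against $g\in L^2(\mathcal{X})$ as a tent-space pairing against $(t^2L)^k\Psi(t\sqrt{L})(g)$, using $r_{\widetilde{B}_j}^{2k}t^{2M-2k}\lesssim r_{\widetilde{B}_j}^{2M}$ on the support of $a_j$ (valid for $k\in\{0,\dots,M\}$), the Cauchy--Schwarz inequality with \eqref{jiewei}, and the square-function bound $\|(t^2L)^k\Psi(t\sqrt{L})(g)\|_{T^2(\mathcal{X}^+)}\lesssim\|g\|_{L^2(\mathcal{X})}$, I arrive at $\|(r_{\widetilde{B}_j}^2L)^kb_j\|_{L^2(\mathcal{X})}\lesssim r_{\widetilde{B}_j}^{2M}\|a_j\|_{T^2(\mathcal{X}^+)}\lesssim r_{\widetilde{B}_j}^{2M}[\mu(\widetilde{B}_j)]^{1/2}\|\mathbf{1}_{\widetilde{B}_j}\|_{X(\mathcal{X})}^{-1}$, which is exactly \eqref{chichun-at}. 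Finally, since $\|\mathbf{1}_{\widetilde{B}_j}\|_{X(\mathcal{X})}\sim\|\mathbf{1}_{B_j}\|_{X(\mathcal{X})}$ by doubling and $\mathbf{1}_{B_j}\le\mathbf{1}_{\widetilde{B}_j}$, Proposition \ref{prfs} (with $\tau:=c$) transfers the coefficient control above from $\{B_j\}$ to $\{\widetilde{B}_j\}$, completing the estimate. The hard part will be precisely this last verification: pinning down the finite-propagation support of $\alpha_j$ via Lemma \ref{guanjian} and recovering the exact normalization $r_B^{2M}[\mu(B)]^{1/2}\|\mathbf{1}_B\|_{X(\mathcal{X})}^{-1}$ in the size bound, uniformly in $j$ and in $k\in\{0,\dots,M\}$, while converting the tent-atom size $\|a_j\|_{T^2(\mathcal{X}^+)}\le[\mu(B_j)]^{1/2}\|\mathbf{1}_{B_j}\|_{X(\mathcal{X})}^{-1}$ into the atomic estimate.
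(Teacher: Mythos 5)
Your overall strategy is exactly the paper's: decompose $F:=t^2Le^{-t^2L}(f)$ via the tent-space atomic decomposition (Theorem \ref{thm-ad-tent}), push the decomposition back through a Calder\'on reproducing operator, and verify that the resulting pieces are $(X,M)$-atoms using Lemma \ref{guanjian} for the support and a duality argument with the quadratic estimate \eqref{bound3} for the size. There is, however, one concrete flaw in your normalization of the reproducing operator. You take $\alpha_j=L^M(b_j)$ with $b_j=\int_0^\infty t^{2M}\Psi(t\sqrt L)(a_j(\cdot,t))\,\frac{dt}{t}$, so that in the duality estimate for $\|(r_{B_j}^2L)^k b_j\|_{L^2(\mathcal{X})}$ the operator landing on the test function $g$ is $(t^2L)^k\Psi(t\sqrt L)$, with symbol $s^{2k}\Psi(s)$. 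Your parenthetical claim that these symbols ``have the required decay for every $k\in\{0,\dots,M\}$'' fails at $k=0$: since $\Psi$ is the Fourier transform of $\psi\in C^\infty_{\mathrm{c}}(\mathbb{R})$, one has $\Psi(0)=\int_{\mathbb{R}}\psi$, which is not zero in general, so $\int_0^\infty|\Psi(s)|^2\,\frac{ds}{s}=\infty$ and \eqref{bound3} is not applicable; the bound $\|\Psi(t\sqrt L)(g)\|_{T^2(\mathcal{X}^+)}\lesssim\|g\|_{L^2(\mathcal{X})}$ is genuinely false for such $\Psi$. (A related, smaller point: Lemma \ref{guanjian} is stated only for $k\in\mathbb{N}$, so the finite-propagation support of $\Psi(t\sqrt L)$ itself, which you need for $L^0(b_j)=b_j$, is not literally covered either.)

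The paper sidesteps both issues by building one extra factor of $t^2L$ into $b_j$, namely $b_j=\int_0^\infty t^{2M+2}L\Psi(t\sqrt L)(a_j(\cdot,t))\,\frac{dt}{t}$, with the reproducing operator normalized through $\Phi(t):=t^{2M+2}\Psi(t)$ and $C_{(\Phi)}\int_0^\infty\Phi(t)t^2e^{-t^2}\,\frac{dt}{t}=1$; then every symbol that arises is $s^{2(k+1)}\Psi(s)$ with $k\ge0$, which vanishes at the origin and is covered by both \eqref{bound3} and Lemma \ref{guanjian}. Your argument goes through verbatim after this one-line change (alternatively you could impose $\int_{\mathbb{R}}\psi=0$, but you would then still need the $k=0$ finite-propagation statement and a check that the normalization integral remains nonzero). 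The remaining deviations from the paper --- placing the atoms on a dilate $\widetilde B_j=cB_j$ and invoking Proposition \ref{prfs} to transfer the coefficient bound --- are harmless but unnecessary: since $a_j$ is supported in $T(B_j)$, where $d(y,B_j^\complement)\ge t$, the propagation bound $d(x,y)\le t$ already forces $\mathrm{supp}\,(L^k(b_j))\subset B_j$ itself.
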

\begin{proof}
Let $\Psi$ be the same as in Lemma \ref{guanjian} and $\Phi(t):=t^{2M+2}\Psi(t)$ for any $t\in(0,\infty)$.
For any $g\in T^2(\mathcal{X}^+)$ and $x\in\mathcal{X}$, let
\begin{align*}
\pi_{\Phi,\,L}(g)(x):=C_{(\Phi)}\int_0^\infty \Phi\left(t\sqrt{L}\right)(g(\cdot,t))(x)\,\frac{dt}{t},
\end{align*}
where $C_{(\Phi)}$ is a positive constant satisfying
\begin{align*}
C_{(\Phi)}\int_0^\infty\Phi(t)t^2e^{-t^2}\,\frac{dt}{t}=1.
\end{align*}
Let $f\in H_{X,\,L}(\mathcal{X})\cap \overline{R(L)}$. By the functional calculi for $L$, we find that
\begin{align}\label{ige}
f=C_{(\Phi)}\int_0^\infty \Phi\left(t\sqrt{L}\right)t^2Le^{-t^2L}f\,\frac{dt}{t}=\pi_{\Phi,\,L}
\left(t^2Le^{-t^2L}(f)\right)
\end{align}
in $L^2(\mathcal{X}).$ From \eqref{bound} and the assumption that $f\in L^2(\mathcal{X})$, we deduce that
$t^2Le^{-t^2L}(f)\in T_X(\mathcal{X}^+)\cap T^2(\mathcal{X}^+).$ By this and Theorem \ref{thm-ad-tent}, we
conclude that there exists a sequence $\{\lambda_j\}_{j=1}^\infty\subset[0,\infty)$ and a sequence $\{a_j\}_{j=1}^\infty$
of $(T_X,\infty)$-atoms associated, respectively,  with the balls $\{B_j\}_{j=1}^\infty\subset \mathcal{X}$ such that
\begin{align*}
t^2Le^{-t^2L}(f)=\sum_{j\in\mathbb{N}}
\lambda_ja_j
\end{align*}
in $T^2(\mathcal{X}^+)$ and
\begin{align*}
\left\|\left\{\sum_{j=1}^\infty\left[\frac{\lambda_j}{\|\mathbf{1}_{B_j}\|_{X(\mathcal{X})}}\right]^{s_0}
\mathbf{1}_{B_j}\right\}^{\frac{1}{s_0}}\right\|_{X(\mathcal{X})}
\lesssim\left\|t^2Le^{-t^2L}(f)\right\|_{T_X(\mathcal{X}^+)}= \|f\|_{H_{X,\,L}(\mathcal{X})}.
\end{align*}
This, combined with \eqref{ige} and the fact that $\pi_{\Phi,\,L}$ is bounded from $T^2(\mathcal{X}^+)$ to
$L^2(\mathcal{X})$ (see, for instance, \cite[Proposition 4.1(i)]{jy11}), further implies that
\begin{align*}
f=\pi_{\Phi,\,L}\left(t^2Le^{-t^2L}(f)\right)=\sum_{j\in\mathbb{N}}\lambda_j\pi_{\Phi,\,L}(a_j)
=:\sum_{j\in\mathbb{N}}\lambda_j\alpha_j
\end{align*}
in $L^2(\mathcal{X})$.

For any $j\in\mathbb{N}$ and $x\in\mathcal{X}$, let
\begin{align*}
b_j(x):=\int_0^\infty t^{2M+2}L\Psi\left(t\sqrt{L}\right)\left(\alpha_j(\cdot,t)\right)(x)\,\frac{dt}{t}.
\end{align*}
Then $\alpha_j=L^M(b_j)$ for any $j\in\mathbb{N}.$ Moreover, from Lemma \ref{guanjian}, it follows that,
for any $k\in\{0,\ldots,M\}$, $\mathrm{supp}\,(L^{k}(b_j))\subset B_j$.
By the Fubini theorem and the assumption that $L$ is self-adjoint, we find that, for any given $g\in L^2(\mathcal{X})$,
\begin{align*}
&\int_{\mathcal{X}}\left(r_{B_j}^{2}L\right)^k(b_j)(x) \overline{g(x)}\,d\mu(x)\\ \notag
&\quad=r_{B_j}^{2k}\int_0^\infty\int_{\mathcal{X}}t^{2M-2k}\left(t^2L\right)^{k+1}
\Psi\left(t\sqrt{L}\right)(a_j(\cdot,t))(x)\overline{g(x)}\,d\mu(x)\,\frac{dt}{t}\\ \notag
&\quad=r_{B_j}^{2k}\int_0^\infty\int_{\mathcal{X}}t^{2M-2k}a_j(x,t)\overline{(t^2L)^{k+1}
\Psi\left(t\sqrt{L}\right)(g)(x)}\,d\mu(x)\,\frac{dt}{t}.
\end{align*}
From this, the assumption that $a$ is a $(T_X,\infty)$-atom, the H\"older inequality, and \eqref{bound3}, we deduce
that, for any $g\in L^2(\mathcal{X}),$
\begin{align*}
&\left|\int_{\mathcal{X}}\left(r_{B_j}^{2}L\right)^kb_j(x)\overline{g(x)}\,d\mu(x)\right|\\ \notag
&\quad\leq r_{B_j}^{2M}\iint_{T(B)}\left|a_j(x,t)\right|\left|(t^2L)^{k+1}\Psi\left(t\sqrt{L}\right)(g)(x)\right|\,
\frac{d\mu(x)\,dt}{t}\\ \notag
&\quad\leq r_{B_j}^{2M}\left\|a_j\right\|_{T^2(\mathcal{X}^+)}\left\|(t^2L)^{M+1-k}
\Psi\left(t\sqrt{L}\right)(g)\right\|_{T^2(\mathcal{X}^+)}\\ \notag
&\quad\lesssim r_{B_j}^{2M}\left\|a_j\right\|_{T^2(\mathcal{X}^+)} \|g\|_{L^2(\mathcal{X})}
\lesssim r_{B_j}^{2M}\left[\mu\left(B_j\right)\right]^{\frac{1}{2}}
\left\|\mathbf{1}_{B_j}\right\|_{X(\mathcal{X})}^{-1}
\|g\|_{L^2(\mathcal{X})},
\end{align*}
which further implies that, for any $j\in\mathbb{Z}_+$ and $k\in\{0,\ldots,M\}$,
\begin{align*}
\left\|\left(r_{B_j}^{2}L\right)^k(b_j)\right\|_{L^2(\mathcal{X})}
\lesssim r_{B_j}^{2M}\left[\mu\left(B_j\right)\right]^{\frac{1}{2}}
\left\|\mathbf{1}_{B_j}\right\|_{X(\mathcal{X})}^{-1}.
\end{align*}
By this, we conclude that, for any $j\in\mathbb{N},$ $\alpha_j$ is an $(X,M)$-atom, associated with $B_j$,
up to a harmless constant. This finishes the proof of Proposition \ref{thm-at-dec}.
\end{proof}

Finally, we prove Theorem \ref{thm-mc} by using Propositions \ref{thm-mc-re} and \ref{thm-at-dec}.

\begin{proof}[Proof of Theorem \ref{thm-mc}]
By Propositions \ref{thm-mc-re} and \ref{thm-at-dec} and Remark \ref{buceng}, we conclude that the spaces
$H_{X,\,L}(\mathcal{X})\cap \overline{R(L)}$, $\widetilde{H}^{M,\,\epsilon}_{X,\,L,\,\rm{mol}}(\mathcal{X})$,
and $\widetilde{H}^{M}_{X,\,L,\,\rm{at}}(\mathcal{X})$ coincide with equivalent quasi-norms. From this and a
density argument, it follows that the spaces $H_{X,\,L}(\mathcal{X})$, $H^{M,\,\epsilon}_{X,\,L,\,\rm{mol}}(\mathcal{X})$,
and $H^{M}_{X,\,L,\,\rm{at}}(\mathcal{X})$ coincide with equivalent quasi-norms. This finishes the proof of
Theorem \ref{thm-mc}.
\end{proof}

\section{Applications to Boundedness of Operators}\label{section4}
In this section, we give several applications of the atomic and the molecular characterizations of
$H_{X,\,L}(\mathcal{X})$ to the boundedness of spectral multipliers and Schr\"odinger groups and also to the
Littlewood--Paley characterization of $H_{X,\,L}(\mathcal{X})$.

\subsection{Boundedness of Spectral Multipliers}\label{sec4.3}
In this subsection, we prove a H\"ormander type spectral multiplier theorem for $L$ on $H_{X,\,L}(\mathcal{X})$.
We begin with some concepts. Let $s\in[0,\infty)$. The \emph{space} $C^s(\mathbb{R})$ is defined to be the set
of all the  functions $f$ on $\mathbb{R}$ such that
\begin{align*}
\|f\|_{C^s(\mathbb{R})}:=
\begin{cases}
\displaystyle\sum_{k=0}^s\sup_{x\in\mathbb{R}}\left|f^{(k)}(x)\right| &s\in\mathbb{Z}_+,\\
\displaystyle\sum_{k=0}^{\lfloor s \rfloor}\sup_{x\in\mathbb{R}}\left|f^{(k)}(x)\right|
+\left\|f^{(\lfloor s \rfloor)}\right\|_{\mathrm{Lip}(s-\lfloor s \rfloor)}
&s\notin\mathbb{Z}_+,
\end{cases}
\end{align*}
is finite, where, for any $k\in\mathbb{Z}_+$, $f^{(k)}$ denotes the $k$-order derivative of $f$,
$\lfloor s\rfloor$ denotes the \emph{maximal integer not more than} $s$, and
\begin{align*}
\left\|f^{(\lfloor s \rfloor)}\right\|_{\mathrm{Lip}(s-\lfloor s \rfloor)}
:=\sup_{x,y\in\mathbb{R}\,x\not=y}\frac{|f^{(\lfloor s \rfloor)}(x)-f^{(\lfloor s \rfloor)}(y)|}{|x-y|^{s-\lfloor s \rfloor}}.
\end{align*}
Let $\phi\in C^\infty_{\mathrm{c}}(\mathbb{R})$ satisfy that
\begin{align}\label{620}
\mathrm{supp}\,(\phi)\subset\left(\frac{1}{4},1\right)
\ \text{and}\ \sum_{i\in\mathbb{Z}}\phi\left(2^{-i}x\right)=1
\end{align}
for any $x\in(0,\infty).$ The following conclusion is the main result of this subsection.

\begin{theorem}\label{thm-spec}
Let $X(\mathcal{X})$ be a $\mathrm{BQBF}$ space satisfying both Assumptions \ref{vector1} and \ref{vector2} for some
$p\in(0,\infty)$, $s_0\in(0,\min\{p,1\}]$, and $q_0\in(s_0,2]$. Assume that $L$ is a non-negative self-adjoint
operator on $L^2(\mathcal{X})$ satisfying the Davies--Gaffney estimate \eqref{dg}. Let $s\in(\frac{n}{s_0},\infty)$
and $\phi\in C^\infty_{\mathrm{c}}(\mathbb{R})$ satisfy \eqref{620}. If the bounded Borel function $m:\
[0,\infty)\to\mathbb{C}$ satisfies that
\begin{align}\label{2022930c}
C(\phi,s):=\sup_{t\in(0,\infty)}\|\phi(\cdot)m(t\cdot)\|_{C^s(\mathbb{R})}+|m(0)|<\infty,
\end{align}
then
there exists a positive constant $C$ such that, for any $f\in H_{X,\,L}(\mathcal{X})$,
\begin{align*}
\|m(L)(f)\|_{H_{X,\,L}(\mathcal{X})}\leq C\|f\|_{H_{X,\,L}(\mathcal{X})}.
\end{align*}
\end{theorem}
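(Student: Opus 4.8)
The plan is to prove boundedness by showing that $m(L)$ maps $(X,M)$-atoms to fixed multiples of $(X,M,\epsilon)$-molecules and then invoking the atomic and molecular characterizations from Theorem \ref{thm-mc}. Since $m$ is a bounded Borel function, $m(L)$ is bounded on $L^2(\mathcal{X})$ and, as it commutes with the spectral resolution of $L$ while $m(0)$ only acts on $N(L)$, it maps $\overline{R(L)}$ into itself. I would fix $\epsilon\in(\frac{n}{s_0},s)$ --- this is the sole place where the hypothesis $s>\frac{n}{s_0}$ enters --- and $M\in(\frac{n}{2}[\frac{1}{s_0}-\frac12],\infty)\cap\mathbb{N}$, so that the pair $(M,\epsilon)$ is admissible in Theorem \ref{thm-mc}. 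For $f\in H_{X,\,L}(\mathcal{X})\cap\overline{R(L)}$, Proposition \ref{thm-at-dec} furnishes an atomic decomposition $f=\sum_{j}\lambda_j\alpha_j$ with $(X,M)$-atoms $\alpha_j$ on balls $B_j$ and $\|\{\sum_j(\lambda_j/\|\mathbf{1}_{B_j}\|_{X(\mathcal{X})})^{s_0}\mathbf{1}_{B_j}\}^{1/s_0}\|_{X(\mathcal{X})}\lesssim\|f\|_{H_{X,\,L}(\mathcal{X})}$, and applying $m(L)$ term by term yields $m(L)f=\sum_j\lambda_j\,m(L)\alpha_j$ in $L^2(\mathcal{X})$. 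If each $m(L)\alpha_j$ is a uniformly bounded multiple of an $(X,M,\epsilon)$-molecule associated with the \emph{same} ball $B_j$, then this is, up to a constant, a molecular $(X,M,\epsilon)$-representation with the same coefficients and balls; hence $\|m(L)f\|_{H_{X,\,L}(\mathcal{X})}\lesssim\|f\|_{H_{X,\,L}(\mathcal{X})}$ by the molecular characterization, and a density argument completes the proof.

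For the atom-to-molecule step, let $\alpha=L^M(b)$ be an $(X,M)$-atom on $B=B(x_B,r_B)$, so that \eqref{chichun-at} holds and $\mathrm{supp}\,(L^k b)\subset B$. Since $m(L)$ commutes with $L$, we have $m(L)\alpha=L^M(\tilde b)$ with $\tilde b:=m(L)(b)$, and $\tilde b$ plays the role of the molecular primitive. Verifying that $m(L)\alpha$ is an $(X,M,\epsilon)$-molecule thus reduces, through \eqref{chicun}, to the off-diagonal $L^2$ bounds
\begin{align*}
\left\|\left(r_B^2L\right)^k m(L)(b)\right\|_{L^2(U_j(B))}\lesssim 2^{-j\epsilon}r_B^{2M}\left[\mu\left(2^jB\right)\right]^{\frac12}\|\mathbf{1}_B\|_{X(\mathcal{X})}^{-1}
\end{align*}
for all $k\in\{0,\ldots,M\}$ and $j\in\mathbb{Z}_+$, where the input is the $k=0$ case of \eqref{chichun-at}, namely $\|b\|_{L^2(\mathcal{X})}\le r_B^{2M}[\mu(B)]^{1/2}\|\mathbf{1}_B\|_{X(\mathcal{X})}^{-1}$, together with $\mathrm{supp}\,(b)\subset B$. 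Because $q_0\le2$ we remain inside the $L^2$-theory compatible with the Davies--Gaffney estimate \eqref{dg}, which is precisely why the molecular annuli are measured in $L^2(U_j(B))$.

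The core of the argument, and the main obstacle, is to extract from the H\"ormander smoothness $C(\phi,s)<\infty$ in \eqref{2022930c} an off-diagonal decay of order exceeding $\epsilon>\frac{n}{s_0}$ for the operator $(r_B^2L)^k m(L)$. I would decompose $m$ dyadically at the scale $r_B$ using the partition \eqref{620}, writing (on $\overline{R(L)}$, where the $m(0)$ component vanishes) $m(L)=\sum_{i\in\mathbb{Z}}m(L)\phi(2^{-i}r_B\sqrt{L})$ and splitting into a local and a global part relative to the working scale. For each dyadic block I would invoke the finite speed of propagation: by \eqref{2039} the kernel of $\cos(t\sqrt{L})$ is supported in $\{d(x,y)\le Ct\}$, so expressing the relevant even function through its Fourier transform and using its $C^s$-smoothness produces kernel (or $L^2$ off-diagonal) decay of order $s$ in $d(x,y)$, uniformly in $i$ after rescaling; summing these pieces against the volume growth \eqref{eqoz} and \eqref{eqoz3} while absorbing the polynomial factor $(r_B^2L)^k$ then yields the displayed bound with any $\epsilon<s$. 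The delicate point is this resummation: one must balance the frequency scale $2^i$ against the spatial scale $2^jr_B$ in the finite-propagation estimates and recombine the dyadic contributions uniformly over all $r_B$ and all frequency blocks without logarithmic loss, the polynomial weights $(r_B^2L)^k$ being carried through. The threshold $s>\frac{n}{s_0}$ is exactly what guarantees convergence with enough margin to realize the molecular exponent $\epsilon\in(\frac{n}{s_0},s)$ required by Theorem \ref{thm-mc}.
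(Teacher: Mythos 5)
Your high-level strategy (atomic decomposition, show $m(L)$ sends atoms to molecules with the same balls and coefficients, conclude via the molecular characterization and density) is exactly the paper's, which implements it through Lemma \ref{lem-spec}. But there is a genuine gap in your atom-to-molecule step, and it is located precisely at the point you flag as ``the delicate point.'' You propose to decompose the bare operator $m(L)=\sum_{i\in\mathbb{Z}}m(L)\phi(2^{-i}r_B\sqrt{L})$ and to bound $\|(r_B^2L)^k m(L)(b)\|_{L^2(U_j(B))}$ using only $\|b\|_{L^2(\mathcal{X})}\le r_B^{2M}[\mu(B)]^{1/2}\|\mathbf{1}_B\|_{X(\mathcal{X})}^{-1}$ and $\mathrm{supp}\,(b)\subset B$. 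For $k=0$ this cannot work: finite propagation speed plus the $C^s$ control \eqref{2022930c} gives, for the block at frequency $2^i/r_B$, off-diagonal decay of order roughly $2^{-s(i+j)}$ from $B$ to $U_j(B)$, and the sum over $i<0$ (frequencies below $1/r_B$) diverges for every fixed $j$. The hypothesis $s>\frac{n}{s_0}$ has nothing to do with this divergence; what is needed is a spectral damping factor vanishing at $\lambda=0$ to make the low-frequency sum converge. This is exactly the role of the factor $(I-e^{-r_B^2L})^M$ in \eqref{spec00} and of the $\min\{1,(2^{\ell}t)^{\beta}\}$ term in Lemma \ref{yi}: the paper's key block estimate reads
\begin{align*}
\left\|F_{i,r_B,M}\left(\sqrt{L}\right)(b)\right\|_{L^2(U_j(B))}\lesssim C(\phi,s)\,2^{-s(i+j)}r_B^{-s}\min\left\{1,2^{2Mi}r_B^{2M}\right\}\|b\|_{L^2(B)},
\end{align*}
and without the $\min\{\cdot\}$ factor (which comes from $(1-e^{-r_B^2\lambda^2})^M\sim(r_B\lambda)^{2M}$ near $\lambda=0$, with $M>s/2$) the $i$-sum is infinite.

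Consequently the paper does not map $(X,M)$-atoms to molecules with primitive $m(L)(b)$ as you propose. Instead, Lemma \ref{lem-spec} decomposes $f$ into $(X,2M)$-atoms $\alpha=L^{2M}(b)$, reserves $M$ powers of $L$ for the molecular primitive, and uses the identity $I=(I-e^{-r_B^2L})^M+\sum_{k=1}^{M}(-1)^{k+1}C_M^k e^{-kr_B^2L}$: the first term is controlled by the assumed decay \eqref{spec00} (verified from \eqref{2022930c} via the dyadic blocks $F_{i,r_B,M}$ and Lemma \ref{yi}), while the remainder, being a sum of heat semigroup operators, is handled by the Davies--Gaffney estimate \eqref{dg} together with the \emph{full} family of bounds \eqref{chichun-at} for $k\in\{0,\ldots,2M\}$ --- not just the $k=0$ case. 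To repair your argument you must (i) work with $(X,2M)$-atoms, (ii) insert the cancellation $(I-e^{-r_B^2L})^M$ before performing the frequency decomposition, and (iii) use all of the size conditions on $b$, after which the resummation you describe goes through exactly as in the paper.
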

Applying Theorem \ref{thm-spec} with $X(\mathcal{X}):=L^r(\mathcal{X})$, we obtain the
following conclusion; since their proofs are similar to that of Theorem \ref{ls10}, we omit the details here.
\begin{theorem}\label{ls40}
Let $L$ be a non-negative self-adjoint
operator on $L^2(\mathcal{X})$ satisfying the Davies--Gaffney estimate \eqref{dg}, $r\in(0,2)$, and $s\in(\frac{n}{\min\{1,r\}},\infty)$.
Assume that $\varphi\in C^\infty_{\mathrm{c}}(\mathbb{R})$ satisfies \eqref{620} and  $m:\ [0,\infty)\to\mathbb{C}$
is a bounded Borel function satisfying \eqref{2022930c}. Then there exists a positive constant $C$ such that,
for any $f\in H^{r}_{L}(\mathcal{X})$,
\begin{align*}
\|m(L)(f)\|_{H^{r}_{L}(\mathcal{X})}\leq C\|f\|_{H^{r}_{L}(\mathcal{X})}.
\end{align*}
\end{theorem}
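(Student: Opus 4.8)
The plan is to deduce Theorem \ref{thm-spec} from the atomic and molecular characterization in Theorem \ref{thm-mc}, by proving that the spectral multiplier $m(L)$ sends each $(X,M)$-atom to a uniformly bounded multiple of an $(X,M,\epsilon)$-molecule associated with the \emph{same} ball. Fix $M\in(\frac n2[\frac1{s_0}-\frac12],\infty)\cap\mathbb N$, and postpone the choice of $\epsilon\in(\frac n{s_0},\infty)$ until the off-diagonal decay rate is determined; for any such $\epsilon$, Theorem \ref{thm-mc} applies. Since $m$ is a bounded Borel function and $L$ is non-negative self-adjoint, the functional calculus shows that $m(L)$ is bounded on $L^2(\mathcal X)$, commutes with every power of $L$, and maps $\overline{R(L)}$ into itself. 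Hence, by the density of $H_{X,\,L}(\mathcal X)\cap\overline{R(L)}$ in $H_{X,\,L}(\mathcal X)$, it suffices to establish the a priori bound for $f\in H_{X,\,L}(\mathcal X)\cap\overline{R(L)}$. For such $f$, Proposition \ref{thm-at-dec} yields an $(X,M)$-atomic decomposition $f=\sum_{j}\lambda_j\alpha_j$ (convergent in $L^2(\mathcal X)$), with $\alpha_j$ associated with a ball $B_j$ and $\Lambda(\{\lambda_j\alpha_j\}_{j})\lesssim\|f\|_{H_{X,\,L}(\mathcal X)}$. As $m(L)$ is $L^2$-bounded, $m(L)f=\sum_j\lambda_j m(L)\alpha_j$ in $L^2(\mathcal X)$, so once each $m(L)\alpha_j$ is a uniform multiple of a molecule associated with $B_j$, this becomes a molecular representation of $m(L)f$ with the same coefficients and balls.

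Next I would reduce the molecular claim to a single off-diagonal estimate. Let $\alpha=L^M(b)$ be an $(X,M)$-atom associated with $B:=B(x_B,r_B)$, so that $\mathrm{supp}\,(L^k(b))\subset B$ and, by \eqref{chichun-at}, $\|(r_B^2L)^k(b)\|_{L^2(\mathcal X)}\le r_B^{2M}[\mu(B)]^{1/2}\|\mathbf 1_B\|_{X(\mathcal X)}^{-1}$ for $k\in\{0,\dots,M\}$. Since $\alpha\in\overline{R(L)}$, the spectral value $0$ is irrelevant, and we may replace $m$ by $\widetilde m:=\sum_{i\in\mathbb Z}m\,\phi(2^{-i}\cdot)$, which agrees with $m$ on $(0,\infty)$ and vanishes at $0$; thus the constant $m(0)$ and the null space of $L$ play no role. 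Setting $\widetilde b:=\widetilde m(L)(b)\in\mathcal D(L^M)$, we get $m(L)\alpha=\widetilde m(L)\alpha=L^M(\widetilde b)$, and for every $k\in\{0,\dots,M\}$, $(r_B^2L)^k(\widetilde b)=\widetilde m(L)\,g_k$ with $g_k:=(r_B^2L)^k(b)$ supported in $B$. Consequently the whole matter reduces to the off-diagonal $L^2$ bound
\[
\left\|\widetilde m(L)(g)\right\|_{L^2(U_j(B))}\le C\,C(\phi,s)\,2^{-j\epsilon}\|g\|_{L^2(\mathcal X)}
\]
for every $g$ supported in $B$ and every $j\in\mathbb Z_+$, with $\epsilon>\frac n{s_0}$: granting this and using $\mu(B)\le\mu(2^jB)$ together with the size bound on $g_k$, the molecular size condition \eqref{chicun} follows for $\widetilde b$, so that $m(L)\alpha$ is a fixed multiple of an $(X,M,\epsilon)$-molecule associated with $B$, uniformly in the atom.

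The core of the proof, and the main obstacle, is this off-diagonal estimate under the mere Davies--Gaffney hypothesis \eqref{dg} (with no pointwise heat kernel bound). I would obtain it via the finite propagation speed of the wave operator. Writing $m_i:=m\,\phi(2^{-i}\cdot)$ and $F_i(\xi):=m_i(\xi^2)$, I represent each dyadic piece through the even-function Fourier inversion $F_i(\sqrt L)=\frac1{2\pi}\int_{\mathbb R}\widehat{F_i}(t)\cos(t\sqrt L)\,dt$ and exploit that, by \eqref{2039}, the kernel of $\cos(t\sqrt L)$ is supported in $\{(x,y):d(x,y)\le C|t|\}$ (the same mechanism underlying Lemma \ref{guanjian}). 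Since $g$ is supported in $B$ and $U_j(B)$ lies at distance $\sim2^jr_B$, only $|t|\gtrsim2^jr_B$ contribute to $\|m_i(L)g\|_{L^2(U_j(B))}$, and the scale-invariant smoothness hypothesis \eqref{2022930c} controls the corresponding tail of $\widehat{F_i}$ at the dyadic frequency scale $2^{i/2}$. The crucial technical device is that the low-frequency scales must not be summed piece by piece (which would diverge), but grouped into a single smooth multiplier $m\,\Theta(2^{-i_\ast}\cdot)$ at the ball scale $2^{i_\ast/2}\sim r_B^{-1}$, to which the same wave bound is applied; the high-frequency pieces $i>i_\ast$ are then summed through the smoothness-induced decay. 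This yields the desired factor $2^{-j\epsilon}$, and the delicate point is precisely the dependence of the achievable decay order $\epsilon$ on $s$—i.e.\ the careful tail estimates of $\widehat{F_i}$ and the bookkeeping of volume and scale factors over the frequencies $i$ and annuli $j$—which is where the hypothesis $s>\frac n{s_0}$ is used to secure an admissible molecular exponent $\epsilon>\frac n{s_0}$.

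Finally, I would assemble the estimate. By the preceding steps, $m(L)\alpha_j$ is a uniform multiple of an $(X,M,\epsilon)$-molecule associated with $B_j$, and $m(L)f=\sum_j\lambda_jm(L)\alpha_j$ is a molecular $(X,M,\epsilon)$-representation with coefficients $\{\lambda_j\}_j$ and balls $\{B_j\}_j$. Hence $\|m(L)f\|_{H^{M,\,\epsilon}_{X,\,L,\,\mathrm{mol}}(\mathcal X)}\lesssim\Lambda(\{\lambda_j\alpha_j\}_j)\lesssim\|f\|_{H_{X,\,L}(\mathcal X)}$, and Theorem \ref{thm-mc} upgrades this to $\|m(L)f\|_{H_{X,\,L}(\mathcal X)}\lesssim\|f\|_{H_{X,\,L}(\mathcal X)}$ on the dense subspace; a density argument then extends the bound to all of $H_{X,\,L}(\mathcal X)$, which finishes the proof.
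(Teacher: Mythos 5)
Your overall architecture (reduce to Theorem \ref{thm-mc}, show $m(L)$ maps atoms to molecules, use finite propagation speed and a dyadic decomposition of the multiplier) matches the paper's, but the specific reduction you make is fatally too strong. You take an $(X,M)$-atom $\alpha=L^M(b)$, set $\widetilde b:=\widetilde m(L)(b)$, and reduce everything to the off-diagonal bound $\|\widetilde m(L)(g)\|_{L^2(U_j(B))}\lesssim 2^{-j\epsilon}\|g\|_{L^2}$ for arbitrary $g$ supported in $B$, with $\epsilon>\frac{n}{s_0}\ge n$. This estimate is false for general multipliers satisfying \eqref{2022930c}. Take $L=-\Delta$ on $\mathbb{R}^n$ and $m(\lambda)=\lambda^{i\tau}$: the condition \eqref{2022930c} holds for every $s$, yet the kernel of $(-\Delta)^{i\tau}$ decays only like $d(x,y)^{-n}$ at infinity no matter how large $s$ is (the low-frequency dyadic pieces saturate this rate), so the best possible $L^2(B)\to L^2(U_j(B))$ bound is of order $2^{-jn/2}$ --- short of the molecular exponent $\epsilon>n/s_0\ge n$ by a full factor. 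Your proposed rescue --- grouping the low frequencies into a single multiplier $m\,\Theta(2^{-i_\ast}\cdot)$ at the ball scale --- does not work either: the H\"ormander condition controls $\|\phi(\cdot)m(t\cdot)\|_{C^s(\mathbb{R})}$ only band by band, and the rescaled function $m(2^{i_\ast}\cdot)\Theta(\cdot)$ has derivatives blowing up like $\xi^{-k}$ near the origin, so it has no uniform $C^s$ bound and the wave-group tail estimate you want to apply to it is unavailable; unwinding it dyadically just reproduces the $d^{-n}$ kernel decay.

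The missing ingredient is the low-frequency damping that the paper builds in from the start: Proposition \ref{thm-at-dec} is invoked with $(X,2M)$-atoms, so that $m(L)\alpha=L^M\bigl(m(L)L^M(b)\bigr)$ and the extra $M$ powers of $L$ falling on the compactly supported $b$ play the role of the factor $(I-e^{-r_B^2L})^M$ in \eqref{spec00}. Concretely, the dyadic pieces $F_{i,r_B,M}(\sqrt L)$ in the proof of Theorem \ref{thm-spec} carry the factor $\min\{1,2^{2Mi}r_B^{2M}\}$, which is exactly what makes the sum over low frequencies converge (Lemma \ref{yi}); this is also why the paper needs $M>\frac{s}{2}$ there. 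Without passing to higher-order atoms (or equivalently without inserting $(I-e^{-r_B^2L})^M$), the frequency sum in your third paragraph cannot be closed, and the ``delicate bookkeeping'' you defer is precisely where the argument breaks. For the statement at hand the remaining step --- checking that $X=L^r(\mathcal{X})$ satisfies Assumptions \ref{vector1} and \ref{vector2} with $s_0$ close enough to $\min\{1,r\}$ that $s>\frac{n}{s_0}$, as in the proof of Theorem \ref{ls10} --- is routine and you implicitly have it, but the core off-diagonal estimate needs to be repaired as above.
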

\begin{remark}
Let $r\in (0,1]$. In this case, Theorem \ref{ls40} was obtained in \cite[Theorem 1.1]{dy11}.
However, in the case when $r\in(1,2)$, Theorem \ref{ls40} is new.
\end{remark}

To prove Theorem \ref{thm-spec}, we need the following conclusion.
\begin{lemma}\label{lem-spec}
Assume that $X(\mathcal{X})$ is a $\mathrm{BQBF}$ space satisfying both Assumptions \ref{vector1} and \ref{vector2} for
some $p\in(0,\infty)$, $s_0\in(0,\min\{p,1\}]$, and $q_0\in(s_0,2]$. Let $L$ be a non-negative self-adjoint
operator on $L^2(\mathcal{X})$ satisfying the Davies--Gaffney estimate \eqref{dg}, $m$ a bounded Borel function
on $\mathbb{R}$, and $M\in(\frac{n}{2}[\frac{1}{s_0}-\frac{1}{2}],\infty)\cap\mathbb{N}$. Assume that there
exist constants $D\in (\frac{n}{s_0},\infty)$ and $C\in(0,\infty)$ such that, for
any $j\in\mathbb{N}\cap [2,\infty)$ and any $f\in L^2(\mathcal{X})$
with $\mathrm{supp}\,(f)\subset B:=B(x_B,r_B),$ 
\begin{align}\label{spec00}
\left\|m(L)\left(I-e^{-r_B^2L}\right)^M(f)\right\|_{L^2(U_j(B))}\leq C2^{-jD}\|f\|_{L^2(\mathcal{X})},
\end{align}
where $x_B\in \mathcal{X}$ and $r_B\in(0,\infty)$.
Then there exists a positive constant $C_1$ such that, for any $f\in H_{X,\,L}(\mathcal{X})$,
\begin{align*}
\|m(L)(f)\|_{H_{X,\,L}(\mathcal{X})}\leq C_1\|f\|_{H_{X,\,L}(\mathcal{X})}.
\end{align*}
\end{lemma}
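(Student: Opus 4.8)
The plan is to reduce the boundedness of $m(L)$ to an off-diagonal estimate for the Lusin area function of $m(L)a$ on a single $(X,M)$-atom $a$, and then to assemble these estimates exactly as in the proof of Proposition \ref{thm-mc-re}. By the atomic characterization in Theorem \ref{thm-mc} together with a density argument, it suffices to prove the asserted bound for $f$ in the dense subspace $H_{X,\,L}(\mathcal{X})\cap\overline{R(L)}$. For such $f$, Proposition \ref{thm-at-dec} yields an atomic decomposition $f=\sum_{j\in\mathbb{N}}\lambda_j a_j$ in $L^2(\mathcal{X})$, with $(X,M)$-atoms $a_j$ associated with balls $B_j=B(x_{B_j},r_{B_j})$ and with $\|\{\sum_j(\lambda_j/\|\mathbf{1}_{B_j}\|_{X(\mathcal{X})})^{s_0}\mathbf{1}_{B_j}\}^{1/s_0}\|_{X(\mathcal{X})}\lesssim\|f\|_{H_{X,\,L}(\mathcal{X})}$. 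Since $m$ is a bounded Borel function, $m(L)$ is bounded on $L^2(\mathcal{X})$, so $m(L)f=\sum_j\lambda_j m(L)a_j$ in $L^2(\mathcal{X})$; combining the $L^2$-boundedness of $S_L$ in \eqref{bound} with the elementary inequality \eqref{jiben}, and arguing as in \eqref{silu}, reduces matters to summing the quantities $\|\sum_j[\lambda_j S_L(m(L)a_j)\mathbf{1}_{U_i(B_j)}]^{s_0}\|_{X^{1/s_0}(\mathcal{X})}$ over $i\in\mathbb{Z}_+$.

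The key estimate to establish is that, for every $(X,M)$-atom $a$ associated with $B=B(x_B,r_B)$ and every $i\in\mathbb{Z}_+$,
\begin{align*}
\|S_L(m(L)a)\|_{L^2(U_i(B))}\lesssim 2^{-i\eta}[\mu(B)]^{1/2}\|\mathbf{1}_B\|_{X(\mathcal{X})}^{-1}
\end{align*}
for some $\eta\in(n[\frac{1}{s_0}-\frac{1}{2}],\infty)$, with implicit constant independent of $a$ and $i$. Granting this, I would apply Proposition \ref{pras} with $q:=2$ and $\theta:=2^i$ (legitimate since $q_0\le2$) to sum in $j$, picking up the factor $2^{i(1-s_0/2)n}$; the resulting geometric series $\sum_i 2^{-is_0[\eta-n(\frac{1}{s_0}-\frac{1}{2})]}$ converges precisely because $\eta>n(\frac{1}{s_0}-\frac{1}{2})$, and the coefficient bound from the atomic decomposition then gives $\|m(L)f\|_{H_{X,\,L}(\mathcal{X})}\lesssim\|f\|_{H_{X,\,L}(\mathcal{X})}$. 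Thus the hypotheses $q_0\le2$, $s_0\le\min\{p,1\}$, $M>\frac{n}{2}(\frac{1}{s_0}-\frac{1}{2})$ and $D>\frac{n}{s_0}$ are used, respectively, to invoke Proposition \ref{pras} at exponent $2$, to apply the $s_0$-subadditivity, and to arrange the two decay orders below so that $\eta>n(\frac{1}{s_0}-\frac{1}{2})$.

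To prove the key estimate I would split the defining integral of $S_L(m(L)a)$ at $t=2^{i\delta}r_B$ for a small $\delta\in(0,1)$ to be chosen, mirroring the split \eqref{tabuhui5} in the proof of Proposition \ref{thm-mc-re}. In the range $t\ge2^{i\delta}r_B$, writing $m(L)a=r_B^{-2M}m(L)(r_B^2L)^M b$ and extracting $t^{-4M}(t^2L)^{M+1}e^{-t^2L}$, the $L^2$-boundedness of $m(L)$ and of the square function associated with $(t^2L)^{M+1}e^{-t^2L}$ (exactly as for the term $\mathrm{II}$ in that proof), together with the atom size bound \eqref{chichun-at} (so $\|b\|_{L^2(\mathcal{X})}\lesssim r_B^{2M}[\mu(B)]^{1/2}\|\mathbf{1}_B\|_{X(\mathcal{X})}^{-1}$), give a bound $\lesssim2^{-2iM\delta}[\mu(B)]^{1/2}\|\mathbf{1}_B\|_{X(\mathcal{X})}^{-1}$; this part does not use \eqref{spec00}. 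The range $t<2^{i\delta}r_B$ is the crux, and here I would exploit the $L^M$-structure of the atom to bring \eqref{spec00} into play. Using $a=r_B^{-2M}(r_B^2L)^M b$ and the functional-calculus identity
\begin{align*}
t^2Le^{-t^2L}m(L)a=r_B^{-2M}\,\Upsilon_t(L)\,\bigl[m(L)(I-e^{-r_B^2L})^M b\bigr],
\end{align*}
where $\Upsilon_t(s):=t^2s(r_B^2s)^Me^{-t^2s}(1-e^{-r_B^2s})^{-M}$, one checks that $\Upsilon_t$ is bounded uniformly in $t\in(0,\infty)$ and $s\in(0,\infty)$ (near $s=0$ it is comparable to $t^2se^{-t^2s}$, and it decays exponentially as $s\to\infty$). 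Since $\mathrm{supp}\,(b)\subset B$, hypothesis \eqref{spec00} applies to $w_0:=m(L)(I-e^{-r_B^2L})^M b$ and yields $\|w_0\|_{L^2(U_{j'}(B))}\lesssim2^{-j'D}\|b\|_{L^2(\mathcal{X})}$ for $j'\ge2$, as well as the global bound $\|w_0\|_{L^2(\mathcal{X})}\lesssim\|b\|_{L^2(\mathcal{X})}$.

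Finally, because for $x\in U_i(B)$ and $t<2^{i\delta}r_B$ the ball $B(x,t)$ lies in the far annulus $(2^{i+1}B)\setminus(2^{i-2}B)$, it remains to bound $\Upsilon_t(L)w_0$ in $L^2$ over this annulus, where $w_0$ is concentrated near $B$. Expanding $(1-e^{-r_B^2s})^{-M}=\sum_{\nu\ge0}\binom{M+\nu-1}{\nu}e^{-\nu r_B^2s}$ writes $\Upsilon_t(L)$ as a weighted sum of the operators $(sL)^{M+1}e^{-sL}$ with $s=t^2+\nu r_B^2$, each satisfying the Davies--Gaffney estimate by Remark \ref{1515}; this furnishes off-diagonal bounds for $\Upsilon_t(L)$ at scale $\sim t\ll2^ir_B$. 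Combining these with the decay of $w_0$ from \eqref{spec00} and integrating in $t$, with the volume comparisons \eqref{eqoz} and \eqref{eqoz3} handled exactly as for the term $\mathrm{I}_1$ in the proof of Proposition \ref{thm-mc-re}, produces a decay $2^{-i\eta_1}[\mu(B)]^{1/2}\|\mathbf{1}_B\|_{X(\mathcal{X})}^{-1}$ with $\eta_1$ adjustable through $\delta$, $D$ and the Davies--Gaffney order; taking $\eta$ to be the minimum of the two exponents and using $D>\frac{n}{s_0}$ and $M>\frac{n}{2}(\frac{1}{s_0}-\frac{1}{2})$ to secure $\eta>n(\frac{1}{s_0}-\frac{1}{2})$ finishes the key estimate. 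I expect the main obstacle to be precisely this small-$t$ analysis: the factorization through $(I-e^{-r_B^2L})^M$, which is what makes \eqref{spec00} usable and which relies essentially on the $L^M$-cancellation of the atom, and the verification of the requisite off-diagonal bounds for the auxiliary operators $\Upsilon_t(L)$.
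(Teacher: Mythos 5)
Your overall architecture---atomic decomposition via Proposition \ref{thm-at-dec}, annulus estimates for $S_L$ of the image of a single atom, and summation via Proposition \ref{pras}---is sound and is essentially the proof of Proposition \ref{thm-mc-re} unrolled; the paper instead decomposes $f$ into $(X,2M)$-atoms, shows that $m(L)$ maps each $(X,2M)$-atom to an $(X,M,D)$-molecule (citing the argument of \cite[(3.4)]{dy11}), and then invokes Proposition \ref{thm-mc-re} as a black box. The difference in the choice of atoms is not cosmetic, and it is where your argument breaks.

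The gap is in the small-$t$ estimate. The function $\Upsilon_t(s)=t^2s(r_B^2s)^Me^{-t^2s}(1-e^{-r_B^2s})^{-M}$ is \emph{not} uniformly bounded in $t$ and $s$: since $u/(1-e^{-u})\sim u$ as $u\to\infty$, one has $\sup_{s\in(0,\infty)}\Upsilon_t(s)\sim(r_B/t)^{2M}$ for $t\le r_B$, attained near $s\sim t^{-2}$. Hence $\|\Upsilon_t(L)\|_{L^2\to L^2}$ blows up as $t\to0$, and the near-field contribution (where no Davies--Gaffney gain is available and you must rely on the $L^2\to L^2$ bound together with the decay of $w_0$ on annuli) produces $\int_0^{2^{i\delta}r_B}(r_B/t)^{4M}\,\frac{dt}{t}=\infty$. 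The same defect survives the expansion $(1-e^{-r_B^2s})^{-M}=\sum_{\nu\ge0}\binom{M+\nu-1}{\nu}e^{-\nu r_B^2s}$: the $\nu=0$ term is $t^2L(r_B^2L)^Me^{-t^2L}$, whose symbol has sup norm $\sim(r_B/t)^{2M}$. The root cause is structural: \eqref{spec00} gives only \emph{spatial} $L^2$ decay of $w_0=m(L)(I-e^{-r_B^2L})^Mb$ on annuli, and this says nothing about $L^Mw_0$; yet reconstructing $t^2Le^{-t^2L}m(L)a=t^2Le^{-t^2L}L^M(I-e^{-r_B^2L})^{-M}w_0$ forces you to apply $L^M$ to $w_0$, and for $t\ll r_B$ the factor $t^2\lambda e^{-t^2\lambda}$ cannot absorb $\lambda^M$ at the relevant frequencies $\lambda\sim t^{-2}\gg r_B^{-2}$. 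The paper's route avoids this: starting from an $(X,2M)$-atom $\alpha=L^{2M}b$, the molecule function is $\widetilde b=m(L)L^Mb$, and the spare $M$ powers of $L$ controlled by \eqref{chichun-at} (for $k$ up to $2M$) are exactly what is needed to absorb the growth of $\bigl(u/(1-e^{-u})\bigr)^M\lesssim1+u^M$ in the Duong--Yan estimate of $\|(r_B^2L)^k\widetilde b\|_{L^2(U_j(B))}$. To repair your proof you would need to (i) decompose into $(X,2M)$-atoms and (ii) first establish the annulus decay of $(r_B^2L)^km(L)L^Mb$ itself, i.e., the molecule estimate---at which point you have reproduced the paper's argument.
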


\begin{proof}
Let $f\in H_{X,\,L}(\mathcal{X})\cap \overline{R(L)}$. By Proposition \ref{thm-at-dec}, we find  that there
exists a sequence $\{\lambda_j\}_{j\in\mathbb{N}}\subset[0,\infty)$ and a sequence $\{\alpha_j\}_{j\in\mathbb{N}}$ of
$(X,2M)$-atoms associated, respectively, with the balls $\{B_j\}_{j\in\mathbb{N}}$ such that
\begin{align}\label{2022930a}
f=\sum_{j\in\mathbb{N}}\lambda_j\alpha_j
\end{align}
in $L^2(\mathcal{X})$ and
\begin{align}\label{2022930b}
\left\|\left\{\sum_{j=1}^\infty\left(\frac{\lambda_j}{\|\mathbf{1}_{B_j}\|_{X}}\right)^{s_0}\mathbf{1}_{B_j}
\right\}^{\frac{1}{s_0}}\right\|_X\lesssim \|f\|_{H_{X,\,L}(\mathcal{X})},
\end{align}
where the implicit positive constant is independent of $f$.

Now, we show that, for any $(X,2M)$-atom $\alpha$ associated with the ball $B:=B(x_B,r_B)\subset\mathcal{X}$
for some $x_B\in\mathcal{X}$ and $r_B\in(0,\infty)$, $m(L)(\alpha)$ is an $(X,M,D)$-molecule, associated with
the ball $B$, up to a harmless positive constant multiple. Since $\alpha$ is an $(X,2M)$-atom, it follows that
there exists a $b\in \mathcal{D}(L^{2M})$ such that $\alpha=L^{2M}(b)$. By an argument similar to that used in
the estimation of \cite[(3.4)]{dy11}, we conclude that, for any $k\in\{0,\ldots,M\}$ and $j\in\mathbb{Z}_+$,
\begin{align*}
\left\|\left(r_B^{2}L\right)^km(L)L^M(b)\right\|_{L^2(U_j(B))}
\lesssim2^{-jD} r_B^{2M}\left[\mu\left(2^jB\right)\right]^{\frac{1}{2}}\|\mathbf{1}_{B}\|_{X(\mathcal{X})}^{-1}.
\end{align*}
This proves that $m(L)(\alpha)$ is an $(X,M,D)$-molecule up to a harmless positive constant multiple.

From the above argument, we deduce that, for any $j\in\mathbb{N},$ $m(L)(\alpha_j)$ is an $(X,M,D)$-molecule up
to a harmless positive constant multiple. Meanwhile, by the boundedness of $m(L)$ on $L^2(\mathcal{X})$ and \eqref{2022930a},
we conclude that
\begin{align*}
m(L)f=\sum_{j\in\mathbb{N}}\lambda_jm(L)\alpha_j
\end{align*}
in $L^2(\mathcal{X})$. From this, \eqref{2022930b}, and Proposition \ref{thm-mc-re},  it follows that
\begin{align*}
\left\|m(L)f\right\|_{H_{X,\,L}(\mathcal{X})}\lesssim\left\|\left\{\sum_{j=1}^\infty
\left(\frac{\lambda_j}{\|\mathbf{1}_{B_j}\|_{X}}\right)^{s_0}\mathbf{1}_{B_j}\right\}^{\frac{1}{s_0}}\right\|_X
\lesssim \|f\|_{H_{X,\,L}(\mathcal{X})},
\end{align*}
which, together with the fact that $H_{X,\,L}(\mathcal{X})\cap \overline{R(L)}$ is dense in $H_{X,\,L}(\mathcal{X})$,
further implies that the conclusion of the present lemma holds true. This finishes the  proof of Lemma \ref{lem-spec}.
\end{proof}

Meanwhile, we also need the following useful conclusion whose proof is quite easy; we omit the details here.
\begin{lemma}\label{yi}
Let $t\in(0,\infty)$ and $0<\alpha<\beta<\infty$. Then
\begin{align*}
\sum_{\ell\in\mathbb{Z}}\left(2^\ell t\right)^{-\alpha}\min\left\{1, \left(2^\ell t\right)^\beta\right\}
\leq\frac{1}{1-2^{\alpha-\beta}}+\frac{1}{1-2^{-\alpha}}.
\end{align*}
\end{lemma}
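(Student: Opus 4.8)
The plan is to exploit that the summand depends only on the single quantity $2^\ell t$, and that the whole sum is just two convergent geometric series glued together at the scale where $2^\ell t$ crosses the value $1$. First I would let $m\in\mathbb{Z}$ be the largest integer with $2^m t\le 1$, namely $m=\lfloor-\log_2 t\rfloor$, so that $2^\ell t\le 1$ precisely when $\ell\le m$ and $2^\ell t>1$ precisely when $\ell\ge m+1$. On the first range the minimum equals $(2^\ell t)^\beta$, so the summand becomes $(2^\ell t)^{\beta-\alpha}$; on the second range the minimum equals $1$, so the summand becomes $(2^\ell t)^{-\alpha}$. This splits the sum as
\begin{align*}
\sum_{\ell\in\mathbb{Z}}(2^\ell t)^{-\alpha}\min\{1,(2^\ell t)^\beta\}
=\sum_{\ell\le m}(2^\ell t)^{\beta-\alpha}+\sum_{\ell\ge m+1}(2^\ell t)^{-\alpha}.
\end{align*}

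For the first sum I would use $\beta-\alpha>0$: writing $\ell=m-k$ with $k\in\mathbb{Z}_+$ gives $\sum_{k=0}^\infty(2^m t)^{\beta-\alpha}2^{-k(\beta-\alpha)}=(2^m t)^{\beta-\alpha}/(1-2^{-(\beta-\alpha)})$, and since $2^m t\le 1$ and the exponent is positive, $(2^m t)^{\beta-\alpha}\le 1$, so this term is at most $1/(1-2^{\alpha-\beta})$. For the second sum I would use $\alpha>0$: writing $\ell=m+1+k$ gives $\sum_{k=0}^\infty(2^{m+1}t)^{-\alpha}2^{-k\alpha}=(2^{m+1}t)^{-\alpha}/(1-2^{-\alpha})$, and since $2^{m+1}t>1$ we have $(2^{m+1}t)^{-\alpha}\le 1$, so this term is at most $1/(1-2^{-\alpha})$. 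Adding the two bounds yields exactly the claimed inequality.

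The only point requiring a little care is the placement of the threshold index $m$ together with the boundary case $2^\ell t=1$; defining $m$ as the \emph{largest} integer with $2^m t\le 1$ handles both ranges uniformly (in the boundary case $2^m t=1$ one still has $(2^m t)^{\beta-\alpha}=1$ and $2^{m+1}t=2>1$). Beyond this bookkeeping the argument is a routine geometric-series estimate and presents no genuine obstacle, which is consistent with the statement that its proof is elementary.
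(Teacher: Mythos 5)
Your argument is correct: splitting the sum at the largest $m$ with $2^m t\le 1$ and bounding each piece by a geometric series with leading term at most $1$ gives exactly the stated constants. The paper omits the proof entirely (calling it "quite easy"), and your computation is evidently the intended elementary argument, so there is nothing further to compare.
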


Now, we prove Theorem \ref{thm-spec} via using Lemmas \ref{lem-spec} and \ref{yi}.

\begin{proof}[Proof of Theorem \ref{thm-spec}]
Since $m$ satisfies \eqref{2022930c} if and only if the function $\lambda\mapsto m(\lambda^2)$ has the
same property, we consider $m(\sqrt{L})$ instead of $m(L)$. Observe that $$m\left(\sqrt{L}\right)=[m(\cdot)-m(0)]\left(\sqrt{L}\right)+m(0)I.$$
Replacing $m$ by $m-m(0)$, we may assume that $m(0)=0$. Let $s\in(\frac{n}{s_0},\infty)$ and $M\in(\frac{s}{2},
\infty)\cap\mathbb{N}$. For any $i\in\mathbb{N}$, $r\in(0,\infty)$, and $x\in\mathbb{R}$, let
\begin{align*}
F_{r,M}(x):=m(x)\left(1-e^{-r^2x^2}\right)^M \ \text{and}\
F_{i,r,M}(x):=\phi\left(2^{-i}x\right)m(x)\left(1-e^{-r^2x^2}\right)^M.
\end{align*}
By \eqref{620}, we conclude that, for any $x\in\mathbb{R}$,
\begin{align*}
F_{r,M}(x)=\lim_{M\to\infty}\sum_{i=-M}^MF_{i,r,M}(x).
\end{align*}
From this and the functional calculi associated with $L$, it follows that, for any $g\in L^2(\mathcal{X})$,
\begin{align*}
m\left(\sqrt{L}\right)\left(I-e^{-r^2L}\right)^M(g)=F_{r,M}\left(\sqrt{L}\right)(g)
=\lim_{M\to\infty}\sum_{i=-M}^MF_{i,r,M}\left(\sqrt{L}\right)(g)
\end{align*}
in $L^2(\mathcal{X})$. By \cite[(4.8)]{dy11}, we find that, for any $b\in L^2(\mathcal{X})$ with
$\mathrm{supp}\,(b)\subset B:=B(x_B,r_B)$ for some $x_B\in\mathcal{X}$ and $r_B\in(0,\infty)$, any
$i\in\mathbb{Z}$, and any $j\in\{2,\ldots\}$,
\begin{align*}
\left\|F_{i,r_B,M}\left(\sqrt{L}\right)(b)\right\|_{L^2(U_j(B))}\lesssim
C(\phi,s)2^{-s(i+j)}r_B^{-s}\min\left\{1,2^{2Mi}r_B^{2M}\right\}\|b\|_{L^2(B)},
\end{align*}
which, combined with Lemma \ref{yi}, further implies that
\begin{align*}
&\left\|m\left(\sqrt{L}\right)\left(I-e^{-r^2L}\right)^M(b)\right\|_{L^2(U_j(B))}\\
&\quad\lesssim2^{-js}\lim_{M\to\infty}\sum_{i=-M}^M2^{-si}r_B^{-s}\min\left\{1,2^{2Mi}r_B^{2M}\right\}
\|b\|_{L^2(B)}\lesssim2^{-js}\|b\|_{L^2(B)},
\end{align*}
where $C(\phi,s)$ is the same as in \eqref{2022930c}.
This shows that $m(L)$ satisfies \eqref{spec00}. Thus, all the assumptions of Lemma \ref{lem-spec} are
satisfied and hence the desired conclusion of the present theorem holds true. This finishes the proof
of Theorem \ref{thm-spec}.
\end{proof}

\subsection{Littlewood--Paley Characterizations of $H_{X,\,L}(\mathcal{X})$}\label{sec4.1}
In this subsection, we establish the Littlewood--Paley characterization of $H_{X,\,L}(\mathcal{X})$.
Recall that, for any given $\lambda\in(0,\infty)$ and any $f\in L^2(\mathcal{X})$, the \emph{Littlewood--Paley
$g$-function} $g_L(f)$ and the \emph{Littlewood--Paley $g_{\lambda}^\ast$-function} $g_{\lambda,L}^\ast (f)$
are defined, respectively, by setting, for any $x\in\mathcal{X},$
\begin{align*}
g_L(f)(x):=\left[\int_0^\infty\left|t^2Le^{-t^2L}(f)(x)\right|^2\,\frac{dt}{t}\right]^{\frac{1}{2}}
\end{align*}
and
\begin{align*}
g_{\lambda,\,L}^\ast(f)(x):=\left\{\int_0^\infty\int_{\mathcal{X}}\left[\frac{t}{t+d(x,y)}\right]^\lambda
\left|t^2Le^{-t^2L}(f)(y)\right|^2\,\frac{d\mu(y)\,dt}{V(x,t)t}\right\}^{\frac{1}{2}}.
\end{align*}
In a similar way, the $g_L$-\emph{adapted} and the $g_{\lambda,L}^\ast$-\emph{adapted Hardy spaces}
$H_{X,\,L,\,g}(\mathcal{X})$ and $H_{X,\,L,\,g_{\lambda}^\ast}(\mathcal{X})$ are defined in the way same as
$H_{X,\,L}(\mathcal{X})$ with $S_L(f)$ replaced, respectively, by $g_L(f)$ and $g^\ast_{\lambda,\,L}(f)$.

If  $L$ satisfies the Davies--Gaffney estimate \eqref{dg}, we have the
following conclusion.
\begin{theorem}\label{thm-g-2}
Let $L$ be a non-negative self-adjoint operator on $L^2(\mathcal{X})$ satisfying
the Davies--Gaffney estimate \eqref{dg}. Assume that $X(\mathcal{X})$ is a $\mathrm{BQBF}$ space
satisfying both Assumptions \ref{vector1} and \ref{vector2} for some $p\in(0,\infty)$,
$s_0\in(0,\min\{p,1\}),$ and $q_0\in(s_0,2]$. Let $\lambda\in(\frac{2n}{s_0},\infty)$.
Then both $g_{L}$ and $g_{\lambda,\,L}^\ast$ are bounded from $H_{X,\,L}(\mathcal{X})$ to $X(\mathcal{X})$
\end{theorem}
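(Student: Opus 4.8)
The plan is to observe that both $g_L(f)$ and $g_{\lambda,\,L}^\ast(f)$ are sublinear functionals of the single tent-space function $F:=t^2Le^{-t^2L}(f)$, and then to repeat, almost verbatim, the atomic-decomposition scheme of the proof of Proposition \ref{thm-mc-re}, but decomposing $F$ directly in $T_X(\mathcal{X}^+)$ rather than decomposing $f$ into operator-atoms. For $G\in T^2(\mathcal{X}^+)$ write $\mathcal{V}(G)(x):=[\int_0^\infty|G(x,t)|^2\,\frac{dt}{t}]^{1/2}$ and $\mathcal{G}_\lambda(G)(x):=\{\int_0^\infty\int_{\mathcal{X}}[\frac{t}{t+d(x,y)}]^\lambda|G(y,t)|^2\,\frac{d\mu(y)\,dt}{V(x,t)t}\}^{1/2}$, so that $g_L(f)=\mathcal{V}(F)$ and $g_{\lambda,\,L}^\ast(f)=\mathcal{G}_\lambda(F)$. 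By density it suffices to argue for $f\in\widetilde{H}_{X,\,L}(\mathcal{X})$, whence $f\in L^2(\mathcal{X})$; then \eqref{bound} and $\|\mathcal{A}(F)\|_{X(\mathcal{X})}=\|S_L(f)\|_{X(\mathcal{X})}=\|f\|_{H_{X,\,L}(\mathcal{X})}$ give $F\in T^2(\mathcal{X}^+)\cap T_X(\mathcal{X}^+)$. Applying Theorem \ref{thm-ad-tent} (its hypotheses hold since $s_0\in(0,p)$) furnishes $\{\lambda_j\}_{j\in\mathbb{N}}$ and $(T_X,\infty)$-atoms $\{a_j\}_{j\in\mathbb{N}}$ with $\mathrm{supp}\,(a_j)\subset T(B_j)$, $F=\sum_{j}\lambda_ja_j$ both in $T^2(\mathcal{X}^+)$ and almost everywhere, and $\|\{\sum_j(\lambda_j\|\mathbf{1}_{B_j}\|_{X(\mathcal{X})}^{-1})^{s_0}\mathbf{1}_{B_j}\}^{1/s_0}\|_{X(\mathcal{X})}\lesssim\|f\|_{H_{X,\,L}(\mathcal{X})}$. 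Since $\mathcal{V}$ and $\mathcal{G}_\lambda$ are weighted $L^2$-norms, the Minkowski inequality and the almost everywhere identity yield $\mathcal{V}(F)\le\sum_j\lambda_j\mathcal{V}(a_j)$ and $\mathcal{G}_\lambda(F)\le\sum_j\lambda_j\mathcal{G}_\lambda(a_j)$ pointwise almost everywhere.

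For the $g$-function the per-atom estimate is immediate: since $\mathrm{supp}\,(a)\subset T(B)$ forces $a(x,t)\ne0$ only when $x\in B$ and $t\le r_B$, the function $\mathcal{V}(a)$ is supported in $B$, and the Fubini theorem with \eqref{jiewei} gives $\|\mathcal{V}(a)\|_{L^2(\mathcal{X})}=\|a\|_{T^2(\mathcal{X}^+)}\le[\mu(B)]^{1/2}\|\mathbf{1}_B\|_{X(\mathcal{X})}^{-1}$. Applying Proposition \ref{pras} with $q:=2$ and $\theta:=1$ (admissible as $q_0\le2$) to $\{\lambda_j\mathcal{V}(a_j)\}_j$ then gives $\|\sum_j[\lambda_j\mathcal{V}(a_j)]^{s_0}\|_{X^{1/s_0}(\mathcal{X})}\lesssim\|\sum_j(\lambda_j\|\mathbf{1}_{B_j}\|_{X(\mathcal{X})}^{-1})^{s_0}\mathbf{1}_{B_j}\|_{X^{1/s_0}(\mathcal{X})}$, and since $\|g_L(f)\|_{X(\mathcal{X})}^{s_0}=\|[\mathcal{V}(F)]^{s_0}\|_{X^{1/s_0}(\mathcal{X})}\le\|\sum_j[\lambda_j\mathcal{V}(a_j)]^{s_0}\|_{X^{1/s_0}(\mathcal{X})}$ by $s_0$-subadditivity ($s_0\le1$), this settles the assertion for $g_L$.

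The decisive point is the annular $L^2$-decay for $\mathcal{G}_\lambda(a)$. For $i\in\{0,1\}$ I would use the $L^2$-boundedness $\|\mathcal{G}_\lambda(a)\|_{L^2(\mathcal{X})}\lesssim\|a\|_{T^2(\mathcal{X}^+)}$, which follows by the Fubini theorem from $\int_{\mathcal{X}}[\frac{t}{t+d(x,y)}]^\lambda\frac{d\mu(x)}{V(x,t)}\lesssim1$ for $\lambda>n$. For $i\ge2$ and $x\in U_i(B)$, $y\in B$, $t\le r_B$ one has $d(x,y)\sim2^ir_B$, so $[\frac{t}{t+d(x,y)}]^\lambda\lesssim(t/2^ir_B)^\lambda$; moreover covering $U_i(B)$ by $\sim(2^ir_B/t)^n$ balls of radius $t$ gives $\int_{U_i(B)}\frac{d\mu(x)}{V(x,t)}\lesssim(2^ir_B/t)^n$, whence $\int_{U_i(B)}[\frac{t}{t+d(x,y)}]^\lambda\frac{d\mu(x)}{V(x,t)}\lesssim2^{-i(\lambda-n)}(t/r_B)^{\lambda-n}\le2^{-i(\lambda-n)}$. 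Inserting this into the Fubini expansion of $\|\mathcal{G}_\lambda(a)\|_{L^2(U_i(B))}^2$ and using $\int_0^\infty\int_{\mathcal{X}}|a(y,t)|^2\frac{d\mu(y)\,dt}{t}=\|a\|_{T^2(\mathcal{X}^+)}^2$ yields $\|\mathcal{G}_\lambda(a)\|_{L^2(U_i(B))}\lesssim2^{-i(\lambda-n)/2}[\mu(B)]^{1/2}\|\mathbf{1}_B\|_{X(\mathcal{X})}^{-1}$.

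With this decay I would finish exactly as in Proposition \ref{thm-mc-re}: write $\mathcal{G}_\lambda(a_j)=\sum_{i\in\mathbb{Z}_+}\mathcal{G}_\lambda(a_j)\mathbf{1}_{U_i(B_j)}$, apply the triangle inequality in the BBF space $X^{1/s_0}(\mathcal{X})$ together with $s_0$-subadditivity to separate the layers, apply Proposition \ref{pras} with $q:=2$ and $\theta:=2^i$ to each layer, and sum the geometric series in $i$. The series converges precisely when the decay exponent $\eta:=(\lambda-n)/2$ exceeds $n(\frac{1}{s_0}-\frac12)$, i.e. when $\lambda>\frac{2n}{s_0}$, which is exactly the hypothesis; this gives $\|g_{\lambda,\,L}^\ast(f)\|_{X(\mathcal{X})}^{s_0}\lesssim\|\sum_j(\lambda_j\|\mathbf{1}_{B_j}\|_{X(\mathcal{X})}^{-1})^{s_0}\mathbf{1}_{B_j}\|_{X^{1/s_0}(\mathcal{X})}\lesssim\|f\|_{H_{X,\,L}(\mathcal{X})}^{s_0}$. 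I expect the main obstacle to be precisely this annular estimate for $\mathcal{G}_\lambda$: one must balance the Poisson-type weight $[\frac{t}{t+d(x,y)}]^\lambda$ against the volume ratio $V(x,t)^{-1}$ (controlled via \eqref{eqoz} and \eqref{eqoz3}) so as to extract the sharp exponent $(\lambda-n)/2$ matching the threshold $\lambda>\frac{2n}{s_0}$; the $g_L$ part and the final summation are then routine.
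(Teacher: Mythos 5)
Your proposal is correct, but it follows a genuinely different route from the paper's. The paper treats the two square functions by two separate mechanisms: for $g_L$ it invokes Proposition \ref{jiushijie}, which decomposes $f$ into operator $(X,M)$-atoms via Proposition \ref{thm-at-dec} and then derives the annular decay of $g_L(\alpha)$ from the Davies--Gaffney estimate and the structure $\alpha=L^M(b)$; for $g_{\lambda,\,L}^\ast$ it invokes Proposition \ref{kandejian}, an extrapolation-type comparison $\|g^\ast_\lambda(F)\|_{X(\mathcal{X})}\lesssim\|S(F)\|_{X(\mathcal{X})}$ valid for \emph{arbitrary} $F$ on $\mathcal{X}^+$, proved by dualizing against the $A_1(\mathcal{X})$ majorant built by the Rubio de Francia iteration (Lemma \ref{20232211}) and quoting the weighted inequality of Lemma \ref{202322111}. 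You instead decompose the single tent-space function $F:=t^2Le^{-t^2L}(f)$ into $(T_X,\infty)$-atoms via Theorem \ref{thm-ad-tent} and prove both per-atom estimates by hand: for $\mathcal{V}(a)$ the support property of tent atoms plus \eqref{jiewei} makes the estimate trivial (no off-diagonal estimates for the semigroup are needed at all), and for $\mathcal{G}_\lambda(a)$ your annular bound $2^{-i(\lambda-n)/2}$ is correct -- the only points worth writing out carefully are the covering-number bound $\int_{U_i(B)}[V(x,t)]^{-1}\,d\mu(x)\lesssim(2^ir_B/t)^n$ (which does follow from doubling) and the observation that $(y,t)\in T(B)$ forces $t\le 2r_B$, so the factor $(t/r_B)^{\lambda-n}$ stays bounded. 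The summation then closes exactly under $\frac{\lambda-n}{2}>n(\frac{1}{s_0}-\frac{1}{2})$, i.e. $\lambda>\frac{2n}{s_0}$, matching the hypothesis. What each approach buys: yours is more elementary and self-contained (only doubling geometry and Proposition \ref{pras}, no weighted theory and no quotation of the Gong--Yan inequality), and it makes the origin of the threshold $\lambda>\frac{2n}{s_0}$ completely transparent; the paper's Proposition \ref{kandejian} is a strictly stronger statement about arbitrary functions on $\mathcal{X}^+$, which is then reused verbatim as the first inclusion \eqref{yifang1} in the proof of the Littlewood--Paley characterization Theorem \ref{thm-g}, so the extrapolation detour pays for itself later.
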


To obtain the Littlewood--Paley characterization of $H_{X,\,L}(\mathcal{X})$ and the boundedness of
the operator $(I+L)^{-s}e^{itL}$ on $H_{X,\,L}(\mathcal{X})$, we need the following assumption on $L.$

\begin{assumption}
The kernels of the semigroup $\{e^{-tL}\}_{t\in(0,\infty)}$, denoted by $\{K_t\}_{t\in(0,\infty)}$, are
measurable functions on $\mathcal{X}\times\mathcal{X}$ and satisfy the Gaussian upper bound estimate, that is,
there exist positive constants $C$ and $c$ such that, for any $t\in(0,\infty)$ and $x,y\in\mathcal{X}$,
\begin{align}\label{gauss}
|K_t(x,y)|\leq\frac{C}{V(x,t^{1/2})}\exp\left\{-\frac{[d(x,y)]^2}{ct}\right\}.
\end{align}
\end{assumption}

\begin{remark}
Let $L$ be a non-negative self-adjoint operator on $L^2(\mathcal{X})$ satisfying the Gaussian upper bound
estimate \eqref{gauss}. Then it is easy to find that $L$ satisfies the Davies--Gaffney estimate \eqref{dg}.
Moreover, by \cite[Section 2.6]{hlmmy11}, we have $N(L)=\{0\}$ and hence $\overline{R(L)}=L^2(\mathcal{X}).$
\end{remark}

\begin{theorem}\label{thm-g}
Let $L$ be a non-negative self-adjoint operator on $L^2(\mathcal{X})$ satisfying the Gaussian upper bound
estimate \eqref{gauss}, and $X(\mathcal{X})$ be a $\mathrm{BQBF}$ space satisfying Assumption \ref{vector1} for some
$p\in(0,\infty)$. Assume that $s_0\in(0,\min\{p,1\})$, $\lambda\in(\frac{2n}{s_0},\infty),$
$X^{1/s_0}(\mathcal{X})$ is $\mathrm{BBF}$ space, and the Hardy--Littlewood maximal operator $\mathcal{M}$ is bounded
on $(X^{1/s_0}(\mathcal{X}))'.$ Then the spaces $H_{X,\,L}(\mathcal{X})$, $H_{X,\,L,\,g}(\mathcal{X}),$
and $H_{X,\,L,\,g^\ast_{\lambda}}(\mathcal{X})$ coincide with equivalent quasi-norms.
\end{theorem}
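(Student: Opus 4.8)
The plan is to show that the three defining quasi-norms $\|S_L(\cdot)\|_{X(\mathcal{X})}$, $\|g_L(\cdot)\|_{X(\mathcal{X})}$, and $\|g^\ast_{\lambda,\,L}(\cdot)\|_{X(\mathcal{X})}$ are mutually equivalent on a common dense subclass and then to pass to the completions. Since the Gaussian upper bound \eqref{gauss} implies the Davies--Gaffney estimate \eqref{dg} and $\overline{R(L)}=L^2(\mathcal{X})$, it suffices to prove the equivalences for every $f$ lying in the $L^2$-part of these spaces; the identification of the completions then follows from a routine density argument, for which the atomic characterization of Theorem \ref{thm-mc} supplies a common dense subspace. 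Throughout I abbreviate $F(y,t):=t^{2}Le^{-t^{2}L}(f)(y)$.

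Two of the four required inequalities come for free. First, restricting the defining integral of $g^\ast_{\lambda,\,L}(f)$ to the cone $\{(y,t):\,d(x,y)<t\}$, on which $[t/(t+d(x,y))]^\lambda\ge2^{-\lambda}$, yields the pointwise bound $S_L(f)\le2^{\lambda/2}g^\ast_{\lambda,\,L}(f)$ and hence $\|S_L(f)\|_{X(\mathcal{X})}\lesssim\|g^\ast_{\lambda,\,L}(f)\|_{X(\mathcal{X})}$. Second, the two upper bounds $\|g_L(f)\|_{X(\mathcal{X})}\lesssim\|S_L(f)\|_{X(\mathcal{X})}$ and $\|g^\ast_{\lambda,\,L}(f)\|_{X(\mathcal{X})}\lesssim\|S_L(f)\|_{X(\mathcal{X})}$ are provided by Theorem \ref{thm-g-2}, whose proof estimates the $g$- and $g^\ast_{\lambda}$-functions of $(X,M)$-atoms through the atomic characterization (the hypotheses of the present theorem being arranged so that this argument applies). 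Together these already give $H_{X,\,L}(\mathcal{X})=H_{X,\,L,\,g^\ast_{\lambda}}(\mathcal{X})$ and the embedding $H_{X,\,L}(\mathcal{X})\hookrightarrow H_{X,\,L,\,g}(\mathcal{X})$.

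It remains to prove the reverse area bound $\|S_L(f)\|_{X(\mathcal{X})}\lesssim\|g_L(f)\|_{X(\mathcal{X})}$, which is the only place where the Gaussian hypothesis \eqref{gauss} is genuinely used. Fix $r\in(0,\min\{p,2\})$ small and $a$ large so that the Peetre type maximal estimate of Lemma \ref{point} applies; it gives, for each $t\in(0,\infty)$ and $x\in\mathcal{X}$,
\[
\sup_{y\in\mathcal{X}}\frac{|F(y,t)|}{(1+d(x,y)/t)^{a}}\lesssim\left[\mathcal{M}\left(|F(\cdot,t)|^{r}\right)(x)\right]^{\frac{1}{r}}.
\]
For $y\in B(x,t)$ the supremand dominates $2^{-a}|F(y,t)|$, so averaging $|F(y,t)|^{2}$ over $B(x,t)$ and integrating in $t$ produces
\[
S_L(f)(x)^{2}\lesssim\int_{0}^{\infty}\left[\mathcal{M}\left(|F(\cdot,t)|^{r}\right)(x)\right]^{\frac{2}{r}}\,\frac{dt}{t}.
\]
Taking $X(\mathcal{X})$-quasi-norms of the square root and applying the vector-valued maximal inequality of Assumption \ref{vector1} (with its exponent parameter equal to $r\in(0,p)$ and $u:=2/r\in(1,\infty)$), in its continuous $\frac{dt}{t}$-form obtained from the stated sequential version by a standard dyadic discretization of the $t$-integral, we arrive at
\[
\|S_L(f)\|_{X(\mathcal{X})}\lesssim\left\|\left(\int_{0}^{\infty}|F(\cdot,t)|^{2}\,\frac{dt}{t}\right)^{\frac{1}{2}}\right\|_{X(\mathcal{X})}=\|g_L(f)\|_{X(\mathcal{X})}.
\]
This yields $H_{X,\,L,\,g}(\mathcal{X})\hookrightarrow H_{X,\,L}(\mathcal{X})$ and, with the previous paragraph, completes the chain of equivalences.

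I expect the main obstacle to be precisely this reverse bound: choosing $r$ small enough for Lemma \ref{point} while keeping $r<p$ and $u=2/r>1$ so that Assumption \ref{vector1} remains applicable, and upgrading that assumption from its sequential form to the continuous $\frac{dt}{t}$ version used above. The two free directions and the completion step are routine by comparison; the entire weight of the theorem, together with the necessity of the Gaussian hypothesis, rests in feeding the Peetre maximal estimate into the vector-valued maximal inequality on $X(\mathcal{X})$.
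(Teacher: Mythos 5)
There is a genuine gap, and it is a hypothesis mismatch. Theorem \ref{thm-g} does \emph{not} assume Assumption \ref{vector2}: it only assumes that $X^{1/s_0}(\mathcal{X})$ is a $\mathrm{BBF}$ space and that $\mathcal{M}$ is bounded on $(X^{1/s_0}(\mathcal{X}))'$ itself, which is strictly weaker than boundedness of $\mathcal{M}$ on the $\frac{1}{(q_0/s_0)'}$-convexification of $(X^{1/s_0})'(\mathcal{X})$ for some $q_0\in(s_0,2]$ (the paper derives the former from the latter in the proof of Theorem \ref{thm-g-2}, not conversely). Consequently neither Theorem \ref{thm-g-2} nor the atomic characterization Theorem \ref{thm-mc} is available under the hypotheses of Theorem \ref{thm-g}, so your two ``free'' upper bounds $\|g_L(f)\|_{X(\mathcal{X})}\lesssim\|S_L(f)\|_{X(\mathcal{X})}$ and $\|g^\ast_{\lambda,L}(f)\|_{X(\mathcal{X})}\lesssim\|S_L(f)\|_{X(\mathcal{X})}$, as well as the appeal to atoms for the density step, are not justified; as written your argument proves the theorem only under the stronger standing assumptions of Section \ref{section3}. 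The paper avoids this by proving $\|g^\ast_{\lambda,L}(f)\|_{X(\mathcal{X})}\lesssim\|S_L(f)\|_{X(\mathcal{X})}$ via Proposition \ref{kandejian}, i.e.\ a Rubio de Francia iteration $R_{(X^{1/s_0})'}$ (Lemma \ref{20232211}) that reduces the estimate to the $A_1$-weighted inequality of Lemma \ref{202322111}, and by obtaining $\|g_L(f)\|_{X(\mathcal{X})}\lesssim\|g^\ast_{\lambda,L}(f)\|_{X(\mathcal{X})}$ from a pointwise domination of $\int_0^\infty|\varphi^\ast_{(\lambda+n)/2}(t^2L)f|^2\,\frac{dt}{t}$ by $[g^\ast_{\lambda,L}(f)]^2$; both steps use exactly the stated hypotheses. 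The density argument likewise needs no atoms: the $L^2$-intersections are dense in each completion by definition, and $\overline{R(L)}=L^2(\mathcal{X})$ under \eqref{gauss}.

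A second, repairable, inaccuracy concerns your reverse area bound. The single-scale estimate $\sup_y|F(y,t)|(1+d(x,y)/t)^{-a}\lesssim[\mathcal{M}(|F(\cdot,t)|^r)(x)]^{1/r}$ is not what Lemma \ref{point} provides (and is not to be expected for $\varphi(\xi)=\xi e^{-\xi}$, which is not a compactly supported spectral cutoff): the lemma gives a multi-scale bound involving a sum $\sum_{j\ge\ell}2^{-(j-\ell)ur}$ over finer scales. One must therefore carry this sum through the H\"older and Minkowski steps, arrive at $\sum_{j}2^{-|j-\ell|u}\{\mathcal{M}([b_j]^{r/2})\}^{2/r}$ with $b_j:=\int_1^2|\varphi(2^{-2j}t^2L)f|^2\,\frac{dt}{t}$, and only then apply Assumption \ref{vector1}; this is exactly the paper's proof of \eqref{yifang2}, and your choice of exponents ($r<\min\{p,2\}$, $u=2/r>1$) is the right one. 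With these two corrections -- replacing the atomic route by the extrapolation argument of Proposition \ref{kandejian} together with the pointwise $g^\ast_{\lambda}\Rightarrow g$ comparison, and using the multi-scale form of the Peetre estimate -- your outline matches the paper's proof.
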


In the case when $X(\mathcal{X}):=L^{r}(\mathcal{X})$,
we denote $H_{X,\,L,\,g}(\mathcal{X})$ and $H_{X,\,L,\,g^\ast_{\lambda}}(\mathcal{X})$, respectively, by
$H_{L,\,g}^{r}(\mathcal{X})$ and $H_{L,\,g^\ast_{\lambda}}^{r}(\mathcal{X})$. Applying Theorems  \ref{thm-g-2} and \ref{thm-g} with $X(\mathcal{X}):=L^r(\mathcal{X})$, we have the
following conclusions; since their proofs are similar to that of Theorem \ref{ls10}, we omit the details here.
\begin{theorem}\label{ls20}
\begin{itemize}
\item[$\mathrm{(i)}$] Let  $L$ be a non-negative self-adjoint operator on $L^2(\mathcal{X})$ satisfying
the Davies --Gaffney estimate \eqref{dg}, $r\in(0,2)$, and $\lambda\in(\frac{2n}{\min\{1,r\}},\infty)$.
Then both $g_{L}$ and $g_{\lambda,\,L}^\ast$ are bounded from $H_{L}^{r}(\mathcal{X})$
to $L^r(\mathcal{X}).$
\item[$\mathrm{(ii)}$] Let $L$ be a non-negative self-adjoint operator on $L^2(\mathcal{X})$ satisfying the Gaussian upper bound
estimate \eqref{gauss}, $r\in(0,\infty)$, and $\lambda\in(\frac{2n}{\min\{1,r\}},\infty)$.
Then the spaces $H_{L}^{r}(\mathcal{X})$, $H_{L,\,g}^{r}(\mathcal{X}),$ and $H_{L,\,g^\ast_{\lambda}}^{r}(\mathcal{X})$ coincide with equivalent quasi-norms.
\end{itemize}
\end{theorem}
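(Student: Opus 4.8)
The plan is to deduce both parts by specializing Theorems \ref{thm-g-2} and \ref{thm-g} to the choice $X(\mathcal{X}):=L^r(\mathcal{X})$, exactly along the lines of the proof of Theorem \ref{ls10}. The only genuine task is to verify, for a well-chosen auxiliary index $s_0$, that $L^r(\mathcal{X})$ satisfies the structural hypotheses imposed on $X(\mathcal{X})$ in each theorem; once this is done, the conclusions transfer verbatim since $\|\cdot\|_{X(\mathcal{X})}=\|\cdot\|_{L^r(\mathcal{X})}$.

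For part $\mathrm{(i)}$ I would set $p:=r$ and $q_0:=2$. The hypothesis $\lambda\in(\frac{2n}{\min\{1,r\}},\infty)$ is equivalent to $\frac{2n}{\lambda}<\min\{1,r\}$, so I may fix $s_0\in(\frac{2n}{\lambda},\min\{1,r\})$; this single choice forces $s_0\in(0,\min\{p,1\})$, $q_0=2\in(s_0,2]$, and $\lambda>\frac{2n}{s_0}$ simultaneously, which are precisely the index constraints of Theorem \ref{thm-g-2}. It then remains to verify the two Assumptions for $L^r(\mathcal{X})$. Assumption \ref{vector1} with this $p$ is the Fefferman--Stein vector-valued maximal inequality, furnished by \cite[Theorem 1.2]{gly09}. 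For Assumption \ref{vector2}, observe that $X^{1/s_0}(\mathcal{X})=L^{r/s_0}(\mathcal{X})$ with $r/s_0>1$ is a BBF space, that by \cite[p.\,10, Theorem 2.5]{bs88} its associate space is $L^{(r/s_0)'}(\mathcal{X})$, and that the required $\frac{1}{(q_0/s_0)'}$-convexification of the latter is again a Lebesgue space $L^c(\mathcal{X})$ with $c=(r/s_0)'/(q_0/s_0)'$; boundedness of $\mathcal{M}$ there (see \cite[Theorem 2.2]{h01}) holds exactly when $c>1$, i.e.\ when $r<q_0=2$, which is where the hypothesis $r\in(0,2)$ enters. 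Theorem \ref{thm-g-2} then gives the boundedness of $g_L$ and $g_{\lambda,\,L}^\ast$ from $H_L^r(\mathcal{X})$ into $X(\mathcal{X})=L^r(\mathcal{X})$.

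For part $\mathrm{(ii)}$, where $L$ satisfies the Gaussian upper bound \eqref{gauss} and $r$ ranges over all of $(0,\infty)$, I would make the same choice $p:=r$ and $s_0\in(\frac{2n}{\lambda},\min\{1,r\})$, so that $s_0\in(0,\min\{p,1\})$ and $\lambda>\frac{2n}{s_0}$. Here Theorem \ref{thm-g} requires only Assumption \ref{vector1} (again from \cite[Theorem 1.2]{gly09}) together with the conditions that $X^{1/s_0}(\mathcal{X})=L^{r/s_0}(\mathcal{X})$ be a BBF space and that $\mathcal{M}$ be bounded on its associate space $(X^{1/s_0}(\mathcal{X}))'=L^{(r/s_0)'}(\mathcal{X})$. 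Since $s_0<\min\{1,r\}\le r$ yields $r/s_0>1$ and hence $(r/s_0)'\in(1,\infty)$, both conditions hold for every $r\in(0,\infty)$; crucially, no convexification index $q_0$ appears, which is why the restriction $r<2$ is absent here. Applying Theorem \ref{thm-g} shows that $H_L^r(\mathcal{X})$, $H_{L,\,g}^r(\mathcal{X})$, and $H_{L,\,g^\ast_{\lambda}}^r(\mathcal{X})$ coincide with equivalent quasi-norms.

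The main subtlety, and essentially the only one, is the index bookkeeping: one must confirm that a single $s_0$ meets all the strict inequalities at once, and must correctly recognize the convexified associate space as a Lebesgue space so as to see that Assumption \ref{vector2} forces $r<2$ in part $\mathrm{(i)}$ while the weaker maximal-function hypothesis of Theorem \ref{thm-g} imposes no such bound in part $\mathrm{(ii)}$. Everything else is quoted directly: the Fefferman--Stein inequality from \cite{gly09}, the duality of Lebesgue spaces from \cite{bs88}, and the $L^q$-boundedness of $\mathcal{M}$ from \cite{h01}.
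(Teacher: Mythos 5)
Your proposal is correct and follows exactly the route the paper intends: specializing Theorems \ref{thm-g-2} and \ref{thm-g} to $X(\mathcal{X}):=L^r(\mathcal{X})$ and verifying Assumptions \ref{vector1} and \ref{vector2} via \cite{gly09}, \cite{bs88}, and \cite{h01}, as in the proof of Theorem \ref{ls10}. Your index bookkeeping (choosing $s_0\in(\frac{2n}{\lambda},\min\{1,r\})$, and locating where $r<2$ is forced in part (i) but not in part (ii)) is accurate and supplies precisely the details the paper omits.
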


To prove Theorems \ref{thm-g-2} and \ref{thm-g},
we first show the following conclusion.
\begin{proposition}\label{jiushijie}
Let $L$ be a non-negative self-adjoint operator on $L^2(\mathcal{X})$ satisfying
the Davies--Gaffney estimate \eqref{dg}. Assume that $X(\mathcal{X})$ is a $\mathrm{BQBF}$ space
satisfying both Assumptions \ref{vector1} and \ref{vector2} for some $p\in(0,\infty)$,
$s_0\in(0,\min\{p,1\}),$ and $q_0\in(s_0,2]$. Then $g_{L}$ is bounded from $H_{X,\,L}(\mathcal{X})$
to $X(\mathcal{X})$.
\end{proposition}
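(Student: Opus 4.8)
The plan is to adapt the proof of Proposition \ref{thm-mc-re}, replacing the Lusin area function $S_L$ by the vertical $g$-function $g_L$ and feeding in the atomic decomposition of Proposition \ref{thm-at-dec} in place of a molecular representation. First I would record that $g_L$ is bounded on $L^2(\mathcal{X})$: taking $\varphi(z):=|z|^2e^{-|z|^2}$ in \eqref{bound3} gives $\|g_L(f)\|_{L^2(\mathcal{X})}\lesssim\|f\|_{L^2(\mathcal{X})}$ directly, with no Fubini step over a cone as was needed for \eqref{bound}. Fixing $f\in H_{X,\,L}(\mathcal{X})\cap\overline{R(L)}$ and an integer $M>\frac{n}{2}(\frac{1}{s_0}-\frac{1}{2})$, Proposition \ref{thm-at-dec} (applicable since $s_0<p$ and Assumption \ref{vector1} holds) yields $f=\sum_{j\in\mathbb{N}}\lambda_j\alpha_j$ in $L^2(\mathcal{X})$ with $(X,M)$-atoms $\alpha_j$ associated with balls $B_j$, whose coefficients are controlled by $\|f\|_{H_{X,\,L}(\mathcal{X})}$.

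Next, since $g_L(f)(x)$ is the $L^2((0,\infty),\,dt/t)$-norm of $t^2Le^{-t^2L}(f)(x)$, the triangle inequality gives $g_L(f)\le\sum_{j\in\mathbb{N}}\lambda_jg_L(\alpha_j)$ pointwise, justified exactly as in the proof of Proposition \ref{thm-mc-re}. Splitting each term over the annuli $U_i(B_j)$ and using $s_0<1$ via \eqref{jiben}, I would obtain $g_L(f)\le\{\sum_{i\in\mathbb{Z}_+}\sum_{j\in\mathbb{N}}[\lambda_jg_L(\alpha_j)\mathbf{1}_{U_i(B_j)}]^{s_0}\}^{\frac{1}{s_0}}$. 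Passing to the $\frac{1}{s_0}$-convexification, which is a $\mathrm{BBF}$ space by Assumption \ref{vector2}, and applying its triangle inequality as in \eqref{silu} reduces everything to estimating $\|\sum_{j\in\mathbb{N}}[\lambda_jg_L(\alpha_j)\mathbf{1}_{U_i(B_j)}]^{s_0}\|_{X^{1/s_0}(\mathcal{X})}$ for each $i$ and then summing a geometric series in $i$.

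The heart of the argument is the per-atom decay estimate: for any $(X,M)$-atom $\alpha$ associated with $B:=B(x_B,r_B)$ and any $i\in\mathbb{Z}_+$,
\[
\|g_L(\alpha)\|_{L^2(U_i(B))}\lesssim 2^{-i\eta}[\mu(B)]^{\frac{1}{2}}\|\mathbf{1}_B\|_{X(\mathcal{X})}^{-1}
\]
for some $\eta>n(\frac{1}{s_0}-\frac{1}{2})$, the $g_L$-analogue of \eqref{guji2}. For bounded $i$ this follows from the $L^2$-boundedness of $g_L$ and the atom size condition \eqref{chichun-at}. For $i\ge4$ I would split the $t$-integral at $t=2^{i\delta}r_B$ for a suitable $\delta\in(0,1)$, as in \eqref{tabuhui5}. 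On $t>2^{i\delta}r_B$, writing $\alpha=L^M(b)$ gives $t^2Le^{-t^2L}(\alpha)=t^{-2M}(t^2L)^{M+1}e^{-t^2L}(b)$, and the $L^2$-boundedness of the $(M+1)$-th $g$-function together with \eqref{chichun-at} produces the factor $2^{-4i\delta M}$. On $0<t\le2^{i\delta}r_B$, since $\mathrm{supp}\,(\alpha)\subset B$ while $U_i(B)$ lies at distance $\sim2^ir_B$, the Davies--Gaffney estimate for $\{tLe^{-tL}\}_{t}$ (Remark \ref{1515}) gives $\|t^2Le^{-t^2L}(\alpha)\|_{L^2(U_i(B))}\lesssim\exp\{-c(2^ir_B/t)^2\}\|\alpha\|_{L^2(\mathcal{X})}$; bounding the exponential by $(t/2^ir_B)^N$ and integrating in $t$ yields the factor $2^{-iN(1-\delta)}$. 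Choosing $M$, $N$ large and $\delta$ close to $1$ makes $\eta:=\min\{2M\delta,\frac{1}{2}N(1-\delta)\}$ exceed $n(\frac{1}{s_0}-\frac{1}{2})$.

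With this estimate in hand, Proposition \ref{pras} applied with $q:=2$ (legitimate since $q_0\le2$) and $\theta:=2^i$ converts the decay into $\|\sum_{j\in\mathbb{N}}[\lambda_jg_L(\alpha_j)\mathbf{1}_{U_i(B_j)}]^{s_0}\|_{X^{1/s_0}(\mathcal{X})}\lesssim 2^{-is_0[\eta-n(\frac{1}{s_0}-\frac{1}{2})]}\|\sum_{j\in\mathbb{N}}(\lambda_j/\|\mathbf{1}_{B_j}\|_{X(\mathcal{X})})^{s_0}\mathbf{1}_{B_j}\|_{X^{1/s_0}(\mathcal{X})}$, and summing over $i\in\mathbb{Z}_+$ together with the coefficient bound from Proposition \ref{thm-at-dec} gives $\|g_L(f)\|_{X(\mathcal{X})}\lesssim\|f\|_{H_{X,\,L}(\mathcal{X})}$. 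A density argument, using that $H_{X,\,L}(\mathcal{X})\cap\overline{R(L)}$ is dense in $H_{X,\,L}(\mathcal{X})$, completes the proof. The main obstacle is the per-atom decay estimate above; compared with the $S_L$-case the verification is somewhat cleaner, as $g_L$ carries no cone average, but the interplay between the off-diagonal Davies--Gaffney decay and the $t$-integration still requires careful bookkeeping.
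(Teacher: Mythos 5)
Your proposal is correct and follows essentially the same route as the paper: atomic decomposition via Proposition \ref{thm-at-dec}, a per-atom annular decay estimate $\|g_L(\alpha)\|_{L^2(U_i(B))}\lesssim 2^{-i\eta}[\mu(B)]^{\frac{1}{2}}\|\mathbf{1}_B\|_{X(\mathcal{X})}^{-1}$, Proposition \ref{pras} with $q=2$ and $\theta=2^i$, geometric summation in $i$, and a density argument. The only deviation is internal to the per-atom estimate: the paper splits the $t$-integral at $t=r_B$ and uses the Davies--Gaffney bound, in the form $\exp\{-c(2^ir_B)^2/t^2\}\lesssim(t/(2^ir_B))^{2\eta}$ with $\eta<2M$, on both pieces, whereas you split at $t=2^{i\delta}r_B$ as in the molecular Proposition \ref{thm-mc-re}; since atoms are supported in $B$, both bookkeeping choices yield the required $\eta>n(\frac{1}{s_0}-\frac{1}{2})$.
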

\begin{proof}
Let $f\in H_{X,\,L}(\mathcal{X})\cap\overline{R(L)}$ and $M\in(\frac{n}{2}[\frac{1}{s_0}-\frac{1}{2}],
\infty)\cap\mathbb{N}$. By Proposition \ref{thm-at-dec}, we find that  there exists a sequence $\{\lambda_j\}_{j\in\mathbb{N}}
\subset[0,\infty)$ and a sequence $\{\alpha_j\}_{j\in\mathbb{N}}$ of $(X,M)$-atoms associated, respectively,
with the balls $\{B_j\}_{j\in\mathbb{N}}$ such that
\begin{align}\label{banban}
f=\sum_{j\in\mathbb{N}}\lambda_j\alpha_j
\end{align}
in $L^2(\mathcal{X})$ and
\begin{align}\label{banban2}
\left\|\left\{\sum_{j\in\mathbb{N}}
\left[\frac{\lambda_j}{\|\mathbf{1}_{B_j}\|_{X(\mathcal{X})}}\right]^{s_0}\mathbf{1}_{B_j}
\right\}^{\frac{1}{s_0}}\right\|_X\lesssim \|f\|_{H_{X,\,L}(\mathcal{X})},
\end{align}
where the implicit positive constant is independent of $f$.

We first show that there exists a positive constant $C$  such
that, for any $(X,M)$-atom $\alpha$, associated with the ball $B:=B(x_B,r_B)$ for some $x_B\in\mathcal{X}$ and
$r_B\in(0,\infty)$, and for any $i\in\mathbb{Z}_+$,
\begin{align}\label{aibile}
\left\|g_L(\alpha)\right\|_{L^2(U_i(B))}
\leq C2^{-i\eta}[\mu(B)]^{\frac{1}{2}}\|\mathbf{1}_B\|_{X(\mathcal{X})}^{-1}
\end{align}
for some constant $\eta\in(n[\frac{1}{s_0}-\frac{1}{2}],2M)$.
Indeed, from the Tonelli theorem, \eqref{jiewei}, and \eqref{bound},  we deduce that, for any $h\in L^2(\mathcal{X})$,
\begin{align}\label{L2}
\|g_L(h)\|_{L^2(\mathcal{X})}^2\sim\|S_L(h)\|_{L^2(\mathcal{X})}^2
\lesssim\|h\|_{L^2(\mathcal{X})}^2.
\end{align}
If $i\in\{0,1\},$ then, by \eqref{L2} and \eqref{chichun-at}, we conclude that
\begin{align}\label{aibile3}
\|g_L(\alpha)\|_{L^2(U_i(B))}
\leq\|g_L(\alpha)\|_{L^2(\mathcal{X})}
\lesssim\|\alpha\|_{L^2(\mathcal{X})}
\leq[\mu(B)]^{\frac{1}{2}}\|\mathbf{1}_B\|_{X(\mathcal{X})}^{-1}.
\end{align}
Let $i\in\mathbb{N}\cap[2,\infty).$ Then, from the Tonelli theorem, it follows that
\begin{align}\label{beilun0}
\int_{U_i(B)}\left[g_L(\alpha)(x)\right]^2\,d\mu(x)
&=\int_0^{r_B}\int_{U_i(B)}\left|t^2Le^{-t^2L}(\alpha)(x)\right|^2\,d\mu(x)\,\frac{dt}{t}\\ \nonumber
&\quad+\int_{r_B}^\infty\cdots\\\notag
&=:\mathrm{I}+\mathrm{II}.
\end{align}
By Remark \ref{1515}, the estimate that $\mathrm{dist}(U_i(B),B)\sim 2^ir_B$, and \eqref{chichun-at}, we find that
\begin{align}\label{gailun1}
\mathrm{I}&\lesssim\|\alpha\|_{L^2(B)}^2\int_0^{r_B}\exp\left\{-c\frac{(2^ir_B)^2}{t^2}\right\}\,\frac{dt}{t}
\lesssim\|\alpha\|_{L^2(B)}^2\int_0^{r_B}\left(\frac{t}{2^ir_B}\right)^{2\eta}\,\frac{dt}{t}\\ \nonumber
&\lesssim2^{-2i\eta}\mu(B)\|\mathbf{1}_{B}\|_{X(\mathcal{X})}^{-2},
\end{align}
where  $c$ is a positive
constant depending only on $L$. Since $\alpha$ is an $(X,M)$-atom, it follows that there exists a $b\in \mathcal{D}(L^M)$
such that $\alpha=L^M(b)$ and \eqref{chichun-at} holds true for any $k\in\{0,\ldots,M\}.$
This, combined with Remark \ref{1515}, further implies that
\begin{align*}
\mathrm{II}
&=\int_{r_B}^\infty\int_{U_i(B)}t^{-4M}\left|(t^2L)^{M+1}e^{-t^2L}(b)(x)
\right|^2\,d\mu(x)\,\frac{dt}{t}\\
&\lesssim\|b\|_{L^2(B)}^2\int_{r_B}^\infty
t^{-4M}\exp\left\{-c\frac{(2^ir_B)^2}{t^2}\right\}\,\frac{dt}{t}\\
&\lesssim\|b\|_{L^2(B)}^2\int_{r_B}^\infty
t^{-4M}\left(\frac{t}{2^ir_B}\right)^{2\eta}\,\frac{dt}{t}\\
&\sim2^{-2i\eta}\|b\|_{L^2(B)}^2
\lesssim r_B^{4M}2^{-2i\eta}\mu(B)\left\|\mathbf{1}_B\right\|_{X(\mathcal{X})}^{-2}.
\end{align*}
By this, \eqref{gailun1}, \eqref{beilun0}, and \eqref{aibile3}, we conclude that \eqref{aibile} holds true.

Therefore, using \eqref{aibile} and applying Proposition \ref{pras} with $q=2$ and $\theta=2^i$, we obtain
\begin{align}\label{mingbai}
&\left\|\sum_{j\in\mathbb{N}}
\left[\lambda_jg_L(\alpha_j)\mathbf{1}_{U_i(B_j)}\right]^{s_0}\right\|_{X^{\frac{1}{s_0}}(\mathcal{X})}\\ \nonumber
&\quad\lesssim2^{-is_0[\eta-n(\frac{1}{s_0}-\frac{1}{2})]}
\left\|\sum_{j\in\mathbb{N}}\left[\frac{\lambda_j}
{\|\mathbf{1}_{B_j}\|_{X(\mathcal{X})}}\right]^{s_0}
\mathbf{1}_{B_j}\right\|_{X^{\frac{1}{s_0}}(\mathcal{X})}.
\end{align}
On the other hand, from \eqref{L2}, \eqref{banban}, and \eqref{jiben}, we infer that, for almost every
$x\in\mathcal{X},$
\begin{align*}
g_L(f)(x)&\leq\sum_{j\in\mathbb{N}} \lambda_jg_L(\alpha_j)(x)
=\sum_{j\in\mathbb{N}}\sum_{i\in\mathbb{Z}_+}
\lambda_jg_L(\alpha_j)(x)\mathbf{1}_{U_i(B_j)}(x)\\
&\leq\left\{\sum_{j\in\mathbb{N}}\sum_{i\in\mathbb{Z}_+}\left[\lambda_jg_L(\alpha_j)
\mathbf{1}_{U_i(B_j)}(x)\right]^{s_0}\right\}^{\frac{1}{s_0}}.
\end{align*}
By this, the assumption that $X^{\frac{1}{s_0}}$ is a $\mathrm{BBF}$ space, \eqref{mingbai}, and \eqref{banban2}, we find that
\begin{align*}
\|g_L(f)\|_{X(\mathcal{X})}^{s_0}&=\left\|[g_L(f)]^{s_0}\right\|_{X^{\frac{1}{s_0}}}\\
&\leq\left\|\sum_{i\in\mathbb{Z}_+}\sum_{j\in\mathbb{N}}
\left[\lambda_jg_L(\alpha_j)\mathbf{1}_{U_i(B_j)}\right]^{s_0}\right\|_{X^{\frac{1}{s_0}}(\mathcal{X})}\\
&\leq\sum_{i\in\mathbb{Z}_+}\left\|\sum_{j\in\mathbb{N}}
\left[\lambda_jg_L(\alpha_j)\mathbf{1}_{U_i(B_j)}\right]^{s_0}\right\|_{X^{\frac 1{s_0}}(\mathcal{X})}\\
&\lesssim\sum_{i\in\mathbb{Z}_+}2^{-is_0[\eta-(\frac{1}{s_0}-\frac{1}{2})n]}
\left\|\sum_{j\in\mathbb{N}}\left[\frac{\lambda_j}
{\|\mathbf{1}_{B_j}\|_{X(\mathcal{X})}}\right]^{s_0}
\mathbf{1}_{B_j}\right\|_{X^{\frac 1 {s_0}}(\mathcal{X})}\\
&\sim \|f\|_{H_{X,L}(\mathcal{X})}^{s_0},
\end{align*}
which, together with the fact that $H_{X,\,L}(\mathcal{X})\cap \overline{R(L)}$ is dense in $H_{X,\,L}(\mathcal{X})$, further
implies that the conclusion of the present proposition holds true. This finishes the proof of Proposition \ref{jiushijie}.
\end{proof}

To prove Theorems \ref{thm-g-2} and \ref{thm-g}, we borrow some ideas from the extrapolation theorem and its proof. We first
recall the concept of weighted Lebesgue spaces on $\mathcal{X}$. Denote the set of all balls of
$\mathcal{X}$ by $\mathbb{B}$. A locally integrable function $\omega:\ \mathcal{X}\to[0,\infty)$ is call an
$A_p(\mathcal{X})$-\emph{weight} with $p\in[1,\infty)$ if
\begin{align*}
[w]_{A_p(\mathcal{X})}:=\sup_{B\in\mathbb{B}}\left\{[\mu(B)]^{-p}\|\omega\|_{L^1(B)}
\|\omega^{-1}\|_{L^{\frac{1}{p-1}}(B)}\right\}<\infty,
\end{align*}
where $\frac{1}{p-1}:=\infty$ when $p=1.$ Moreover, let
\begin{align*}
A_\infty(\mathcal{X}):=\bigcup_{p\in[1,\infty)}A_p(\mathcal{X}).
\end{align*}
For any given $r\in(0,\infty)$ and $\omega\in A_\infty(\mathcal{X})$, the \emph{weighed Lebesgue space}
$L^r_\omega(\mathcal{X})$ is defined by setting
\begin{align*}
L^r_\omega(\mathcal{X}):=\left\{f\in\mathscr{M}(\mathcal{X}):\ \|f\|_{L^r_\omega(\mathcal{X})}<\infty\right\},
\end{align*}
where
\begin{align*}
\|f\|_{L^r_\omega(\mathcal{X})}:=\left[\int_\mathcal{X} |f(z)|^r\omega(z)\,d\mu(z)\right]^{\frac{1}{r}}.
\end{align*}
The following lemma  can be proved by a slight modification of
the proof of its Euclidean case \cite[Lemma 4.6]{dlyyz23};
we omit the details here.
\begin{lemma}\label{20232211}
Let $X(\mathcal{X})\subset\mathscr{M}(\mathcal{X})$ be a linear normed space, equipped with a norm
$\|\cdot\|_{X(\mathcal{X})}$ which makes sense for all functions in $\mathscr{M}(\mathcal{X})$.
Assume that the Hardy--Littlewood maximal operator $\mathcal{M}$ is bounded on $X(\mathcal{X})$.
For any $g\in X(\mathcal{X})$ and $x\in\mathcal{X}$, let
\begin{align*}
R_{X(\mathcal{X})}g(x):=\sum_{k=0}^{\infty}
\frac{\mathcal{M}^kg(x)}{2^k\|\mathcal{M}\|^k_{X(\mathcal{X})\to X(\mathcal{X})}},
\end{align*}
where, for any $k\in\mathbb{N},\,\mathcal{M}^k:=\mathcal{M}\circ\cdots\circ\mathcal{M}$ is the $k$ iterations
of the Hardy--Littlewood maximal operator and $\mathcal{M}^0g(x):=|g(x)|.$ Then, for any $g\in X(\mathcal{X})$
and $x\in\mathcal{X}$,
\begin{itemize}
\item[{\rm(i)}] $|g(x)|\leq R_{X(\mathcal{X})}g(x);$
\item[{\rm(ii)}] $R_{X(\mathcal{X})}g\in A_1(\mathcal{X})$ and $[R_{X(\mathcal{X})}g]_
{A_1(\mathcal{X})}\leq2\|\mathcal{M}\|_{X(\mathcal{X})\to X(\mathcal{X})},$
where $\|\mathcal{M}\|_{X(\mathcal{X})\to X(\mathcal{X})}$ denotes the operator norm of
$\mathcal{M}$ mapping $X(\mathcal{X})$ to $X(\mathcal{X})$;
\item[{\rm(iii)}] $\|R_{X(\mathcal{X})}g\|_{X(\mathcal{X})}\leq2\|g\|_{X(\mathcal{X})}.$
\end{itemize}
\end{lemma}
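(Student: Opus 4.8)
The plan is to verify the three assertions in the order (i), (iii), (ii), exploiting that $R_{X(\mathcal{X})}$ is a Rubio de Francia type iteration algorithm and that $\mathcal{M}$ is non-negative, monotone, sublinear, and bounded on $X(\mathcal{X})$. For brevity I would set $N:=\|\mathcal{M}\|_{X(\mathcal{X})\to X(\mathcal{X})}$. Assertion (i) is then immediate: the $k=0$ summand of $R_{X(\mathcal{X})}g(x)$ equals $\mathcal{M}^0 g(x)=|g(x)|$, while every remaining summand is non-negative since $\mathcal{M}^k g\geq0$; hence $|g(x)|\leq R_{X(\mathcal{X})}g(x)$.

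Next I would establish (iii), which simultaneously guarantees that $R_{X(\mathcal{X})}g$ is a genuine function (finite $\mu$-almost everywhere). By the triangle inequality for $\|\cdot\|_{X(\mathcal{X})}$ together with the iterated bound $\|\mathcal{M}^k g\|_{X(\mathcal{X})}\leq N^k\|g\|_{X(\mathcal{X})}$, one obtains
\[
\left\|R_{X(\mathcal{X})}g\right\|_{X(\mathcal{X})}
\leq\sum_{k=0}^\infty\frac{\|\mathcal{M}^k g\|_{X(\mathcal{X})}}{2^kN^k}
\leq\sum_{k=0}^\infty\frac{N^k\|g\|_{X(\mathcal{X})}}{2^kN^k}=2\|g\|_{X(\mathcal{X})},
\]
which is exactly (iii); in particular $R_{X(\mathcal{X})}g\in X(\mathcal{X})$ and is finite $\mu$-almost everywhere.

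For (ii) the crucial ingredient is the self-improving pointwise inequality $\mathcal{M}(R_{X(\mathcal{X})}g)\leq 2N\,R_{X(\mathcal{X})}g$ $\mu$-almost everywhere. To prove it I would first use that $\mathcal{M}$ is sublinear and monotone to obtain its countable subadditivity on non-negative functions, $\mathcal{M}(\sum_k f_k)\leq\sum_k\mathcal{M}(f_k)$; applying this to $f_k:=(2^kN^k)^{-1}\mathcal{M}^k g$ and re-indexing the resulting series yields
\[
\mathcal{M}\left(R_{X(\mathcal{X})}g\right)
\leq\sum_{k=0}^\infty\frac{\mathcal{M}^{k+1}g}{2^kN^k}
=2N\sum_{j=1}^\infty\frac{\mathcal{M}^j g}{2^jN^j}
\leq2N\,R_{X(\mathcal{X})}g.
\]
It then remains to convert this into the weight estimate. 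Using the definition of $[\cdot]_{A_1(\mathcal{X})}$ in the case $p=1$, where $\|w^{-1}\|_{L^\infty(B)}=[\mathrm{ess\,inf}_B\,w]^{-1}$, for every ball $B$ and $\mu$-almost every $x\in B$ one has $\frac{1}{\mu(B)}\int_B R_{X(\mathcal{X})}g\,d\mu\leq\mathcal{M}(R_{X(\mathcal{X})}g)(x)\leq2N\,R_{X(\mathcal{X})}g(x)$; taking the essential infimum over $x\in B$ and then the supremum over all balls $B$ gives $[R_{X(\mathcal{X})}g]_{A_1(\mathcal{X})}\leq2N$, which is (ii).

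The main obstacle — really the only non-cosmetic step — is the interchange of $\mathcal{M}$ with the infinite series in (ii): I would justify it by first proving the inequality for finite partial sums via the sublinearity of $\mathcal{M}$ and then passing to the limit by the monotone convergence theorem, which is legitimate because all $\mathcal{M}^k g$ are non-negative and the partial sums increase to $R_{X(\mathcal{X})}g$. I would also record that the doubling property of $\mu$ is what makes $\mathcal{M}$ well behaved and validates the passage from $\mathcal{M}(w)\leq Cw$ to $w\in A_1(\mathcal{X})$, and that $R_{X(\mathcal{X})}g$ is positive $\mu$-almost everywhere (otherwise the $A_1$ quotient would be meaningless): since $R_{X(\mathcal{X})}g\geq|g|$, the self-improving inequality forces the zero set of such a nontrivial weight to be $\mu$-null, because $\mathcal{M}(R_{X(\mathcal{X})}g)(x)=0$ at a point would imply vanishing averages on all balls containing $x$ and hence $R_{X(\mathcal{X})}g\equiv0$. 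The whole argument parallels the Euclidean case in \cite[Lemma 4.6]{dlyyz23}, with only these routine modifications needed on $(\mathcal{X},d,\mu)$.
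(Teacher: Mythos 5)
Your proof is correct and is precisely the standard Rubio de Francia iteration argument that the paper itself relies on: the paper omits the proof of this lemma entirely, deferring to its Euclidean counterpart in \cite[Lemma 4.6]{dlyyz23}, and your three steps (non-negativity of the summands for (i), the geometric series bound for (iii), and the self-improving inequality $\mathcal{M}(R_{X(\mathcal{X})}g)\leq 2\|\mathcal{M}\|_{X(\mathcal{X})\to X(\mathcal{X})}R_{X(\mathcal{X})}g$ obtained from countable subadditivity of $\mathcal{M}$ for (ii)) are exactly that argument, including the correct justifications of the interchange of $\mathcal{M}$ with the series and of the a.e.\ positivity and local integrability of the weight. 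The only point worth flagging is that estimating the norm of the infinite sum in (iii) by the sum of the norms uses a Fatou-type property of $\|\cdot\|_{X(\mathcal{X})}$ (lower semicontinuity under increasing pointwise limits), which is not literally contained in the hypothesis ``linear normed space'' but does hold for the associate spaces of BBF spaces to which the lemma is actually applied in the paper.
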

For any  $\lambda\in(0,\infty)$, any
measurable function $F$ on $\mathcal{X}^+$, and any $x\in\mathcal{X}$, let
\begin{align*}
S(F)(x):=\left[\int_0^\infty\int_{B(x,t)}|F(y,t)|^2\,\frac{d\mu(y)\,dt}{V(x,t)t}\right]^{\frac{1}{2}}
\end{align*}
and
\begin{align*}
g_{\lambda}^\ast(F)(x):=\left\{\int_0^\infty\int_{\mathcal{X}}\left[\frac{t}{t+d(x,y)}\right]^\lambda
|F(y,t)|^2\,\frac{d\mu(y)\,dt}{V(x,t)t}\right\}^{\frac{1}{2}}.
\end{align*}
The following lemma is just \cite[Lemma 3.2(i)]{gy14}.
\begin{lemma}\label{202322111}
Let $r\in(0,2)$, $p\in[1,\infty)$, $\omega\in A_{p}(\mathcal{X})$, and $\lambda\in(\frac{2pn}{r},\infty)$.
Then there exists a positive constant $C$ such that, for any measurable function $F$ on $\mathcal{X}^+$
with $\|S(F)\|_{L^r_\omega(\mathcal{X})}<\infty,$
\begin{align*}
\left\|g_{\lambda}^\ast(F)\right\|_{L^r_\omega(\mathcal{X})}
\leq C\|S(F)\|_{L^r_\omega(\mathcal{X})}.
\end{align*}
\end{lemma}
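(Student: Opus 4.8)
The plan is to dominate $g_\lambda^\ast$ pointwise by a dyadic superposition of area functions with dilated apertures and then, via a weighted change-of-aperture estimate, to reduce everything to the single area function $S$. For $\alpha\in[1,\infty)$, introduce the aperture-$\alpha$ area function
\begin{align*}
S_\alpha(F)(x):=\left[\int_0^\infty\int_{B(x,\alpha t)}|F(y,t)|^2\,\frac{d\mu(y)\,dt}{V(x,t)t}\right]^{\frac12},
\end{align*}
so that $S=S_1$ and the normalization $V(x,t)^{-1}$ is kept fixed. First I would split the inner integration set of $g_\lambda^\ast(F)(x)$ into the cone $\{(y,t):\ d(x,y)<t\}$ and the dyadic annuli $\{(y,t):\ 2^{k-1}t\le d(x,y)<2^kt\}$ with $k\in\mathbb{N}$. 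On the $k$-th annulus one has $[t/(t+d(x,y))]^\lambda\sim 2^{-k\lambda}$ and the $y$-slice of this annulus lies in $B(x,2^kt)$, whereas on the cone the weight is at most $1$ and the $y$-slice equals $B(x,t)$. Since $V(x,t)^{-1}$ appears identically in $g_\lambda^\ast$ and in every $S_{2^k}$, this yields the pointwise bound
\begin{align*}
\left[g_{\lambda}^\ast(F)(x)\right]^2\lesssim\sum_{k=0}^\infty 2^{-k\lambda}\left[S_{2^k}(F)(x)\right]^2 ,
\end{align*}
with implicit constant independent of $F$ and $x$.

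The key analytic ingredient is then the weighted change-of-aperture estimate for the area function: for $\omega\in A_p(\mathcal{X})$ and $\alpha\in[1,\infty)$,
\begin{align*}
\left\|S_\alpha(F)\right\|_{L^r_\omega(\mathcal{X})}\lesssim\alpha^{\frac{pn}{r}}\left\|S(F)\right\|_{L^r_\omega(\mathcal{X})}.
\end{align*}
On a space of homogeneous type this is the analogue of the classical Coifman--Meyer--Stein comparison. Since $r$ may lie below $1$, duality on $L^r_\omega(\mathcal{X})$ is unavailable, so I would argue at the level of weighted distribution functions, comparing the super-level sets $\{S_\alpha(F)>\tau\}$ and $\{S(F)>c\tau\}$ through a Whitney-type covering of $\{S(F)>c\tau\}$ and extracting the factor $\alpha^{pn}$ from the doubling of $\mu$ together with the $A_p(\mathcal{X})$ property of $\omega$; this is exactly the content recorded in \cite[Lemma 3.2(i)]{gy14}.

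Because $r\in(0,2)$, one has $r/2<1$, so the subadditivity $(\sum_k a_k)^{r/2}\le\sum_k a_k^{r/2}$ applies to the pointwise bound above; raising that bound to the power $r/2$, integrating against $\omega$, and inserting the change-of-aperture estimate give
\begin{align*}
\left\|g_\lambda^\ast(F)\right\|_{L^r_\omega(\mathcal{X})}^r
\lesssim\sum_{k=0}^\infty 2^{-\frac{k\lambda r}{2}}\left\|S_{2^k}(F)\right\|_{L^r_\omega(\mathcal{X})}^r
\lesssim\left(\sum_{k=0}^\infty 2^{-\frac{k\lambda r}{2}+kpn}\right)\left\|S(F)\right\|_{L^r_\omega(\mathcal{X})}^r .
\end{align*}
The hypothesis $\lambda>\frac{2pn}{r}$ is precisely what forces $-\frac{\lambda r}{2}+pn<0$, so the geometric series converges and the asserted estimate follows.

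I expect the main obstacle to be the change-of-aperture estimate: securing a power of $\alpha$ small enough to match the threshold $\lambda>\frac{2pn}{r}$, uniformly over the whole range $r\in(0,2)$ and in particular for $r<1$, where one cannot use duality and must instead control weighted level sets directly. The pointwise decomposition and the final summation are routine geometric and series estimates once the correct aperture exponent is available.
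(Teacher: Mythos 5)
The paper offers no argument here at all: Lemma \ref{202322111} is imported verbatim as \cite[Lemma 3.2(i)]{gy14}, so there is no internal proof to compare against, and your sketch is in effect a reconstruction of the standard proof behind that citation. The reconstruction is sound: the dyadic annular decomposition of the factor $[t/(t+d(x,y))]^{\lambda}$ gives the pointwise bound $[g_{\lambda}^{\ast}(F)]^{2}\lesssim\sum_{k\ge0}2^{-k\lambda}[S_{2^{k}}(F)]^{2}$ (immediate because the normalization $V(x,t)^{-1}$ is common to $g_{\lambda}^{\ast}$ and to every $S_{2^{k}}$), the $r/2$-subadditivity is legitimate precisely because $r<2$, and the bookkeeping $-\lambda r/2+pn<0$ reproduces exactly the threshold $\lambda>\frac{2pn}{r}$ of the statement, which is strong evidence that this is the intended mechanism. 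The entire weight of the argument thus rests on the weighted change-of-aperture inequality $\|S_{\alpha}(F)\|_{L^{r}_{\omega}(\mathcal{X})}\lesssim\alpha^{pn/r}\|S(F)\|_{L^{r}_{\omega}(\mathcal{X})}$, which you assert rather than prove; it is indeed provable along the lines you indicate, by setting $O:=\{S(F)>c\tau\}$, taking the enlarged set $O^{\ast}:=\{\mathcal{M}(\mathbf{1}_{O})>c'\alpha^{-n}\}$ (the density threshold must be of order $\alpha^{-n}$ so that a Lemma \ref{tent-1}-type averaging applies on cones of aperture $\alpha$), estimating $\omega(O^{\ast})\lesssim\alpha^{pn}\omega(O)$ via the weighted weak $(p,p)$ inequality for $\mathcal{M}$ with $\omega\in A_{p}(\mathcal{X})$, and controlling the level set of $S_{\alpha}(F)$ off $O^{\ast}$ by Chebyshev before integrating the distribution function; this avoids duality and so covers $r<1$. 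One attribution slip: the change-of-aperture estimate is not \cite[Lemma 3.2(i)]{gy14} --- that reference is the statement of the present lemma itself --- so you would need to cite the aperture comparison as a separate ingredient (or prove it as above). This does not affect the correctness of your argument.
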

With the help of Lemma \ref{20232211}, we generalize Lemma \ref{202322111}
with $L^r_\omega(\mathcal{X})$ replaced by a $\mathrm{BQBF}$ space $X(\mathcal{X}).$
\begin{proposition}\label{kandejian}
Let $X(\mathcal{X})$ be a $\mathrm{BQBF}$ space, $s_0\in(0,2)$, and $\lambda\in(\frac{2n}{s_0},\infty)$.
Assume that $X^{1/s_0}(\mathcal{X})$ is a $\mathrm{BBF}$ space and $\mathcal{M}$ is bounded on
$(X^{1/s_0}(\mathcal{X}))'.$ Then there exists a positive constant $C$ such that, for any
measurable function $F$ on $\mathcal{X}^+$ with $\|S(F)\|_{X(\mathcal{X})}<\infty,$
\begin{align*}
\left\|g_{\lambda}^\ast(F)\right\|_{X(\mathcal{X})}\leq C\|S(F)\|_{X(\mathcal{X})}.
\end{align*}
\end{proposition}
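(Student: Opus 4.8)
The plan is to deduce the result from the weighted estimate in Lemma \ref{202322111} by a Rubio de Francia type extrapolation, using the iteration operator of Lemma \ref{20232211} to manufacture the required $A_1(\mathcal{X})$-weights with a \emph{uniformly} controlled $A_1$-constant.

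First I would pass to the convexification $X^{1/s_0}(\mathcal{X})$. By Definition \ref{decon}, for any $h\in\mathscr{M}(\mathcal{X})$ one has $\|h\|_{X(\mathcal{X})}^{s_0}=\||h|^{s_0}\|_{X^{1/s_0}(\mathcal{X})}$. Since $X^{1/s_0}(\mathcal{X})$ is a $\mathrm{BBF}$ space, Lemma \ref{2023221}(iii) yields the duality representation
\begin{align*}
\left\|g_\lambda^\ast(F)\right\|_{X(\mathcal{X})}^{s_0}
=\left\|\left[g_\lambda^\ast(F)\right]^{s_0}\right\|_{X^{1/s_0}(\mathcal{X})}
=\sup\int_{\mathcal{X}}\left[g_\lambda^\ast(F)(x)\right]^{s_0}g(x)\,d\mu(x),
\end{align*}
where the supremum is taken over all $0\le g\in (X^{1/s_0}(\mathcal{X}))'$ with $\|g\|_{(X^{1/s_0}(\mathcal{X}))'}\le1$ (and is, a priori, allowed to be infinite). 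Thus it suffices to bound $\int_{\mathcal{X}}[g_\lambda^\ast(F)]^{s_0}g\,d\mu$ for each such $g$ by a constant multiple of $\|S(F)\|_{X(\mathcal{X})}^{s_0}$, uniformly in $g$.

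Next, for each fixed such $g$, I would apply Lemma \ref{20232211} to the normed space $(X^{1/s_0}(\mathcal{X}))'$, on which $\mathcal{M}$ is bounded by hypothesis. This produces $Rg:=R_{(X^{1/s_0}(\mathcal{X}))'}g$ with $g\le Rg$ pointwise, with $\|Rg\|_{(X^{1/s_0}(\mathcal{X}))'}\le2$, and, crucially, with $Rg\in A_1(\mathcal{X})$ satisfying $[Rg]_{A_1(\mathcal{X})}\le 2\|\mathcal{M}\|_{(X^{1/s_0}(\mathcal{X}))'\to(X^{1/s_0}(\mathcal{X}))'}$. Since this $A_1$-bound is independent of $g$, I would apply Lemma \ref{202322111} with $r:=s_0\in(0,2)$, $p:=1$, $\omega:=Rg$, and $\lambda\in(\frac{2n}{s_0},\infty)=(\frac{2pn}{r},\infty)$; its finiteness hypothesis holds because, by the Hölder inequality of Lemma \ref{2023221}(ii), $\int_{\mathcal{X}}[S(F)]^{s_0}Rg\,d\mu\le2\|[S(F)]^{s_0}\|_{X^{1/s_0}(\mathcal{X})}=2\|S(F)\|_{X(\mathcal{X})}^{s_0}<\infty$. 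Hence, with a constant $C$ depending only on the fixed quantity $[Rg]_{A_1(\mathcal{X})}$,
\begin{align*}
\int_{\mathcal{X}}\left[g_\lambda^\ast(F)\right]^{s_0}g\,d\mu
\le\int_{\mathcal{X}}\left[g_\lambda^\ast(F)\right]^{s_0}Rg\,d\mu
\le C\int_{\mathcal{X}}\left[S(F)\right]^{s_0}Rg\,d\mu
\le 2C\left\|S(F)\right\|_{X(\mathcal{X})}^{s_0}.
\end{align*}
Taking the supremum over $g$ then gives the claim.

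The main obstacle is ensuring that the constant furnished by the weighted inequality is \emph{uniform} over the dual functions $g$; this is precisely what the Rubio de Francia iteration buys us, since Lemma \ref{202322111} depends on $\omega$ only through $[\omega]_{A_p(\mathcal{X})}$, which Lemma \ref{20232211}(ii) bounds by the fixed quantity $2\|\mathcal{M}\|_{(X^{1/s_0}(\mathcal{X}))'\to(X^{1/s_0}(\mathcal{X}))'}$. A secondary point requiring care is the finiteness bookkeeping—verifying the hypothesis $\|S(F)\|_{L^{s_0}_{Rg}}<\infty$ of Lemma \ref{202322111} and legitimizing the dual representation—both of which follow from the Hölder inequality on the $\mathrm{BBF}$ space $X^{1/s_0}(\mathcal{X})$ together with the assumption $\|S(F)\|_{X(\mathcal{X})}<\infty$.
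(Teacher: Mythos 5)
Your proposal is correct and follows essentially the same route as the paper's proof: dualize through the convexification $X^{1/s_0}(\mathcal{X})$ via Lemma \ref{2023221}(iii), apply the Rubio de Francia iteration of Lemma \ref{20232211} on $(X^{1/s_0}(\mathcal{X}))'$ to produce an $A_1(\mathcal{X})$-weight with uniformly bounded constant, and invoke the weighted estimate of Lemma \ref{202322111}. Your explicit attention to the uniformity of the $A_1$-constant and to the finiteness hypothesis of Lemma \ref{202322111} is a slightly more careful bookkeeping of points the paper leaves implicit, but the argument is the same.
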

\begin{proof}
Let $Y(\mathcal{X}):=X^{1/s_0}(\mathcal{X}).$ Then, by Lemmas \ref{2023221}, \ref{20232211}, and
\ref{202322111}, we find that, for any measurable function $F$ on $\mathcal{X}^+$ with $\|S(F)\|_{X(\mathcal{X})}<\infty,$
\begin{align*}
\left\|g_{\lambda}^\ast(F)\right\|_{X(\mathcal{X})}^{s_0}
&=\left\|\left[g_{\lambda}^\ast(F)\right]^{s_0}\right\|_{Y(\mathcal{X})}
=\left\|\left[g_{\lambda}^\ast(F)\right]^{s_0}\right\|_{Y''(\mathcal{X})}\\
&=\sup_{\|h\|_{Y'(\mathcal{X})}=1}\int_{\mathcal{X}}\left[g_{\lambda}^\ast(F)(x)\right]^{s_0}h(x)\,d\mu(x)\\
&\leq\sup_{\|h\|_{Y'(\mathcal{X})}=1}\int_{\mathcal{X}}\left[g_{\lambda}^\ast(F)(x)\right]^{s_0}R_{Y'(\mathcal{X})}h(x)\,d\mu(x)\\
&\lesssim\sup_{\|h\|_{Y'(\mathcal{X})}=1}\int_{\mathcal{X}}\left[S(F)(x)\right]^{s_0}R_{Y'(\mathcal{X})}h(x)\,d\mu(x)\\
&\lesssim\sup_{\|h\|_{Y'(\mathcal{X})}=1}\|[S(F)]^{s_0}\|_{Y(\mathcal{X})} \|R_{Y'(\mathcal{X})}h\|_{Y'(\mathcal{X})}\\
&\lesssim\sup_{\|h\|_{Y'(\mathcal{X})}=1}\left\|[S(F)]^{s_0}\right\|_{Y(\mathcal{X})} \|h\|_{Y'(\mathcal{X})}
\lesssim\left\|[S(F)]^{s_0}\right\|_{Y(\mathcal{X})}\\
&=\|S(F)\|_{X(\mathcal{X})}^{s_0}.
\end{align*}
This finishes the proof of Proposition \ref{kandejian}.
\end{proof}
Next, we prove Theorem \ref{thm-g-2} via using Propositions \ref{jiushijie} and \ref{kandejian}.
\begin{proof}[Proof of Theorem \ref{thm-g-2}]
By Proposition \ref{jiushijie}, we immediately find that $g_{L}$ is bounded from $H_{X,\,L}(\mathcal{X})$
to $X(\mathcal{X})$. Meanwhile, from the assumption that $\mathcal{M}$ is bounded on the $\frac{1}
{(q_0/s_0)'}$-convexification of the associate space $(X^{1/s_0})'(\mathcal{X})$ and the fact that
$(q_0/s_0)'>1$, we deduce that $\mathcal{M}$ is bounded on
$(X^{1/s_0})'(\mathcal{X})$. Thus, all the assumptions of Proposition \ref{kandejian} are satisfied. Therefore,
by Proposition \ref{kandejian}, we then conclude that $g_{\lambda,\,L}^\ast$ is bounded from
$H_{X,\,L}(\mathcal{X})$ to $X(\mathcal{X})$. This finishes the proof of Theorem \ref{thm-g-2}.
\end{proof}

Recall that, for any $t,\lambda\in(0,\infty)$ and $\varphi\in\mathcal{S}(\mathbb{R})$, the \emph{Peetre type maximal
operator} $\varphi^\ast_\lambda(tL)$ is defined by setting, for any $f\in L^2(\mathcal{X})$ and $x\in\mathcal{X}$,
\begin{align*}
\varphi^\ast_\lambda(tL)(f)(x):=\sup_{y\in\mathcal{X}}\frac{|\varphi(tL)f(y)|}{[1+t^{-1/2}d(x,y)]^\lambda}.
\end{align*}
The following lemma is just  \cite[Lemma 3.4]{h17}.

\begin{lemma}\label{point}
Let $L$ be a non-negative self-adjoint operator on $L^2(\mathcal{X})$ satisfying the Gaussian upper bound
estimates \eqref{gauss}. For any $\xi\in\mathbb{R}$, let $\varphi(\xi):=\xi e^{-\xi}.$ Then, for any given
$u,r\in(0,\infty)$ and $\lambda\in(\frac{n}{2},\infty)$, there exists a positive constant $C$ such that,
for any $\ell\in\mathbb{Z}$, $t\in[1,2]$,  $f\in L^2(\mathcal{X})$, and $x\in\mathcal{X}$,
\begin{align*}
&\left[\varphi^\ast_\lambda\left(2^{-2\ell}t^2L\right)(f)(x)\right]^r\\ \notag
&\quad\leq C\sum_{j=\ell}^\infty 2^{-(j-\ell)ur}\int_{\mathcal{X}}\frac{|\varphi(2^{-2j}t^2L)(f)(z)|^r}
{V(z,2^{-\ell})[1+2^{\ell}d(x,z)]^{\lambda r}}\,d\mu(z).	
\end{align*}
\end{lemma}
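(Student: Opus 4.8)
The plan is to reduce the estimate to a discrete Calder\'on reproducing formula adapted to $L$, to convert it into a pointwise kernel domination using the Gaussian bound \eqref{gauss}, and then to run the classical Peetre maximal function argument. Throughout I would write $g_j:=\varphi(2^{-2j}t^2L)(f)$ and $A_j:=\varphi^\ast_\lambda(2^{-2j}t^2L)(f)(x)$, and use that $t\in[1,2]$ forces $2^{-2j}t^2\sim 2^{-2j}$ and $[1+(2^{-2j}t^2)^{-1/2}d(x,\cdot)]^\lambda\sim[1+2^jd(x,\cdot)]^\lambda$. Since $\varphi(\xi)=\xi e^{-\xi}$ is nontrivial and decays at both $0$ and $\infty$, the functional calculus for $L$ already exploited in \eqref{bound3} yields a reproducing formula $g_\ell=\sum_{j\ge\ell}\Theta_{\ell,j}(g_j)$, in which each $\Theta_{\ell,j}=m_{\ell,j}(L)$ is a spectral multiplier carrying an extra scale-separation factor; the formula can be arranged so that only the finer scales $j\ge\ell$ occur, the coarser contributions being negligible because $\varphi$ vanishes at the origin.

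Next I would establish the kernel estimates. Arguing as in \eqref{2039} and Lemma \ref{guanjian} (finite propagation speed of $\cos(s\sqrt{L})$) together with the Gaussian upper bound \eqref{gauss}, the kernel of $\varphi(2^{-2j}t^2L)$, and more importantly that of $\Theta_{\ell,j}$, obeys, for every $\sigma,N\in(0,\infty)$, the bound $|K_{\Theta_{\ell,j}}(y,z)|\lesssim 2^{-(j-\ell)\sigma}[V(z,2^{-j})]^{-1}[1+2^jd(y,z)]^{-N}$. Inserting this into the reproducing formula gives the pointwise domination
\[
|g_\ell(y)|\lesssim\sum_{j\ge\ell}2^{-(j-\ell)\sigma}\int_{\mathcal{X}}\frac{|g_j(z)|}{V(z,2^{-j})[1+2^jd(y,z)]^N}\,d\mu(z).
\]

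Then comes the geometric bookkeeping. Using the quasi-triangle inequality in the form $1+2^\ell d(x,z)\lesssim[1+2^\ell d(x,y)][1+2^\ell d(y,z)]$, dividing by $[1+2^\ell d(x,y)]^\lambda$, using $2^\ell d(y,z)\le 2^j d(y,z)$ to take the supremum over $y$, and converting $V(z,2^{-j})$ into $V(z,2^{-\ell})$ by the doubling property \eqref{eqoz} (at the cost of a factor $2^{(j-\ell)n}$, absorbed into $2^{-(j-\ell)\sigma}$ by enlarging $N$), one reaches a bound for $A_\ell$ in terms of the integrals $B_j:=\int_{\mathcal{X}}|g_j(z)|^r[V(z,2^{-\ell})]^{-1}[1+2^\ell d(x,z)]^{-\lambda r}\,d\mu(z)$. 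When $r\ge1$, the measure $2^{-(j-\ell)\sigma}[V(z,2^{-j})(1+2^jd(y,z))^N]^{-1}\,d\mu(z)$ has uniformly bounded total mass, so Jensen's inequality applied to $x\mapsto x^r$ converts the linear bound directly into $A_\ell^r\lesssim\sum_{j\ge\ell}2^{-(j-\ell)ur}B_j$, which is the claim. When $0<r<1$, Jensen fails, and instead I would split $|g_j(z)|=|g_j(z)|^r|g_j(z)|^{1-r}$ and estimate $|g_j(z)|^{1-r}\le A_j^{1-r}[1+2^jd(x,z)]^{\lambda(1-r)}$ straight from the definition of the Peetre maximal function; after the same bookkeeping this produces $A_\ell\lesssim\sum_{j\ge\ell}2^{-(j-\ell)\sigma'}A_j^{1-r}B_j$ with $\sigma'$ as large as desired.

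The main obstacle is the final absorption in the range $0<r<1$, since the right-hand side still contains the maximal quantities $A_j^{1-r}$. To remove them I would first prove, using $\lambda>n/2$, $f\in L^2(\mathcal{X})$, and the Gaussian bound, that each $A_j$ is finite and grows at most polynomially in $j$, so that a self-improving (Rychkov-type) iteration is legitimate; then, choosing $\sigma'$ large enough that the residual geometric decay dominates both $ur$ and that polynomial growth, the standard absorption lemma upgrades $A_\ell\lesssim\sum_{j\ge\ell}2^{-(j-\ell)\sigma'}A_j^{1-r}B_j$ into $A_\ell^r\lesssim\sum_{j\ge\ell}2^{-(j-\ell)ur}B_j$. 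Verifying the a priori finiteness and the quantitative growth control needed to launch this absorption is the delicate heart of the argument; the reproducing formula and the kernel bounds, by contrast, are routine consequences of \eqref{gauss} and the finite propagation speed already used in Lemma \ref{guanjian}.
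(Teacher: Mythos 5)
The paper itself contains no proof of this lemma: it is imported verbatim as \cite[Lemma 3.4]{h17}, so there is no in-paper argument to compare against. Judged on its own merits, your proposal follows the standard Rychkov-type scheme that underlies the cited result, and most of its steps are sound: the geometric bookkeeping with the quasi-triangle inequality and doubling is correct; Jensen's inequality does settle the case $r\geq1$ once the kernels have uniformly integrable tails; and your identification of the delicate point for $r<1$ is exactly right --- the a priori bound $\varphi^\ast_\lambda(2^{-2j}t^2L)(f)(x)\lesssim\|f\|_{L^2(\mathcal{X})}[V(x,2^{-j})]^{-1/2}$, which requires precisely $\lambda>\frac{n}{2}$, the Gaussian bound, and $f\in L^2(\mathcal{X})$, grows at rate $2^{(j-\ell)n/2}$ and legitimizes the absorption iteration once $\sigma'$ is taken large relative to $ur$ and $n$.

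The genuine gap is in the step you dismiss as routine: the existence of a reproducing formula $g_\ell=\sum_{j\geq\ell}\Theta_{\ell,j}(g_j)$ whose multipliers carry an \emph{arbitrarily large} scale-separation factor $2^{-(j-\ell)\sigma}$. Your stated mechanism --- coarse contributions are negligible ``because $\varphi$ vanishes at the origin'' --- does not deliver this. Since $\varphi(\xi)=\xi e^{-\xi}$ vanishes only to first order at $0$, the natural telescoping $I=e^{-2^{-2\ell}t^2L}+\sum_{j\geq\ell}\bigl(e^{-2^{-2(j+1)}t^2L}-e^{-2^{-2j}t^2L}\bigr)$, after dividing each increment by $\varphi(2^{-2j}t^2\cdot)$ to bring out $g_j$, produces multipliers of size $O(1)$: the gain $2^{-2(j-\ell)}$ from the length of the increment is exactly cancelled by the loss $2^{2(j-\ell)}$ incurred by dividing by $\varphi(2^{-2j}t^2\xi)\approx2^{-2j}t^2\xi$ near the spectral origin (evaluate at $\xi\sim 2^{2\ell}$ to see this). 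So a proof executed as you describe stalls precisely where the decay is supposed to appear. Two correct replacements exist. One is the exact semigroup identity $\varphi(a\xi)=\frac{a}{b_j}e^{-(a-b_j)\xi}\varphi(b_j\xi)$ with $a:=2^{-2\ell}t^2$ and $b_j:=2^{-2j}t^2$, averaged against weights $w_j\sim2^{-(j-\ell)(\sigma+2)}$ with $\sum_{j\geq\ell}w_j=1$; this gives $\Theta_{\ell,j}=w_j\,2^{2(j-\ell)}e^{-(a-b_j)L}$, with decay $2^{-(j-\ell)\sigma}$ but kernels Gaussian-localized at scale $2^{-\ell}$, not $2^{-j}$. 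The other is a spectral partition of unity $\sum_{j\geq\ell}\chi_j\equiv1$ with $\mathrm{supp}\,\chi_j\subset\{\xi\sim2^{2j}\}$ for $j>\ell$, where the decay comes from the \emph{super-exponential decay of $\varphi$ at infinity}, namely $\varphi(2^{-2\ell}t^2\xi)\sim2^{2(j-\ell)}e^{-c2^{2(j-\ell)}}$ on $\mathrm{supp}\,\chi_j$; only this second route yields the scale-$j$ kernel bound $[V(z,2^{-j})]^{-1}[1+2^jd(y,z)]^{-N}$ that you wrote (that bound is false for the first construction, as one checks at $d(y,z)\sim2^{-\ell}$ with $j\to\infty$). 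Either localization suffices for your bookkeeping, since the target weights are at scale $\ell$, but you must commit to one construction and prove its kernel estimate via the standard bounds for spectrally supported multipliers under \eqref{gauss}; finite propagation speed as in Lemma \ref{guanjian} gives support information only and produces no factor decaying in $j-\ell$.
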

Now, we prove Theorem \ref{thm-g} by using Proposition \ref{kandejian} and Lemma \ref{point}.

\begin{proof}[Proof of Theorem \ref{thm-g}]
By Proposition \ref{kandejian}, we have
\begin{align}\label{yifang1}
\left[H_{X,\,L}(\mathcal{X})\cap L^2(\mathcal{X})\right]\subset
\left[H_{X,\,L,\,g_{\lambda}^\ast}(\mathcal{X})\cap L^2(\mathcal{X})\right]
\end{align}
and, for any $f\in H_{X,\,L}(\mathcal{X})\cap L^2(\mathcal{X}),$
\begin{align*}
\|f\|_{H_{X,\,L,\,g_{\lambda}^\ast}(\mathcal{X})}\lesssim\|f\|_{H_{X,\,L}(\mathcal{X})}.
\end{align*}
Next, we show that
\begin{align}\label{yifang3}
\left[H_{X,\,L,\,g_{\lambda}^\ast}(\mathcal{X})\cap L^2(\mathcal{X})\right]
\subset\left[H_{X,\,L,\,g}(\mathcal{X})\cap L^2(\mathcal{X})\right]
\end{align}
and, for any $f\in H_{X,\,L,\,g_{\lambda}^\ast}(\mathcal{X})\cap L^2(\mathcal{X}),$
\begin{align*}
\|f\|_{H_{X,\,L,\,g}(\mathcal{X})}\lesssim\|f\|_{H_{X,\,L,\,g_{\lambda}^\ast}(\mathcal{X})}.
\end{align*}
Let $f\in H_{X,\,L,\,g_{\lambda}^\ast}(\mathcal{X})\cap L^2(\mathcal{X})$
and $\varphi$ be the same as in Lemma \ref{point}.
From Lemma \ref{point}, we deduce
that, for any $\ell\in\mathbb{Z}$ and $x\in\mathcal{X},$
\begin{align*}
&\int_1^2\left|\varphi_{(\lambda+n)/2}^\ast\left(2^{-2\ell}t^2L\right)(f)(x)\right|^2\,\frac{dt}{t}\\ \notag
&\quad\lesssim\sum_{j=\ell}^\infty 2^{-2(j-\ell)u}\int_1^2\int_{\mathcal{X}}\frac{|\varphi(2^{-2j}t^2L)(f)(z)|^2}
{V(z,2^{-\ell})[1+2^{\ell}d(x,z)]^{\lambda+n}}\,\frac{d\mu(z)\,dt}{t}\\ \notag
&\quad\lesssim\sum_{j=\ell}^\infty 2^{-2(j-\ell)u}\int_1^2\int_{\mathcal{X}}\frac{|\varphi(2^{-2j}t^2L)(f)(z)|^2}
{V(z,2^{-j}t)[1+2^{\ell}t^{-1}d(x,z)]^{\lambda+n}}\,\frac{d\mu(z)\,dt}{t}\\ \notag
&\quad\lesssim\sum_{j\in\mathbb{Z}}2^{-|j-\ell|(2u-\lambda-n)}
\int_1^2\int_{\mathcal{X}}\frac{|\varphi(2^{-2j}t^2L)
(f)(z)|^2}{V(x,2^{-j}t)[1+2^{j}t^{-1}d(x,z)]^{\lambda}}\,\frac{d\mu(z)\,dt}{t}
\end{align*}
and hence
\begin{align*}
&\int_0^\infty\left|\varphi_{(\lambda+n)/2}^\ast\left(t^2L\right)(f)(x)\right|^2\,\frac{dt}{t}\\ \notag
&\quad=\sum_{\ell\in\mathbb{Z}}\int_1^2\left|
\varphi_{(\lambda+n)/2}^\ast\left(2^{-2\ell}t^2L\right)(f)(x)\right|^2\,\frac{dt}{t}\\ \notag
&\quad\lesssim\sum_{\ell\in\mathbb{Z}}\sum_{j\in\mathbb{Z}}2^{-|j-\ell|(2u-\lambda-n)}
\int_1^2\int_{\mathcal{X}}\frac{|\varphi(2^{-2j}t^2L)(f)(z)|^2}
{V(x,2^{-j}t)[1+2^{j}t^{-1}d(x,z)]^{\lambda}}\,
\frac{d\mu(z)\,dt}{t}\\ \notag
&\quad\sim\left[g_{\lambda,\,L}^\ast(f)(x)\right]^2,
\end{align*}
which, together with the definition of $\varphi^\ast_{\lambda}(f)$, further implies that
\begin{align*}
\|f\|_{H_{X,\,L,\,g}(\mathcal{X})}&=\left\|\left[\int_0^\infty
\left|\varphi\left(t^2L\right)(f)\right|^2\,\frac{dt}{t}\right]^{\frac{1}{2}}\right\|_{X(\mathcal{X})}\\
&\leq\left\|\left[\int_0^\infty\left|\varphi^\ast_{(\lambda+n)/2}\left(t^2L\right)(f)\right|^2\,
\frac{dt}{t}\right]^{\frac{1}{2}}\right\|_{X(\mathcal{X})}\\
&\lesssim\left\|g_{\lambda,\,L}^\ast(f)\right\|_{X(\mathcal{X})}
\sim\|f\|_{H_{X,\,L,\,g^\ast_\lambda}(\mathcal{X})}.
\end{align*}
This finishes the proof of \eqref{yifang3}.

Finally, we prove that
\begin{align}\label{yifang2}
\left[H_{X,\,L,\,g}(\mathcal{X})\cap L^2(\mathcal{X})\right]
\subset\left[H_{X,\,L}(\mathcal{X})\cap L^2(\mathcal{X})\right]
\end{align}
and, for any $f\in H_{X,\,L,\,g}(\mathcal{X})\cap L^2(\mathcal{X}),$
\begin{align*}
\|f\|_{H_{X,\,L}(\mathcal{X})}\lesssim\|f\|_{H_{X,\,L,\,g}(\mathcal{X})}.
\end{align*}

Let $f\in H_{X,\,L,\,g}(\mathcal{X})\cap L^2(\mathcal{X}).$  It is obvious that, for any $t\in(0,\infty)$ and $x,y\in\mathcal{X}$ satisfying $d(x,y)<t$,
\begin{align*}
\left|\varphi\left(t^2L\right)f(y)\right|\leq2^\lambda\varphi^\ast_\lambda\left(t^2L\right)(f)(x).
\end{align*}
This, combined with the Tonelli theorem,  further implies that
\begin{align}\label{liu1}
\|f\|_{H_{X,\,L}(\mathcal{X})}&\sim\left\|\left[\int_0^\infty\int_{B(\cdot,t)}\left|\varphi\left(t^2L\right)
f(y)\right|^2\,\frac{d\mu(y)\,dt}{V(\cdot,t)t}\right]^{\frac{1}{2}}\right\|_{X(\mathcal{X})}\\ \notag
&\lesssim\left\|\left[\int_0^\infty\left|\varphi_{\lambda}^\ast\left(t^2L\right)f\right|^2\,\frac{dt}{t}
\right]^{\frac{1}{2}}\right\|_{X(\mathcal{X})}\\\notag
&=\left\|\left[\sum_{j\in\mathbb{Z}}\int_0^1\left|\varphi_{\lambda}^\ast\left(2^{-2j}t^2L\right)f\right|^2\,
\frac{dt}{t}\right]^{\frac{1}{2}}\right\|_{X(\mathcal{X})}.
\end{align}
Since $\lambda>\frac{2n}{s_0}>\frac{2n}{\min\{p,2\}},$ we can take an
$r\in(\frac{2n}{\lambda},\min\{p,2\})$. For any $x\in\mathcal{X}$, let
\begin{align*}
b_{j}(x):=\int_1^2\left|\varphi\left(2^{-2j}t^2L\right)(f)(x)\right|^2\,\frac{dt}{t}.
\end{align*}
Then, by Lemma \ref{point}, the H\"older inequality, and the Minkowski integral inequality, we find that
\begin{align}\label{liu2}
&\left[\int_1^2\left|\varphi_{\lambda}^\ast\left(2^{-2\ell}t^2L\right)(f)(x)\right|^2\,
\frac{dt}{t}\right]^{\frac{r}{2}}\\ \notag
&\quad\lesssim\sum_{j=\ell}^\infty 2^{-(j-\ell)ur}\left\{\int_1^2\left[\int_{\mathcal{X}}\frac{|\varphi(2^{-2j}t^2L)
(f)(z)|^r}{V(z,2^{-\ell})[1+2^{\ell}d(x,z)]^{\lambda r}}\,d\mu(z)\right]^{\frac{2}{r}}\,\frac{dt}{t}\right\}^{\frac{r}{2}}\\ \notag
&\quad\lesssim\sum_{j=\ell}^\infty 2^{-(j-\ell)ur}\int_{\mathcal{X}}\frac{[b_j(z)]^{\frac{r}{2}}}
{V(z,2^{-\ell})[1+2^{\ell}d(x,z)]^{\lambda r}}\,d\mu(z).
\end{align}
Moreover, it is well known that, for any given $\alpha\in(2n,\infty)$, there exists a positive constant $C$
such that, for any $\mu$-measurable function $g$ on $\mathcal{X}$, any $t\in(0,\infty)$, and any $x\in\mathcal{X},$
\begin{align*}
\int_{\mathcal{X}}\frac{|g(y)|}{V(y,t)[1+t^{-1}d(x,y)]^\alpha}\,d\mu(y)\leq C\mathcal{M}(g)(x)
\end{align*}
(see, for instance, \cite[Lemma 2.1]{h17}), which, together with \eqref{liu2} and the H\"older inequality, further
implies that
\begin{align*}
&\int_1^2\left|\varphi_{\lambda}^\ast\left(2^{-2\ell}t^2L\right)(f)(x)\right|^2\,\frac{dt}{t}\\ \notag
&\quad\lesssim\left\{\sum_{j=\ell}^\infty 2^{-(j-\ell)ur}\mathcal{M}
\left([b_j]^{\frac{r}{2}}\right)(x)\right\}^{\frac{2}{r}}
\lesssim\sum_{j\in\mathbb{Z}}2^{-|j-\ell|u}\left\{\mathcal{M}
\left([b_j]^{\frac{r}{2}}\right)(x)\right\}^{\frac{2}{r}}.
\end{align*}
From this, \eqref{liu1}, and Assumption \ref{vector1}, we deduce that
\begin{align*}
\|f\|_{H_{X,\,L}(\mathcal{X})}&\lesssim\left\|\left\{\sum_{\ell\in\mathbb{Z}}\int_1^2\left|
\varphi_{\lambda}^\ast\left(2^{-2\ell}t^2L\right)(f)\right|^2\,\frac{dt}{t}\right\}^{\frac{1}{2}}
\right\|_{X(\mathcal{X})}\\
&\lesssim\left\|\left(\sum_{j\in\mathbb{Z}}\left\{\mathcal{M}\left([b_j]^{\frac{r}{2}}\right)
\right\}^{\frac{2}{r}}\right)^{\frac{1}{2}}\right\|_{X(\mathcal{X})}
\lesssim\left\|\left(\sum_{j\in\mathbb{Z}}b_j\right)^{\frac{1}{2}}\right\|_{X(\mathcal{X})}\\ \notag
&\sim\|g_{L}(f)\|_{X(\mathcal{X})}.
\end{align*}
This proves \eqref{yifang2}.

Finally, by \eqref{yifang1}, \eqref{yifang3}, and \eqref{yifang2}, we conclude that the spaces $H_{X,\,L}(\mathcal{X})
\cap L^2(\mathcal{X}), H_{X,\,L,\,g}(\mathcal{X})\cap L^2(\mathcal{X}),$ and $H_{X,\,L,\,g_{\lambda}^\ast}(\mathcal{X})
\cap L^2(\mathcal{X})$ coincide with equivalent quasi-norms. From this and a density argument, it follows that
the spaces $H_{X,\,L}(\mathcal{X}), H_{X,\,L,\,g}(\mathcal{X}),$ and
$H_{X,\,L,\,g_{\lambda}^\ast}(\mathcal{X})$  coincide with equivalent quasi-norms.
This finishes the proof of Theorem \ref{thm-g}.
\end{proof}

\subsection{Boundedness of Schr\"odinger Groups}\label{sec4.2}
In this subsection, we prove the boundedness of Schr\"odinger groups generated by $L$
on the Hardy type space $H_{X,\,L}(\mathcal{X})$.

Let $L$ be a non-negative self-adjoint operator on $L^2(\mathcal{X})$. By the spectral theorem (see, for
instance, \cite{m86}), we conclude that the Schr\"odinger group $\{e^{itL}\}_{t\in\mathbb{R}}$ is defined by
setting, for any $t\in\mathbb{R}$,
\begin{align*}
e^{itL}:=\int_0^\infty e^{it\lambda}\,dE_L(\lambda),
\end{align*}
where $E_L(\lambda)$ denotes the projection-valued measure determined by $L$. The following conclusion is
the main result of this subsection.

\begin{theorem}\label{sch}
Let $L$ be a non-negative self-adjoint operator on $L^2(\mathcal{X})$ satisfying the Gaussian upper bound
estimate \eqref{gauss}. Assume that $X(\mathcal{X})$ is a $\mathrm{BQBF}$ space satisfying both Assumptions \ref{vector1}
and \ref{vector2} for some $p\in(0,\infty)$, $s_0\in(0,\min\{p,1\}]$, and $q_0\in(s_0,2]$. Let
$s\in (n[\frac{1}{s_0}-\frac{1}{2}],\infty)$. Then there exists a positive constant $C$ such that, for any
 $t\in\mathbb{R}$ and $f\in H_{X,\,L}(\mathcal{X})$,
\begin{align}\label{190}
\left\|(I+L)^{-s}e^{itL}(f)\right\|_{H_{X,\,L}(\mathcal{X})}\leq C(1+|t|)^{n(\frac{1}{s_0}-\frac{1}{2})}\|f\|_{H_{X,\,L}(\mathcal{X})}.
\end{align}
\end{theorem}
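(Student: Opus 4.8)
The plan is to run the atom-to-molecule scheme underlying Proposition \ref{thm-at-dec}, Proposition \ref{thm-mc-re}, and Lemma \ref{lem-spec}, but now carrying the dependence on $t$ explicitly through a dilation of the underlying balls. Since $L$ satisfies the Gaussian upper bound estimate \eqref{gauss}, we have $\overline{R(L)}=L^2(\mathcal{X})$, so $H_{X,\,L}(\mathcal{X})\cap L^2(\mathcal{X})$ is dense in $H_{X,\,L}(\mathcal{X})$, and it suffices to prove \eqref{190} for $f\in H_{X,\,L}(\mathcal{X})\cap L^2(\mathcal{X})$. Fix $M\in\mathbb{N}$ with $2M>s$ and $M>\frac{n}{2}[\frac{1}{s_0}-\frac{1}{2}]$, and use Proposition \ref{thm-at-dec} to write $f=\sum_{j\in\mathbb{N}}\lambda_j\alpha_j$ in $L^2(\mathcal{X})$, where each $\alpha_j$ is an $(X,2M)$-atom associated with a ball $B_j$ and the coefficients are controlled by $\|f\|_{H_{X,\,L}(\mathcal{X})}$. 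Since $(I+L)^{-s}e^{itL}$ is bounded on $L^2(\mathcal{X})$ with norm at most $1$, the series $(I+L)^{-s}e^{itL}f=\sum_j\lambda_j(I+L)^{-s}e^{itL}\alpha_j$ also converges in $L^2(\mathcal{X})$.

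The key step is to show that, for every $(X,2M)$-atom $\alpha$ associated with $B=B(x_B,r_B)$, the function $(I+L)^{-s}e^{itL}\alpha$ is a constant multiple of an $(X,M,\epsilon)$-molecule (for some $\epsilon>\frac{n}{s_0}$) associated with the dilated ball $\widetilde{B}:=B(x_B,(1+|t|)^{1/2}r_B)$, where the constant is allowed to grow in $|t|$. Writing $\alpha=L^{2M}(b)$ as in \eqref{chichun-at}, we have $(I+L)^{-s}e^{itL}\alpha=L^M(\widetilde{b})$ with $\widetilde{b}:=(I+L)^{-s}e^{itL}L^M(b)$, so that, for $k\in\{0,\dots,M\}$,
\begin{align*}
\left(r_{\widetilde{B}}^{2}L\right)^k\widetilde{b}=(1+|t|)^{k}r_B^{2k}(I+L)^{-s}e^{itL}L^{M+k}(b).
\end{align*}
Combining $\|(I+L)^{-s}e^{itL}\|_{L^2(\mathcal{X})\to L^2(\mathcal{X})}\le1$ with the atomic size bounds \eqref{chichun-at} (which control $\|L^{M+k}(b)\|_{L^2(\mathcal{X})}$), the verification of the molecular size-and-decay condition \eqref{chicun} for $\widetilde{b}$ on the annuli $U_j(\widetilde{B})$ reduces to an $L^2$ off-diagonal estimate for $(I+L)^{-s}e^{itL}$.

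The heart of the matter, and the main obstacle, is precisely this off-diagonal estimate. Unlike the Davies--Gaffney-driven bound \eqref{guji2} for the square function, the Schr\"odinger propagator $e^{itL}$ is oscillatory and preserves the $L^2$ norm, so no finite Davies--Gaffney decay is available and the naive analog of \eqref{spec00} fails uniformly in $t$. Instead, following Chen et al. \cite{cdly20,cdly21arxiv} and Bui and Ly \cite{bl22}, I would represent $e^{itL}$ through the even wave propagators $\cos(\tau\sqrt{L})$ by Fourier inversion and exploit their finite speed of propagation \eqref{2039} (see Lemma \ref{guanjian}), together with the Gaussian upper bound \eqref{gauss} and the smoothing supplied by $(I+L)^{-s}$ with $s\in(n[\frac{1}{s_0}-\frac{1}{2}],\infty)$. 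This yields, for $g$ supported in $B$, a decay of the form $\|(I+L)^{-s}e^{itL}(g)\|_{L^2(U_j(\widetilde{B}))}\lesssim 2^{-j\gamma}\|g\|_{L^2(\mathcal{X})}$ with $\gamma$ as large as desired at the cost of enlarging $M$, the propagation being governed by the intrinsic Schr\"odinger scale $(1+|t|)^{1/2}$, which is exactly why $\widetilde{B}$ is dilated by this factor. The delicate point is the sharp bookkeeping of the $t$-power and the treatment of atoms whose radius $r_B$ is below the propagation scale, where the oscillatory tail dominates and the full cancellation encoded by $\alpha=L^{2M}(b)$ must be used; tracking these carefully is what produces the precise exponent $n(\frac{1}{s_0}-\frac{1}{2})$ rather than a larger one.

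Finally, I would assemble the pieces. Granting the molecular claim, $(I+L)^{-s}e^{itL}f=\sum_j\lambda_j C(t)\, m_j$, where each $m_j$ is an $(X,M,\epsilon)$-molecule on $\widetilde{B}_j=(1+|t|)^{1/2}B_j$ and $C(t)$ is the $t$-dependent constant from the $L^2$ estimate. Applying Proposition \ref{prfs} with exponent $t=s_0$ to replace the dilated indicators $\mathbf{1}_{\widetilde{B}_j}$ by $\mathbf{1}_{B_j}$, and using $\|\mathbf{1}_{\widetilde{B}_j}\|_{X(\mathcal{X})}\ge\|\mathbf{1}_{B_j}\|_{X(\mathcal{X})}$ in the normalization appearing in \eqref{chicun}, the molecular quasi-norm $\Lambda(\{\lambda_j C(t) m_j\}_j)$ is bounded by $C(t)$ times the dilation factor from Proposition \ref{prfs} times the right-hand side of the coefficient estimate from Proposition \ref{thm-at-dec}; the interplay between this dilation factor and the size normalization is what reproduces the factor $(1+|t|)^{n(\frac{1}{s_0}-\frac{1}{2})}$ exactly. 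Proposition \ref{thm-mc-re} then gives $\|(I+L)^{-s}e^{itL}f\|_{H_{X,\,L}(\mathcal{X})}\lesssim(1+|t|)^{n(\frac{1}{s_0}-\frac{1}{2})}\|f\|_{H_{X,\,L}(\mathcal{X})}$, and a density argument extends \eqref{190} to all of $H_{X,\,L}(\mathcal{X})$.
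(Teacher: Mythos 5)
Your scaffolding (density, atomic decomposition, reduction to an off-diagonal $L^2$ estimate for $(I+L)^{-s}e^{itL}$) is reasonable, but the central claim your argument rests on is false. You assert that the decay $\|(I+L)^{-s}e^{itL}(g)\|_{L^2(U_j(\widetilde{B}))}\lesssim 2^{-j\gamma}\|g\|_{L^2(\mathcal{X})}$ can be achieved ``with $\gamma$ as large as desired at the cost of enlarging $M$''. It cannot: enlarging $M$ only improves the low-frequency factors $\min\{1,(2^\ell r_B^2)^M\}$, while the far-field behaviour at frequency $2^\ell$ is paid for by powers of $(1+2^\ell)^{-s}$, so that after summing over frequencies the annular decay is capped at $2^{-js}$ --- this is exactly what the estimates \eqref{1959} and \eqref{2114} in the paper deliver, and no more. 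Since the theorem only assumes $s>n[\frac{1}{s_0}-\frac{1}{2}]$, the available decay $2^{-js}$ is in general strictly weaker than the decay $2^{-j\epsilon}$ with $\epsilon>\frac{n}{s_0}$ required of an $(X,M,\epsilon)$-molecule in Definition \ref{defi-mol}, which is the threshold needed to invoke Proposition \ref{thm-mc-re} (and Theorem \ref{thm-mc}). Hence for $s\in(n[\frac{1}{s_0}-\frac{1}{2}],\frac{n}{s_0}]$ --- the sharp and most interesting range --- the propagated atoms are simply not molecules in the required sense and your route through the molecular characterization collapses. Symptomatically, your proposal never uses the hypothesis $s>n[\frac{1}{s_0}-\frac{1}{2}]$ in any quantitative step, whereas it must be the precise condition making some series converge.

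The paper avoids this obstruction by never packaging $F(L)f$, $F(\lambda):=(1+\lambda)^{-s}e^{it\lambda}$, into molecules. It first proves Lemma \ref{pingfang} (a consequence of the Peetre-type maximal estimate of Lemma \ref{point}, available under the Gaussian upper bound), which dominates $\|F(L)f\|_{H_{X,\,L}(\mathcal{X})}$ by the discrete square function $\|S_{L,\,\psi}(F(L)f)\|_{X(\mathcal{X})}$; then, after the splitting $I=(I-e^{-r_{B_j}^2L})^M+P(r_{B_j}^2L)$, it estimates $S_{L,\,\psi}(F(L)\alpha_j)$ annulus by annulus over $U_i(B_{t,j})$ with $B_{t,j}=(1+|t|)B_j$ (dilation by $(1+|t|)$, not by $(1+|t|)^{1/2}$), where the bounds borrowed from \cite{bl22} have constants \emph{uniform} in $t$, and feeds the localized pieces into Proposition \ref{pras} with $\theta=2^i(1+|t|)$. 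The cost of Proposition \ref{pras} is exactly $\theta^{(1-\frac{s_0}{2})n}$, so the sum over annuli is $\sum_{i}2^{-is_0[s-n(\frac{1}{s_0}-\frac{1}{2})]}$, convergent precisely when $s>n[\frac{1}{s_0}-\frac{1}{2}]$, and the surviving $t$-power is $(1+|t|)^{(1-\frac{s_0}{2})n}$, i.e.\ $(1+|t|)^{n(\frac{1}{s_0}-\frac{1}{2})}$ after taking the $s_0$-th root; all $t$-dependence enters through this single step. Two further defects in your bookkeeping: with your scale $(1+|t|)^{1/2}B$, the $L^2$ annular estimates of \cite{bl22}, proved on annuli of $(1+|t|)B$, would acquire $t$-dependent losses you have not quantified, so your claim that the exponent $n(\frac{1}{s_0}-\frac{1}{2})$ comes out ``exactly'' is unsupported; and Proposition \ref{prfs}, which you invoke to replace $\mathbf{1}_{\widetilde{B}_j}$ by $\mathbf{1}_{B_j}$, carries the factor $\tau^{s'n}$ with $s'>1$ strictly, which already overshoots the target exponent in the admissible endpoint case $s_0=1$.
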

Applying Theorem  \ref{sch} with $X(\mathcal{X}):=L^r(\mathcal{X})$, we have the
following conclusion; since their proofs are similar to that of Theorem \ref{ls10}, we omit the details here.
\begin{theorem}\label{ls30}
Let  $L$ be a non-negative self-adjoint operator on $L^2(\mathcal{X})$ satisfying the Gaussian upper bound
estimate \eqref{gauss}, $r\in(0,2)$, and $s\in(\frac{n}{\min\{1,r\}}-\frac{n}{2},\infty)$.
Then there exists a positive constant $C$ such that, for any
 $t\in\mathbb{R}$ and $f\in H^{r}_{L}(\mathcal{X})$,
\begin{align*}
\left\|(I+L)^{-s}e^{itL}f\right\|_{H^{r}_{L}(\mathcal{X})}\leq C(1+|t|)^{\frac{n}{\min\{1,r\}}-\frac{n}{2}}\|f\|_{H^{r}_{L}(\mathcal{X})}.
\end{align*}
\end{theorem}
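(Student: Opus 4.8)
The plan is to reduce \eqref{190} to a single $L^2$ off-diagonal estimate that carries the sharp growth factor $(1+|t|)^{n(\frac1{s_0}-\frac12)}$, and then to run the atomic-to-molecular scheme already encoded in Lemma \ref{lem-spec}. Since $m_t(\lambda):=(1+\lambda)^{-s}e^{it\lambda}$ satisfies $|m_t(\lambda)|\le1$, the operator $m_t(L)=(I+L)^{-s}e^{itL}$ is a bounded Borel function of $L$; moreover, under \eqref{gauss} one has $\overline{R(L)}=L^2(\mathcal{X})$, so that $H_{X,\,L}(\mathcal{X})\cap L^2(\mathcal{X})$ is dense in $H_{X,\,L}(\mathcal{X})$ and serves as the starting point. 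Fix $M\in(\frac{n}{2}[\frac1{s_0}-\frac12],\infty)\cap\mathbb{N}$ and $\epsilon\in(\frac{n}{s_0},\infty)$. By Proposition \ref{thm-at-dec}, any $f\in H_{X,\,L}(\mathcal{X})\cap L^2(\mathcal{X})$ admits a decomposition $f=\sum_{j\in\mathbb{N}}\lambda_j\alpha_j$ in $L^2(\mathcal{X})$ into $(X,2M)$-atoms with $\|\{\sum_{j}(\lambda_j/\|\mathbf{1}_{B_j}\|_{X(\mathcal{X})})^{s_0}\mathbf{1}_{B_j}\}^{1/s_0}\|_X\lesssim\|f\|_{H_{X,\,L}(\mathcal{X})}$. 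Inspecting the proof of Lemma \ref{lem-spec}, where the constant in the conclusion depends linearly on the constant in \eqref{spec00}, and invoking Proposition \ref{thm-mc-re}, it therefore suffices to prove that, for every $f\in L^2(\mathcal{X})$ with $\mathrm{supp}\,(f)\subset B:=B(x_B,r_B)$ and every $j\in\mathbb{N}\cap[2,\infty)$,
\begin{align}\label{eq-sch-key}
\left\|(I+L)^{-s}e^{itL}\left(I-e^{-r_B^2L}\right)^M(f)\right\|_{L^2(U_j(B))}
\lesssim(1+|t|)^{n(\frac1{s_0}-\frac12)}2^{-j\epsilon}\|f\|_{L^2(\mathcal{X})};
\end{align}
indeed, feeding this into the scheme of Lemma \ref{lem-spec} turns $m_t(L)(\alpha)$ into $(1+|t|)^{n(\frac1{s_0}-\frac12)}$ times an $(X,M,\epsilon)$-molecule, and Proposition \ref{thm-mc-re} together with the atomic decomposition above yields \eqref{190}.

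To establish \eqref{eq-sch-key} I exploit the finite propagation speed of the wave operator. Set
\[
\Phi_t(\xi):=(1+\xi^2)^{-s}e^{it\xi^2}\left(1-e^{-r_B^2\xi^2}\right)^M,\qquad\xi\in\mathbb{R},
\]
so that $\Phi_t$ is even and $(I+L)^{-s}e^{itL}(I-e^{-r_B^2L})^M=\Phi_t(\sqrt{L})$. By the spectral theorem and Fourier inversion,
\[
\Phi_t(\sqrt{L})=\frac{1}{2\pi}\int_{\mathbb{R}}\widehat{\Phi_t}(u)\cos\left(u\sqrt{L}\right)\,du.
\]
Since, by \eqref{2039}, the kernel of $\cos(u\sqrt{L})$ is supported in $\{(x,y):\ d(x,y)\le C|u|\}$ while $d(U_j(B),B)\sim2^jr_B$, the part of this integral with $|u|\le c2^jr_B$ does not contribute to $\|\Phi_t(\sqrt{L})f\|_{L^2(U_j(B))}$ whenever $\mathrm{supp}\,(f)\subset B$. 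Combining this with the elementary bound $\|\cos(u\sqrt{L})\|_{L^2(\mathcal{X})\to L^2(\mathcal{X})}\le1$, which follows from $|\cos|\le1$ and the spectral theorem, reduces \eqref{eq-sch-key} to the one-dimensional estimate
\begin{align}\label{eq-sch-fourier}
\int_{|u|\ge c2^jr_B}\left|\widehat{\Phi_t}(u)\right|\,du\lesssim(1+|t|)^{n(\frac1{s_0}-\frac12)}2^{-j\epsilon}.
\end{align}

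The main obstacle is the quantitative bound \eqref{eq-sch-fourier} on the Fourier transform of the chirp-modulated symbol $\Phi_t$, which is exactly where the parameters of the theorem are consumed. I would decompose $\Phi_t$ dyadically in frequency by means of a partition of unity $\sum_{\nu\in\mathbb{Z}}\phi(2^{-\nu}\xi)=1$ as in \eqref{620} and analyze each piece $\Phi_t^{(\nu)}$ separately. On the region $\xi\sim2^\nu$ the phase $t\xi^2$ has derivative of size $\sim|t|2^\nu$, so a stationary-phase analysis, carried out through repeated integration by parts against $e^{it\xi^2}$, localizes $\widehat{\Phi_t^{(\nu)}}$ near $u\sim2t\cdot2^\nu$ with rapidly decaying tails; the smoothing factor $(1+\xi^2)^{-s}$ supplies summability in $\nu$, while the vanishing of order $2M$ of $(1-e^{-r_B^2\xi^2})^M$ at the origin supplies the correct $r_B$-scaling. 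Summing the dyadic contributions and balancing the regime in which $2^jr_B$ is comparable to the chirp spread $|t|2^\nu$, which produces the growth $(1+|t|)^{n(\frac1{s_0}-\frac12)}$, against the complementary regime, in which integration by parts yields the rapid decay $2^{-j\epsilon}$ with $\epsilon>\frac{n}{s_0}$ guaranteed by the hypotheses $s>n(\frac1{s_0}-\frac12)$ and $M>\frac{n}{2}(\frac1{s_0}-\frac12)$, gives \eqref{eq-sch-fourier}. This quantitative Fourier analysis of the Schr\"odinger symbol is the technical heart of the argument and follows the scheme developed for the classical situation in \cite{cdly20,cdly21arxiv} and \cite{bl22}. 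With \eqref{eq-sch-key} thereby established, the reduction of the first paragraph completes the proof of \eqref{190}.
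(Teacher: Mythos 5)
Your reduction collapses at the key off-diagonal estimate \eqref{eq-sch-key}. That estimate is false in the range of $s$ permitted by the theorem, because the Schr\"odinger group has no finite propagation speed: it genuinely transports $L^2$-mass out to distance comparable with $(1+|t|)$ times the relevant scale, and no multiplicative prefactor $(1+|t|)^{n(\frac1{s_0}-\frac12)}$ can restore geometric decay $2^{-j\epsilon}$ with $\epsilon>\frac{n}{s_0}$ in the annuli $U_j(B)$ of the \emph{original} ball. Concretely, take $L=-\Delta$ on $\mathbb{R}^n$, $r_B=1$, and $f$ a fixed normalized bump on $B$; for $|t|$ large the stationary phase approximation $|e^{it\Delta}f(x)|\approx|t|^{-n/2}|\widehat f(x/(2t))|$ shows that $\|(I+L)^{-s}e^{itL}(I-e^{-L})^Mf\|_{L^2(U_j(B))}\gtrsim 1$ when $2^j\sim|t|$ (the symbol is bounded below at frequencies $|\xi|\sim1$, which the chirp carries to $|x|\sim2|t|$), whereas your right-hand side there is $\lesssim(1+|t|)^{n(\frac1{s_0}-\frac12)}2^{-j\epsilon}\sim|t|^{n(\frac1{s_0}-\frac12)-\epsilon}=o(1)$ since $\epsilon>\frac{n}{s_0}$. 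The same phenomenon kills \eqref{eq-sch-fourier}: $\widehat{\Phi_t}$ has unit $L^1$-mass concentrated near $|u|\sim 2|t|\xi$ for $\xi$ in the essential support of the symbol, so the tail integral over $|u|\ge c2^jr_B$ stays bounded below whenever $2^jr_B\lesssim|t|$. Turning $m_t(L)\alpha$ into a bounded multiple of a molecule adapted to $B$ itself would require $s$ roughly larger than $\frac{n}{s_0}$, not $\frac{n}{s_0}-\frac{n}{2}$.

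The paper's argument (Theorem \ref{ls30} is just Theorem \ref{sch} specialized to $X(\mathcal{X})=L^r(\mathcal{X})$ with $p=r$, $q_0=2$, $s_0=\min\{1,r\}$, the hypotheses being verified exactly as in the proof of Theorem \ref{ls10}) avoids this by measuring off-diagonal decay from the \emph{enlarged} ball $B_t:=(1+|t|)B$: the estimates imported from \cite{bl22} give $\|S_{L,\,\psi}([I-e^{-r_B^2L}]^MF(L)\alpha)\|_{L^2(U_i(B_t))}\lesssim 2^{-is}[\mu(B)]^{1/2}\|\mathbf{1}_B\|_{X(\mathcal{X})}^{-1}$, with \emph{no} $t$-growth in the $L^2$ bound, and the factor $(1+|t|)^{n(\frac1{s_0}-\frac12)}$ is produced afterwards by Proposition \ref{pras} applied with $\theta=2^i(1+|t|)$, i.e., by the geometric dilation of the supports; the argument is also routed through the discrete square function $S_{L,\,\psi}$ and Lemma \ref{pingfang} rather than through the molecular characterization. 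To salvage your scheme you would have to replace $U_j(B)$ by $U_j(B_t)$ in your key estimate, drop the $(1+|t|)$ prefactor there, and recover the growth from the support dilation via Proposition \ref{pras}; as written, the technical heart of your proposal asserts a false inequality.
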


\begin{remark} We point out that
\cite[Theorem 1.1]{bl22} shows that 
Theorem \ref{ls30} with $r\in (0,1]$ and 
$s=n(\frac{1}{r}-\frac{1}{2})$ also holds true.
\end{remark}

To prove Theorem \ref{sch}, we need several lemmas. Let $\psi\in \mathcal{S}(\mathbb{R})$ be such that
$\mathrm{supp}\,(\psi)\subset [1/2,2]$, $\int_{\mathbb{R}}\psi(t)\,\frac{dt}{t}\not=0,$ and
\begin{align*}
\sum_{j\in\mathbb{Z}}\psi\left(2^{-j}t\right)=1
\end{align*}
for any $t\in(0,\infty)$. For any  $f\in L^2(\mathcal{X})$, the discrete square function
$S_{L,\,\psi}(f)$ is defined by setting, for any $x\in\mathcal{X},$
\begin{align*}
S_{L,\,\psi}(f)(x):=\left[\sum_{j\in\mathbb{Z}}\left|\psi\left(2^{-j}L\right)(f)(x)\right|^2\right]^{1/2}.
\end{align*}
Motivated by \cite[Theorem 2.5]{bl22}, we have the following lemma.

\begin{lemma}\label{pingfang}
Let $L$ be a non-negative self-adjoint operator on $L^2(\mathcal{X})$ satisfying the Gaussian upper bound
estimate \eqref{gauss}. Assume that $X(\mathcal{X})$ is a $\mathrm{BQBF}$ space satisfying Assumption \ref{vector1}
for some $p\in(0,\infty)$. Then there exists a positive constant $C$ such that, for any $f\in L^2(\mathcal{X})$
with $\|S_{L,\,\psi}(f)\|_{X(\mathcal{X})}<\infty$,
\begin{align*}
\|f\|_{H_{X,\,L}(\mathcal{X})}\leq C\left\|S_{L,\,\psi}(f)\right\|_{X(\mathcal{X})}.
\end{align*}
\end{lemma}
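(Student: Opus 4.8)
The plan is to deduce the estimate from two separate comparisons, namely
\[
\|f\|_{H_{X,\,L}(\mathcal{X})}\lesssim\|g_L(f)\|_{X(\mathcal{X})}
\quad\text{and}\quad
\|g_L(f)\|_{X(\mathcal{X})}\lesssim\left\|S_{L,\,\psi}(f)\right\|_{X(\mathcal{X})},
\]
and then to chain them. The first inequality is precisely the content of the argument establishing \eqref{yifang2}: for $y\in B(x,t)$ one has $[1+t^{-1}d(x,y)]^\lambda\le2^\lambda$, so $S_L(f)(x)$ is dominated by the continuous $g$-function built from the Peetre type maximal function $\varphi^\ast_\lambda(t^2L)(f)$, and then Lemma \ref{point} together with the Fefferman--Stein vector-valued inequality of Assumption \ref{vector1} returns $\|f\|_{H_{X,\,L}(\mathcal{X})}\lesssim\|g_L(f)\|_{X(\mathcal{X})}$. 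That step uses only Assumption \ref{vector1} and the Gaussian upper bound \eqref{gauss}, which are exactly the hypotheses available here; in particular $\|g_L(f)\|_{X(\mathcal{X})}<\infty$, established below, places $f$ in the class to which this argument applies.

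For the second inequality I would exploit the resolution of the identity $\sum_{k\in\mathbb{Z}}\psi(2^{-k}L)=I$ on $L^2(\mathcal{X})=\overline{R(L)}$. Writing $t=2^{-\ell}s$ with $s\in[1,2]$ and $\ell\in\mathbb{Z}$, and recalling $\varphi(\xi)=\xi e^{-\xi}$, I would expand
\[
\varphi\left(2^{-2\ell}s^2L\right)(f)=\sum_{k\in\mathbb{Z}}\varphi\left(2^{-2\ell}s^2L\right)\widetilde{\psi}\left(2^{-k}L\right)\left[\psi\left(2^{-k}L\right)(f)\right],
\]
where $\widetilde{\psi}\in C^\infty_{\mathrm c}(\mathbb{R})$ equals $1$ on $\mathrm{supp}\,\psi$. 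The crucial point is an \emph{almost orthogonality} estimate in scale: since $\mathrm{supp}\,\psi\subset[1/2,2]$ forces $\lambda\sim2^{k}$ while $\varphi(2^{-2\ell}s^2\lambda)$ concentrates at $\lambda\sim2^{2\ell}$, the composite symbol has size $\lesssim2^{-\sigma|k-2\ell|}$ for arbitrarily large $\sigma$; combining this spectral decay with the Gaussian bound \eqref{gauss} and the finite propagation speed \eqref{2039} yields a kernel bound for $\varphi(2^{-2\ell}s^2L)\widetilde{\psi}(2^{-k}L)$ with Gaussian-type spatial localization at scale $\min\{2^{-\ell},2^{-k/2}\}$ and total weight $2^{-\sigma|k-2\ell|}$. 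Taking $\sigma$ large enough that the kernel decays faster than the critical order $n/r$ for a fixed $r\in(0,\min\{p,2\})$, this produces the pointwise bound $|\varphi(2^{-2\ell}s^2L)(f)(x)|\lesssim\sum_{k}2^{-\sigma|k-2\ell|}[\mathcal{M}(|\psi(2^{-k}L)(f)|^r)(x)]^{1/r}$.

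It then remains to sum and pass to the $X(\mathcal{X})$-norm. Setting $h_k:=\psi(2^{-k}L)(f)$ and applying the Cauchy--Schwarz inequality in $k$ together with $\sum_\ell2^{-\sigma|k-2\ell|}\lesssim1$ and $\sum_k2^{-\sigma|k-2\ell|}\lesssim1$, I would obtain $[g_L(f)(x)]^2\lesssim\sum_{k\in\mathbb{Z}}[\mathcal{M}(|h_k|^r)(x)]^{2/r}$. Finally, writing $u:=2/r\in(1,\infty)$ and $g_k:=|h_k|^r$, the identity $\|(\sum_k[\mathcal{M}g_k]^u)^{1/2}\|_{X(\mathcal{X})}=\|(\sum_k[\mathcal{M}g_k]^u)^{1/u}\|_{X^{1/r}(\mathcal{X})}^{1/r}$ lets me invoke Assumption \ref{vector1} with $t:=r\in(0,p)$ and exponent $u$, and then reverse the identity, so that each $\mathcal{M}g_k$ is replaced by $g_k$ and $(\sum_k|g_k|^u)^{1/2}=(\sum_k|h_k|^2)^{1/2}=S_{L,\,\psi}(f)$. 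This gives $\|g_L(f)\|_{X(\mathcal{X})}\lesssim\|S_{L,\,\psi}(f)\|_{X(\mathcal{X})}$ and, combined with the first comparison, completes the proof. I expect the main obstacle to be the almost orthogonality kernel estimate of the preceding paragraph, where the spectral decay $2^{-\sigma|k-2\ell|}$ must be extracted from the composite operator and balanced against its spatial spreading to yield $L^r$-maximal control with $r<\min\{p,2\}$; the remaining manipulations are routine bookkeeping with Assumption \ref{vector1}.
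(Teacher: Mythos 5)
Your overall strategy is the same one the paper uses: discretize the continuous square function into the dyadic pieces $\psi(2^{-k}L)f$, dominate everything pointwise by $[\mathcal{M}(|\psi(2^{-k}L)f|^r)]^{1/r}$ for some $r\in(0,\min\{p,2\})$, and finish with Assumption \ref{vector1}; your final bookkeeping with $u=2/r$ and the $1/r$-convexification is exactly the paper's. The difference is organizational: the paper goes in one pass from \eqref{liu1} to two pointwise estimates quoted from \cite{bl22} (namely $\int_0^\infty|\varphi^\ast_\lambda(tL)f|^2\,\frac{dt}{t}\lesssim\sum_j[\psi^\ast_\lambda(2^{-j}L)f]^2$ and $\psi^\ast_\lambda(2^{-j}L)f\lesssim[\mathcal{M}(|\psi(2^{-j}L)f|^r)]^{1/r}$), whereas you insert $\|g_L(f)\|_{X(\mathcal{X})}$ as an intermediate and thereby run the Peetre-maximal-function machinery of \eqref{yifang2} twice. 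That detour is redundant but not wrong.

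The gap is in your second step. A minor issue first: the spectral decay of $\varphi(2^{-2\ell}s^2\lambda)\widetilde{\psi}(2^{-k}\lambda)$ is \emph{not} $2^{-\sigma|k-2\ell|}$ for arbitrarily large $\sigma$; since $\varphi(\xi)=\xi e^{-\xi}$ vanishes only to first order at the origin, for $k<2\ell$ you get only $2^{-(2\ell-k)}$, i.e.\ $\sigma=1$ in that direction. This still suffices for your Cauchy--Schwarz summation, but the stated rationale (``$\sigma$ large enough that the kernel decays faster than $n/r$'') conflates decay in the scale index with spatial decay of the kernel. The substantive problem is the claim that a kernel bound for $\varphi(2^{-2\ell}s^2L)\widetilde{\psi}(2^{-k}L)$ ``produces'' the pointwise bound by $[\mathcal{M}(|\psi(2^{-k}L)f|^r)(x)]^{1/r}$. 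A kernel bound with integrable spatial decay yields only $|Tg(x)|\lesssim\mathcal{M}(g)(x)$, and since $[\mathcal{M}(|g|^r)]^{1/r}\leq\mathcal{M}(|g|)$ for $r<1$ by the Jensen inequality, the bound you need is strictly stronger and is false for general $g$. It holds for $g=\psi(2^{-k}L)f$ only because $g$ is spectrally localized, and extracting it is precisely the Plancherel--Polya/Peetre maximal function estimate $\psi^\ast_\lambda(2^{-k}L)f\lesssim[\mathcal{M}(|\psi(2^{-k}L)f|^r)]^{1/r}$ that the paper imports from \cite{bl22} (the analogue of Lemma \ref{point} for $\psi$ in place of $\varphi$). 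Because Assumption \ref{vector1} only applies with $t\in(0,p)$ and $p$ may be at most $1$, you cannot retreat to $r=1$. The argument is repairable by inserting this Peetre-type estimate between your kernel bound and the maximal function, but as written that step is missing.
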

\begin{proof}
Let  $\lambda\in(\frac{n}{p},
\infty)$, $r\in(0,\min\{1,p\}),$  $\varphi(\xi):=\xi e^{-\xi}$ for any $\xi\in\mathbb{R},$ and $f\in L^2(\mathcal{X})$ with $\|S_{L,\,\psi}(f)\|_{X(\mathcal{X})}<\infty$.
By \cite[p.\, 8 and p.\, 11]{bl22}, we have, for any $j\in\mathbb{Z}$ and $x\in\mathcal{X},$
\begin{align*}
\psi^\ast_{\lambda}\left(2^{-j}L\right)f(x)\lesssim\left[\mathcal{M}\left(\left|\psi\left(2^{-j}L\right)
f\right|^r\right)(x)\right]^{\frac{1}{r}}
\end{align*}
and
\begin{align*}
\int_0^\infty\left|\varphi_{\lambda}^\ast(tL)f(x)\right|^2\,\frac{dt}{t}
\lesssim\sum_{j\in\mathbb{Z}}\left[\psi^\ast_{\lambda}\left(2^{-j}L\right)f(x)\right]^2.
\end{align*}
Using this, \eqref{liu1}, and Assumption \ref{vector1}, we find that
\begin{align*}
\|f\|_{H_{X,\,L}(\mathcal{X})}&\lesssim\left\|\left[\int_0^\infty\left|\varphi_{\lambda}^\ast
(tL)f\right|^2\,\frac{dt}{t}\right]^{\frac{1}{2}}\right\|_{X(\mathcal{X})}\\
&\lesssim\left\|\left\{\sum_{j\in\mathbb{N}}\left[\mathcal{M}\left(\left|\psi\left(2^{-j}L\right)f\right|^r
\right)\right]^{\frac{2}{r}}\right\}^{\frac{1}{2}}\right\|_{X(\mathcal{X})}\\
&\lesssim \left\|\left[\sum_{j\in\mathbb{N}}\left|\psi\left(2^{-j}L\right)f\right|^2
\right]^{\frac{1}{2}}\right\|_{X(\mathcal{X})}\sim\left\|S_{L,\psi}(f)\right\|_{X(\mathcal{X})}.
\end{align*}
This finishes the proof of Lemma \ref{pingfang}.
\end{proof}

Now, we prove Theorem \ref{sch} by using Lemmas \ref{pingfang} and \ref{yi}.

\begin{proof}[Proof of Theorem \ref{sch}]
Let  $M\in(\frac{s}{2},\infty)\cap\mathbb{N}$ and
$F(\lambda):=(1+\lambda)^{-s}e^{it\lambda}$ for any $\lambda\in(0,\infty)$. 
By Lemma \ref{pingfang}, we conclude that, for any $t\in\mathbb{R}$ and $f\in H_{X,\,L}(\mathcal{X})\cap L^2(\mathcal{X})$,
\begin{align*}
\left\|(I+L)^{-s}e^{itL}(f)\right\|_{H_{X,\,L}(\mathcal{X})}\lesssim
\left\|S_{L,\,\psi}\left(F(L)f\right)\right\|_{X(\mathcal{X})}.
\end{align*}
Thus, to show \eqref{190}, it suffices to prove that, for any $t\in\mathbb{R}$ and $f\in H_{X,\,L}(\mathcal{X})\cap L^2(\mathcal{X})$,
\begin{align}\label{2127}
\left\|S_{L,\,\psi}(F(L)f)\right\|_{X(\mathcal{X})}
\lesssim(1+|t|)^{n(\frac{1}{s_0}-\frac{1}{2})}\|f\|_{H_{X,\,L}(\mathcal{X})}.
\end{align}

Let $f\in H_{X,\,L}(\mathcal{X})\cap L^2(\mathcal{X})$.
From Theorem \ref{thm-mc}, we deduce that there exists a sequence $\{\lambda_j\}_{j\in\mathbb{N}}\subset[0,\infty)$
and a sequence $\{\alpha_j\}_{j\in\mathbb{N}}$ of $(X,M)$-atoms associated, respectively, with the balls
$\{B_j\}_{j\in\mathbb{N}}\subset \mathcal{X}$ such that
\begin{align}\label{can0}
f=\sum_{j\in\mathbb{N}}\lambda_j\alpha_j
\end{align}
in $L^2(\mathcal{X})$ and
\begin{align}\label{can1}
\left\|\left\{\sum_{j=1}^\infty
\left(\frac{\lambda_j}{\|\mathbf{1}_{B_j}\|_{X(\mathcal{X})}}\right)^{s_0}
\mathbf{1}_{B_j}\right\}^{\frac{1}{s_0}}\right\|_{X(\mathcal{X})}\lesssim \|f\|_{H_{X,\,L}(\mathcal{X})},
\end{align}
where the implicit positive constant is independent of $f$. By \eqref{can0}, we have, for almost every
$x\in\mathcal{X},$
\begin{align}\label{2008}
S_{L,\,\psi}\left(F(L)f\right)(x)\leq\sum_{j\in\mathbb{N}}\lambda_{j}S_{L,\,\psi}\left(F(L)\alpha_j\right)(x).
\end{align}
Moreover, it is easy to find that, for any $j\in\mathbb{N}$,
\begin{align*}
I=\left(I-e^{-r_{B_j}^2L}\right)^M+\sum_{k=1}^M(-1)^{k+1}C_M^ke^{-kr_{B_j}^2L}
=:\left(I-e^{-r_{B_j}^2L}\right)^M+P\left(r_{B_j}^2L\right),
\end{align*}
where $I$ denotes the identity operator on $L^2(\mathcal{X})$ and, for any $k\in\{0,\ldots,M\}$,
$C_M^k:=\frac{M!}{k!(M-k)!}$. Thus, for any $j\in\mathbb{N}$ and
$x\in\mathcal{X},$
\begin{align*}
S_{L,\,\psi}\left(F(L)\alpha_j\right)(x)&\leq
S_{L,\,\psi}\left(\left[I-e^{-r_{B_j}^2L}\right]^MF(L)\alpha_j\right)(x)\\
&\quad+ S_{L,\,\psi}\left(P\left(r_{B_j}^2L\right)F(L)\alpha_j\right)(x).
\end{align*}
From this, \eqref{2008}, \eqref{jiben}, and the assumption that $X^{\frac{1}{s_0}}(\mathcal{X})$ is a
$\mathrm{BBF}$ space, it follows that
\begin{align}\label{2128}
&\left\|S_{L,\,\psi}(F(L)f)\right\|_{X(\mathcal{X})}^{s_0}\\ \notag
&\quad\leq\sum_{i\in\mathbb{Z}_+}\left\|\sum_{j\in\mathbb{N}}\left[\lambda_{j}S_{L,\,\psi}
\left(\left[I-e^{-r_{B_j}^2L}\right]^MF(L)\alpha_{j}\right)\mathbf{1}_{S_i(B_{t,j})}\right]^{s_0}
\right\|_{X^{\frac{1}{s_0}}(\mathcal{X})}\\ \notag
&\quad\quad+\sum_{i\in\mathbb{Z}_+}\left\|\sum_{j\in\mathbb{N}}\left[\lambda_{j}S_{L,\,\psi}
\left(P\left(r_{B_j}^2L\right)F(L)\alpha_{j}\right)\mathbf{1}_{S_i(B_{t,j})}\right]^{s_0}
\right\|_{X^{\frac{1}{s_0}}(\mathcal{X})}\\ \notag
&\quad=:\sum_{i\in\mathbb{Z}_+}E_{i,1}+\sum_{i\in\mathbb{Z}_+}E_{i,2},
\end{align}
where $B_{t,j}:=(1+|t|)B_j$ for any $j\in\mathbb{N}.$

Next, we show that, for any $(X,M)$-atom $\alpha$, associated with the ball $B:=B(x_B,r_B)$
for some $x_B\in\mathcal{X}$ and $r_B\in(0,\infty),$ and any $i\in\mathbb{Z}_+$,
\begin{align}\label{1959}
\left\|S_{L,\,\psi}\left(\left[I-e^{-r_{B}^2L}\right]^MF(L)\alpha\right )\right\|_{L^2(U_i(B_{t}))}\lesssim
2^{-is}\left[\mu(B)\right]^{\frac{1}{2}}\|\mathbf{1}_{B}\|_{X(\mathcal{X})}^{-1}
\end{align}
and
\begin{align}\label{2114}
\left\|S_{L,\,\psi}\left(P\left(r_{B}^2L\right)F(L)\alpha\right)\right\|_{L^q(U_i(B_{t}))}
\lesssim2^{-is}
\left[\mu(B)\right]^{\frac{1}{2}}\|\mathbf{1}_{B}\|_{X(\mathcal{X})}^{-1},
\end{align}
where $B_t:=(1+|t|)B$.

We first prove \eqref{1959}. If $i\in\{0,1\}$, by the functional calculi associated with $L$ and by \eqref{chichun-at}, we have
\begin{align*}
\left\|S_{L,\,\psi}\left(\left[I-e^{-r_{B}^2L}\right]^MF(L)\alpha\right)\right\|_{L^2(U_i(B_{t}))}
\lesssim\|\alpha\|_{L^2(\mathcal{X})}
\leq \left[\mu(B)\right]^{\frac{1}{2}}\|\mathbf{1}_{B}\|_{X(\mathcal{X})}^{-1}.
\end{align*}
Let $i\in\mathbb{N}\cap[2,\infty).$ For any $\ell\in\mathbb{Z}$ and $\lambda\in(0,\infty),$ let
\begin{align*}
F_{\ell,r_B}(\lambda):=\psi\left(2^{-\ell}\lambda\right)\left(1-e^{-r_B^2\lambda}\right)^MF(\lambda).
\end{align*}
Using \eqref{jiben} with $s:=2$, we conclude that
\begin{align}\label{1961}
&\left\|S_{L,\,\psi}\left(\left[I-e^{-r_{B}^2L}\right]^MF(L)\alpha\right)\right\|_{L^2(U_i(B_{t}))}\\ \notag
&\quad\leq\sum_{\ell\in\mathbb{Z}}\left\|F_{\ell,r_B}(L)\alpha\right\|_{L^2(U_i(B_{t}))}
=\sum_{\ell<0}\cdots+\sum_{0\leq \ell<i}\cdots+\sum_{\ell\geq i}\cdots\\\notag
&\quad=:\mathrm{I}_1+\mathrm{I}_2+\mathrm{I}_3.
\end{align}
Let $h:=s+\theta$ for some $\theta \in(0,s)$. By \cite[(24) and (25)]{bl22}, we find that, for any
$\ell\in\mathbb{Z}$ and $g\in L^2(B)$,
\begin{align*}
&\left\|F_{\ell,r_B}(L)g\right\|_{L^2(U_i(B_t))}\\ \notag
&\quad\lesssim2^{-hi}\left(2^\ell r_B^2\right)^{-\frac{h}{2}}\max\left\{1,2^{\theta \ell}\right\}
\min\left\{1,\left(2^\ell r_B^2\right)^M\right\}\|g\|_{L^2(B)}.
\end{align*}
Using this, \eqref{chichun-at}, and Lemma \ref{yi}, we obtain
\begin{align}\label{1962}
\mathrm{I}_1
&=\sum_{\ell<0}\left\|F_{\ell,r_B}(L)\alpha\right\|_{L^2(U_i(B_{t}))}\\\notag
&\lesssim
2^{-hi}\sum_{\ell<0}\left(2^\ell r_B^2\right)^{-\frac{h}{2}}
\min\left\{1,\left(2^\ell r_B^2\right)^M\right\}\|\alpha\|_{L^2(B)}
\lesssim
2^{-hi}[\mu(B)]^{\frac{1}{2}}\|\mathbf{1}_{B}\|_{X(\mathcal{X})}^{-1}
\end{align}
and
\begin{align}\label{1964}
\mathrm{I}_{2}
&=\sum_{0\leq \ell<i}\left\|F_{\ell,r_B}(L)\alpha\right\|_{L^2(U_i(B_{t}))}\\\notag
&\lesssim2^{-h i}
\sum_{0\leq \ell<i}2^{\theta \ell}\left(2^\ell r_B^2\right)^{-\frac{h}{2}}
\min\left\{1,\left(2^\ell r_B^2\right)^M\right\}\|\alpha\|_{L^2(B)}\\ \notag
&\leq2^{-h i+\theta i}
\sum_{\ell\in\mathbb{Z}}\left(2^\ell r_B^2\right)^{-\frac{h}{2}}
\min\left\{1,\left(2^\ell r_B^2\right)^M\right\}\|\alpha\|_{L^2(B)}\\ \notag
&\lesssim2^{-si}
[\mu(B)]^{\frac{1}{2}}\|\mathbf{1}_{B}\|_{X(\mathcal{X})}^{-1}.
\end{align}
Furthermore, observe that, for any $\ell\in\mathbb{Z},$
\begin{align*}
\left\|F_{\ell,r_B}\right\|_{L^\infty(0,\infty)}\lesssim2^{-\ell s}
\min\left\{1,\left(2^\ell r_B^2\right)^{M}\right\}.
\end{align*}
From this, the functional calculi associated with $L$, and  \eqref{chichun-at},  it follows that
\begin{align*}
\mathrm{I}_{3}
&=\sum_{ \ell\geq i}\left\|F_{\ell,r_B}(L)\alpha\right\|_{L^2(U_i(B_{t}))}
\lesssim
\sum_{\ell\geq i}2^{-\ell s}\min\left\{1,\left(2^\ell r_B^2\right)^{M}\right\}
\|\alpha\|_{L^2(B)}\\
&\lesssim2^{-si}\left[\mu(B)\right]^{\frac{1}{2}}
\|\mathbf{1}_{B}\|_{X(\mathcal{X})}^{-1}.
\end{align*}
By this, \eqref{1964}, \eqref{1962}, and \eqref{1961}, we find that \eqref{1959} holds true.

For any $\ell\in\mathbb{Z}$ and $\lambda\in(0,\infty)$, let
\begin{align*}
G_{\ell,r_B}(\lambda):=\psi\left(2^{-\ell}\lambda\right)\left(r_B^2\lambda\right)^M
P\left(r_B^2\lambda\right)F(\lambda).
\end{align*}
By \cite[p. 18 and Lemma 3.1]{bl22}, we obtain, for any $\ell\in\mathbb{Z}$, $i\in\mathbb{N}$,
and $g\in L^2(B)$,
\begin{align*}
\left\|G_{\ell,r_B}\right\|_{L^\infty(0,\infty)}\lesssim2^{-\ell s}\left(2^\ell r_B^2\right)^{-M}
\min\left\{1,\left(2^\ell r_B^2\right)^{2M}\right\}
\end{align*}
and
\begin{align*}
&\left\|G_{\ell,r_B}g\right\|_{L^2(S_i(B_t))}\\
&\quad\lesssim2^{-hi}\left(2^\ell r_B^2\right)^{-M-\frac{h}{2}}\max\left\{1,2^{\theta \ell}\right\}
\min\left\{1,\left(2^\ell r_B^2\right)^{2M}\right\}\|g\|_{L^2(B)}.
\end{align*}
Using this, by an argument similar to that used in the proof of \eqref{1959} with $F_{\ell,r_B}$ and
$\alpha$ replaced, respectively, by $G_{\ell,r_B}$ and $r_B^{-2M}\alpha$, we
conclude that \eqref{2114} holds true.

By \eqref{1959}, Proposition \ref{pras} with $q:=2$ and $\theta:=2^i(1+|t|),$
and \eqref{can1}, we have, for any $i\in\mathbb{Z}_+$,
\begin{align}\label{2126}
E_{i,1}
&=\left\|\sum_{j\in\mathbb{N}}\left\{\lambda_{j}S_{L,\,\psi}
\left(\left[I-e^{-r_{B_j}^2L}\right]^MF(L)\alpha_{j}\right)\mathbf{1}_{S_i(B_{t,j})}\right\}^{s_0}
\right\|_{X^{\frac{1}{s_0}}(\mathcal{X})}\\\notag
&\lesssim
[2^i(1+|t|)]^{(1-\frac{s_0}{2})n}2^{-iss_0}\left\|\sum_{j=1}^\infty
\left[\frac{\lambda_j}{\|\mathbf{1}_{B_j}\|_{X(\mathcal{X})}}\right]^{s_0}
\mathbf{1}_{B_j}\right\|_{X^{\frac{1}{s_0}}(\mathcal{X})}\\\notag
&\lesssim
2^{-is_0[s-n(\frac{1}{s_0}-\frac{1}{2})]}(1+|t|)^{(1-\frac{s_0}{2})n}\|f\|_{H_{X,\,L}(\mathcal{X})}^{s_0}.
\end{align}
Meanwhile, similarly to \eqref{2126}, we also have, for any $i\in\mathbb{Z}_+,$
\begin{align*}
E_{i,2}\lesssim
2^{-is_0[s-n(\frac{1}{s_0}-\frac{1}{2})]}(1+|t|)^{(1-\frac{s_0}{2})n}\|f\|_{H_{X,\,L}(\mathcal{X})}^{s_0}.
\end{align*}
From this, \eqref{2126}, and \eqref{2128}, it follows that \eqref{2127} holds true, which completes the proof
of Theorem \ref{sch}.
\end{proof}

\section{Applications to Specific Function Spaces}\label{section5}
In this section, we apply the main results obtained in Sections \ref{section3} and \ref{section4} to
some specific spaces associated with operators, including  Orlicz--Hardy spaces
(see Subsection \ref{secos} below), weighted Hardy spaces (see Subsection \ref{secwls} below), and variable
Hardy spaces (see Subsection \ref{secvls} below). These examples show that the results obtained in this article
have wide generality and applications.

\subsection{Orlicz--Hardy Spaces}\label{secos}
Let us first recall the concepts of Orlicz functions and Orlicz spaces. A function $\Phi:\ [0,\infty)
\to[0,\infty)$ is called an \emph{Orlicz function} if $\Phi$ is non-decreasing, $\Phi(0)=0$, $\Phi(t)>0$
for any $t\in(0,\infty)$, and $\lim_{t\to\infty}\Phi(t)=\infty.$ Moreover, an Orlicz function $\Phi$ is
said to be of \emph{lower} [resp. \emph{upper}] \emph{type} $r$ for some $r\in\mathbb{R}$ if there exists
a positive constant $C_{(r)}$ such that, for any $t\in[0,\infty)$ and $s\in(0,1)$ [resp. $s\in[1,\infty)$],
$$\Phi(st)\le C_{(r)} s^r\Phi(t).$$
In what follows, we \emph{always} assume that $\Phi:\ [0,\infty)\to[0,\infty)$ is an Orlicz function with
positive lower type $r_{\Phi}^-$ and positive upper type $r_{\Phi}^+$. The \emph{Orlicz space $L^\Phi(\mathcal{X})$}
is defined to be the set of all the $\mu$-measurable functions $f$ on $\mathcal{X}$ with finite \emph{(quasi-)norm}
\begin{align*}
\|f\|_{L^\Phi(\mathcal{X})}:=\inf\left\{\lambda\in(0,\infty):\ \int_{\mathcal{X}}\Phi\left(\frac{|f(x)|}
{\lambda}\right)\,d\mu(x)\le1\right\}.
\end{align*}
We point out that $L^\Phi(\mathcal{X})$ is a quasi-Banach function space and hence a $\mathrm{BQBF}$ space (see, for
instance, \cite[Subsection 8.3]{syy21}). In the case when $X(\mathcal{X}):=L^{\Phi}(\mathcal{X})$, we denote
$H_{X,\,L}(\mathcal{X})$, $H_{X,\,L,\,\rm{mol}}^{M,\,\epsilon}(\mathcal{X})$, $H_{X,\,L,\,\rm{at}}^{M}(\mathcal{X})$, $H_{X,\,L,\,g}(\mathcal{X})$, and $H_{X,\,L,\,g^\ast_{\lambda}}(\mathcal{X})$, respectively, by
$H_{L}^{\Phi}(\mathcal{X})$, $H_{L,\,\rm{mol}}^{\Phi,\,M,\,\epsilon}(\mathcal{X})$,
$H_{L,\,\rm{at}}^{\Phi,\,M}(\mathcal{X})$, $H_{L,\,g}^{\Phi}(\mathcal{X})$,
and $H_{L,\,g^\ast_{\lambda}}^{\Phi}(\mathcal{X})$. Applying Theorem \ref{thm-mc} with $X(\mathcal{X}):=L^{\Phi}
(\mathcal{X})$, we have the following conclusion.

\begin{theorem}\label{os1}
Let the operator $L$ be the same as in Theorem \ref{thm-mc} and $\Phi$ be an Orlicz function with positive
lower type $r_{\Phi}^-$ and positive upper type $r_{\Phi}^+$. Assume that $0<r_{\Phi}^-\leq r_{\Phi}^+<2$,
$M\in(\frac{n}{2}[\frac{1}{\min\{1,r_{\Phi}^-\}}-\frac{1}{2}],\infty)\cap\mathbb{N}$, and $\epsilon\in(\frac{n}
{\min\{1,r_{\Phi}^-\}},\infty)$. Then the spaces $H_{L}^{\Phi}(\mathcal{X})$, $H_{L,\,\rm{mol}}
^{\Phi,\,M,\,\epsilon}(\mathcal{X})$, and $H_{L,\,\rm{at}}^{\Phi,\,M}(\mathcal{X})$ coincide with equivalent
quasi-norms.
\end{theorem}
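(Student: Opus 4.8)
The plan is to follow the strategy used in the proof of Theorem \ref{ls10}: to verify that the Orlicz space $X(\mathcal{X}):=L^{\Phi}(\mathcal{X})$ satisfies both Assumptions \ref{vector1} and \ref{vector2} with suitable parameters, and then to invoke Theorem \ref{thm-mc}. Recall that $L^{\Phi}(\mathcal{X})$ is a $\mathrm{BQBF}$ space. First I would fix the parameters by taking $p:=r_{\Phi}^-$ and $q_0:=2$, and choosing $s_0\in(0,\min\{1,r_{\Phi}^-\})$ close enough to $\min\{1,r_{\Phi}^-\}$ so that the given $M$ and $\epsilon$ still satisfy $M>\frac{n}{2}(\frac{1}{s_0}-\frac{1}{2})$ and $\epsilon>\frac{n}{s_0}$; this is possible precisely because the hypotheses $M>\frac{n}{2}(\frac{1}{\min\{1,r_{\Phi}^-\}}-\frac{1}{2})$ and $\epsilon>\frac{n}{\min\{1,r_{\Phi}^-\}}$ are strict. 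Since $r_{\Phi}^+<2$, this choice yields $s_0<r_{\Phi}^-\le r_{\Phi}^+<q_0=2$, so all the index constraints of Theorem \ref{thm-mc} are compatible.

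To verify Assumption \ref{vector1} with $p=r_{\Phi}^-$, I would use the fact that, for any $t\in(0,p)$, the convexification $(L^{\Phi})^{1/t}(\mathcal{X})$ is again an Orlicz space, namely $L^{\Phi(\cdot^{1/t})}(\mathcal{X})$, whose defining function has lower type $r_{\Phi}^-/t>1$. The required vector-valued Fefferman--Stein inequality then holds on such Orlicz spaces with nontrivial lower type, which I would quote from the corresponding Orlicz-space result (the analogue of \cite{gly09} in the Lebesgue case).

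The decisive step is Assumption \ref{vector2}. Here I would first note that $X^{1/s_0}(\mathcal{X})=L^{\Psi}(\mathcal{X})$ with $\Psi(\tau):=\Phi(\tau^{1/s_0})$, which has lower type $r_{\Phi}^-/s_0>1$ and upper type $r_{\Phi}^+/s_0$; hence $X^{1/s_0}(\mathcal{X})$ is a $\mathrm{BBF}$ space. The associate space $(X^{1/s_0})'(\mathcal{X})$ is then the Orlicz space built from the complementary function $\widetilde{\Psi}$ of $\Psi$, and I must show that $\mathcal{M}$ is bounded on the $\frac{1}{(q_0/s_0)'}$-convexification of this space. The main obstacle is exactly this boundedness: it amounts to checking that the convexified associate Orlicz space has lower type strictly larger than $1$. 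Tracking the type indices through complementation (the upper type $r_{\Phi}^+/s_0$ of $\Psi$ becoming the lower type $(r_{\Phi}^+/s_0)'$ of $\widetilde{\Psi}$) and through the subsequent convexification, this reduces, after elementary index bookkeeping, to the single inequality $q_0>r_{\Phi}^+$, which holds since $q_0=2>r_{\Phi}^+$. Once this lower type exceeds $1$, the boundedness of $\mathcal{M}$ on the Orlicz space follows from standard maximal-function theory.

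With both assumptions established for $X(\mathcal{X})=L^{\Phi}(\mathcal{X})$ and the parameters $p=r_{\Phi}^-$, $q_0=2$, and the chosen $s_0$, together with the given $M$ and $\epsilon$, all hypotheses of Theorem \ref{thm-mc} are met. Applying Theorem \ref{thm-mc} then yields that $H_{L}^{\Phi}(\mathcal{X})$, $H_{L,\,\mathrm{mol}}^{\Phi,\,M,\,\epsilon}(\mathcal{X})$, and $H_{L,\,\mathrm{at}}^{\Phi,\,M}(\mathcal{X})$ coincide with equivalent quasi-norms, which completes the proof.
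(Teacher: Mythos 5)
Your proposal is correct and follows essentially the same route as the paper: choose $p:=r_{\Phi}^-$, $q_0:=2$, and $s_0\in(0,\min\{1,r_{\Phi}^-\})$ close enough to the endpoint that the given $M$ and $\epsilon$ survive, verify Assumption \ref{vector1} via the Fefferman--Stein inequality on Orlicz spaces and Assumption \ref{vector2} by identifying the convexified associate space as an Orlicz space whose lower type $(r_{\Phi}^+/s_0)'/(2/s_0)'$ exceeds $1$ precisely because $r_{\Phi}^+<2=q_0$, and then invoke Theorem \ref{thm-mc}. Your index bookkeeping matches the paper's computation exactly.
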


\begin{proof}
Let $p:=r_{\Phi}^-$, $q_0:=2,$ and $s_0\in (0,\min\{1,r_{\Phi}^-\})$ satisfy both $M>\frac{n}{2}
(\frac{1}{s_0}-\frac{1}{2})$ and $\epsilon>\frac{n}{s_0}$. By \cite[Theorem 6.6]{fmy20}, we find that
$X(\mathcal{X}):=L^\Phi(\mathcal{X})$ satisfies Assumption \ref{vector1} for the aforementioned $p$. From
\cite[p. 61, Proposition 4 and p. 100, Proposition 1]{rr91}, we deduce that
\begin{align}\label{qi}
\left[\left(\left[L^\Phi(\mathcal{X})\right]^{\frac{1}{s_0}}\right)'\right]
^{\frac{1}{(2/s_0)'}}=L^{\Psi}(\mathcal{X}),
\end{align}
where, for any $t\in[0,\infty)$,
$$\Psi(t):=\sup_{h\in(0,\infty)}\left[t^{1/(2/s_0)'}h-\Phi\left(h^{1/s_0}\right)\right].$$
Moreover, by \cite[Proposition 7.8(i)]{shyy17}, we conclude that $\Psi$ is an Orlicz function with positive
lower type $r_{\Psi}^-:=(r_{\Phi}^+/s_0)'/(2/s_0)'\in(1,\infty).$ This, combined with \eqref{qi} and
\cite[Theorem 6.6]{fmy20}, further implies that $X(\mathcal{X}):=L^\Phi(\mathcal{X})$  satisfies Assumption
\ref{vector2} for the aforementioned $s_0$ and $q_0.$ Therefore, all the assumptions of  Theorem
\ref{thm-mc} are satisfied with $X(\mathcal{X}):=L^{\Phi}(\mathcal{X})$, which further implies the
desired conclusions of the present theorem. This finishes the  proof of Theorem \ref{os1}.
\end{proof}

\begin{remark}
We point out that Theorem \ref{os1} under the assumption that $r_\Phi^+\in(0,1]$ (see also \cite[Theorem 5.5]{yy14}) was established in \cite[Theorem 5.5]{jy11}. To the best of our
knowledge,  Theorem \ref{os1} when $r_\Phi^+\in(1,2)$ is new.
\end{remark}

By Theorems \ref{thm-spec}, \ref{thm-g}, and \ref{sch} with $X(\mathcal{X}):=L^{\Phi}(\mathcal{X})$,
we have the  following conclusions; since their proofs are similar to that of Theorem \ref{os1},
we omit the details here.

\begin{theorem}\label{os4}
Let $L$ be the same as in Theorem \ref{thm-spec} and $\Phi$ be an Orlicz function with positive lower type
$r_{\Phi}^-$ and positive upper type $r_{\Phi}^+$. Assume that $0<r_{\Phi}^-\leq r_{\Phi}^+<2$
and $s\in(\frac{n}{\min\{1,r_{\Phi}^-\}},\infty)$. Let $\varphi\in C^\infty_{\mathrm{c}}(\mathbb{R})$ satisfy \eqref{620}
and  $m:\ [0,\infty)\to\mathbb{C}$ be a bounded Borel function satisfying \eqref{2022930c}. Then there exists a
positive constant $C$ such that, for any $f\in H^{\Phi}_{L}(\mathcal{X})$,
\begin{align*}
\|m(L)(f)\|_{H^{\Phi}_{L}(\mathcal{X})}\leq C\|f\|_{H^{\Phi}_{L}(\mathcal{X})}.
\end{align*}
\end{theorem}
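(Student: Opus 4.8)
The plan is to deduce Theorem \ref{os4} directly from Theorem \ref{thm-spec} by verifying that the Orlicz space $X(\mathcal{X}):=L^\Phi(\mathcal{X})$ meets all of its hypotheses, exactly paralleling the proof of Theorem \ref{os1}. The only genuinely new feature, relative to the mechanical verification in Theorem \ref{os1}, is that now the smoothness exponent $s$ must be reconciled with the auxiliary exponent $s_0$, so I would organize the whole argument around a careful choice of the parameters $p$, $s_0$, and $q_0$.

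First I would set $p:=r_\Phi^-$ and $q_0:=2$. Since the hypothesis furnishes $s>\frac{n}{\min\{1,r_\Phi^-\}}$, I can select $s_0\in(0,\min\{1,r_\Phi^-\})$ close enough to $\min\{1,r_\Phi^-\}$ that, simultaneously, $s_0\le\min\{p,1\}$, $q_0=2>s_0$, and $\frac{n}{s_0}<s$; the last inequality is attainable because $\frac{n}{s_0}\to\frac{n}{\min\{1,r_\Phi^-\}}<s$ as $s_0\uparrow\min\{1,r_\Phi^-\}$. With such a choice one has $s\in(\frac{n}{s_0},\infty)$, which is precisely the smoothness condition demanded by Theorem \ref{thm-spec}.

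Next I would check the two structural assumptions. Assumption \ref{vector1} for the chosen $p=r_\Phi^-$ follows from the vector-valued maximal inequality on Orlicz spaces in \cite[Theorem 6.6]{fmy20}. For Assumption \ref{vector2}, I would first note that $X^{1/s_0}(\mathcal{X})=L^{\Phi(\cdot^{s_0})}(\mathcal{X})$ has positive lower type $r_\Phi^-/s_0>1$ and is therefore a $\mathrm{BBF}$ space; then, using the duality formulas in \cite[p.\,61, Proposition 4 and p.\,100, Proposition 1]{rr91}, I would identify
\[
\left[\left(\left[L^\Phi(\mathcal{X})\right]^{\frac{1}{s_0}}\right)'\right]^{\frac{1}{(2/s_0)'}}=L^\Psi(\mathcal{X}),
\]
where $\Psi$ is the complementary Orlicz function given as in the proof of Theorem \ref{os1}. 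By \cite[Proposition 7.8(i)]{shyy17}, $\Psi$ has positive lower type $(r_\Phi^+/s_0)'/(2/s_0)'>1$, the strict inequality being exactly where the hypothesis $r_\Phi^+<2$ is used; a further application of \cite[Theorem 6.6]{fmy20} then yields the boundedness of $\mathcal{M}$ on $L^\Psi(\mathcal{X})$, establishing Assumption \ref{vector2} for this $s_0$ and $q_0$.

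With all hypotheses verified, Theorem \ref{thm-spec} applies with $X(\mathcal{X})=L^\Phi(\mathcal{X})$ and delivers the desired boundedness of $m(L)$ on $H^\Phi_L(\mathcal{X})$. I expect the main obstacle to be precisely the coordination carried out in the second paragraph: one must exploit the slack in $s>\frac{n}{\min\{1,r_\Phi^-\}}$ to push $s_0$ sufficiently close to $\min\{1,r_\Phi^-\}$ so that $\frac{n}{s_0}<s$, while still keeping $s_0<\min\{1,r_\Phi^-\}$ (and hence $s_0<r_\Phi^+<2$), which is what guarantees both that $X^{1/s_0}(\mathcal{X})$ is a $\mathrm{BBF}$ space and that $\Psi$ retains lower type strictly above $1$.
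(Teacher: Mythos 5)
Your proposal is correct and follows essentially the same route as the paper: the authors prove Theorem \ref{os4} by invoking Theorem \ref{thm-spec} with $X(\mathcal{X}):=L^\Phi(\mathcal{X})$ and verifying Assumptions \ref{vector1} and \ref{vector2} exactly as in the proof of Theorem \ref{os1}, with $p:=r_\Phi^-$, $q_0:=2$, and $s_0\in(0,\min\{1,r_\Phi^-\})$ chosen (using the slack in the hypothesis on $s$) so that $s>\frac{n}{s_0}$. Your parameter bookkeeping, the duality identification with $L^\Psi(\mathcal{X})$, and the role of $r_\Phi^+<2$ all match the paper's argument.
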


\begin{theorem}\label{os2}
Let   $\Phi$ be an Orlicz function with positive lower type
$r_{\Phi}^-$ and positive upper type $r_{\Phi}^+$.
\begin{itemize}
\item [$\mathrm{(i)}$]Let $L$ be the same as in Theorem \ref{thm-g-2}. Assume that $0<r_{\Phi}^-\leq r_{\Phi}^+<2$ and
$\lambda\in(\frac{2n}{\min\{1,r_{\Phi}^-\}},\infty)$. Then
both $g_{L}$ and $g_{\lambda,\,L}^\ast$ are bounded from
$H_{L}^{\Phi}(\mathcal{X})$ to $L^{\Phi}(\mathcal{X}).$
\item [$\mathrm{(i)}$]Let $L$ be the same as in Theorem \ref{thm-g}. Assume that $0<r_{\Phi}^-\leq r_{\Phi}^+<\infty$ and
$\lambda\in(\frac{2n}{\min\{1,r_{\Phi}^-\}},\infty)$. Then the spaces $H_{L}^{\Phi}(\mathcal{X})$,
$H_{L,\,g}^{\Phi}(\mathcal{X}),$ and $H_{L,\,g^\ast_{\lambda}}^{\Phi}(\mathcal{X})$ coincide with equivalent quasi-norms.
\end{itemize}
\end{theorem}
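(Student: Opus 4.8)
The plan is to deduce both parts by invoking Theorems~\ref{thm-g-2} and~\ref{thm-g} with $X(\mathcal{X}):=L^{\Phi}(\mathcal{X})$, so that the entire task reduces to checking that the Orlicz space $L^\Phi(\mathcal{X})$ satisfies the hypotheses of those two theorems for a suitable choice of the parameters $p$, $s_0$, and $q_0$. Since $L^\Phi(\mathcal{X})$ is already known to be a $\mathrm{BQBF}$ space, the verification follows exactly the template of the proof of Theorem~\ref{os1}, and I would reuse the lower/upper type bookkeeping developed there.

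For part (i), I would set $p:=r_\Phi^-$ and $q_0:=2$, and then choose $s_0\in(0,\min\{1,r_\Phi^-\})$ with $\frac{2n}{s_0}<\lambda$; this is possible precisely because $\lambda\in(\frac{2n}{\min\{1,r_\Phi^-\}},\infty)$, which leaves the nonempty interval $(\frac{2n}{\lambda},\min\{1,r_\Phi^-\})$ from which to pick $s_0$. With these parameters, Assumption~\ref{vector1} for $p=r_\Phi^-$ follows from the positive lower type of $\Phi$ via \cite[Theorem 6.6]{fmy20}, and Assumption~\ref{vector2} follows by first identifying, through \cite[p.\,61, Proposition 4 and p.\,100, Proposition 1]{rr91}, the $\frac{1}{(2/s_0)'}$-convexification of $([L^\Phi]^{1/s_0})'$ with an Orlicz space $L^\Psi(\mathcal{X})$, and then using \cite[Proposition 7.8(i)]{shyy17} to see that $\Psi$ has positive lower type $(r_\Phi^+/s_0)'/(2/s_0)'$. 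Here the hypothesis $r_\Phi^+<2$ is exactly what forces $(r_\Phi^+/s_0)'>(2/s_0)'$, hence that this lower type exceeds $1$, so $\mathcal{M}$ is bounded on $L^\Psi(\mathcal{X})$ again by \cite[Theorem 6.6]{fmy20}. Applying Theorem~\ref{thm-g-2} then yields the asserted boundedness of $g_L$ and $g^\ast_{\lambda,\,L}$ from $H^\Phi_L(\mathcal{X})$ to $L^\Phi(\mathcal{X})$.

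For part (ii), where $L$ satisfies the Gaussian upper bound, I would instead invoke Theorem~\ref{thm-g}, which does \emph{not} require Assumption~\ref{vector2} with $q_0\le2$ but only that $X^{1/s_0}(\mathcal{X})$ be a $\mathrm{BBF}$ space and that $\mathcal{M}$ be bounded on its associate space. Taking again $p:=r_\Phi^-$ and $s_0\in(0,\min\{1,r_\Phi^-\})$ with $\frac{2n}{s_0}<\lambda$, Assumption~\ref{vector1} is obtained as above. Since $s_0<r_\Phi^-$, the convexification $[L^\Phi(\mathcal{X})]^{1/s_0}$ is an Orlicz space whose defining function has lower type $r_\Phi^-/s_0>1$ and finite upper type $r_\Phi^+/s_0$, so it is a $\mathrm{BBF}$ space; its associate space is the complementary Orlicz space, whose lower type $(r_\Phi^+/s_0)'>1$ forces $\mathcal{M}$ to be bounded on it, once more by \cite[Theorem 6.6]{fmy20}. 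This is exactly why part (ii) only needs $r_\Phi^+<\infty$ rather than $r_\Phi^+<2$: the $q_0\le2$ restriction present in Theorem~\ref{thm-g-2} is absent here. Theorem~\ref{thm-g} then gives the coincidence of $H^\Phi_L(\mathcal{X})$, $H^\Phi_{L,\,g}(\mathcal{X})$, and $H^\Phi_{L,\,g^\ast_\lambda}(\mathcal{X})$ with equivalent quasi-norms.

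The main obstacle is the associate-space identification together with the lower-type verification: the boundedness of $\mathcal{M}$ on an Orlicz space hinges on the complementary function having lower type strictly above $1$, and keeping track of how the convexification exponent $s_0$ and the conjugate exponents interact, so that all the relevant types stay in the correct ranges, is the only genuinely delicate bookkeeping. Once these type computations are in place, everything else is a direct application of the already-established Theorems~\ref{thm-g-2} and~\ref{thm-g}.
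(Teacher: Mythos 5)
Your proposal is correct and matches the paper's intended argument: the paper proves Theorem \ref{os2} exactly by applying Theorems \ref{thm-g-2} and \ref{thm-g} with $X(\mathcal{X}):=L^{\Phi}(\mathcal{X})$ and verifying the hypotheses as in the proof of Theorem \ref{os1} (via \cite[Theorem 6.6]{fmy20} for Assumption \ref{vector1} and the Orlicz associate-space identification from \cite{rr91} and \cite[Proposition 7.8(i)]{shyy17} for the maximal-operator hypotheses), with the details omitted. Your bookkeeping --- choosing $s_0\in(\frac{2n}{\lambda},\min\{1,r_{\Phi}^-\})$, and noting that part (ii) only needs $\mathcal{M}$ bounded on $([L^\Phi]^{1/s_0})'(\mathcal{X})$ rather than the full Assumption \ref{vector2} with $q_0\le2$, which is precisely why $r_{\Phi}^+<\infty$ suffices there --- is exactly the verification the paper leaves to the reader.
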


\begin{theorem}\label{os3}
Let $L$ be the same as in Theorem \ref{sch} and $\Phi$ be an Orlicz function with positive lower type
$r_{\Phi}^-$ and positive upper type $r_{\Phi}^+$. Assume that $0<r_{\Phi}^-\leq r_{\Phi}^+<2$ and
$s\in(\frac{n}{\min\{1,r_{\Phi}^-\}}-\frac{n}{2},\infty)$. Then there exists a positive constant $C$ such that,
for any  $t\in\mathbb{R}$ and $f\in H^{\Phi}_{L}(\mathcal{X})$,
\begin{align*}
\left\|(I+L)^{-s}e^{itL}f\right\|_{H^{\Phi}_{L}(\mathcal{X})}\leq C(1+|t|)^s\|f\|_{H^{\Phi}_{L}(\mathcal{X})}.
\end{align*}
\end{theorem}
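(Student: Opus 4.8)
The plan is to derive Theorem \ref{os3} as a direct specialization of the abstract boundedness result Theorem \ref{sch} to the ball quasi-Banach function space $X(\mathcal{X}):=L^\Phi(\mathcal{X})$, following verbatim the parameter verification already carried out in the proof of Theorem \ref{os1}.

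First I would fix the parameters. Set $p:=r_\Phi^-$ and $q_0:=2$. The key observation is that the map $\sigma\mapsto n(\frac{1}{\sigma}-\frac{1}{2})$ is continuous and strictly decreasing in $\sigma$, with limit $\frac{n}{\min\{1,r_\Phi^-\}}-\frac{n}{2}$ as $\sigma\uparrow\min\{1,r_\Phi^-\}$. Since the hypothesis imposes the strict inequality $s>\frac{n}{\min\{1,r_\Phi^-\}}-\frac{n}{2}$, I can choose $s_0\in(0,\min\{1,r_\Phi^-\})$ close enough to $\min\{1,r_\Phi^-\}$ so that $s\in(n[\frac{1}{s_0}-\frac{1}{2}],\infty)$, while still keeping $s_0\le\min\{p,1\}$ and $q_0\in(s_0,2]$, exactly as required by Theorem \ref{sch}.

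Next I would verify Assumptions \ref{vector1} and \ref{vector2} for $L^\Phi(\mathcal{X})$ exactly as in the proof of Theorem \ref{os1}. By \cite[Theorem 6.6]{fmy20}, the space $L^\Phi(\mathcal{X})$ satisfies Assumption \ref{vector1} for $p=r_\Phi^-$. For Assumption \ref{vector2}, I would invoke the duality computation from \cite[p.\,61, Proposition 4 and p.\,100, Proposition 1]{rr91} giving
$$\left[\left(\left[L^\Phi(\mathcal{X})\right]^{1/s_0}\right)'\right]^{1/(2/s_0)'}=L^\Psi(\mathcal{X}),$$
where $\Psi$ is the Orlicz function defined as in the proof of Theorem \ref{os1}. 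By \cite[Proposition 7.8(i)]{shyy17}, $\Psi$ has positive lower type $r_\Psi^-=(r_\Phi^+/s_0)'/(2/s_0)'\in(1,\infty)$, so \cite[Theorem 6.6]{fmy20} shows that $\mathcal{M}$ is bounded on $L^\Psi(\mathcal{X})$; this establishes Assumption \ref{vector2} with the chosen $s_0$ and $q_0=2$.

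Finally, with every hypothesis of Theorem \ref{sch} verified, that theorem yields
$$\left\|(I+L)^{-s}e^{itL}(f)\right\|_{H^\Phi_L(\mathcal{X})}\le C(1+|t|)^{n(\frac{1}{s_0}-\frac{1}{2})}\|f\|_{H^\Phi_L(\mathcal{X})}.$$
Since $1+|t|\ge1$ and $n(\frac{1}{s_0}-\frac{1}{2})<s$ by the choice of $s_0$, I have $(1+|t|)^{n(\frac{1}{s_0}-\frac{1}{2})}\le(1+|t|)^s$, which upgrades the estimate to the claimed exponent and finishes the proof. The only delicate point is the admissible choice of $s_0$ in the first step; everything else is a routine transcription of the verification done for Theorem \ref{os1}, and the strict inequality in the hypothesis on $s$ is precisely what guarantees that such an $s_0$ exists.
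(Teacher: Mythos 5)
Your proposal is correct and is exactly the argument the paper intends: the paper derives Theorem \ref{os3} by applying Theorem \ref{sch} with $X(\mathcal{X}):=L^\Phi(\mathcal{X})$ and explicitly omits the details on the grounds that the verification of Assumptions \ref{vector1} and \ref{vector2} is the same as in the proof of Theorem \ref{os1}, which is precisely what you carry out. Your added care in choosing $s_0$ close to $\min\{1,r_\Phi^-\}$ so that $s>n(\frac{1}{s_0}-\frac{1}{2})$, and in noting that $(1+|t|)^{n(\frac{1}{s_0}-\frac{1}{2})}\le(1+|t|)^s$, correctly fills in the two small points the paper leaves implicit.
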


\begin{remark}
\begin{itemize}
\item [$\mathrm{(i)}$]  In the case when $r_{\Phi}^+\in (0,1]$, Theorem \ref{os4} was obtained in
\cite[Theorem 6.10]{yy14}. However, in the case when $r_{\Phi}^+\in (1,2)$, Theorem \ref{os4} is new.
\item [$\mathrm{(ii)}$] To the best of our knowledge, Theorems \ref{os2}(ii) and \ref{os3} are completely new. Moreover,
\cite[Theorems 6.3 and 6.7]{yy14} is a part of Theorem \ref{os2}(i).
\end{itemize}
\end{remark}

\subsection{Weighted Hardy Spaces}\label{secwls}
For any $\omega\in A_\infty(\mathcal{X})$, the \emph{critical index} $q_\omega$ of $\omega$ is defined by setting
\begin{align*}
q_\omega:=\inf\left\{p\in[1,\infty):\ \omega\in A_p(\mathcal{X})\right\}.
\end{align*}
We point out that, for any given $r\in(0,\infty)$ and $\omega\in A_\infty(\mathcal{X})$,
$L^r_\omega(\mathcal{X})$ is a $\mathrm{BQBF}$ space, but not necessarily a quasi-Banach function space
(see, for instance, \cite[Subsection 7.1]{shyy17} for the Euclidean space case). In the case when
$X(\mathcal{X}):=L^{r}_\omega(\mathcal{X})$, we denote $H_{X,\,L}(\mathcal{X})$, $H_{X,\,L,\,\rm{mol}}
^{M,\,\epsilon}(\mathcal{X}),$ $H_{X,\,L,\,\rm{at}}^{M}(\mathcal{X}),$ $H_{X,\,L,\,g}(\mathcal{X})$, and
$H_{X,\,L,\,g^\ast_{\lambda}}(\mathcal{X})$, respectively, by $H_{\omega,\,L}^{r}(\mathcal{X})$,
$H_{\omega,\,L,\,\rm{mol}}^{r,\,M,\,\epsilon}(\mathcal{X}),$
$H_{\omega,\,L,\,\rm{at}}^{r,\,M}(\mathcal{X}),$ $H_{\omega,\,L,\,g}^{r}(\mathcal{X})$, and
$H_{\omega,\,L,\,g^\ast_{\lambda}}^{r}(\mathcal{X})$. Applying Theorem \ref{thm-mc} with $X(\mathcal{X}):
=L^{r}_\omega(\mathcal{X})$, we obtain the  following conclusion.

\begin{theorem}\label{wls1}
Let the operator $L$ be the same as in Theorem \ref{thm-mc}. Assume that $r\in(0,2)$, $\omega\in A_\infty
(\mathcal{X})$, $s_0\in(0,\min\{1,r/q_\omega\})$, and $\omega^{1-(r/s_0)'}\in A_{(r/s_0)'/(2/s_0)'}(\mathcal{X})$.
Let $M\in(\frac{n}{2}[\frac{1}{s_0}-\frac{1}{2}],\infty)\cap\mathbb{N}$ and $\epsilon\in(\frac{n}{s_0},\infty)$.
Then the spaces $H_{\omega,\,L}^{r}(\mathcal{X})$, $H_{\omega,\,L,\,\rm{mol}}^{r,\,M,\,\epsilon}(\mathcal{X}),$ and
$H_{\omega,\,L,\,\rm{at}}^{r,\,M}(\mathcal{X})$ coincide with equivalent quasi-norms.
\end{theorem}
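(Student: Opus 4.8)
The plan is to apply Theorem \ref{thm-mc} with $X(\mathcal{X}):=L^r_\omega(\mathcal{X})$, so that the entire task reduces to verifying that this weighted Lebesgue space satisfies both Assumptions \ref{vector1} and \ref{vector2} for a suitable choice of parameters. Following the pattern of the proofs of Theorems \ref{ls10} and \ref{os1}, I would set $p:=r/q_\omega$ and $q_0:=2$, and keep $s_0$ as in the hypothesis. One checks immediately that these satisfy the numerical constraints of Theorem \ref{thm-mc}: indeed $s_0<\min\{1,r/q_\omega\}=\min\{1,p\}$ gives $s_0\in(0,\min\{p,1\}]$, while $s_0<1<2=q_0$ gives $q_0\in(s_0,2]$, and the ranges of $M$ and $\epsilon$ already match those in Theorem \ref{thm-mc}.

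First I would verify Assumption \ref{vector1}. For any $t\in(0,p)$ one has $X^{1/t}(\mathcal{X})=L^{r/t}_\omega(\mathcal{X})$, and since $r/t>r/p=q_\omega$, the definition of the critical index $q_\omega$ yields $\omega\in A_{r/t}(\mathcal{X})$. The weighted Fefferman--Stein vector-valued maximal inequality on spaces of homogeneous type then supplies the required bound in $L^{r/t}_\omega(\mathcal{X})$ for all $u\in(1,\infty)$, which establishes Assumption \ref{vector1} for $p:=r/q_\omega$.

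The more computational step is Assumption \ref{vector2}. Writing $\rho:=r/s_0$, from $s_0<r/q_\omega\le r$ one obtains $\rho>q_\omega\ge1$, so $X^{1/s_0}(\mathcal{X})=L^{\rho}_\omega(\mathcal{X})$ is a $\mathrm{BBF}$ space. The two identities I would exploit are the duality of weighted Lebesgue spaces, $(L^{\rho}_\omega)'(\mathcal{X})=L^{\rho'}_{\omega^{1-\rho'}}(\mathcal{X})$, so that $(X^{1/s_0})'(\mathcal{X})=L^{(r/s_0)'}_{\omega^{1-(r/s_0)'}}(\mathcal{X})$, together with the behavior of convexification, $(L^q_v)^{\beta}(\mathcal{X})=L^{\beta q}_v(\mathcal{X})$. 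Combining these shows that the $\frac{1}{(q_0/s_0)'}$-convexification of $(X^{1/s_0})'(\mathcal{X})$ equals $L^{(r/s_0)'/(2/s_0)'}_{\omega^{1-(r/s_0)'}}(\mathcal{X})$. Since $r<2$ forces $(r/s_0)'>(2/s_0)'$, the exponent $(r/s_0)'/(2/s_0)'$ exceeds $1$, and the hypothesis $\omega^{1-(r/s_0)'}\in A_{(r/s_0)'/(2/s_0)'}(\mathcal{X})$ is precisely the Muckenhoupt condition needed for the boundedness of $\mathcal{M}$ on this space. Hence Assumption \ref{vector2} holds with the chosen $s_0$ and $q_0:=2$.

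I expect the main obstacle to be purely bookkeeping, namely correctly threading the weight $\omega$ and the exponents through the associate-space and convexification operations so that the resulting Muckenhoupt class matches the stated hypothesis exactly; the analytic inputs (weighted vector-valued maximal inequality and Muckenhoupt boundedness of $\mathcal{M}$) are standard on spaces of homogeneous type. Once both assumptions are verified, Theorem \ref{thm-mc} applies directly and yields the coincidence of $H^{r}_{\omega,\,L}(\mathcal{X})$, $H^{r,\,M,\,\epsilon}_{\omega,\,L,\,\mathrm{mol}}(\mathcal{X})$, and $H^{r,\,M}_{\omega,\,L,\,\mathrm{at}}(\mathcal{X})$ with equivalent quasi-norms, completing the proof.
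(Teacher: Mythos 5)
Your proposal is correct and follows essentially the same route as the paper: apply Theorem \ref{thm-mc} with $X(\mathcal{X}):=L^r_\omega(\mathcal{X})$, $p:=r/q_\omega$, $q_0:=2$, verify Assumption \ref{vector1} via the weighted vector-valued maximal inequality, and verify Assumption \ref{vector2} via the identity $[([L^{r}_\omega(\mathcal{X})]^{1/s_0})']^{1/(2/s_0)'}=L^{(r/s_0)'/(2/s_0)'}_{\omega^{1-(r/s_0)'}}(\mathcal{X})$ together with the stated Muckenhoupt hypothesis. Your exponent bookkeeping matches the paper's computation exactly.
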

\begin{proof}
Let $p:=r/q_\omega$ and $q_0:=2.$ By \cite[Theorem 6.5(ii)]{fmy20} and the definition of $q_\omega,$
we find that $X(\mathcal{X}):=L^{r}_\omega(\mathcal{X})$ satisfies Assumption \ref{vector1} for the
aforementioned $p.$ Notice that
\begin{align*}
\left[\left(\left[L^{r}_\omega(\mathcal{X})\right]^{\frac{1}{s_0}}\right)'\right]^{\frac{1}{(2/s_0)'}}
=L^{(r/s_0)'/(2/s_0)'}_{\omega^{1-(r/s_0)'}}(\mathcal{X}).
\end{align*}
Using this and the assumption that $\omega^{1-(r/s_0)'}\in A_{(r/s_0)'/(2/s_0)'}(\mathcal{X})$, we conclude
that the Hardy--Littlewood maximal operator $\mathcal{M}$ is bounded on the $\frac{1}{(2/s_0)'}$-convexification
of the associate space $([L^{r}_\omega(\mathcal{X})]^{\frac{1}{s_0}})'$. Thus, all the assumptions of
Theorem \ref{thm-mc} are satisfied with $X(\mathcal{X}):=L^r_\omega(\mathcal{X})$, which further implies
the desired conclusions of the present theorem. This finishes the  proof of Theorem \ref{wls1}.
\end{proof}

By Theorems \ref{thm-spec}, \ref{thm-g}, and \ref{sch} with $X(\mathcal{X}):=L^{r}_\omega(\mathcal{X})$,
we have the following conclusions; since their proofs are similar to that of Theorem \ref{wls1}, we omit
the details here.

\begin{theorem}\label{wls4}
Let $L$ be the same as in Theorem \ref{thm-spec}, $r,\omega,$ and $s_0$ be  the same as in Theorem \ref{wls1}, and $s\in
(\frac{n}{s_0},\infty)$. Assume that $\varphi\in C^\infty_{\mathrm{c}}(\mathbb{R})$ satisfies \eqref{620}
and $m:\ [0,\infty)\to\mathbb{C}$ is a bounded Borel function satisfying \eqref{2022930c}.
Then there exists a positive constant $C$ such that, for any $f\in H^{r}_{\omega,\,L}(\mathcal{X})$,
\begin{align*}
\|m(L)(f)\|_{H^{r}_{\omega,\,L}(\mathcal{X})}\leq C\|f\|_{H^{r}_{\omega,\,L}(\mathcal{X})}.
\end{align*}
\end{theorem}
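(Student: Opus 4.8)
The plan is to deduce Theorem \ref{wls4} as a direct specialization of the general spectral multiplier theorem, Theorem \ref{thm-spec}, to the concrete ball quasi-Banach function space $X(\mathcal{X}):=L^{r}_\omega(\mathcal{X})$. Since the notational convention preceding Theorem \ref{wls1} identifies $H^{r}_{\omega,\,L}(\mathcal{X})$ with $H_{X,\,L}(\mathcal{X})$ for this choice of $X$, and since $r,\omega,$ and $s_0$ are inherited verbatim from Theorem \ref{wls1}, the entire task reduces to checking that $L^{r}_\omega(\mathcal{X})$ meets every hypothesis of Theorem \ref{thm-spec}. This verification has essentially already been carried out in the proof of Theorem \ref{wls1}, so I would reuse it.

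First I would set $p:=r/q_\omega$ and $q_0:=2$. Exactly as in the proof of Theorem \ref{wls1}, Assumption \ref{vector1} for this $p$ follows from \cite[Theorem 6.5(ii)]{fmy20} together with the definition of the critical index $q_\omega$. For Assumption \ref{vector2}, I would invoke the identity
\begin{align*}
\left[\left(\left[L^{r}_\omega(\mathcal{X})\right]^{\frac{1}{s_0}}\right)'\right]^{\frac{1}{(2/s_0)'}}
=L^{(r/s_0)'/(2/s_0)'}_{\omega^{1-(r/s_0)'}}(\mathcal{X})
\end{align*}
and the standing hypothesis $\omega^{1-(r/s_0)'}\in A_{(r/s_0)'/(2/s_0)'}(\mathcal{X})$, which together yield the boundedness of the Hardy--Littlewood maximal operator $\mathcal{M}$ on the $\frac{1}{(2/s_0)'}$-convexification of the associate space $([L^{r}_\omega(\mathcal{X})]^{1/s_0})'$; this is precisely Assumption \ref{vector2} with $s_0$ and $q_0=2$.

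It then remains only to confirm the index restrictions of Theorem \ref{thm-spec}. Since $s_0\in(0,\min\{1,r/q_\omega\})=(0,\min\{1,p\})$, we have $s_0\in(0,\min\{p,1\}]$, and $q_0=2\in(s_0,2]$; the requirement $s\in(\frac{n}{s_0},\infty)$ is imposed directly in the hypotheses. With all assumptions of Theorem \ref{thm-spec} verified for $X(\mathcal{X}):=L^{r}_\omega(\mathcal{X})$, the boundedness of $m(L)$ on $H^{r}_{\omega,\,L}(\mathcal{X})$ follows at once.

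The only genuinely delicate point is the associate-space computation and the translation of the weight condition $\omega^{1-(r/s_0)'}\in A_{(r/s_0)'/(2/s_0)'}(\mathcal{X})$ into the boundedness of $\mathcal{M}$ on the appropriate convexification. However, this step is identical to the one already established in the proof of Theorem \ref{wls1}, so it presents no new obstacle, and I would therefore omit the repeated details, in keeping with the other weighted corollaries of this section.
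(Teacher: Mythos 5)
Your proposal is correct and coincides with the paper's intended argument: the paper explicitly derives Theorem \ref{wls4} by applying Theorem \ref{thm-spec} with $X(\mathcal{X}):=L^{r}_\omega(\mathcal{X})$ and omits the details precisely because the verification of Assumptions \ref{vector1} and \ref{vector2} (with $p:=r/q_\omega$, $q_0:=2$, and the associate-space computation) is identical to that in the proof of Theorem \ref{wls1}. Your check of the index restrictions $s_0\in(0,\min\{p,1\}]$ and $q_0=2\in(s_0,2]$ completes the argument exactly as intended.
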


\begin{theorem}\label{wls2}
\begin{itemize}
\item [$\mathrm{(i)}$]
Let $L$ be the same as in Theorem \ref{thm-g-2}, $r,\omega,$ and $s_0$ be the same as in Theorem \ref{wls1}, and $\lambda
\in(\frac{2n}{s_0},\infty)$. Then
both $g_L$ and $g_{\lambda,\,L}^\ast$ are bounded from
$H_{\omega,\,L}^{r}(\mathcal{X})$ to $L^r_{\omega}(\mathcal{X}).$
\item [$\mathrm{(ii)}$]
Let $L$ be the same as in Theorem \ref{thm-g}.
Assume that $\omega\in A_\infty(\mathcal{X})$, $r\in(0,\infty)$, and
$\lambda\in(\frac{2n}{\min\{1,r/q_\omega\}},\infty).$
 Then the spaces $H_{\omega,\,L}^{r}(\mathcal{X})$, $H_{\omega,\,L,\,g}^{r}
(\mathcal{X}),$ and $H_{\omega,\,L,\,g^\ast_{\lambda}}^{r}(\mathcal{X})$ coincide with equivalent quasi-norms.
\end{itemize}
\end{theorem}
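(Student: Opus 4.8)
The plan is to deduce both parts of Theorem \ref{wls2} from the general results already proved for ball quasi-Banach function spaces, namely Theorem \ref{thm-g-2} for part (i) and Theorem \ref{thm-g} for part (ii), by specializing $X(\mathcal{X}):=L^r_\omega(\mathcal{X})$. Thus the entire task reduces to verifying the structural hypotheses (Assumptions \ref{vector1} and \ref{vector2}, the $\mathrm{BBF}$ property of the convexification, and the boundedness of $\mathcal{M}$ on the relevant associate space) for the weighted Lebesgue space, exactly along the lines of the proof of Theorem \ref{wls1}. Since $L^r_\omega(\mathcal{X})$ is a $\mathrm{BQBF}$ space, this is legitimate.

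For part (i), I would set $p:=r/q_\omega$, $q_0:=2$, and take $s_0$ as in Theorem \ref{wls1}. By \cite[Theorem 6.5(ii)]{fmy20} together with the definition of $q_\omega$, $X(\mathcal{X})=L^r_\omega(\mathcal{X})$ satisfies Assumption \ref{vector1} for this $p$. Using the identification
\begin{align*}
\left[\left(\left[L^{r}_\omega(\mathcal{X})\right]^{\frac{1}{s_0}}\right)'\right]^{\frac{1}{(2/s_0)'}}
=L^{(r/s_0)'/(2/s_0)'}_{\omega^{1-(r/s_0)'}}(\mathcal{X})
\end{align*}
and the hypothesis $\omega^{1-(r/s_0)'}\in A_{(r/s_0)'/(2/s_0)'}(\mathcal{X})$ inherited from Theorem \ref{wls1}, one sees that $\mathcal{M}$ is bounded on the $\frac{1}{(2/s_0)'}$-convexification of $([L^r_\omega]^{1/s_0})'$, so Assumption \ref{vector2} holds for $s_0$ and $q_0=2$. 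Since $\lambda\in(\frac{2n}{s_0},\infty)$, all hypotheses of Theorem \ref{thm-g-2} are met with $X(\mathcal{X}):=L^r_\omega(\mathcal{X})$, and the boundedness of $g_L$ and $g^\ast_{\lambda,\,L}$ from $H^r_{\omega,\,L}(\mathcal{X})$ to $L^r_\omega(\mathcal{X})$ follows at once.

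For part (ii), the hypotheses of Theorem \ref{thm-g} are lighter, which explains the relaxed assumptions here. I would again set $p:=r/q_\omega$, so Assumption \ref{vector1} holds as above, and then choose $s_0\in(0,\min\{1,r/q_\omega\})$ close enough to $\min\{1,r/q_\omega\}$ that $2n/s_0<\lambda$; this is possible precisely because $\lambda>\frac{2n}{\min\{1,r/q_\omega\}}$. With this $s_0$ one has $r/s_0>1$, so $X^{1/s_0}(\mathcal{X})=L^{r/s_0}_\omega(\mathcal{X})$ is a $\mathrm{BBF}$ space; moreover $r/s_0>q_\omega$ forces $\omega\in A_{r/s_0}(\mathcal{X})$, whence by the duality of Muckenhoupt classes $\omega^{1-(r/s_0)'}\in A_{(r/s_0)'}(\mathcal{X})$, and therefore $\mathcal{M}$ is bounded on $(X^{1/s_0}(\mathcal{X}))'=L^{(r/s_0)'}_{\omega^{1-(r/s_0)'}}(\mathcal{X})$. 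Theorem \ref{thm-g} then yields the coincidence of $H^r_{\omega,\,L}(\mathcal{X})$, $H^r_{\omega,\,L,\,g}(\mathcal{X})$, and $H^r_{\omega,\,L,\,g^\ast_\lambda}(\mathcal{X})$ with equivalent quasi-norms.

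The only non-routine points are the admissible choice of $s_0$ in part (ii) and the verification of the boundedness of $\mathcal{M}$ on the associate space through the weight-duality identity $\omega\in A_{r/s_0}\Rightarrow\omega^{1-(r/s_0)'}\in A_{(r/s_0)'}$; everything else is a direct invocation of earlier results. I expect the main (mild) obstacle to be confirming that the weight exponents land in the correct ranges, i.e.\ that $s_0<r/q_\omega$ indeed guarantees $r/s_0>q_\omega$ and hence $\omega\in A_{r/s_0}(\mathcal{X})$; since $s_0$ may be taken strictly below $\min\{1,r/q_\omega\}$, this is automatic, and no genuine difficulty remains.
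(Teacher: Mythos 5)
Your proposal is correct and is exactly the argument the paper intends: Theorem \ref{wls2} is obtained by specializing Theorem \ref{thm-g-2} (for (i)) and Theorem \ref{thm-g} (for (ii)) to $X(\mathcal{X}):=L^r_\omega(\mathcal{X})$, verifying the assumptions as in the proof of Theorem \ref{wls1}, which is why the paper omits the details. Your additional care in part (ii) --- choosing $s_0\in(2n/\lambda,\min\{1,r/q_\omega\})$ and deriving $\omega\in A_{r/s_0}(\mathcal{X})$, hence $\omega^{1-(r/s_0)'}\in A_{(r/s_0)'}(\mathcal{X})$ and the boundedness of $\mathcal{M}$ on $\bigl(X^{1/s_0}(\mathcal{X})\bigr)'$, from $q_\omega$ and weight duality alone --- correctly fills in the one step where the hypotheses of Theorem \ref{wls2}(ii) are weaker than those of Theorem \ref{wls1}.
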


\begin{theorem}\label{wls3}
Let $L$ be the same as in Theorem \ref{sch}, $r,\omega,$ and $s_0$ be the same as in Theorem \ref{wls1}, and $s\in
(\frac{n}{s_0}-\frac{n}{2},\infty)$. Then there exists a positive constant $C$ such that, for any
 $t\in\mathbb{R}$ and $f\in H^{r}_{\omega,\,L}(\mathcal{X})$,
\begin{align*}
\left\|(I+L)^{-s}e^{itL}f\right\|_{H^{r}_{\omega,\,L}(\mathcal{X})}
\leq C(1+|t|)^s\|f\|_{H^{r}_{\omega,\,L}(\mathcal{X})}.
\end{align*}
\end{theorem}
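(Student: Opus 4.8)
The plan is to deduce Theorem \ref{wls3} from the abstract Schr\"odinger-group bound of Theorem \ref{sch} by specializing $X(\mathcal{X}):=L^r_\omega(\mathcal{X})$, in exactly the same manner that Theorem \ref{wls1} specializes Theorem \ref{thm-mc}. Since $L^r_\omega(\mathcal{X})$ is already known to be a $\mathrm{BQBF}$ space, the substance of the argument is to verify that, for the present parameters, it satisfies both Assumptions \ref{vector1} and \ref{vector2}, after which Theorem \ref{sch} applies directly. Accordingly I would set $p:=r/q_\omega$ and $q_0:=2$.

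First I would check Assumption \ref{vector1}. By the definition of the critical index $q_\omega$ and the weighted Fefferman--Stein vector-valued maximal inequality recorded in \cite[Theorem 6.5(ii)]{fmy20}, the inequality of Assumption \ref{vector1} holds in $X^{1/t}(\mathcal{X})=L^{r/t}_\omega(\mathcal{X})$ for every $t\in(0,p)$, since then $r/t>q_\omega$ forces $\omega\in A_{r/t}(\mathcal{X})$, which is exactly the weight condition guaranteeing the vector-valued maximal inequality in $L^{r/t}_\omega(\mathcal{X})$.

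Next, for Assumption \ref{vector2}, I would compute the associate space explicitly. Combining the weighted Lebesgue duality $(L^{a}_\sigma(\mathcal{X}))'=L^{a'}_{\sigma^{1-a'}}(\mathcal{X})$ with the two convexification identities yields
\[
\left[\left(\left[L^{r}_\omega(\mathcal{X})\right]^{\frac{1}{s_0}}\right)'\right]^{\frac{1}{(2/s_0)'}}
=L^{(r/s_0)'/(2/s_0)'}_{\omega^{1-(r/s_0)'}}(\mathcal{X}).
\]
Since $\mathcal{M}$ is bounded on a weighted Lebesgue space $L^{a}_\sigma(\mathcal{X})$ with $a\in(1,\infty)$ exactly when $\sigma\in A_{a}(\mathcal{X})$, the hypothesis $\omega^{1-(r/s_0)'}\in A_{(r/s_0)'/(2/s_0)'}(\mathcal{X})$ gives the boundedness of $\mathcal{M}$ on the convexification above, which is Assumption \ref{vector2} with the chosen $s_0$ and $q_0=2$. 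The one place demanding care is the bookkeeping of these weighted exponents, namely checking that $(r/s_0)'/(2/s_0)'$ and $\omega^{1-(r/s_0)'}$ are indeed the correct index and weight.

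Finally I would invoke Theorem \ref{sch}. Its hypotheses are met: $s_0\in(0,\min\{1,r/q_\omega\})\subset(0,\min\{p,1\}]$, $q_0=2\in(s_0,2]$, and $s\in(\frac{n}{s_0}-\frac{n}{2},\infty)=(n[\frac{1}{s_0}-\frac{1}{2}],\infty)$. Theorem \ref{sch} then furnishes
\[
\left\|(I+L)^{-s}e^{itL}f\right\|_{H^{r}_{\omega,\,L}(\mathcal{X})}
\le C(1+|t|)^{n(\frac{1}{s_0}-\frac{1}{2})}\|f\|_{H^{r}_{\omega,\,L}(\mathcal{X})}.
\]
I expect the only genuine subtlety to be the mismatch of growth factors: Theorem \ref{sch} produces $(1+|t|)^{n(1/s_0-1/2)}$ whereas the target carries $(1+|t|)^s$. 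This is resolved by the standing assumption $s>\frac{n}{s_0}-\frac{n}{2}=n(1/s_0-1/2)$ together with $1+|t|\ge1$, which give $(1+|t|)^{n(1/s_0-1/2)}\le(1+|t|)^s$ and upgrade the bound to the asserted one. No further analytic obstacle remains, since all the hard work has already been absorbed into Theorem \ref{sch}.
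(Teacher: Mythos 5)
Your proposal is correct and follows exactly the route the paper intends: the paper states that the proof of Theorem \ref{wls3} is obtained by applying Theorem \ref{sch} with $X(\mathcal{X}):=L^r_\omega(\mathcal{X})$ and verifying Assumptions \ref{vector1} and \ref{vector2} just as in the proof of Theorem \ref{wls1}, which is precisely what you do. Your closing remark that $(1+|t|)^{n(1/s_0-1/2)}\le(1+|t|)^s$ because $s>n(\frac{1}{s_0}-\frac{1}{2})$ correctly reconciles the exponent produced by Theorem \ref{sch} with the one in the statement.
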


\begin{remark}
\begin{itemize}
\item [$\mathrm{(i)}$] When $r\in(0,1]$,  Theorem \ref{wls4} in this case was obtained in
\cite[Theorem 6.10]{yy14} under a different assumption on the weight $\omega$ (see \cite[Theorem 6.10]{yy14}
for the details). Moreover, we point out that \cite[Theorem 1.2]{h17} is a part of Theorem \ref{wls2}(ii).
\item [$\mathrm{(ii)}$] To the best of our knowledge, Theorem \ref{wls3} is completely new.
\end{itemize}
\end{remark}

\subsection{Variable Hardy Spaces}\label{secvls}
We first recall the concept of variable Lebesgue spaces on $\mathcal{X}$. Let $r(\cdot):\ \mathcal{X}\to(0,\infty)$
be a $\mu$-measurable function,
$$
\widetilde{r}_-:=\underset{x\in\mathcal{X}}{\operatorname{ess\,inf}}\,r(x),\text{and}\ \
\widetilde r_+:=\underset{x\in\mathcal{X}}{\operatorname{ess\,sup}}\,r(x).
$$
The \emph{variable Lebesgue space $L^{r(\cdot)}(\mathcal{X})$} associated with the function $r:\ \mathcal{X}
\to(0,\infty)$ is defined to be the set of all the $\mu$-measurable functions $f$ on $\mathcal{X}$ with
the finite \emph{quasi-norm}
$$
\|f\|_{L^{r(\cdot)}(\mathcal{X})}:=\inf\left\{\lambda\in(0,\infty):\ \int_{\mathcal{X}}\left[\frac{|f(x)|}
{\lambda}\right]^{r(x)}\,d\mu(x)\le1\right\}.
$$
As was pointed out in \cite[Remark 2.7(iv)]{yhyy21a}, for any $r(\cdot):\ \mathcal{X}\to(0,\infty)$ with
$0<\widetilde{r}_-\leq \widetilde{r}_+<\infty$, the variable Lebesgue space $L^{r(\cdot)}(\mathcal{X})$
is a quasi-Banach function space and hence a $\mathrm{BQBF}$ space. The variable exponent $r(\cdot)$ is said to be
\emph{locally log-H\"older continuous} if there exists a positive constant $c_{\rm{log}}$ such that,
for any $x,y\in\mathcal{X}$,
$$
|r(x)-r(y)|\le \frac{c_{\rm{log}}}{\log(e+1/d(x,y))}
$$
and that $r(\cdot)$ is said to satisfy the \emph{log-H\"older decay condition} with a basepoint $x_r\in\mathcal{X}$
if there exists a $r_\infty\in\mathbb{R}$ and a positive constant $c_\infty$ such that, for any $x\in\mathcal{X},$
\begin{align*}
|r(x)-r_\infty|\leq \frac{c_\infty}{\log(e+d(x,x_r))}.
\end{align*}
The variable exponent $r(\cdot)$ is said to be \emph{log-H\"older continuous} if $r(\cdot)$ satisfies both the locally
log-H\"older continuous condition and the log-H\"older decay condition. In the case when $X(\mathcal{X}):
=L^{r(\cdot)}(\mathcal{X})$, we denote $H_{X,\,L}(\mathcal{X})$, $H_{X,\,L,\,\rm{mol}}^{M,\,\epsilon}(\mathcal{X})$,
$H_{X,\,L,\,\rm{at}}^{M}(\mathcal{X})$, $H_{X,\,L,\,g}(\mathcal{X})$, and $H_{X,\,L,\,g^\ast_{\lambda}}(\mathcal{X})$,
respectively, by $H_{L}^{r(\cdot)}(\mathcal{X})$, $H_{L,\,\rm{mol}}^{r(\cdot),\,M,\,\epsilon}(\mathcal{X})$,
$H_{L,\,\rm{at}}^{r(\cdot),\,M}(\mathcal{X})$, $H_{L,\,g}^{r(\cdot)}(\mathcal{X})$, and $H_{L,\,g^\ast_{\lambda}}
^{r(\cdot)}(\mathcal{X})$. Applying Theorem \ref{thm-mc} with $X(\mathcal{X}):=L^{r(\cdot)}(\mathcal{X})$, we
have the following conclusion.

\begin{theorem}\label{vls1}
Let the operator $L$ be the same as in Theorem \ref{thm-mc} and $r:\ \mathcal{X}\to(0,\infty)$ be log-H\"older
continuous. Assume that $0<\widetilde{r}_-\leq\widetilde{r}_+<2$, $M\in(\frac{n}{2}[\frac{1}{\min\{1,\widetilde{r}_-\}}
-\frac{1}{2}],\infty)\cap\mathbb{N}$, and $\epsilon\in(\frac{n}{\min\{1,\widetilde{r}_-\}},\infty)$. Then the spaces
$H_{L}^{r(\cdot)}(\mathcal{X})$, $H_{L,\,\rm{mol}}^{r(\cdot),\,M,\,\epsilon}(\mathcal{X})$, and $H_{L,\,\rm{at}}
^{r(\cdot),\,M}(\mathcal{X})$ coincide with equivalent quasi-norms.
\end{theorem}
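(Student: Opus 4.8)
The plan is to follow verbatim the template already used to prove Theorems \ref{ls10}, \ref{os1}, and \ref{wls1}: I would verify that the variable Lebesgue space $X(\mathcal{X}):=L^{r(\cdot)}(\mathcal{X})$ satisfies both Assumptions \ref{vector1} and \ref{vector2} for a suitable triple of parameters, and then invoke Theorem \ref{thm-mc} directly. To this end, I would set $p:=\widetilde{r}_-$ and $q_0:=2$, and choose $s_0\in(0,\min\{1,\widetilde{r}_-\})$ close enough to $\min\{1,\widetilde{r}_-\}$ so that both $M>\frac{n}{2}(\frac{1}{s_0}-\frac{1}{2})$ and $\epsilon>\frac{n}{s_0}$ still hold. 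This is possible precisely because the hypotheses $M\in(\frac{n}{2}[\frac{1}{\min\{1,\widetilde{r}_-\}}-\frac{1}{2}],\infty)\cap\mathbb{N}$ and $\epsilon\in(\frac{n}{\min\{1,\widetilde{r}_-\}},\infty)$ are strict inequalities, which leaves room for such an $s_0$ by continuity.

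For Assumption \ref{vector1}, I would appeal to the vector-valued Fefferman--Stein maximal inequality on variable Lebesgue spaces. Since $r(\cdot)$ is log-H\"older continuous with $0<\widetilde{r}_-\le\widetilde{r}_+<\infty$, for every $t\in(0,p)$ the $\frac{1}{t}$-convexification $[L^{r(\cdot)}(\mathcal{X})]^{1/t}=L^{r(\cdot)/t}(\mathcal{X})$ again carries a log-H\"older continuous exponent, so the required $\ell^u$-valued inequality for $\mathcal{M}$ holds; this is exactly the content of the variable-exponent case recorded in \cite{fmy20}. Hence $X(\mathcal{X})=L^{r(\cdot)}(\mathcal{X})$ satisfies Assumption \ref{vector1} for $p=\widetilde{r}_-$.

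For Assumption \ref{vector2}, the key step is to identify the relevant space explicitly, exactly as in the proof of Theorem \ref{wls1}. Using the convexification identity $[L^{r(\cdot)}(\mathcal{X})]^{1/s_0}=L^{r(\cdot)/s_0}(\mathcal{X})$, which is a $\mathrm{BBF}$ space because $s_0<\widetilde{r}_-$ forces $r(\cdot)/s_0>1$ $\mu$-almost everywhere, together with the duality $(L^{q(\cdot)}(\mathcal{X}))'=L^{q'(\cdot)}(\mathcal{X})$, one obtains
\begin{align*}
\left[\left(\left[L^{r(\cdot)}(\mathcal{X})\right]^{\frac{1}{s_0}}\right)'\right]^{\frac{1}{(2/s_0)'}}
=L^{(r(\cdot)/s_0)'/(2/s_0)'}(\mathcal{X}).
\end{align*}
It then remains to check that $\mathcal{M}$ is bounded on this last variable Lebesgue space, for which it suffices that the exponent $q(\cdot):=(r(\cdot)/s_0)'/(2/s_0)'$ is log-H\"older continuous and satisfies $\operatorname{ess\,inf}_{x\in\mathcal{X}}q(x)>1$. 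The log-H\"older continuity is inherited from that of $r(\cdot)$ under conjugation and affine rescaling, since the exponents stay bounded away from $1$ and $\infty$; and, because the conjugate is decreasing, the identity $\operatorname{ess\,inf}_{x\in\mathcal{X}}q(x)=(\widetilde{r}_+/s_0)'/(2/s_0)'$ shows that $\operatorname{ess\,inf}_{x\in\mathcal{X}}q(x)>1$ is equivalent to $\widetilde{r}_+<2$, which is precisely our hypothesis. Thus $\mathcal{M}$ is bounded on $L^{q(\cdot)}(\mathcal{X})$ and Assumption \ref{vector2} holds for the above $s_0$ and $q_0$.

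I expect the main obstacle to be the verification in the previous paragraph that the transformed exponent $q(\cdot)=(r(\cdot)/s_0)'/(2/s_0)'$ remains log-H\"older continuous and, crucially, that $\operatorname{ess\,inf}_{x\in\mathcal{X}}q(x)>1$; the latter is exactly where the hypothesis $\widetilde{r}_+<2$ (rather than merely $\widetilde{r}_+<\infty$) enters, mirroring the role of the restriction $q_0\le 2$ in Theorem \ref{thm-mc}. Once both assumptions are confirmed, all hypotheses of Theorem \ref{thm-mc} are met with $X(\mathcal{X})=L^{r(\cdot)}(\mathcal{X})$, and the coincidence of $H_{L}^{r(\cdot)}(\mathcal{X})$, $H_{L,\,\rm{mol}}^{r(\cdot),\,M,\,\epsilon}(\mathcal{X})$, and $H_{L,\,\rm{at}}^{r(\cdot),\,M}(\mathcal{X})$ with equivalent quasi-norms follows at once.
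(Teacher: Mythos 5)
Your proposal is correct and follows essentially the same route as the paper's proof: the same choice of parameters $p:=\widetilde{r}_-$, $q_0:=2$, and $s_0\in(0,\min\{1,\widetilde{r}_-\})$ close enough to the endpoint, verification of Assumption \ref{vector1} via the vector-valued maximal inequality for log-H\"older exponents, verification of Assumption \ref{vector2} via the identical duality identity $[([L^{r(\cdot)}(\mathcal{X})]^{1/s_0})']^{1/(2/s_0)'}=L^{(r(\cdot)/s_0)'/(2/s_0)'}(\mathcal{X})$ together with the boundedness of $\mathcal{M}$ on that space (where $\widetilde{r}_+<2$ enters exactly as you say), and finally an appeal to Theorem \ref{thm-mc}. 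The only minor discrepancy is bibliographic: the paper draws both the maximal inequality and the duality/boundedness lemmas from \cite{zsy16} (the variable-exponent reference on spaces of homogeneous type) rather than \cite{fmy20}, which is the Orlicz-space reference used in Theorem \ref{os1}; this does not affect the correctness of your argument.
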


\begin{proof}
Let $p:=\widetilde{r}_-,$ $q_0:=2,$ and $s_0\in (0,\min\{1,\widetilde{r}_-\})$ satisfy both $M>\frac{n}{2}
(\frac{1}{s_0}-\frac{1}{2})$ and $\epsilon>\frac{n}{s_0}.$ By \cite[Theorem 2.7]{zsy16}, we find that
$X(\mathcal{X}):=L^{r(\cdot)}(\mathcal{X})$ satisfies Assumption \ref{vector1} for the aforementioned
$p.$ Meanwhile, using \cite[Lemma 2.9]{zsy16}, we conclude that
\begin{align*}
\left[\left(\left[L^{r(\cdot)}(\mathcal{X})\right]^{\frac{1}{s_0}}\right)'\right]^{\frac{1}{(2/s_0)'}}
=L^{(r(\cdot)/s_0)'/(2/s_0)'}(\mathcal{X}),
\end{align*}
where $\frac{1}{r(x)/s_0}+\frac{1}{(r(x)/s_0)'}=1$ for any $x\in\mathcal{X}.$ From this, the assumption
that $0<\widetilde{r}_+<2,$ and \cite[Lemma 2.5]{zsy16}, it follows that $X(\mathcal{X}):=L^{r(\cdot)}
(\mathcal{X})$ satisfies Assumption \ref{vector2} for the aforementioned $s_0$ and $q_0.$ Therefore,
all the assumptions of Theorem \ref{thm-mc} are satisfied, which further implies the desired conclusions
of the present theorem. This finishes the proof of Theorem \ref{vls1}.
\end{proof}

\begin{remark}
We point out that, when $\widetilde{r}_+\in(0,1]$, Theorem \ref{vls1} was obtained in \cite[Theorem 3.3
and Proposition 5.12]{yz18}. However, when $\widetilde{r}_+\in(1,2)$, Theorem \ref{vls1} is new.
\end{remark}

By Theorems \ref{thm-spec}, \ref{thm-g}, and \ref{sch} with $X(\mathcal{X}):=L^{r(\cdot)}(\mathcal{X})$, we
have the following conclusions; since their proofs are similar to that of Theorem \ref{vls1}, we omit the
details here.

\begin{theorem}\label{vls4}
Let $L$ be the same as in Theorem \ref{thm-spec} and $r:\ \mathcal{X}\to(0,\infty)$ be log-H\"older continuous.
Assume that $0<\widetilde{r}_-\leq\widetilde{r}_+<2$ and $s\in(\frac{n}{\min\{1,\widetilde{r}_-\}},\infty)$.
Let $\varphi\in C^\infty_{\mathrm{c}}(\mathbb{R})$ satisfy \eqref{620} and $m:\ [0,\infty)\to\mathbb{C}$ be a
bounded Borel function satisfying \eqref{2022930c}. Then there exists a positive constant $C$ such that, for
any $f\in H^{r(\cdot)}_{L}(\mathcal{X})$,
\begin{align*}
\|m(L)(f)\|_{H^{r(\cdot)}_{L}(\mathcal{X})}\leq C\|f\|_{H^{r(\cdot)}_{L}(\mathcal{X})}.
\end{align*}
\end{theorem}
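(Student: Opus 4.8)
The plan is to deduce Theorem \ref{vls4} directly from the abstract spectral multiplier result Theorem \ref{thm-spec} by specializing $X(\mathcal{X}):=L^{r(\cdot)}(\mathcal{X})$, in exactly the same manner as Theorem \ref{ls10} was obtained from Theorem \ref{thm-mc}. The only genuine work is to verify that the variable Lebesgue space $L^{r(\cdot)}(\mathcal{X})$ fulfills the hypotheses of Theorem \ref{thm-spec} for a suitable choice of the parameters $p$, $s_0$, $q_0$, and $s$; once these are in place, the boundedness of $m(L)$ on $H^{r(\cdot)}_L(\mathcal{X})$ is immediate. This mirrors the proof of Theorem \ref{vls1}, where the same space was shown to satisfy Assumptions \ref{vector1} and \ref{vector2}.

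First I would set $p:=\widetilde{r}_-$ and $q_0:=2$. Since we are given $s\in(\frac{n}{\min\{1,\widetilde{r}_-\}},\infty)$, we have $\frac{n}{s}<\min\{1,\widetilde{r}_-\}$, so I can fix $s_0\in(\frac{n}{s},\min\{1,\widetilde{r}_-\})$; this simultaneously guarantees $s_0\in(0,\min\{p,1\}]$ and $s\in(\frac{n}{s_0},\infty)$, which are precisely the constraints that Theorem \ref{thm-spec} imposes on $s_0$ and $s$, while $q_0\in(s_0,2]$ holds trivially with $q_0=2$. Next I would verify the two key assumptions, reusing the computations from the proof of Theorem \ref{vls1}: by \cite[Theorem 2.7]{zsy16} the space $L^{r(\cdot)}(\mathcal{X})$ satisfies Assumption \ref{vector1} for $p=\widetilde{r}_-$, and for Assumption \ref{vector2} I would invoke \cite[Lemma 2.9]{zsy16} to identify the relevant convexified associate space,
\begin{align*}
\left[\left(\left[L^{r(\cdot)}(\mathcal{X})\right]^{\frac{1}{s_0}}\right)'\right]^{\frac{1}{(2/s_0)'}}
=L^{(r(\cdot)/s_0)'/(2/s_0)'}(\mathcal{X}),
\end{align*}
and then use the hypothesis $0<\widetilde{r}_+<2$ together with \cite[Lemma 2.5]{zsy16} to conclude that $\mathcal{M}$ is bounded on this space, thereby yielding Assumption \ref{vector2} for the chosen $s_0$ and $q_0=2$.

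With all the hypotheses of Theorem \ref{thm-spec} confirmed for $X(\mathcal{X})=L^{r(\cdot)}(\mathcal{X})$, and with $\varphi$ and $m$ satisfying \eqref{620} and \eqref{2022930c} by assumption, the desired estimate $\|m(L)(f)\|_{H^{r(\cdot)}_L(\mathcal{X})}\le C\|f\|_{H^{r(\cdot)}_L(\mathcal{X})}$ follows at once. The step I expect to require the most care is the parameter selection: one must confirm that a single $s_0$ can lie below $\min\{1,\widetilde{r}_-\}$ (needed for Assumptions \ref{vector1} and \ref{vector2} and for $s_0\le\min\{p,1\}$) while staying above $\frac{n}{s}$ (needed so that $s>\frac{n}{s_0}$). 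This is exactly where the hypothesis $s>\frac{n}{\min\{1,\widetilde{r}_-\}}$ enters, and no other genuine difficulty arises, since everything else is a direct transcription of the proof of Theorem \ref{vls1}.
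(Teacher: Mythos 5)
Your proposal is correct and coincides with the paper's intended argument: the paper explicitly omits the proof of Theorem \ref{vls4} because it is obtained by specializing Theorem \ref{thm-spec} to $X(\mathcal{X}):=L^{r(\cdot)}(\mathcal{X})$ and verifying Assumptions \ref{vector1} and \ref{vector2} exactly as in the proof of Theorem \ref{vls1}. Your parameter selection $s_0\in(\frac{n}{s},\min\{1,\widetilde{r}_-\})$, $p=\widetilde{r}_-$, $q_0=2$ is precisely the right way to reconcile the constraint $s>\frac{n}{s_0}$ with $s_0\le\min\{p,1\}$, and the citations to \cite{zsy16} match those used in the paper.
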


\begin{theorem}\label{vls2}
Let $r:\ \mathcal{X}\to(0,\infty)$ be log-H\"older continuous.
\begin{itemize}
\item [$\mathrm{(i)}$]Let $L$ be the same as in Theorem \ref{thm-g-2}.
Assume that $0<\widetilde{r}_-\leq\widetilde{r}_+<2$ and $\lambda\in(\frac{2n}{\min\{1,\widetilde{r}_-\}},\infty)$.
Then both $g_L$ and $g_{\lambda,\,L}^\ast$ are bounded from
$H_{L}^{r(\cdot)}(\mathcal{X})$ to $L^{r(\cdot)}(\mathcal{X}).$
\item [$\mathrm{(ii)}$]Let $L$ be the same as in Theorem \ref{thm-g}.
Assume that $0<\widetilde{r}_-\leq\widetilde{r}_+<\infty$ and $\lambda\in(\frac{2n}{\min\{1,\widetilde{r}_-\}},\infty)$.
Then the spaces $H_{L}^{r(\cdot)}(\mathcal{X})$, $H_{L,\,g}^{r(\cdot)}(\mathcal{X}),$ and $H_{L,\,g^\ast_{
\lambda}}^{r(\cdot)}(\mathcal{X})$ coincide with equivalent quasi-norms.
\end{itemize}
\end{theorem}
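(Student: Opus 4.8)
The plan is to deduce both parts by specializing, respectively, Theorems \ref{thm-g-2} and \ref{thm-g} to $X(\mathcal{X}):=L^{r(\cdot)}(\mathcal{X})$, exactly in the spirit of the proof of Theorem \ref{vls1}; thus the only real work is to verify that the variable Lebesgue space satisfies the required structural hypotheses for a suitable choice of the parameters $p,s_0,q_0$.

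For part (i) I would first set $p:=\widetilde{r}_-$ and $q_0:=2$, and then choose $s_0\in(0,\min\{1,\widetilde{r}_-\})$ close enough to $\min\{1,\widetilde{r}_-\}$ so that $s_0<\min\{p,1\}$ and, crucially, $\frac{2n}{s_0}<\lambda$; this last inequality is solvable precisely because the hypothesis $\lambda\in(\frac{2n}{\min\{1,\widetilde{r}_-\}},\infty)$ leaves room to enlarge $\frac{2n}{s_0}$ slightly beyond $\frac{2n}{\min\{1,\widetilde{r}_-\}}$. With these choices, Assumption \ref{vector1} for the exponent $p$ follows from \cite[Theorem 2.7]{zsy16}, while the identity
\begin{align*}
\left[\left(\left[L^{r(\cdot)}(\mathcal{X})\right]^{\frac{1}{s_0}}\right)'\right]^{\frac{1}{(2/s_0)'}}
=L^{(r(\cdot)/s_0)'/(2/s_0)'}(\mathcal{X})
\end{align*}
from \cite[Lemma 2.9]{zsy16}, together with $\widetilde{r}_+<2$ and \cite[Lemma 2.5]{zsy16}, yields Assumption \ref{vector2} for $s_0$ and $q_0$, just as in the proof of Theorem \ref{vls1}. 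Since $\lambda>\frac{2n}{s_0}$, all hypotheses of Theorem \ref{thm-g-2} are met, giving the boundedness of $g_L$ and $g_{\lambda,\,L}^\ast$ from $H_L^{r(\cdot)}(\mathcal{X})$ to $L^{r(\cdot)}(\mathcal{X})$.

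For part (ii) I would instead invoke Theorem \ref{thm-g}, whose hypotheses on $X(\mathcal{X})$ are weaker: beyond Assumption \ref{vector1} it asks only that $X^{1/s_0}(\mathcal{X})$ be a $\mathrm{BBF}$ space and that $\mathcal{M}$ be bounded on $(X^{1/s_0}(\mathcal{X}))'$, without the constraint $q_0\le2$ behind Assumption \ref{vector2}. This is what allows the full range $\widetilde{r}_+<\infty$ here. Again setting $p:=\widetilde{r}_-$ and picking $s_0\in(0,\min\{1,\widetilde{r}_-\})$ with $\frac{2n}{s_0}<\lambda$, Assumption \ref{vector1} follows from \cite[Theorem 2.7]{zsy16}; the condition $s_0<\widetilde{r}_-$ makes $r(\cdot)/s_0$ bounded below by a constant exceeding $1$, so $X^{1/s_0}(\mathcal{X})=L^{r(\cdot)/s_0}(\mathcal{X})$ is a $\mathrm{BBF}$ space, and by \cite[Lemma 2.9]{zsy16} its associate space is $L^{(r(\cdot)/s_0)'}(\mathcal{X})$. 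Because $r(\cdot)$ is log-H\"older continuous with $\widetilde{r}_+<\infty$, the conjugate exponent $(r(\cdot)/s_0)'$ is likewise log-H\"older continuous with essential infimum $(\widetilde{r}_+/s_0)'>1$, so the boundedness of $\mathcal{M}$ on $L^{(r(\cdot)/s_0)'}(\mathcal{X})$ holds by the standard maximal theorem on variable Lebesgue spaces. Theorem \ref{thm-g} then identifies $H_L^{r(\cdot)}(\mathcal{X})$, $H_{L,\,g}^{r(\cdot)}(\mathcal{X})$, and $H_{L,\,g^\ast_\lambda}^{r(\cdot)}(\mathcal{X})$ with equivalent quasi-norms.

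The only delicate point, and the step I would check most carefully, is the coupling between $\lambda$ and the admissible $s_0$: one must confirm that the window $(0,\min\{1,\widetilde{r}_-\})$ for $s_0$ always contains a value with $\frac{2n}{s_0}<\lambda$ under the standing hypothesis on $\lambda$, and---in part (ii)---that relaxing to the weaker hypotheses of Theorem \ref{thm-g} genuinely accommodates $\widetilde{r}_+\ge2$, which it does precisely because no upper bound $q_0\le2$ is imposed there. Everything else reduces to the variable-exponent facts already used in the proof of Theorem \ref{vls1}.
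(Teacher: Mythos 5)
Your proposal is correct and coincides with the paper's intended argument: the paper explicitly omits the proof, stating that it follows by specializing Theorems \ref{thm-g-2} and \ref{thm-g} to $X(\mathcal{X}):=L^{r(\cdot)}(\mathcal{X})$ with the assumptions verified exactly as in the proof of Theorem \ref{vls1}. Your parameter choices (in particular taking $s_0\in(\frac{2n}{\lambda},\min\{1,\widetilde r_-\})$ so that $\lambda>\frac{2n}{s_0}$) and your observation that part (ii) escapes the restriction $\widetilde r_+<2$ because Theorem \ref{thm-g} does not impose $q_0\le2$ are both accurate.
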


\begin{theorem}\label{vls3}
Let $L$ be the same as in Theorem \ref{sch} and $r:\ \mathcal{X}\to(0,\infty)$ be log-H\"older continuous.
Assume that $0<\widetilde{r}_-\leq\widetilde{r}_+<2$ and $s\in(\frac{n}{\min\{1,\widetilde{r}_-\}}-\frac{n}{2},
\infty)$. Then there exists a positive constant $C$ such that, for any   $t\in\mathbb{R}$ and $f\in H^{r(\cdot)}_{L}(\mathcal{X})
$,
\begin{align*}
\left\|(I+L)^{-s}e^{itL}f\right\|_{H^{r(\cdot)}_{L}(\mathcal{X})}
\leq C(1+|t|)^s\|f\|_{H^{r(\cdot)}_{L}(\mathcal{X})}.
\end{align*}
\end{theorem}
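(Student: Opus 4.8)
The plan is to deduce Theorem \ref{vls3} from the abstract Schr\"odinger group bound in Theorem \ref{sch} by specializing $X(\mathcal{X}):=L^{r(\cdot)}(\mathcal{X})$, exactly in the spirit of the proof of Theorem \ref{vls1}. First I would fix the parameters: set $p:=\widetilde{r}_-$, $q_0:=2$, and choose $s_0\in(0,\min\{1,\widetilde{r}_-\})$. The key flexibility is that $s_0$ may be taken as close to $\min\{1,\widetilde{r}_-\}$ as desired; since we assume $s>\frac{n}{\min\{1,\widetilde{r}_-\}}-\frac{n}{2}$ and
\begin{align*}
n\left(\frac{1}{s_0}-\frac{1}{2}\right)\downarrow n\left(\frac{1}{\min\{1,\widetilde{r}_-\}}-\frac{1}{2}\right)
\end{align*}
as $s_0\uparrow\min\{1,\widetilde{r}_-\}$, a sufficiently large such $s_0$ guarantees $s>n(\frac{1}{s_0}-\frac{1}{2})$, which is precisely the admissible range of the exponent required by Theorem \ref{sch}.

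Next I would verify that $L^{r(\cdot)}(\mathcal{X})$ satisfies Assumptions \ref{vector1} and \ref{vector2} for these parameters, reusing verbatim the computation from the proof of Theorem \ref{vls1}. The log-H\"older continuity of $r(\cdot)$ together with \cite[Theorem 2.7]{zsy16} yields the vector-valued maximal inequality of Assumption \ref{vector1} for $p=\widetilde{r}_-$. For Assumption \ref{vector2}, I would invoke \cite[Lemma 2.9]{zsy16} to identify
\begin{align*}
\left[\left(\left[L^{r(\cdot)}(\mathcal{X})\right]^{\frac{1}{s_0}}\right)'\right]^{\frac{1}{(2/s_0)'}}
=L^{(r(\cdot)/s_0)'/(2/s_0)'}(\mathcal{X}),
\end{align*}
and then use the hypothesis $\widetilde{r}_+<2$ together with \cite[Lemma 2.5]{zsy16} to see that $\mathcal{M}$ is bounded on this space; this gives Assumption \ref{vector2} with the chosen $s_0$ and $q_0=2$.

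With all hypotheses of Theorem \ref{sch} verified, applying it with $X(\mathcal{X}):=L^{r(\cdot)}(\mathcal{X})$ produces, for every $t\in\mathbb{R}$ and $f\in H^{r(\cdot)}_{L}(\mathcal{X})$,
\begin{align*}
\left\|(I+L)^{-s}e^{itL}(f)\right\|_{H^{r(\cdot)}_{L}(\mathcal{X})}
\lesssim(1+|t|)^{n(\frac{1}{s_0}-\frac{1}{2})}\|f\|_{H^{r(\cdot)}_{L}(\mathcal{X})}.
\end{align*}
The final step is to reconcile this growth rate with the claimed $(1+|t|)^s$. Since $1+|t|\geq1$ and, by the choice of $s_0$, we have $n(\frac{1}{s_0}-\frac{1}{2})\leq s$, the monotonicity of $\tau\mapsto(1+|t|)^\tau$ on $[1,\infty)$ gives $(1+|t|)^{n(\frac{1}{s_0}-\frac{1}{2})}\leq(1+|t|)^s$, upgrading the bound to the desired form.

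I do not anticipate any serious obstacle here, as the entire argument is a specialization of Theorem \ref{sch} supplemented by the weight-type identifications already carried out for Theorem \ref{vls1}. The only genuinely non-routine point is arranging $s_0$ close enough to $\min\{1,\widetilde{r}_-\}$ so that the admissible exponent range of Theorem \ref{sch} contains the given $s$; once this is secured, the exponent comparison via $1+|t|\geq1$ is immediate. One should take minor care that the chosen $s_0$ lies in $(0,\min\{p,1\}]$ with $q_0=2\in(s_0,2]$ so that the standing hypotheses of Theorem \ref{sch} are met, which is automatic from $s_0<\min\{1,\widetilde{r}_-\}=\min\{p,1\}$.
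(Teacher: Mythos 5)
Your proposal is correct and follows essentially the same route as the paper, which derives Theorem \ref{vls3} by applying Theorem \ref{sch} with $X(\mathcal{X}):=L^{r(\cdot)}(\mathcal{X})$ and verifying Assumptions \ref{vector1} and \ref{vector2} exactly as in the proof of Theorem \ref{vls1}. Your explicit handling of the two small points the paper leaves implicit --- choosing $s_0$ close enough to $\min\{1,\widetilde{r}_-\}$ so that $s>n(\frac{1}{s_0}-\frac{1}{2})$, and upgrading $(1+|t|)^{n(\frac{1}{s_0}-\frac{1}{2})}$ to $(1+|t|)^{s}$ via $1+|t|\geq1$ --- is exactly what is needed.
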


\begin{remark}
To the best of our knowledge, Theorems \ref{vls4}, \ref{vls2}, and \ref{vls3} are completely new.
\end{remark}

\bigskip

\noindent Xiaosheng Lin, Dachun Yang (Corresponding author) and Wen Yuan

\medskip

\noindent Laboratory of Mathematics and Complex Systems (Ministry of Education of China),
School of Mathematical Sciences, Beijing Normal University, Beijing 100875,
The People's Republic of China

\smallskip

\smallskip

\noindent {\it E-mails}: \texttt{xslin@mail.bnu.edu.cn} (X. Lin)

\noindent\phantom{{\it E-mails:} }\texttt{dcyang@bnu.edu.cn} (D. Yang)

\noindent\phantom{{\it E-mails:} }\texttt{wenyuan@bnu.edu.cn} (W. Yuan)

\bigskip

\noindent Sibei Yang

\medskip

\noindent School of Mathematics and Statistics, Gansu Key Laboratory of Applied Mathematics
and Complex Systems, Lanzhou University, Lanzhou 730000, The People's Republic of China

\smallskip

\noindent{\it E-mail:} \texttt{yangsb@lzu.edu.cn}

\end{document}